\documentclass[11pt]{amsart}

\usepackage{amsmath}
\usepackage{amsthm}
\usepackage{hyperref}
\usepackage{xcolor}
\hypersetup{
	colorlinks,
	linkcolor={blue!80!black},
	citecolor={green!60!black},
	urlcolor={blue!80!black}
}
\usepackage[margin=1.0in]{geometry}
\usepackage[T1]{fontenc}
\usepackage{MnSymbol}

\numberwithin{equation}{section}

\newtheorem{prop}{Proposition}
\newtheorem{lemma}[prop]{Lemma}

\newtheorem{thm}[prop]{Theorem}
\newtheorem{cor}[prop]{Corollary}

\numberwithin{prop}{section}

\theoremstyle{definition}
\newtheorem{defn}[prop]{Definition}
\newtheorem{ex}[prop]{Example}
\newtheorem{rmk}[prop]{Remark}

\newtheorem{assumption}[prop]{Assumption}

%
\DeclareSymbolFont{script}{U}{eus}{m}{n}
\DeclareSymbolFontAlphabet{\mathscr}{script}
\DeclareMathSymbol{\Wedge}{0}{script}{"5E}
\DeclareMathAlphabet{\mathrmsl}{OT1}{cmr}{m}{sl}

\renewcommand{\div}{\mathrm{div}}
\newcommand{\vol}{\mathrm{vol}}
\renewcommand{\i}{\sqrt{-1}}
\newcommand{\gD}{\Delta}
\newcommand{\gs}{\sigma}
\newcommand{\mf}{\mathfrak}
\newcommand{\mc}{\mathcal}
\newcommand{\del}{\partial}
\newcommand{\brs}[1]{\left|#1\right|}

\newcommand{\delb}{\bar{\partial}}
\newcommand{\dt}{\frac{\partial}{\partial t}}
\newcommand{\poiss}{{\mathrm \pi}}
\newcommand{\N}{\nabla}
\newcommand{\GK}{\mathcal{GK}}

\newcommand{\II}{{\mathbb I}}
\newcommand{\JJ}{{\mathbb J}}
\newcommand{\bxi}{\boldsymbol{\xi}}
\newcommand{\til}[1]{\widetilde{#1}}
\renewcommand{\bar}[1]{\overline{#1}}

\newcommand{\J}{\mathbb J}
\newcommand{\R}{\mathbb R}
\newcommand{\C}{\mathbb C}

\newcommand{\G}{\mathbb G}

\newcommand{\IP}[1]{\left<#1\right>}

\newcommand{\Gscal}{\mathrm{Gscal}}

\newcommand{\T}{{\mathbb T}}

\newcommand{\Aut}{{\mathrm {Aut}}}
\newcommand{\hred}{\mathfrak{h}_{\mathrm{red}}}
\newcommand{\mab}[1]{\llangle #1 \rrangle}

\newcommand{\la}{\langle}
\newcommand{\ra}{\rangle}
\newcommand{\ga}{\alpha}
\newcommand{\gb}{\beta}
\newcommand{\gw}{\omega}

\DeclareMathOperator{\Real}{Re}
\DeclareMathOperator{\Rc}{Rc}
\DeclareMathOperator{\ad}{ad}

\DeclareMathOperator{\tr}{tr}
\DeclareMathOperator{\Ker}{Ker}

\DeclareMathOperator{\Id}{Id}

\DeclareMathOperator{\im}{Im}

\DeclareMathOperator{\End}{End}

\DeclareMathOperator{\Diff}{Diff}

\newcommand{\XX}{\mathbf{X}}

\title{Formal structure of scalar curvature in generalized K\"ahler geometry}

\author{Vestislav Apostolov}
\address{V.\,Apostolov\\ D{\'e}partement de Math{\'e}matiques\\ UQAM \\
 and \\ Institute of Mathematics and Informatics\\ Bulgarian Academy of Sciences}
\email{\href{mailto:apostolov.vestislav@uqam.ca}{apostolov.vestislav@uqam.ca}}

\author{Jeffrey Streets}
\address{J.\,Streets \\ Rowland Hall\\
        University of California\\
        Irvine, CA 92617}
\email{\href{mailto:jstreets@uci.edu}{jstreets@uci.edu}}

\author{Yury Ustinovskiy}
\address{Y.\,Ustinovskiy, Susquehanna International Group}
\email{\href{mailto:yura.ust@gmail.com}{yura.ust@gmail.com}}

\date{April 22, 2024}

\begin{document}

\begin{abstract} Building on works of Boulanger \cite{Boulanger} and Goto \cite{Goto_2020,goto-21}, we show that Goto's scalar curvature is the moment map for an action of generalized Hamiltonian automorphisms of the associated Courant algebroid, constrained by the choice of an adapted volume form.  We derive an explicit formula for Goto's scalar curvature, and show that it is constant for generalized K\"ahler-Ricci solitons.  Restricting to the generically symplectic type case, we realize the generalized K\"ahler class as the complexified orbit of the Hamiltonian action above.  This leads to a natural extension of Mabuchi's metric and $K$-energy, implying a conditional uniqueness result.  Finally, in this setting we derive a Calabi-Matsushima-Lichnerowicz obstruction and a Futaki invariant.
\end{abstract}

\maketitle

\section{Introduction}

The classical uniformization of Riemann surfaces is a central result in complex analysis, establishing a fundamental link between metric geometry, complex geometry, and topology.  Furthermore it provides the foundation for deeper studies of moduli spaces of Riemann surfaces.  This definitive theory forms the archetype for uniformization questions in higher dimensions, where linkages are sought between metric and complex geometry.  In \cite{CalabiExt} Calabi initiated an expansive program of finding canonical representatives of K\"ahler metrics in a fixed deRham class.  He proposed extremal K\"ahler metrics, i.e. metrics whose scalar curvature defines a holomorphic Hamiltonian vector field, as the candidates for such canonical metrics.  This problem combines the well-known existence problems for constant scalar curvature and K\"ahler-Einstein metrics, and is guided by the central Yau-Tian-Donaldson (YTD) conjecture, which asserts that the obstruction for existence of an extremal K\"ahler metric is expressed using an algebraic notion of stability for the K\"ahler class.  In recent years the subject of generalized K\"ahler geometry \cite{GHR, GualtieriThesis, GualtieriGKG, HitchinGCY} has emerged as a deeply structured extension of K\"ahler geometry, which captures further natural interactions between complex, Poisson, and symplectic geometry.  Building on seminal work of Goto \cite{Goto_2020, goto-21}, in \cite{ASUScal} the authors gave an extension of analytic aspects of Calabi's program to the setting of generalized K\"ahler structures of symplectic type.  In this work we extend this further to arbitrary generalized K\"ahler structures.  This general setup can be applied to underlying complex Poisson manifolds which do not admit symplectic structures at all, such as Hopf surfaces.

\subsection{Scalar curvature as moment map}

We recall briefly that generalized K\"ahler structures admit two descriptions on the background of $(M, H_0)$,  a smooth manifold $M$ endowed with a closed three-form $H_0$.  The first is described in terms of bihermitian geometry: a generalized K\"ahler structure is a quadruple $(g, I, J, b)$ of a Riemannian metric $g$, a two-form $b$ and two integrable almost complex structures $I, J$ such that $g$ is compatible with  $I$ and $J$ and further satisfy
\begin{align*}
    - d^c_I \gw_I = H_0 + db = d^c_J \gw_J,
\end{align*}
where $\gw_I = gI, \gw_J = gJ$ and $d^c_I = \i (\delb_I - \del_I), \, d^c_J= \i (\delb_J - \del_J)$.  The second description is in terms of generalized complex geometry: a generalized K\"ahler structure is a pair of commuting generalized complex structures $\II, \JJ$ on the exact Courant algebroid $(TM \oplus T^*M, H_0)$ further satisfying $\IP{- \II\JJ,} > 0$, where $\IP{,}$ is the canonical neutral inner product.  These two descriptions are equivalent by the work of Gualtieri \cite{GualtieriThesis} (cf. Theorem \ref{thm:gualtieri_map} below).  There are several natural Poisson tensors associated to a generalized K\"ahler structure.  First, as observed by Hitchin \cite{HitchinPoisson}, the tensor
\begin{align}\label{eq:Poisson_BH}
    \pi := \tfrac{1}{2} [I, J] g^{-1}
\end{align}
defines a real Poisson structure.  Furthermore, for any generalized complex structure $\JJ$ one has a canonically associated real Poisson tensor
$\pi_{\JJ} (\xi, \eta) := \IP{\JJ \xi, \eta}$.
Thus, a generalized K\"ahler structure comes equipped with three (generally distinct) real Poisson tensors: $\pi$ as above, together with
\begin{align*}
    \pi_{\II} = \tfrac{1}{2}(I - J) g^{-1}, \qquad \pi_{\JJ} = \tfrac{1}{2} (I + J) g^{-1}.
\end{align*}
In the special case when the generalized K\"ahler structure is of symplectic type, the endomorphism $I + J$ is invertible, $F = -2 g (I + J)^{-1}$ is a symplectic form, and  $\pi_{\JJ} = F^{-1}$.  With this in mind the results to follow are all generalizations of our prior work \cite{ASUScal} in the symplectic type case.

Viewed from the bihermitian description, one notes that in general there is no single connection which preserves all of the data, and thus it is not clear at the outset what the relevant curvature quantities are.  In view of this, works of Boulanger \cite{Boulanger} and Goto \cite{goto-21,Goto_2020} sought to define a relevant scalar curvature by generalizing the Fujiki-Donaldson work in K\"ahler geometry realizing the scalar curvature as the moment map for a natural action of the group of Hamiltonian diffeomorphisms for a fixed symplectic form.   This culminated in \cite{goto-21}, where Goto identified a natural Lie algebra of infinitesimal automorphisms of $(TM \oplus T^*M, H_0)$ generated by elements $\JJ du$, where $u$ is a smooth function on $M$ preserved by the divergence of $\JJ$ relative to a fixed, \emph{arbitrary} volume form (cf. \S \ref{ss:GCdiv}).  These act on the space of `almost generalized K\"ahler' structures compatible with $\JJ$, a space which inherits a natural K\"ahler structure.  Furthermore, Goto defines a scalar curvature quantity associated to a generalized K\"ahler structure $(\II, \JJ)$ and a volume form, and relates the variation of this scalar curvature to deformations by generalized Hamiltonians described above.  Given this variational formula, certain `modified Hamiltonians' are introduced and it is claimed that the scalar curvature is the moment map for the action by these.

This delicate and surprising work suggests many further questions.  First, it appears that the `modified Hamiltonians' described in \cite{goto-21} do not form a Lie algebra.  For this reason we present in \S \ref{s:scalasmoment} a slight modification of the setup in \cite{goto-21},  which places a further restriction on the generalized K\"ahler structures.  As described above, there is a canonical infinitesimal symmetry of $(TM \oplus T^*M, H_0)$ associated to $\JJ$ and a choice of volume form $\vol$ on $M$, denoted $\div_{\vol}(\JJ)$.  Inspired by recent work of Inoue~\cite{Inoue} in the K\"ahler case, we ask for a volume form $\vol$ such that $\div_{\vol}(\JJ)$ (which always preserves $\JJ$) preserves $\II$.  In this case, we call the volume form \emph{adapted}.  In terms of the bihermitian description $(g, I, J, b)$, expressing $\vol = e^{-f}dV_g$, if  $\vol$ is adapted then the vector field
\[ X= \frac{1}{2}\left(d^*_g(\omega_I + \omega_J) + (I+ J)\nabla^g f\right)\]
is Killing for $(g, I, J)$.  A subtle issue is that the condition that $X$ preserves the bihermitian data does not seem to imply that the volume form is adapted, although we show that this is true in the generically symplectic type case (cf. Lemma \ref{l:adapted_converse}).  We let $\T$ be the torus of isometries generated by the Killing vector field $X$.  In the K\"ahler case, when we have $I=J$ and using $\T$-invariant potentials, we recover the formal moment map setting considered by Inoue in \cite{Inoue}, leading to the so-called \emph{$\mu$-scalar curvature 
K\"ahler metrics} and extending the notion of K\"ahler-Ricci solitons on Fano manifolds~\cite{Tian-Zhu} to arbitrary K\"ahler classes.

Considering the space $\mc{AGK}(\JJ, \vol)$ of all $\II$-compatible and $\div_{\vol} (\JJ)$-invariant almost generalized K\"ahler structures $(\II, \JJ)$ on $M$,  Goto's original variational formulas lead directly to the claim that the generalized scalar curvature is the moment map for the infinitesimal action of $\T$-invariant Hamiltonians $\mf{ham}^{\T}(\JJ)$ on the space $\mc{AGK}(\JJ, \vol)$: (cf. \S \ref{s:scalasmoment} for precise notation) 
\begin{thm} \cite{goto-21} (Theorem \ref{t:gotomoment}) \label{t:gotomomentintro}
The natural action of $\mf{ham}^\T(\JJ)$ on $\mc{AGK}(\JJ, \vol)$ is Hamiltonian with moment map given by the generalized scalar curvature.  More concretely,
\begin{equation*}
\pmb{\Omega} \left(L^{H_0}_{\JJ du}\II, \left. \dt \right|_{t=0}\II_t \right)=
\int_M \left(\dt\Gscal(\II_t,\JJ,\vol)\right)u\vol
\end{equation*}
for any one-parameter family $\II_t\in \mc{AGK}(\JJ, \vol)$.
\end{thm}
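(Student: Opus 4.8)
The plan is to verify the moment-map axiom directly. Writing $\IP{\mu(\II),u} = \int_M \Gscal(\II,\JJ,\vol)\,u\,\vol$ for the candidate moment map $\mu$ on $\AGK(\JJ,\vol)$ valued in the dual of $\mf{ham}^\T(\JJ)$, the assertion is exactly that $\iota_{X_u}\pmb{\Omega} = d\IP{\mu,u}$ for every $u$, where $X_u = L^{H_0}_{\JJ du}\II$ is the fundamental vector field of the action. Since $u$ and $\vol$ are held fixed, differentiating the pairing along a path $\II_t$ identifies the right-hand side of the stated identity with $d\IP{\mu,u}(\dt\II_t)$; hence the entire content is the equality of the two bilinear expressions in the displayed formula.

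First I would make the ambient symplectic form $\pmb{\Omega}$ explicit. A tangent vector to $\AGK(\JJ,\vol)$ at $\II$ is a deformation $A$ with $\II A + A\II = 0$; the space carries the almost complex structure $A \mapsto \II A$ together with the positive pairing furnished by the compatibility condition $\IP{-\II\JJ,} > 0$, and these combine to give $\pmb{\Omega}$ as an integral over $M$ against $\vol$. I would then verify that $X_u = L^{H_0}_{\JJ du}\II$ is genuinely tangent to $\AGK(\JJ,\vol)$. Since $\JJ du$ is a generalized Hamiltonian for $\JJ$, its flow fixes $\JJ$ and moves $\II$ within the $\JJ$-compatible structures; the requirement that $u$ be preserved by $\div_\vol(\JJ)$ keeps the deformation $\div_\vol(\JJ)$-invariant, while $\T$-invariance of $u$—with $\T$ generated by the Killing field $X$ supplied by the adaptedness of $\vol$—keeps it $\T$-invariant. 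This is the step where the constrained setup, rather than Goto's original one, is what makes the action well defined on $\AGK(\JJ,\vol)$.

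Next I would invoke Goto's first-variation formula for the generalized scalar curvature, expressing $\dt\Gscal(\II_t,\JJ,\vol)$ as a second-order differential operator applied to the deformation $V = \dt\II_t$. Substituting into $\int_M (\dt\Gscal)\,u\,\vol$ and integrating by parts transfers the derivatives onto $u$; using the defining relation $X_u = L^{H_0}_{\JJ du}\II$ and the explicit form of $\pmb{\Omega}$, the resulting expression should be recognized as $\pmb{\Omega}(L^{H_0}_{\JJ du}\II, V)$.

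The hard part will be controlling the lower-order terms in this integration by parts. In Goto's original formulation the \emph{modified} Hamiltonians fail to close into a Lie algebra precisely because of a divergence-type defect, and the purpose of passing to an adapted volume form and to $\T$-invariant Hamiltonians is to annihilate that defect. Concretely, I expect the adaptedness condition—that $\div_\vol(\JJ)$ preserves $\II$, equivalently that $X$ is Killing for $(g,I,J)$—to be exactly what forces the divergence contributions to cancel, leaving the clean pairing $\pmb{\Omega}(X_u,V)$ with no residual terms. Checking this cancellation, and matching the explicit $\pmb{\Omega}$ against the bilinear form extracted from Goto's variation, is the crux; the rest is routine rearrangement.
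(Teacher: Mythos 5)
Your proposal is correct and follows essentially the same route as the paper: the paper simply quotes Goto's general variational formula, which expresses $\pmb{\Omega}(L^{H_0}_{\JJ du}\II,\dot\II)$ as $\int_M (\dot\Gscal)\, u\,\vol$ plus the explicit correction term $2\pmb{\Omega}(u\, L_{\div_\vol(\JJ)}\II,\dot\II)$, and observes that this term vanishes identically on $\mc{AGK}(\JJ,\vol)$ because adaptedness means precisely $L_{\div_\vol(\JJ)}\II=0$. You correctly identified both the role of the constrained setup (Lemma \ref{l:hamiltonians} guarantees the action is well defined) and the fact that the adaptedness condition is exactly what annihilates the divergence-type defect; the only difference is that the paper does not redo the integration by parts, since Goto's formula already isolates the defect in the form above.
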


\subsection{Scalar curvature and generalized K\"ahler-Ricci solitons}

Goto's scalar curvature is defined using the local spinorial description of generalized K\"ahler structures, and thus it is not clear what this quantity really is in terms of the classical bihermitian description.  By now formulas in some special cases have appeared \cite{Boulanger, Goto_2020, goto-21, ASUScal,AFSU} which hint at the general form of the answer.  Here, by exploiting the nondegenerate perturbation method introduced in \cite{AFSU}, we obtain the general answer:    
\begin{thm} (Theorem \ref{thm:scalformula}) \label{thm:scalformulaintro}
    Let $(M,g,I,J, b)$ be a generalized K\"ahler manifold determined by generalized complex structures $(\II,\JJ)$ on $(M, H_0)$.  Then for all $f \in C^{\infty}(M)$ one has
    \begin{align*}
        \Gscal(\II,\JJ,e^{-f} dV_g)=\frac{1}{4}\left(
            R-\frac{1}{12}|H|^2+2\Delta f-|df|^2
        \right),
    \end{align*}
    where $H:= - d_I^c\omega_I= d_J^c\omega_J= H_0 +db$.
\end{thm}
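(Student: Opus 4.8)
\emph{Proof proposal.} The plan is to reduce the general identity to the symplectic‐type (nondegenerate) case, where the quantity on the left has already been analyzed in \cite{ASUScal,AFSU}, and then to remove the nondegeneracy hypothesis by the perturbation method of \cite{AFSU}. The guiding observation is that both sides of the asserted formula are \emph{pointwise, local} expressions built from the bihermitian data $(g,I,J,H,f)$, and that each depends only on $H=H_0+db$ rather than on the $b$-field itself; hence the identity is $b$-gauge invariant, it suffices to verify it locally, and one may freely adjust gauges for the pure spinors of $\II,\JJ$ along any deformation. These features are what make an argument by continuity in a deformation parameter possible.

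First I would isolate the dependence on the volume form. Writing $\vol=e^{-f}dV_g$, the only place $f$ enters Goto's construction is through the divergence operator $\div_{\vol}(\JJ)$, which under $\vol\mapsto e^{-f}\vol$ changes by an explicit first–order term proportional to $\JJ\,df$. I would track this correction through the definition of $\Gscal$ and show that it contributes exactly the additive, metric‑conformal piece $\tfrac14\bigl(2\Delta f-\brs{df}^2\bigr)$ (equivalently, up to sign convention, a weighted Laplacian of $e^{-f/2}$). This reduces the theorem to the case $f=0$, namely
\[
\Gscal(\II,\JJ,dV_g)=\tfrac14\Bigl(R-\tfrac{1}{12}\brs{H}^2\Bigr).
\]

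For this $f=0$ identity I would begin with the symplectic‑type case. There $F=-2g(I+J)^{-1}$ is a genuine symplectic form with $\pi_{\JJ}=F^{-1}$, the spinorial definition of $\Gscal$ unwinds to the quantities already computed in \cite{ASUScal,AFSU}, and matching these against the Riemannian scalar curvature of the Bismut connection yields precisely $\tfrac14\bigl(R-\tfrac{1}{12}\brs{H}^2\bigr)$, the generalized scalar curvature familiar from generalized Ricci flow. I would then invoke the nondegenerate perturbation method: given an arbitrary generalized K\"ahler structure $(\II,\JJ)$, deform it through a family $(\II_t,\JJ_t)$ that is of symplectic type for small $t\neq 0$ and converges to $(\II,\JJ)$ as $t\to 0$, with associated data $(g_t,I_t,J_t,H_t)\to(g,I,J,H)$. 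Applying the symplectic‑type formula at each $t$ and passing to the limit establishes the general case, since $\Gscal$ is built from the pure spinors by algebraic and first‑order differential operations and so depends continuously (indeed smoothly) on the deformation, while the right‑hand side $\tfrac14\bigl(R_t-\tfrac{1}{12}\brs{H_t}^2\bigr)$ varies continuously with the perturbed metric and torsion.

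The main obstacle I anticipate is twofold. First, one must construct the deformation so that each $(\II_t,\JJ_t)$ is genuinely a commuting, positive (hence generalized K\"ahler) pair of symplectic type for $t\neq 0$, converging smoothly together with its metric and torsion three–form to the prescribed structure; controlling $H_t\to H$ in the correct gauge is part of this. Second, and more delicate, is verifying the convergence of Goto's \emph{spinorial} scalar curvature through $t=0$: the definition refers to the canonical line bundles $K_{\II},K_{\JJ}$, whose local pure‑spinor trivializations can degenerate precisely as the type jumps in the limit. Choosing gauges for the pure spinors that remain nondegenerate across $t=0$, so that $\Gscal(\II_t,\JJ_t,dV_{g_t})$ extends continuously to $t=0$ with the expected value, is the crux of the argument and is exactly what the perturbation framework of \cite{AFSU} is designed to provide.
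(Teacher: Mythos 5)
Your overall architecture --- verify the formula by a direct spinorial computation in a special case, then remove the degeneracy hypothesis by the perturbation method of \cite{AFSU} --- is exactly the paper's route, and you correctly identify the continuity of the spinorially-defined $\Gscal$ through the type-jumping limit as the point the perturbation framework must supply. (The paper additionally reduces to even type by multiplying by a flat $T^2$ and observes that, both sides being global smooth functions, it suffices to check the identity on the dense set where the types are locally constant, which is where \cite{AFSU} Theorem 3.4 applies.) However, two of your steps have genuine gaps. First, your base case must be the \emph{nondegenerate} case of Definition \ref{d:nondeg}, where both $I+J$ and $I-J$ are invertible, not merely the symplectic type case: the definition \eqref{eq:goto_scalar_def} of $\Gscal$ requires explicit normalized pure spinors for \emph{both} $\II$ and $\JJ$, and only under full nondegeneracy are these available in closed form, namely $e^{4\Omega+\i F_-}$ and $e^{\i F_+}$ with $F_\pm=-2g(I\pm J)^{-1}$. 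This computation (Lemma \ref{lemma:goto_scal_nondeg}) is the substantive content of the proof and cannot simply be quoted from \cite{ASUScal,AFSU}, which the paper itself describes as only hinting at the general formula; the key new ingredients are the evaluation of the Mukai pairings via Lemma \ref{lm:F_trace} and the identity $R-\tfrac{1}{12}\brs{H}^2=-\Delta(\Psi_++\Psi_-)+\IP{d\Psi_+,d\Psi_-}$ with $\Psi_\pm=\tfrac12\log\det(I\pm J)$ from \cite{AFSU} Lemma 4.5.

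Second, the proposed preliminary reduction to $f=0$ is asserted but not justified, and the claimed decoupling is not automatic. Rescaling the normalized spinors by $e^{-f/2}$ shifts the real potentials by $-\JJ_i\,df$, and the resulting correction to \eqref{eq:goto_scalar_def} contains, besides the second-order term producing $\Delta f$ and the quadratic term producing $\brs{df}^2$, first-order cross terms pairing $df$ with the original potentials $\eta_i$ (concretely, terms of the form $\IP{df,\,d\log\det(I\pm J)}$ in the nondegenerate model). These appear in each of the two Mukai-pairing summands and cancel only after adding them, so establishing that the $f$-dependence is exactly the additive piece $\tfrac14(2\Delta f-\brs{df}^2)$ requires essentially the full computation anyway; the paper simply carries $f$ through the normalization $f_i=-f/2+\tfrac14\log\det(I\mp J)$ from the start. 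Neither issue indicates a wrong strategy, but as written the proposal defers the actual proof to citations that do not contain it.
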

\noindent This quantity has already appeared in several distinct contexts, including as the integrand of the string effective action \cite{Polchinski}, in Bismut's work on the index theorem in complex geometry \cite{Bismut}, and as the natural notion of scalar curvature in generalized geometry \cite{CSCW,GF19} and generalized Ricci flow \cite{GRFbook, Streetsscalar}, making Goto's derivation from the spin geometry of generalized K\"ahler structures quite surprising.

Theorem \ref{thm:scalformulaintro} shows that the moment map framework above captures a number of interesting examples.  Generalized K\"ahler-Ricci solitons are self-similar solutions of  generalized K\"ahler-Ricci flow \cite{GKRF} and serve as the canonical geometries behind extensions of the Calabi-Yau theorem to generalized K\"ahler geometry (cf. \cite{AFSU, ASnondeg}).  By a Bianchi identity (Proposition \ref{p:Bianchi}) we show that all generalized K\"ahler-Ricci solitons have constant scalar curvature relative to the weighted volume.  We furthermore show that the vector field $X$ above is automatically Killing for a soliton, thus showing that generically symplectic type solitons fit into the GIT framework above.  In a series of works \cite{Streetssolitons, SU1, ASU3} the authors constructed and classified all generalized K\"ahler-Ricci solitons on compact complex surfaces, with non-K\"ahler generically symplectic type examples occurring on all type 1 Hopf surfaces.  In higher dimensions there are the examples of canonical generalized K\"ahler structures on semisimple Lie groups.  These were shown to have constant positive Goto scalar curvature by Goto, using the spinorial definition directly \cite{goto-21}.  The constant Goto scalar curvature of these examples follows immediately from Theorem \ref{thm:scalformulaintro}, and in the case the group has no abelian factors we construct the relevant adapted volume form and again give the realization of these geometries as zeroes of the moment map.

\subsection{Complexified orbit and Calabi problem}

A further fundamental aspect of the Fujiki-Donaldson moment map framework is the fact that on a given complex manifold, the space of K\"ahler metrics in a fixed deRham class admits a natural interpretation as the complexified orbit of the action of the group of Hamiltonian symplectomorphsims.  This point follows by Moser's lemma, and in \cite{ASUScal} we extended this to the symplectic type generalized K\"ahler case.  In \cite{GualtieriBranes} a notion of generalized K\"ahler class of structures is introduced via certain infinitesimal deformations of the bihermitian data.  We show in Lemma \ref{l:moser} that these deformations correspond formally to the complexified orbit for the natural complex structure  on $\mc{AGK}(\JJ)$ in the moment map setup we present, after pullback by an appropriately chosen family of Courant automorphisms.  However, ensuring that the complexified orbit remains adapted to the volume form is a subtle point.  In general one can construct a family of volume forms associated to a path in the generalized K\"ahler class by pullback of the fixed adapted volume by the automorphisms constructed above.  In general, this volume form will be adapted to the bihermitian data, and in case the initial GK structure is generically symplectic type, it will be adapted to $(\II, \JJ)$.  Furthermore we show in this case that the volume form, which is constructed pathwise in the nonlinear space of generalized K\"ahler structures, does not depend on the chosen path.

Continuing in the generically symplectic type case, using the canonically associated adapted volume form constructed above, we define a generalization of the Mabuchi metric \cite{Mabuchi} (cf. \cite{Semmes, Donaldson-git}), further extending the construction from \cite{ASUScal}.  This is a formal Riemannian metric on the generalized K\"ahler class.  We derive the Levi-Civita connection and geodesic equation, and we show in Proposition \ref{p:mabuchicurvature} that it is formally a symmetric space with nonpositive sectional curvature.  With these tools in place we can define an extension of the Calabi problem.  In particular we define the Calabi energy as the $L^2$ norm of the Goto scalar curvature, and show in Proposition~\ref{p:Calabivar} that critical points are precisely the extremal GK structures.  Furthermore by integrating the Goto scalar curvature against an invariant Hamiltonian potential we obtain a generalization of the Mabuchi one-form.  We show in Proposition~\ref{p:Mabuchiclosed} that this form is closed, and thus there is at least a locally defined functional whose differential is this one-form.  The existence of such a functional which is well-defined on the whole generalized K\"ahler class is a nontrivial open problem.  We furthermore show in Proposition~\ref{p:Mabuchiconvex} that the Mabuchi one-form is monotone along Mabuchi geodesics, which leads in Corollary~\ref{c:conditional-uniqueness} to the claim that any two critical points connected by a smooth geodesic are in fact isometric.

\subsection{Obstruction theory}

In the final Section 7, we  extend  in Theorem~\ref{thm:CML} the Calabi-Lichnerowicz-Matsushima theorem to general extremal GK structures. Similarly to the work of Goto~\cite{Goto_LM} in the symplectic-type case, the general version of this result relates the stabilizers respectively in $\mf{ham}^{\T}$ and in its complexification $\mf{ham}^{\T}\otimes \C$ of an extremal GK structure. This relationship follows from purely formal arguments,  using the the moment map interpretation of the Goto scalar curvature (see Appendix~\ref{s:CLM}). In order to obtain  further ramifications, expressed as the reductiveness of a certain Lie algebra of real holomorphic vector fields, we assume generically symplectic type and obtain in Theorem~\ref{thm:CML-reduced} a partial generalization of our previous results \cite{ASU2} which were obtained in the symplectic type case. We finally derive in Theorem~\ref{t:futaki-character},  as a byproduct of our definition of Mabuchi $1$-form, a version of the Futaki invariant associated to a GK class.

\vskip 0.1in
\textbf{Acknowledgements:} VA was supported by an NSERC Discovery Grant and a Connect Talent Grant of the R\'egion des Pays de la Loire. The work of JS was supported by the NSF via DMS-2203536 and a Simons Fellowship.  The authors thank Mario Garcia-Fernandez and Ryushi Goto for helpful comments.

\section{Background} \label{s:background}
\subsection{Generalized geometry}

Consider a smooth manifold $M$ of real dimension $2n$ endowed with a closed $3$-form $H_0$. The pair $(M, H_0)$ gives rise to an exact Courant algebroid $E=TM\oplus T^*M$ with neutral inner product and $H_0$-twisted Dorfman bracket defined on sections of $E$ by
\begin{align*}
\IP{X + \xi, Y + \eta} =&\ \tfrac{1}{2} \left( \eta(X) + \xi(Y) \right)\\
[X+\xi,Y+\eta]_{H_0}=&\ [X,Y]+L_X\eta-i_Yd\xi+i_Yi_XH_0.
\end{align*}
We have a natural \emph{spinorial} action of elements of $E$ on $\Wedge^*(M)$ by:
\[
(X+\xi)\cdot \alpha = i_X\alpha+\xi\wedge \alpha.
\]
It is straightforward to see that this action extends to an action of the Clifford algebra of $E=TM\oplus T^*M$ on $\Lambda^*(T^*)$, i.e. satisfies:
\[(X+\xi)\cdot\left((X+\xi)\cdot \alpha\right)=\xi(X)\alpha = \IP{X + \xi, X + \xi} \ga.
\]
Thus, $\Wedge^*(M)$ is a Clifford module.  All of the above can be complexified to define a Clifford action of $(TM\oplus T^*M)\otimes \C$ on \[\Wedge^*(M)_\C:=\Wedge^*(M)\otimes\C.\] 
This algebra is equipped with a twisted de Rham differential
\[
d_{H_0}\alpha:=d\alpha-H_0\wedge \alpha.
\]
Furthermore, there is a $\Wedge^{2n}(M)_\C$-valued $\mathrm{Spin}$-invariant \textit{Mukai pairing} 
$(\cdot,\cdot)\colon \Wedge^*(M)_\C\otimes \Wedge^*(M)_\C\to \Wedge^{2n}(M)_\C$ defined by
\[
(\alpha,\beta)_M:=[\alpha\wedge\sigma(\beta)]_{\mathrm{top}},
\]
where $\alpha,\beta\in\Wedge^*(M)_\C$; $\sigma\colon \Wedge^*(M)\to\Wedge^*(M)$ is the Clifford involution defined on the decomposable forms as $\sigma(dx^{i_1}\wedge\dots \wedge dx^{i_k}):=dx^{i_k}\wedge\dots \wedge d x^{i_1}$; and $[\alpha]_{\mathrm{top}}$ denotes the top degree component of $\alpha\in\Wedge^*(M)_\C$.
\begin{rmk}
With our convention for the normalization of the Mukai pairing,  if $\omega\in\Wedge^2(M)$ and $\varphi:=e^{\i\omega} \in \Wedge^{\rm even}(M)_{\C}$, we have
\[
(\varphi,\bar{\varphi})_M=(2\i)^n\frac{\omega^n}{n!}.
\]
\end{rmk}

Given a diffeomorphism $\Phi\in\mathrm{Diff}(M)$ we lift it to $E=T\oplus T^*$ as
\[
\left(
\begin{matrix}
    \Phi_* & 0 \\ 0 & (\Phi^*)^{-1}
\end{matrix}
\right)
\]
and,  by a standard abuse of notation,  denote the corresponding action by $\Phi$:
\[
\Phi\cdot(X+\xi):=\Phi_*(X)+(\Phi^*)^{-1}(\xi).
\]
Furthermore for $b\in\Wedge^2(M)$ we define a $\langle \cdot, \cdot \rangle$-orthogonal endomorphism $e^b\in\End(TM\oplus T^*M)$ by
\[
e^b(X+\xi):=X+\xi+b(X,\cdot).
\]
Combining the two actions,  we obtain the extended diffeomorphism group action $TM \oplus T^*M$:
\[\{
\Phi\circ e^b\ |\ \Phi\in\mathrm{Diff}(M),\ b\in\Wedge^2(M),\}\]
with product given by
\[ (f_1\circ e^{b_1})\circ (f_2\circ e^{b_2})=f_1\circ f_2\circ e^{b_2+f_2^*b_1}.\]
The corresponding Lie algebra is $C^{\infty}(M, TM \oplus \Wedge^2(M))$ with Lie bracket
\[
[X_1+b_1,X_2+b_2]_{\rm aut} =[X_1,X_2]+L_{X_1}b_2-L_{X_2}b_1.
\]
The extended diffeomorphism group leads to a Lie derivative of sections of $TM\oplus T^*M$, extending the usual Lie derivative with respect to a vector field. The extended Lie derivative is explicitly given by
\begin{equation}\label{eq:Lie-derivative}
L_{X+ b} (Y+ \eta) = [X, Y]  + L_X \eta -\imath_Y b.\end{equation}

An extended diffeomorphism $e^b$ preserves the $H_0$-twisted Dorfman  bracket if and only if $db=0$. In general, it transforms the $H_0$-twisted Dorfman bracket into the $(H_0-db)$-twisted Dorfman bracket (cf. \cite{GRFbook} Proposition 2.14):
\begin{gather} \label{f:twistedDorfman}
[e^b(X+\xi),e^b(Y+\eta)]_{H_0-db}=e^b[X+\xi,Y+\eta]_{H_0}.
\end{gather}
We shall refer to $e^b$ as a \emph{$b$-field transformation} of $E$.

\begin{defn}[{\cite[Prop.\,2.21]{GRFbook}}]
The group of automorphisms of $(TM\oplus T^*M, [\cdot,\cdot]_{H_0}$) is
\[
\mathrm{Aut}(M,TM\oplus T^*M, [\cdot,\cdot]_{H_0})=
\{
\Phi\circ e^b\ |\ \Phi\in\mathrm{Diff}(M),\ b\in\Wedge^2(M),\ \Phi^*H_0=H_0-db
\}
\]
For simplicity, we will often denote this group $\mathrm{Aut}(M,H_0)$.  
\end{defn}

The group $\mathrm{Aut}(M,H_0)$ fits into an exact sequence
\[
1\to C^\infty(M,\Wedge_{\mathrm{cl}}^2(M))\to \mathrm{Aut}(M,H_0)\to \mathrm{Diff}(M)\to 1,
\]
where $\Wedge_{\mathrm{cl}}^2(M)$ is the sheaf of closed 2-forms. Its Lie algebra is
\[
\mathfrak{aut}(M,H_0)=\{X+b\ |\ d(i_X H_0+b)=0\},\qquad X\in C^{\infty}(M,TM), \, b\in C^{\infty}(M, \Wedge^2(M)),
\]
with the Lie bracket 
$[\cdot, \cdot]_{\rm aut}$ introduced above.
This Lie algebra fits into an exact sequence
\begin{gather}\label{eq:aut_sequence}
    \begin{split}
  0\to C^\infty(M,\Wedge_{\mathrm{cl}}^1(M))& \to C^\infty(E)\xrightarrow{\Psi} \mf{aut}(M,H_0)\to 0\\
  \Psi(X+\xi)=&\ X+d\xi-i_X H_0.
  \end{split}
\end{gather}
and
\[L_{\Psi(X + \xi)} (Y+ \eta) = [X+\xi, Y+\eta]_{H_0},\]
(which follows readily from \eqref{eq:Lie-derivative} and the definition of $\Psi$ above).
\begin{defn}\label{d:H-Lie-derivative} For sections $X+\xi$  and $Y + \eta$ of $E$, we let
\[ L^{H_0}_{X+ \xi} (Y+ \eta) := [X+ \xi, Y+\eta]_{H_0}.\]
Thus, $ L_{\Psi(X+ \xi)} (Y+\eta)= L^{H_0}_{X+ \xi} (Y+ \eta)$.
\end{defn}

\begin{rmk}
The notation $e^b$ will also be used for the formal exponential of $b$ as a differential form
\begin{align*}
    e^b = 1 + b + \tfrac{1}{2} b \wedge b + \dots,
\end{align*}
which in turn acts on $\Wedge^*(M)$ via the wedge product:
\[ \alpha \to e^b\wedge \alpha, \qquad \alpha \in \Wedge^*(M).\]
This gives rise to an action of $2$-forms on the spin bundle $\Wedge^*(M)$ compatible with the Clifford multiplication and $d_{H_0}$ in the following sense:
\begin{equation}\label{eq:b-field}
e^b(X+\xi)\cdot (e^{-b}\wedge\alpha)=e^{-b}\wedge((X+\xi)\cdot\alpha), \qquad  d_{H_0}\alpha=e^{b}\wedge d_{H_0-db}(e^{-b}\wedge\alpha).\end{equation}
Consequently, the action of $\mathrm{Aut}(M,H_0)$ naturally extends to the action on the spin bundle $\Wedge^*(M)$, and this action commutes with $d_{H_0}$.
\end{rmk}

\subsection{Divergence of generalized complex structures} \label{ss:GCdiv}

In this subsection we define a notion of divergence associated to a volume form $\vol$ and a generalized complex structure $\JJ$.  We first review basic points on generalized complex structures.

\begin{defn}
Let $(M,H_0)$ be a smooth manifold with a closed 3-form.  A \emph{generalized complex} structure on $(M, H_0)$ is a $\IP{,}$-orthogonal almost complex structure on $E$ whose $\i$-eigenbundle is integrable with respect to the $H_0$-twisted Dorfman bracket.  Let
$K_{\JJ}$ denote the canonical line bundle of $\JJ$, i.e. the complex line bundle locally spanned by pure spinors $\psi\in\Wedge^*(M)\otimes\C$ satisfying
\[
\Ker(\psi)=L^{1,0}_{\JJ},
\]
where $L^{1,0}_{\JJ} \subset E\otimes \C$ is the $\sqrt{-1}$-eigenspace of $\JJ$.  The integrability of $\JJ$ is equivalent to the condition that for every local defining spinor $\psi$ one has
\begin{align*}
    d_{H_0} \psi = e \cdot \psi
\end{align*}
for some $e \in E \otimes \mathbb C$.  Note that for a choice of defining spinor $\psi$, there exists a unique element $e \in E$, referred to as the \emph{real potential}, such that
\begin{align} \label{f:realpotential}
   d_{H_0} \psi = \frac{\i}{2} e \cdot \psi.
\end{align}
Furthermore, given $\vol$ a choice of volume form on $M$, we say that a defining spinor is \emph{normalized} if
\begin{equation}\label{eq:spinor_normalization}
(\psi,\bar{\psi})=(2\i)^n\vol. \end{equation}
\end{defn}

\begin{lemma} \label{l:divglemma} Given $(M, H_0)$, let $\JJ$ be a generalized complex structure and $\vol$ a volume form on $M$.  Choose $\psi$ a normalized local defining spinor and let $e = X + \xi$ be the real potential of $\psi$.  Then $X$ and $\Psi(e)=X+ d\xi - \imath_X H_0\in \mf{aut}(M,H_0)$ are globally defined.  Moreover, with respect to the generalized complex structure $\tilde \JJ = e^b \JJ e^{-b}$ on $(M, H_0-db)$ and volume form $\vol$, the corresponding real potential $\til{e}$ satisfies $\Psi(\tilde e)= X + d\xi -\imath_XH_0 + L_X b$.
\end{lemma}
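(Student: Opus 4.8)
The plan is to treat the two assertions separately, in both cases tracking how the real potential \eqref{f:realpotential} responds to the relevant ambiguity: the choice of normalized defining spinor for the first part, and the $b$-field transformation for the second.

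For the global well-definedness of $X$ and $\Psi(e)$, I would first note that any two normalized local defining spinors $\psi, \psi'$ on the same open set are related by $\psi' = f\psi$ for a nonvanishing function $f$, and that the normalization \eqref{eq:spinor_normalization} together with the bilinearity of the Mukai pairing forces $|f| = 1$; consequently $\tfrac{df}{f} = \i\, d\theta$ is a well-defined, closed, purely imaginary one-form. Applying $d_{H_0}$ to $\psi' = f\psi$ and using $d_{H_0}(f\psi) = df\wedge\psi + f\, d_{H_0}\psi$, I would rewrite $\tfrac{df}{f}\wedge\psi' = \i\,(d\theta)\cdot\psi'$ via the spinorial action $(X+\xi)\cdot\alpha = i_X\alpha + \xi\wedge\alpha$ to obtain $d_{H_0}\psi' = \tfrac{\i}{2}(e + 2\,d\theta)\cdot\psi'$. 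Since $e + 2\,d\theta$ is real, uniqueness of the real potential gives $e' = e + 2\,d\theta$. Thus the vector part $X$ is unchanged and the one-form part shifts only by the closed term $2\,d\theta$; because $d(d\theta) = 0$, both $X$ and $\Psi(e) = X + d\xi - \imath_X H_0$ are independent of the chosen normalized spinor, so they agree on overlaps and glue to global objects, with $\Psi(e) \in \mf{aut}(M, H_0)$ by \eqref{eq:aut_sequence}.

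For the $b$-field statement I would first identify $\tilde\psi := e^{-b}\wedge\psi$ as a defining spinor for $\tilde\JJ = e^b\JJ e^{-b}$: for any $w \in L^{1,0}_{\JJ} = \Ker(\psi)$, the first identity in \eqref{eq:b-field} gives $(e^b w)\cdot\tilde\psi = e^{-b}\wedge(w\cdot\psi) = 0$, so $\Ker(\tilde\psi) = e^b L^{1,0}_{\JJ} = L^{1,0}_{\tilde\JJ}$, and $\tilde\psi$ remains normalized against $\vol$ because the $b$-field action preserves the $\mathrm{Spin}$-invariant Mukai pairing. Next I would substitute $\psi$ into the second identity in \eqref{eq:b-field} to get $d_{H_0 - db}\tilde\psi = e^{-b}\wedge d_{H_0}\psi = \tfrac{\i}{2}\,e^{-b}\wedge(e\cdot\psi)$, and then apply the first identity once more (with $e = X + \xi$) to rewrite $e^{-b}\wedge(e\cdot\psi) = (e^b e)\cdot\tilde\psi$. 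Since $e^b e = X + \xi + \imath_X b$ is real, it is the real potential $\tilde e$ of $\tilde\psi$ on $(M, H_0 - db)$.

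Finally I would evaluate $\Psi(\tilde e)$ for the algebroid $(M, H_0 - db)$, namely $\Psi(\tilde e) = X + d(\xi + \imath_X b) - \imath_X(H_0 - db)$, and invoke Cartan's formula $L_X b = d(\imath_X b) + \imath_X db$ to collapse the $b$-dependent terms, giving exactly $X + d\xi - \imath_X H_0 + L_X b$. I expect the main difficulty to be bookkeeping rather than conceptual: one must consistently use that the real potential is the \emph{unique real} representative of the class determined modulo $L^{1,0}$ (so that the manifestly real candidates $e + 2\,d\theta$ and $e^b e$, which satisfy the respective defining equations, are genuinely \emph{the} real potentials), and one must keep careful track of which twisting — $H_0$ or $H_0 - db$ — governs each occurrence of $d_{H_0}$, $\Psi$, and the integrability condition, since the $b$-field alters the background three-form.
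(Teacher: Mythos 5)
Your proposal is correct and follows essentially the same route as the paper's proof: for the first claim, compare two normalized spinors differing by a unit-modulus factor $e^{\i\theta}$ and observe the real potential shifts by the closed form $2\,d\theta$, which is killed by $\Psi$ and has no vector part; for the second, use the two identities in \eqref{eq:b-field} to identify $\tilde e = e^b e = X + \xi + \imath_X b$ and then apply Cartan's formula. The only cosmetic difference is that you carry the factor $\tfrac{\i}{2}$ from \eqref{f:realpotential} consistently, which the paper's displayed computation drops without consequence.
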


\begin{proof} The normalization condition \eqref{eq:spinor_normalization} determines $\psi$ up to a multiplication with $e^{\i u}$, where $u$ is a local real function. If $\psi_u=e^{\i u}\psi$ is another normalized local section of $K_{\JJ}$, then
    \[
        d_{H_0} \psi_u = \frac{\i}{2} (e+ 2du)\cdot \psi_u.
    \]
    Clearly $\Psi(e + 2 du) = \Psi(e)$ and $\pi_{T} (e + 2 du) = \pi_T e$, as claimed.

    For the second claim, first note that $L_{\til{\JJ}}^{1,0} = e^b L_{\JJ}^{1,0}$, and then the integrability with respect to $H_0 - db$ follows from (\ref{f:twistedDorfman}).  Also, if $\psi$ is a defining spinor $\JJ$ then $e^{-b} \wedge \psi$ is a defining spinor for $\til{\JJ}$ by (\ref{eq:b-field}).  Furthermore note that it follows from the definition of the Mukai pairing that it is invariant under the $b$-field action, thus $e^{-b} \wedge \psi$ is $\vol$-normalized if $\psi$ is.  We also note from (\ref{eq:b-field}) that
    \begin{align*}
        d_{H_0 - db} (e^{-b} \wedge \psi) =&\ e^{-b} \wedge d_{H_0} \psi = e^{-b} \wedge (\i e \cdot \psi)\\
        =&\ \left( \i e^b (X + \xi) \right) \cdot e^{-b} \wedge \psi = \left( \i(X + i_X b + \xi) \right) \cdot e^{-b} \wedge \psi.
    \end{align*}
    Thus $\tilde{e} = X + i_X b + \xi$, and using the map $\Psi$ from (\ref{eq:aut_sequence}) associated to $H_0 - db$ we obtain
    \begin{align*}
        \Psi(\tilde{e}) = X + d (i_X b + \xi) - i_X (H_0 - db) = X + d \xi - i_{X} H_0 + L_X b.
    \end{align*}
\end{proof}

\begin{defn} Let $\JJ\in\End(E)$ be a generalized complex structure on $(M,H_0)$.  For a fixed volume form $\mathrm{vol}\in\Wedge^{2n}(M)$ we define the \emph{divergence of $\JJ$ relative to $\mathrm{vol}$}, denoted \[
\div_\vol(\JJ)\in \mf{aut}(M,H_0),
\]
as the unique element of $\mf{aut}(M, H_0)$ guaranteed by Lemma \ref{l:divglemma}.
\end{defn}

\begin{rmk} The notion of a `divergence operator' (cf. \cite{GF19}) plays an important role in the theory of connections in generalized geometry.  While apparently related, this notion is distinct from the concept defined above, though there should be some relation.  Note that the divergences used here are key to the definition of Goto's scalar curvature (cf. \S \ref{s:scalasmoment} below), which in turn by Theorem \ref{thm:scalformulaintro} is equal to the more general notion of scalar curvature in terms of a generalized metric and divergence operator \cite{GF19}.
\end{rmk}

\begin{rmk}
    Whenever a generalized complex structure $\JJ$ is determined by a \emph{closed} pure spinor $\psi$, we can define the corresponding spinorial volume form
    \[
    dV_{\psi}:=(2\i)^{-n}(\psi,\bar{\psi}).
    \]
    Clearly, in this case the divergence of $\JJ$ vanishes: $\div_{dV_{\psi}}(\JJ)=0$.
\end{rmk}

\begin{rmk}
    One can identify $\div_\vol(\JJ)$ with the unique $\JJ$-holomorphic unitary generalized connection $D\colon C^\infty(L)\to C^\infty(E\otimes L)$ on the line bundle $L=K_\JJ$ equipped with the Hermitian structure:
    \[
    |s|_{\vol}^2:=\frac{(2\i)^{-n}(s,\bar s)}{\vol}\in C^\infty(M,\R).
    \]
\end{rmk}

\begin{lemma} \label{l:divgpresJ}
    For any volume form $\vol$ the element $\div_{\vol} (\JJ) \in \mf{aut}(M,H_0)$ generates an infinitesimal symmetry of $\JJ$.
\end{lemma}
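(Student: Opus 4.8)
The plan is to reduce the statement to showing that the (globally defined, real) derivation $L_{\div_\vol(\JJ)}$ preserves the eigenbundle decomposition of $\JJ$, and to verify this by transferring the computation to a local defining spinor. Writing $e = X+\xi$ for the real potential of a local normalized defining spinor $\psi$, one has $\div_\vol(\JJ) = \Psi(e)$ by Lemma \ref{l:divglemma}, and hence by Definition \ref{d:H-Lie-derivative}
\begin{equation*}
L_{\div_\vol(\JJ)}(Y+\eta) = L^{H_0}_e(Y+\eta) = [e, Y+\eta]_{H_0}.
\end{equation*}
Since $e$ is real and the Dorfman bracket is real, $[e,\cdot]_{H_0}$ commutes with complex conjugation; as $L^{0,1}_\JJ = \overline{L^{1,0}_\JJ}$, it therefore suffices to show that $[e,\cdot]_{H_0}$ preserves $L^{1,0}_\JJ = \Ker(\psi)$. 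This is a local statement, and because $\Psi(e)$ is independent of the chosen local normalized $\psi$, the conclusions patch automatically.

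The key tool will be the expression of the Dorfman bracket as a derived bracket on spinors (Gualtieri \cite{GualtieriThesis}). Writing $a\cdot$ for Clifford multiplication and setting $\mathcal L_e := d_{H_0}\circ (e\cdot) + (e\cdot)\circ d_{H_0}$, one has for every section $b$ of $E$ and every spinor $\phi$
\begin{equation*}
[e,b]_{H_0}\cdot\phi = \mathcal L_e(b\cdot\phi) - b\cdot(\mathcal L_e\phi).
\end{equation*}
Applying this with $\phi = \psi$ and $b \in \Ker(\psi)$, the first term on the right vanishes, so the whole problem reduces to computing $\mathcal L_e\psi$ and checking that it is a pointwise multiple of $\psi$.

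The crucial computation uses the defining relation \eqref{f:realpotential}, $d_{H_0}\psi = \tfrac{\i}{2}\, e\cdot\psi$, together with $d_{H_0}^2 = 0$ (valid since $H_0$ is closed and $H_0\wedge H_0 = 0$). On one hand $e\cdot\psi = -2\i\, d_{H_0}\psi$ is $d_{H_0}$-exact, hence $d_{H_0}(e\cdot\psi) = 0$. On the other hand the Clifford relation gives $e\cdot(d_{H_0}\psi) = \tfrac{\i}{2}\, e\cdot e\cdot\psi = \tfrac{\i}{2}\IP{e,e}\psi$. Therefore
\begin{equation*}
\mathcal L_e\psi = d_{H_0}(e\cdot\psi) + e\cdot(d_{H_0}\psi) = \tfrac{\i}{2}\IP{e,e}\psi,
\end{equation*}
a multiple of $\psi$ as required. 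Feeding this back, for $b\in\Ker(\psi)$ one gets $[e,b]_{H_0}\cdot\psi = -\tfrac{\i}{2}\IP{e,e}(b\cdot\psi) = 0$, so $[e,b]_{H_0}\in\Ker(\psi) = L^{1,0}_\JJ$, which closes the argument.

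I expect the main obstacle to be purely bookkeeping: establishing the derived bracket identity with the correct signs (which can either be cited or checked on the generators $a=X$ and $a=\xi$ of $E$ against the explicit Dorfman bracket), and keeping track that the real potential $e$ is genuinely real so that preservation of $L^{1,0}_\JJ$ upgrades, via conjugation, to $L_{\div_\vol(\JJ)}\JJ = 0$. The conceptually clean point — that $\mathcal L_e\psi$ is forced to be proportional to $\psi$ — is immediate once one observes that $e\cdot\psi$ is itself $d_{H_0}$-exact, hence $d_{H_0}$-closed.
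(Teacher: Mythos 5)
Your proof is correct and follows essentially the same route as the paper's: both hinge on the single computation that $L^{H_0}_{e}\psi = d_{H_0}(e\cdot\psi) + e\cdot d_{H_0}\psi$ is a pointwise multiple of $\psi$, obtained exactly as you do from $d_{H_0}^2=0$ (so $e\cdot\psi$, being $d_{H_0}$-exact, is $d_{H_0}$-closed) together with the Clifford relation $e\cdot e\cdot\psi = \la e,e\ra\psi$. The only difference is that you spell out, via the derived-bracket identity, the final step that preserving the canonical line bundle $K_{\JJ}$ forces $[e,\cdot]_{H_0}$ to preserve $\Ker(\psi)=L^{1,0}_{\JJ}$ and hence $\JJ$ itself, a step the paper asserts without elaboration.
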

\begin{proof}
    Let $\psi$ be a local spinor defining $\JJ$ as in the definition of the divergence operator, and choose the unique $e \in E$ such that $d_{H_0}\psi=\i e\cdot \psi$. It suffices to prove that the well-defined infinitesimal action of $e$ on $\Wedge^*(M)\otimes\C$ preserves the subbundle $K_{\JJ}$. Indeed, computing its action on $\psi$ we find
    \[
    \begin{split}
    L^{H_0}_{e}\psi &= 
        (e\cdot d_{H_0}\psi+d_{H_0}(e\cdot \psi)) =\i e\cdot e\cdot\psi -\i d_{H_0}^2\psi=\i \la e,e\ra\psi.
    \end{split}
    \]
    Since $L_e^{H_0}$ preserves the $\C$-span of $\psi$, the underlying generalized complex structure is also preserved.
\end{proof}

A key point in the sequel is the relationship of this divergence to the canonical Poisson structure associated to a generalized complex structure.  

\begin{defn}\label{d:poisson} Given $\JJ$ a generalized complex structure, define the bivector $\pi_{\JJ} \in \Wedge^2 TM$ by
\begin{align*}
    \pi_{\JJ}(\ga,\gb) := \IP{ \JJ \ga, \gb}.
\end{align*}
It is shown in \cite{GualtieriThesis} that $\pi_{\JJ}$ is a real Poisson tensor.  Furthermore, $\poiss_{\JJ}$ is invariant under the conjugation $e^{b} \JJ e^{-b}$ of $\JJ$ by a $2$-form $b\in \Wedge^2(M)$.
\end{defn}

First observe that for a Poisson tensor and a fixed volume form there is a natural notion of divergence:

\begin{defn}\label{divergence-poisson}
    Given $(M, \pi)$ a Poisson manifold and a volume form $\vol$ on $M$, we define the \textit{divergence}
    \[
    X = \div_\vol(\pi)
    \]
    to be the unique vector field $X$
    such that
    \[
    d(i_{\pi} \vol)= i_{X} \vol.
    \]
    It follows directly from Cartan's formula, and its extension for the Schouten bracket, that
    \begin{equation}\label{eq:Cartan}
        L_X\pi=0,\quad L_X\vol=0.
    \end{equation}
\end{defn}

It turns out that the two notions of divergence are compatible in the following sense:

\begin{lemma} \label{l:divagree} Let $\JJ$ be a generalized complex structure on $(M, H_0)$.  Given a volume form $\vol$, one has
\begin{align*}
    \pi_T \div_{\vol}(\JJ) = \div_{\vol} (\pi_{\JJ}).
\end{align*}
\begin{proof} Observe by Lemma \ref{l:divglemma} that $\pi_T \div_{\vol}(\JJ)$ is invariant under the $b$-field action, and also $\pi_{\JJ}$ is invariant by definition.  Furthermore, as the regular points of a generalized complex structure are dense in $M$, in suffices to prove the result at regular points.  Using the Darboux theorem for generalized complex structures (\cite{GualtieriThesis} Theorem 4.35), in an open neighborhood of any regular point there exists a diffeomorphism and $b$-field transformation such that in this neighborhood $\JJ$ is a product of a standard complex space $\mathbb C^k$ and standard symplectic space $(\mathbb R^{2n - 2k}, \gw)$.  In particular it is defined by the closed pure spinor
\begin{align*}
    \psi = e^{\i \gw} \wedge \Theta,
\end{align*}
where $\Theta$ is a holomorphic volume form on $\mathbb C^k$.  It follows that $\pi_{\JJ} = \gw^{-1}$, where the inverse is defined leaf-wise.  It then follows easily, expressing $\vol = e^{-f} \Theta \wedge \bar{\Theta} \wedge \omega^k$, that the relevant normalized spinor is $e^{-f/2} e^{\i \gw} \wedge \Theta$, and thus $\pi_T \div_{\vol} (\JJ) = - \tfrac{1}{2} \omega^{-1} df = \tfrac{1}{2} \div_{\vol} (\pi_{\JJ})$.
\end{proof}
\end{lemma}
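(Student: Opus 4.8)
The plan is to reduce the identity to an explicit computation in a generalized Darboux chart, after first arguing that both sides are insensitive to $b$-field transformations and purely local. The right-hand side $\div_\vol(\pi_{\JJ})$ is manifestly invariant under $\JJ \mapsto e^b\JJ e^{-b}$, since $\pi_{\JJ}$ is $b$-field invariant by Definition \ref{d:poisson}. For the left-hand side, Lemma \ref{l:divglemma} shows that replacing $\JJ$ by $e^b\JJ e^{-b}$ changes $\Psi(\div_\vol(\JJ))$ only by the term $L_X b$, hence leaves its tangential part $\pi_T\div_\vol(\JJ)=X$ untouched. Both sides are local in $(\JJ,\vol)$ and depend continuously on $\JJ$, so since the regular points of a generalized complex structure are dense it suffices to check the equality in a neighborhood of a regular point.

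Near such a point, Gualtieri's Darboux theorem (\cite{GualtieriThesis}, Thm.\,4.35) provides a diffeomorphism and a $b$-field transformation after which $\JJ$ is the product of the standard complex structure on $\C^k$ and the standard symplectic structure on $(\R^{2n-2k},\gw)$, defined by the closed pure spinor $\psi = e^{\i\gw}\wedge\Theta$ with $\Theta$ a holomorphic volume form on $\C^k$. In this gauge the twisting is trivialized, so $d\psi = 0$, and reading the Poisson tensor off the pure spinor gives $\pi_{\JJ} = \gw^{-1}$ (inverted leafwise along the symplectic leaves). Writing the prescribed volume form as $\vol = e^{-f}\,\Theta\wedge\bar{\Theta}\wedge\gw^{k}$ for a local function $f$, the fact that the Mukai pairing of $e^{\i\gw}\wedge\Theta$ reproduces the product volume forces, via the normalization \eqref{eq:spinor_normalization}, the normalized defining spinor to be $\psi_{\rm norm} = e^{-f/2}e^{\i\gw}\wedge\Theta$.

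The heart of the matter is to extract the tangential part of the real potential of $\psi_{\rm norm}$. Since $d\psi = 0$ we have $d\psi_{\rm norm} = -\tfrac12\,df\wedge\psi_{\rm norm}$, which I would rewrite in the Clifford form $\tfrac{\i}{2}e\cdot\psi_{\rm norm}$ dictated by \eqref{f:realpotential}. The decisive identity is $i_X e^{\i\gw} = \i\,(i_X\gw)\wedge e^{\i\gw}$ for $X$ tangent to the symplectic factor, which lets one trade a wedge by a symplectic $1$-form for a contraction by its $\gw^{-1}$-dual; splitting $df$ into its $\C^k$- and symplectic-direction components and separating the contraction term ($i_X\,\cdot$) from the wedge term ($\xi\wedge\cdot$) of $e\cdot\psi_{\rm norm}$ then identifies $\pi_T\div_\vol(\JJ)$ as a definite scalar multiple of $\gw^{-1}df$ supported on the symplectic leaves.

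It remains to compute the Poisson divergence independently and match. Using that the Liouville form is closed, $d\big(i_{\gw^{-1}}\tfrac{\gw^{m}}{m!}\big) = d\big(\tfrac{\gw^{m-1}}{(m-1)!}\big) = 0$ on a symplectic leaf of dimension $2m$, so the divergence of $\gw^{-1}$ against the Liouville volume vanishes; inserting the conformal factor $e^{-f}$ and using $i_X\tfrac{\gw^{m}}{m!} = (i_X\gw)\wedge\tfrac{\gw^{m-1}}{(m-1)!}$ then produces $\div_\vol(\pi_{\JJ})$ as the same multiple of $\gw^{-1}df$. The geometric content is thus an immediate consequence of the Darboux normal form; the step I expect to be the main obstacle is the bookkeeping of normalization constants, reconciling the $\tfrac{\i}{2}$-convention of the real potential \eqref{f:realpotential}, the factor $\tfrac12$ carried by the neutral inner product $\IP{,}$ (which enters $\pi_{\JJ}$ through Definition \ref{d:poisson}), and the contraction conventions of Definition \ref{divergence-poisson}, so that the two computations agree exactly rather than merely up to an overall scalar.
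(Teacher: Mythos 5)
Your proposal follows essentially the same route as the paper's proof: $b$-field invariance of both sides (via Lemma \ref{l:divglemma} and Definition \ref{d:poisson}), reduction to regular points by density, Gualtieri's Darboux normal form with closed pure spinor $e^{\i\gw}\wedge\Theta$, normalization of the spinor by $e^{-f/2}$ against $\vol = e^{-f}\Theta\wedge\bar{\Theta}\wedge\gw^{k}$, and an explicit leafwise comparison of the two divergences. Your instinct that the only real danger is the bookkeeping of normalization constants is well placed --- the paper's own proof in fact terminates with $\pi_T\div_\vol(\JJ) = \tfrac{1}{2}\div_\vol(\pi_{\JJ})$, a stray factor of $\tfrac{1}{2}$ against the stated identity, reflecting exactly the convention-matching you flag --- so your approach is correct and coincides with the paper's.
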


\begin{lemma}\label{l:invariant} Suppose $\JJ$ is a generalized complex structure on $(M, H_0)$ and $\vol$ a volume form. Suppose, furthermore, that $\pi_{\JJ}$ is non-degenerate over an open dense subset of $M$. Then $\div_{\vol}(\JJ) = \div_{\vol} (\pi_{\JJ})$ if and only if  $L_{\div_{\vol} (\pi_{\JJ})} \JJ =0$.
\end{lemma}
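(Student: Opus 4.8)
The plan is to reduce the equivalence to a single pointwise linear-algebra fact at the regular points of $\pi_{\JJ}$, glued together from Lemmas \ref{l:divgpresJ} and \ref{l:divagree}. First I would fix notation: by Lemma \ref{l:divglemma} the element $\div_\vol(\JJ) = \Psi(e) \in \mf{aut}(M,H_0)$ is globally defined, so I may write
\[
\div_\vol(\JJ) = X + b, \qquad X \in C^\infty(M,TM),\quad b \in C^\infty(M,\Wedge^2(M)),
\]
with both components global. Lemma \ref{l:divagree} identifies the vector part, $X = \pi_T\div_\vol(\JJ) = \div_\vol(\pi_{\JJ})$, so the vector parts of the two divergences always agree. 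Consequently, viewing $\div_\vol(\pi_{\JJ})$ as the element of $\mf{aut}(M,H_0)$ with vanishing $2$-form part, the equality $\div_\vol(\JJ) = \div_\vol(\pi_{\JJ})$ holds if and only if $b = 0$. The whole lemma thus reduces to proving $b = 0 \iff L_X\JJ = 0$, where $L_X$ is the standard Lie derivative on $E$.

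Next I would invoke Lemma \ref{l:divgpresJ}, which gives $L^{H_0}_{\div_\vol(\JJ)}\JJ = L_{X+b}\JJ = 0$. Using the explicit form \eqref{eq:Lie-derivative} of the extended Lie derivative, $L_{X+b}(Y+\eta) = [X,Y] + L_X\eta - \imath_Y b$, I can split $L_{X+b} = L_X - \beta_b$ on sections of $E$, where $\beta_b(Y+\eta) := \imath_Y b$ is the infinitesimal $b$-field transformation. Applying this to the endomorphism $\JJ$ yields the key identity $L_X\JJ = [\beta_b,\JJ]$, so the condition $L_X\JJ = 0$ is equivalent to the commutation $[\beta_b,\JJ] = 0$. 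The forward implication of the lemma is then immediate and needs no non-degeneracy hypothesis: if $b = 0$ then $\beta_b = 0$, hence $[\beta_b,\JJ] = 0$ and $L_X\JJ = 0$.

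For the reverse implication I would perform the one pointwise computation where the hypothesis enters. Writing $\JJ$ in its standard block form on $E = TM\oplus T^*M$,
\[
\JJ = \begin{pmatrix} A & \pi \\ \sigma & -A^* \end{pmatrix},
\qquad
\beta_b = \begin{pmatrix} 0 & 0 \\ b^\flat & 0 \end{pmatrix},
\]
with $b^\flat\colon TM\to T^*M$ given by $Y\mapsto \imath_Y b$ and $\pi\colon T^*M\to TM$ the bivector block, the $TM\to TM$ component of $[\beta_b,\JJ]$ is $-\pi\circ b^\flat$. Thus $[\beta_b,\JJ] = 0$ forces $\pi\circ b^\flat = 0$. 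At every point of the open dense set where $\pi_{\JJ}$ is non-degenerate, $\pi$ is an isomorphism (it is a nonzero constant multiple of $\pi_{\JJ}$ by Definition \ref{d:poisson}), so $b^\flat = 0$ and hence $b = 0$ there; since $b$ is a globally defined smooth form, continuity gives $b \equiv 0$ on $M$, completing the reverse direction.

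The main obstacle is precisely this last linear-algebra step, and it is where I would have to be most careful: I must confirm that the top-right block $\pi$ of $\JJ$ coincides, up to the harmless constant factor of Definition \ref{d:poisson}, with the Poisson bivector $\pi_{\JJ}$, so that non-degeneracy of $\pi_{\JJ}$ translates into invertibility of $\pi$. Invertibility is exactly what is needed to cancel $\pi$ in $\pi\circ b^\flat = 0$ and deduce $b^\flat = 0$; without it the commutator could vanish with $b\neq 0$, which is why the statement genuinely requires $\pi_{\JJ}$ to be non-degenerate on a dense set rather than merely somewhere.
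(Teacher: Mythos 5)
Your proof is correct and follows essentially the same route as the paper: decompose $\div_{\vol}(\JJ)=X+b$ via Lemmas \ref{l:divglemma} and \ref{l:divagree}, use Lemma \ref{l:divgpresJ} to get the forward direction and to reduce the converse to showing $L_b\JJ=0$ forces $b=0$ on the nondegenerate locus, then conclude by density. The only difference is that you explicitly verify, via the block computation of $[\beta_b,\JJ]$, the fact that the paper merely asserts (that infinitesimal $b$-field actions have no stabilizer where $\pi_{\JJ}$ is nondegenerate) --- a welcome extra detail, and your computation is correct.
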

\begin{proof}
Let $X=\div_{\vol} (\pi_{\JJ})$ and $\div_{\vol} \JJ = X +B$, where $B$ is a $2$-form, see Lemma~\ref{l:divagree}.
In one direction, if $B$=0 then $L_X \JJ=0$ by Lemma~\ref{l:divgpresJ}.
In the other direction, Lemma~\ref{l:divgpresJ} and the assumption $L_{X}\JJ=0$, yield
\[0=L_{X + B}\JJ = L_B \JJ. \]
As the infinitesimal action of $b$-fields on generalized complex structures has no stabilizer in the locus where $\pi_{\JJ}$ is nondegenerate, we know $B=0$ on that locus, and hence everywhere by the assumption that this tensor is nondegenerate on a dense set.
\end{proof}

\begin{rmk} \label{r:divgremark1} The nondegeneracy condition on $\pi_{\JJ}$ in Lemma~\ref{l:invariant} is essential. For instance, consider the case when $\JJ$ is a generalized complex structure corresponding to a complex structure $J$ on $TM$, i.e 
\begin{align*}
    \JJ = \left( \begin{matrix}
        -J & 0\\
        0 & J^*
    \end{matrix} \right).
\end{align*}
In this case $\pi_{\JJ}=0$ and, therefore, $\div_{\vol} \pi_{\JJ}=0$ for any volume form $\vol$. On the other hand, we have a local spinor $\Theta$ for $\JJ$ which is a holomorphic volume form.  Now fix any volume form $\vol$ and note that this defines a function $f$ such that the spinor $\psi = e^{-f/2} \Theta$ is $\vol$-normalized.  An elementary computation using that $df - \i \JJ df \in \Ker (\psi)$ yields 
\begin{align*}
    d \psi = \i (- \JJ \tfrac{1}{2} df) \cdot \psi = \i (- \tfrac{1}{2} J^*  df) \cdot \psi.
\end{align*}
In particular, for generic $f$, $0 = \div_{\vol} (\pi_{\JJ}) \neq \div_{\vol} (\JJ) = - \tfrac{1}{2} d J^* df$.  Note though, using that $d J^* d f$ is of type $(1,1)$, it follows still that $L_{\div_{\vol} \JJ} \JJ=0$ in accordance with Lemma \ref{l:divgpresJ}.  Note furthermore that if $J$ is compatible with a K\"ahler metric $\gw$, and the data is interpreted as a generalized K\"ahler structure $(\JJ_{\gw}, \JJ)$, then for $f$ nonconstant $L_{\div_{\vol} \JJ} \JJ_{\gw} \neq 0$.
\end{rmk}

\subsection{Generalized K\"ahler structures}

\begin{defn}
Let $(M,H_0)$ be a smooth manifold with a closed 3-form.  A \textit{generalized K\"ahler} structure on $(M,H_0)$ is a pair of commuting generalized complex structures $(\II,\JJ)$ such that the bilinear form
\[
\la{-\II\JJ\cdot,\cdot \ra}
\]
on $TM\oplus T^*M$ is positive definite.  We denote the operator $-\II\JJ$ by $\G$, which is a \emph{generalized metric}.
\end{defn}

A fundamental result of Gualtieri states that a generalized K\"ahler structure $(\II,\JJ)$ on $(M,H_0)$ can be equivalently described by  bihermitian data on $(M,H_0)$.

\begin{thm}[\cite{GualtieriThesis} Ch. 6]\label{thm:gualtieri_map}
        Let $(\II,\JJ)$ be a generalized K\"ahler structure on $(M,H_0)$. Then there exist unique Riemannian metric $g$, real 2-form $b$ and orthogonal integrable complex structures $I$ and $J$,  such  that
    \begin{equation}\label{eq:gualtieri_map}
    \II = \frac{1}{2}e^{b}\left(
        \begin{matrix}I+ J & -(\omega_I^{-1} - \omega_J^{-1})\\ \omega_I -\omega_J & - (I^*+ J^*)\end{matrix}
    \right)e^{-b}, \quad \JJ =\frac{1}{2}e^{b}\left(
        \begin{matrix}I - J & -(\omega_I^{-1}+ \omega_J^{-1})\\ \omega_I + \omega_J & - (I^*- J^*)\end{matrix}
    \right)e^{-b},
    \end{equation}
    where the fundamental $2$-forms $\omega_I=gI$ and $\omega_J=gJ$ satisfy
    \begin{equation}\label{eq:gk_torsion}
    - d^c_I\omega_I=H_0+db= d^c_J\omega_J.
    \end{equation}
    Conversely, if the data $(g,b,I,J)$ satisfies~\eqref{eq:gk_torsion}, then~\eqref{eq:gualtieri_map} determines a generalized K\"ahler structure on $(M, H_0)$.
\end{thm}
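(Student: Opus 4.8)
The plan is to run Gualtieri's argument, extracting the bihermitian data from the generalized metric $\G = -\II\JJ$ and then matching integrability with the torsion identity. Since $\II,\JJ$ are commuting $\IP{,}$-orthogonal complex structures on $E$, the endomorphism $\G$ is a $\IP{,}$-self-adjoint involution ($\G^2 = \Id$), and the positivity hypothesis says its $+1$-eigenbundle $C_+$ is positive-definite while its $-1$-eigenbundle $C_-$ is negative-definite, giving an orthogonal splitting $E = C_+ \oplus C_-$. Each $C_\pm$ meets $T^*M$ trivially, hence projects isomorphically onto $TM$ under the anchor and is the graph of a bundle map $TM \to T^*M$; writing that map as $b \pm g$ with $g$ symmetric and $b$ skew recovers a metric $g$ (positive-definite by the sign of $\IP{,}$ on $C_\pm$) and a $2$-form $b$, so that $C_\pm = \{X + (b \pm g)(X) : X \in TM\}$.

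First I would apply the $b$-field transformation $e^{-b}$, which by \eqref{f:twistedDorfman} conjugates the structures into ones integrable for the twisted bracket $[\cdot,\cdot]_{H_0 + db}$ and puts the eigenbundles in standard form $C_\pm = \{X \pm g(X)\}$; this is the source of the $e^{\pm b}$ conjugation in \eqref{eq:gualtieri_map} and of the replacement $H = H_0 + db$ in \eqref{eq:gk_torsion}. Because $\II$ commutes with $\G$ it preserves $C_\pm$, so transporting $\II|_{C_+}$ and $\II|_{C_-}$ through the isomorphisms $C_\pm \cong TM$ defines almost complex structures $I$ and $J$ on $TM$. From $\G = -\II\JJ$ one computes that $\JJ = \II$ on $C_+$ and $\JJ = -\II$ on $C_-$, so $\JJ$ is transported to $I$ on $C_+$ and to $-J$ on $C_-$; feeding the decomposition of a general $X + \xi$ into its $C_\pm$-components back through the anchor and metric projections then reproduces the block formulas \eqref{eq:gualtieri_map}, with $\omega_I = gI$ and $\omega_J = gJ$. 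Orthogonality of $I,J$ for $g$ follows since $\II$ preserves $\IP{,}$ and the induced metric on $C_\pm$ is $\pm g$.

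The main obstacle is translating Courant-integrability into the torsion identity. I would decompose the $+\i$-eigenbundle $L^{1,0}_\II$ along $E \otimes \C = C_+^\C \oplus C_-^\C$ as $C_+^{0,1} \oplus C_-^{0,1}$ (relative to the transported $I,J$), and impose that $[\cdot,\cdot]_{H}$ closes on $L^{1,0}_\II$. The $C_+$--$C_+$ brackets reproduce the Nijenhuis tensor of $I$, forcing $I$ to be integrable, while the remaining components express the $(p,q)$-type constraints relating $H$ to $\omega_I$. The cleanest bookkeeping is via the Bismut connections $\nabla^{\pm} = \nabla^g \pm \tfrac12 g^{-1}H$: integrability of $\II$ becomes equivalent to $N_I = 0$ together with $\nabla^+ I = 0$, which is in turn equivalent to $d^c_I\omega_I = -H$; the identical analysis for $\JJ$, using $\nabla^-$, yields $d^c_J\omega_J = H$. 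This is where the real computation lies, and where the sign conventions in $d^c_I = \i(\delb_I - \del_I)$ must be tracked with care.

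For the converse, given $(g,b,I,J)$ with $-d^c_I\omega_I = H_0 + db = d^c_J\omega_J$, I would define $\II,\JJ$ by \eqref{eq:gualtieri_map} and verify directly that each is $\IP{,}$-orthogonal, squares to $-\Id$, that they commute, and that $\IP{-\II\JJ\cdot,\cdot}$ is positive-definite (this last reducing to positivity of $g$). Integrability then follows by reversing the bracket computation above, the hypothesis $d^c_I\omega_I = -H = -d^c_J\omega_J$ being exactly what makes the obstruction components vanish. Uniqueness of $(g,b,I,J)$ is automatic, since $\G = -\II\JJ$ determines $C_\pm$ and hence $g$ and $b$, and the restrictions of $\II$ to $C_\pm$ determine $I$ and $J$.
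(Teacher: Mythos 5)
The paper does not actually prove this theorem: it is quoted as background from Gualtieri's thesis (Ch.~6), so there is no in-paper argument to compare against. Your sketch is a correct outline of Gualtieri's original proof --- the eigenbundle decomposition of $\G=-\II\JJ$ into $C_\pm$, the extraction of $g$ and $b$ as graphs, the transport of $\II$ along $C_\pm\cong TM$ to obtain $I,J$ (and of $\JJ$ to $I,-J$, matching the block formulas), and the translation of Courant involutivity into $\nabla^{\pm}$-parallelism and the torsion identity --- with the one small caveat that it is the \emph{joint} integrability of $\II$ and $\JJ$ (equivalently, involutivity of each of $L_{\II}\cap C_+^{\C}$ and $L_{\II}\cap C_-^{\C}$ separately), not integrability of $\II$ alone, that decouples into the conditions $N_I=0$, $-d^c_I\omega_I=H$ on the $C_+$ side and $N_J=0$, $d^c_J\omega_J=H$ on the $C_-$ side.
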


\begin{rmk} By the theorem above we can refer equivalently to generalized K\"ahler structures by the data $(\II, \JJ)$ or the bihermitian data $(g, I, J, b)$.  At times we will refer simply to a generalized K\"ahler structure via $(g, I, J)$, where implicitly we mean to choose $H_0 = - d^c_I \gw_I = d^c_J \gw_J$ and $b = 0$.
\end{rmk}

\begin{defn} \label{d:nondeg} A generalized complex structure is called \emph{symplectic type} if the associated Poisson tensor $\pi_{\JJ}$ is everywhere nondegenerate.  A generalized K\"ahler structure $(\II, \JJ)$ on $(M, H_0)$ is called \emph{symplectic type} if $\JJ$ is symplectic type.  It is called \emph{nondegenerate} if $\II$ and $\JJ$ are both symplectic type.
\end{defn}

If the data $(g, I, J)$ satisfies that $\det(I+J)\neq 0$ on $M$, we can set $b:=g(I+J)^{-1}(I-J)$ and $F:= -2g(I+J)^{-1}$.  The data $(g, I, J, b)$ is then generalized K\"ahler with $H_0 = 0$, and furthermore $\JJ =\left(
        \begin{matrix}0 & -F^{-1}\\ F & 0\end{matrix}\right)$.  This implies that showing that $F$ is a symplectic form and $\JJ$ is a symplectic type generalized complex structure.

For a nondegenerate structure $(\II,\JJ)$ the corresponding transformations $I \pm J\in\End(TM)$ are invertible, and there are canonically associated symplectic forms
\begin{align*}
    F_{\pm} = -2 g (I \pm J)^{-1}.
\end{align*}
If $\II$ and $\JJ$ are given by~\eqref{eq:gualtieri_map}, then setting $b'=b-g(I+J)^{-1}(I-J)$ we observe that
\[
\II=e^{b'-4\Omega}
\left(\begin{matrix}0 & -F_-^{-1} \\ F_- & 0\end{matrix}\right)
e^{-b'+4\Omega},\quad 
\JJ=e^{b'}\left(\begin{matrix}0 & -F_+^{-1} \\ F_+ & 0\end{matrix}\right)e^{-b'}
\]
where $\Omega=g(I+J)^{-1}(I-J)^{-1}$ is a closed 2-form, see~\cite[\S 3.1]{ASnondeg}. The underlying $d_{H_0}$-closed spinors are $e^{4\Omega- b'+\i F_{-}}$ and $e^{- b'+\i F_+}$.

\section{Scalar curvature as moment map} \label{s:scalasmoment}
\subsection{Definition and basic properties}

In this subsection we recall the definition of Goto's scalar curvature following~\cite{goto-21}. Let $(\II,\JJ)$ be a generalized K\"ahler structure on $(M,H_0)$.  This data determines a Riemannian metric $g$ on $M$.  Let $dV_g$ be the corresponding volume form, and pick an arbitrary volume form $\mu_f:=e^{-f}dV_g$.  Associated to the data $(\II,\JJ,\mu_f)$, Goto assigns a scalar quantity $S(\II,\JJ,\mu_f)\in C^\infty(M,\R)$ generalizing the notion of the scalar curvature in K\"ahler geometry, determined purely by the underlying spinors and the measure $\mu_f$.  

\begin{defn} (\cite{goto-21}) \label{def:goto_scal}
Let $(\II,\JJ)$ be a generalized K\"ahler manifold on $(M, H_0)$, with a fixed volume form $\mu_f=e^{-f}dV_g$. Let $\psi_1$ and $\psi_2$ be pure spinors defining $\II$ and $\JJ$ respectively, normalized relative to $\mu_f$ according to~\eqref{eq:spinor_normalization}, with real potentials $\eta_i\in TM\oplus T^*M$ satisfying (\ref{f:realpotential}).  Then \emph{Goto's scalar curvature} $\Gscal(\II,\JJ,\mu_f)$ is the real function defined by the identity \begin{equation}\label{eq:goto_scalar_def}
\begin{split}
\Gscal (\II,\JJ,\mu_f)\mu_f
&:=
    \Real\left(
    (2\i)^{-n}(\psi_1,d_{H_0}(\eta_2\cdot \bar\psi_1))
    +
    (2\i)^{-n}(\psi_2,d_{H_0}(\eta_1\cdot\bar\psi_2))
    \right).
\end{split}
\end{equation}
\end{defn}

\begin{prop}\label{p:Gscal} ({\cite{goto-21} Propositions 3.2, 3.5, 3.6}) The following hold:
\begin{enumerate}
    \item $\Gscal(\II,\JJ,\mu_f)$ does not depend on the choice of the local spinors $\psi_i$ and determines a well-defined real-valued function on $M$.
    \item If $b$ is a $2$-form, then
    $\Gscal(\II,\JJ,\mu_f)=\Gscal(e^{b}\II e^{-b},e^{b}\JJ e^{-b},\mu_f)$.
    \item $\Gscal(\II,\JJ,\mu_f)=\Gscal(\JJ,\II,\mu_f)$.
\end{enumerate}
\end{prop}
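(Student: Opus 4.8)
The three assertions are of quite different character, so I would treat them separately, beginning with the two formal ones. Assertion (3) is immediate: interchanging $\II$ and $\JJ$ in Definition~\ref{def:goto_scal} amounts to swapping the pairs $(\psi_1,\eta_1)$ and $(\psi_2,\eta_2)$, and this merely exchanges the two summands of the defining density \eqref{eq:goto_scalar_def}, leaving their sum unchanged. For assertion (2) I would track how the ingredients of \eqref{eq:goto_scalar_def} transform under a $b$-field. By \eqref{eq:b-field} a defining spinor transforms as $\psi_i\mapsto e^{-b}\wedge\psi_i$; this preserves the normalization \eqref{eq:spinor_normalization} because the Mukai pairing is $b$-invariant, and by the proof of Lemma~\ref{l:divglemma} the real potential transforms as $\eta_i\mapsto e^b(\eta_i)$. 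The Clifford compatibility $e^b(v)\cdot(e^{-b}\wedge\alpha)=e^{-b}\wedge(v\cdot\alpha)$ then gives $\tilde\eta_2\cdot\bar{\tilde\psi}_1=e^{-b}\wedge(\eta_2\cdot\bar\psi_1)$, while the second identity in \eqref{eq:b-field} yields $d_{H_0-db}(e^{-b}\wedge\beta)=e^{-b}\wedge d_{H_0}\beta$. Combining these with the $b$-invariance of the Mukai pairing shows that each summand of \eqref{eq:goto_scalar_def} is literally unchanged, hence so is $\Gscal$.

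The real content is assertion (1). The normalization \eqref{eq:spinor_normalization} pins down each local defining spinor up to a phase $\psi_i\mapsto e^{\i u_i}\psi_i$ with $u_i$ a local real function, under which — exactly as in the proof of Lemma~\ref{l:divglemma} — the real potential shifts by $\eta_i\mapsto\eta_i+2du_i$. The plan is to compute the resulting variation of the density \eqref{eq:goto_scalar_def} and to show that its real part vanishes pointwise. The phase of $\psi_1$ enters the first summand (through $\psi_1$ and $\bar\psi_1$) and, via the induced shift of $\eta_1$, also the second summand; the phase of $\psi_2$ enters symmetrically. Since $du_i$ acts by wedging and is closed, I would reduce each contribution using $d_{H_0}(du_i\wedge\bar\psi_j)=-du_i\wedge d_{H_0}\bar\psi_j$ together with $d_{H_0}\bar\psi_j=-\tfrac{\i}{2}\eta_j\cdot\bar\psi_j$, so that every term becomes a Mukai pairing of the shape $(\psi_j,\,du_i\wedge(\eta_k\cdot\bar\psi_j))_M$.

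The crux — and the step I expect to be the main obstacle — is to show that, after multiplication by $(2\i)^{-n}$ and taking real parts, these contributions cancel. This requires the delicate Clifford algebra of the Mukai pairing: the adjointness of Clifford multiplication by a real $v\in E$, the graded symmetry $(\alpha,\beta)_M=(-1)^n(\beta,\alpha)_M$, and the consequent Hermiticity of $h(\alpha,\beta):=(2\i)^{-n}(\alpha,\bar\beta)_M$ (so normalized that $h(\psi_i,\psi_i)=\mu_f$), with respect to which a second-order Clifford operator $du_i\cdot\eta_k$ splits as a manifestly real scalar part $\langle du_i,\eta_k\rangle$ plus an $h$-skew part lying in $\mathfrak{spin}(E)$. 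Getting every sign correct is one difficulty; the more serious one is that the cancellation appears to be genuinely a property of the \emph{pair} $(\II,\JJ)$ rather than of either summand alone, so one must feed in the generalized K\"ahler relation $\G=-\II\JJ$ (equivalently $[\II,\JJ]=0$) tying $\psi_1$ to $\psi_2$.

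A cleaner alternative, which sidesteps the sign bookkeeping, is to recast each summand in a manifestly phase-independent form. Reading $\eta_i$ through the identification of the divergence with the unique $\II$- (resp.\ $\JJ$-) holomorphic unitary generalized connection on the Hermitian line $(K_{\II},\mu_f)$ (resp.\ $(K_{\JJ},\mu_f)$), the density \eqref{eq:goto_scalar_def} should be expressible as the real part of a curvature pairing depending only on $\II$, $\JJ$ and $\mu_f$; establishing such a reformulation would yield (1) directly and would also make transparent the reality of the resulting function. This is the route I would ultimately pursue if the direct cancellation in the previous paragraph proves too unwieldy.
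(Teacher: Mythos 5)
First, a point of reference: the paper does not actually prove this proposition. It cites Goto's Propositions 3.2, 3.5 and 3.6, and the only original content is the remark that follows, explaining how to upgrade item (2) from closed $b$ (as in Goto) to arbitrary $b$ via the transformation laws of \S\ref{s:background}. Your arguments for (3) and (2) are correct, and your (2) is precisely the paper's remark: the spinors transform by $e^{-b}\wedge{}$, the Mukai pairing and hence the normalization \eqref{eq:spinor_normalization} are preserved, the real potentials transform by $e^{b}$ (Lemma \ref{l:divglemma}), and the two identities in \eqref{eq:b-field} show each summand of \eqref{eq:goto_scalar_def} is literally unchanged, provided the right-hand side is computed on the background $(M,H_0-db)$ --- which you correctly do.

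The genuine gap is in (1), and it is exactly the step you flag as ``the main obstacle'': the cancellation of the phase-dependence is the entire content of the assertion, and it is not carried out. Worse, the reduction you propose does not close by sign-bookkeeping alone. Carrying it through: under $\psi_1\mapsto e^{\i u}\psi_1$ one has $\eta_1\mapsto \eta_1+2du$, and using that for a normalized pure spinor and real $v,w\in E$ only the top filtration component of $v\cdot w\cdot\bar{\psi}_j$ survives the Mukai pairing, giving $(\psi_j,v\cdot w\cdot\bar{\psi}_j)=(\IP{v,w}+\i\IP{\JJ_j v,w})(2\i)^n\mu_f$, the first summand of \eqref{eq:goto_scalar_def} changes (after taking real parts) by $+\IP{\II du,\eta_2}\mu_f$ and the second by $-\IP{\JJ du,\eta_2}\mu_f$. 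The residue $\IP{(\II-\JJ)du,\eta_2}\mu_f$ is \emph{not} killed by Hermiticity of the pairing or adjointness of Clifford multiplication, and in the K\"ahler model with $\eta_2=\omega^{-1}df$ it equals $\tfrac12\IP{du,df}_g\mu_f$, which does not vanish for generic $u$. So the missing step is not routine: its resolution depends on the precise form of Goto's definition. Indeed, if the first summand is read as the Dorfman--Lie derivative $(\psi_1,L^{H_0}_{\eta_2}\bar{\psi}_1)$ with $L^{H_0}_{\eta_2}\bar{\psi}_1=\eta_2\cdot d_{H_0}\bar{\psi}_1+d_{H_0}(\eta_2\cdot\bar{\psi}_1)$, then the Clifford relation $v\cdot w+w\cdot v=2\IP{v,w}$ makes the phase-variation of each summand purely imaginary, and the $\eta_1$-shift contributes $2(\psi_2,L^{H_0}_{du}\bar{\psi}_2)=0$ because closed one-forms act trivially --- so well-definedness becomes essentially immediate. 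With the expression as literally written in Definition \ref{def:goto_scal} your plan stalls exactly where you predict, and your fallback via the holomorphic unitary connection on the canonical bundles is likewise only a proposal, not an argument. As it stands, assertion (1) is unproved.
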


\begin{rmk} Item (2) essentially follows from \cite{goto-21} Proposition 3.5, although as stated there it requires that $b$ is closed.  The general claim follows easily from the $b$-field transformation laws from \S \ref{s:background}, noting that the conjugated structure is generalized K\"ahler on the background $(M, H_0 - db)$.
\end{rmk}

\subsection{A formal moment map setup}

In \cite{Goto_2020, goto-21} (see also the preceding work by Boulanger~\cite{Boulanger}),  Goto interprets $\Gscal(\II, \JJ, \mu_f)$ as the moment map for a certain formal infinitesimal group action.  In particular, he defines a Lie algebra of Hamiltonian automorphisms using scalar potentials which are invariant under the divergence of $\pi_{\JJ}$ with respect to an \emph{arbitrary} volume form  $\mu_f$ (cf. Definition \ref{d:genHam}).  Then, a general variational formula for the scalar curvature is computed, and it is claimed that this determines the scalar curvature as the moment map for a class of `modified Hamiltonians' (cf. \cite{goto-21} Definition 4.2).  We were not able to verify that these modified Hamiltonians form a Lie algebra, and instead take a different approach here.  Inspired by recent works of E. Inoue~\cite{Inoue} of A. Lahdili \cite{Lahdili}, we ask that the volume  $\vol= \mu_f$ is such that the divergence of $\pi_{\JJ}$ with respect to $\vol$, which always preserves $\JJ$, also preserves $\II$.  This is a nontrivial further constraint (see Lemma~\ref{l:adapted-volume-BH} below) which  in the K\"ahler case $(M, g, J, J)$ boils down for asking that $\log\left(\frac{\vol}{dV_g}\right)$ is a Killing potential, which is precisely the set-up in \cite{Inoue}. In general, it is not always clear that such a volume form will exist, but with this further constraint in place Goto's variational formulas lead directly to the claim that the scalar curvature is the moment map for this infinitesimal group action.

\begin{defn} \label{d:GKadapted} Given a generalized K\"ahler structure $(\II, \JJ)$, we say that a volume form $\vol$ is \emph{adapted to $(\II, \JJ)$} if
\begin{align*}
    L_{\div_{\vol} \JJ} \II = 0.
\end{align*}
 For simplicity at times we will refer to a triple $(\II, \JJ, \vol)$ as \emph{adapted}.
 \end{defn}

\begin{lemma} \label{l:adaptedvolume} Suppose $(\II, \JJ)$ is a generalized K\"ahler structure defined on $(M, H_0)$, with corresponding bihermitian data $(g, I, J, b)$.  Then
\begin{align*}
    \pi_T \mf{aut}(M, H_0, \II, \JJ) < \mf{aut}(M, g, I, J).
\end{align*}
\begin{proof} Elements of $\mf{aut}(M, H_0)$ are canonically identified with sections $X+B$ of $TM \oplus \Wedge^2(T^*M)$.  The induced infinitesimal action of a section $X + B$ on the upper right block of an endomorphism of $E=TM \oplus T^*M$  is purely via Lie derivative by the vector field component $X$.  Thus if such an infinitesimal automorphism preserves $\II$ and $\JJ$, then it also preserves the generalized metric $\mathbb G = - \II \JJ$, and then by inspecting the form of $\II$ and $\JJ$ in Theorem \ref{thm:gualtieri_map}, it follows that $X$ preserves $g, I$, and $J$, as claimed. \end{proof}
\end{lemma}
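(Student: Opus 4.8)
The plan is to decompose endomorphisms of $E = TM \oplus T^*M$ into blocks and to track only the upper-right ($T^*M \to TM$) block under the extended Lie derivative. I would first recall that an element of $\mf{aut}(M, H_0)$ is a pair $X + B$, with $X$ a vector field and $B$ a $2$-form, that $\pi_T(X + B) = X$, and that $X + B$ acts on an endomorphism $\mathcal A$ of $E$ by the derivation $(L_{X+B}\mathcal A)s = L_{X+B}(\mathcal A s) - \mathcal A(L_{X+B}s)$, where $L_{X+B}$ on sections of $E$ is the extended Lie derivative \eqref{eq:Lie-derivative}.

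The key step is to compute the upper-right block of $L_{X+B}\mathcal A$. Writing $\mathcal A$ in block form with upper-right block $\beta \colon T^*M \to TM$ and evaluating $L_{X+B}\mathcal A$ on a pure covector $\eta$ (that is, taking $Y = 0$ in \eqref{eq:Lie-derivative}), the only $TM$-valued contribution to $L_{X+B}(\mathcal A \eta) - \mathcal A(L_{X+B}\eta)$ comes from the vector-field bracket $[X,\,\cdot\,]$; the $2$-form term $-\imath_Y B$ is $T^*M$-valued and, with $Y = 0$, drops out entirely. A short bookkeeping then shows that the upper-right block of $L_{X+B}\mathcal A$ equals $L_X\beta$, with no dependence on $B$. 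Alongside this I would record the elementary fact that conjugation by a $b$-field $e^b$ leaves the upper-right block of any endomorphism unchanged (one line of block multiplication), so that the $e^b$ factors in \eqref{eq:gualtieri_map} may be ignored when reading off upper-right blocks.

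With these two observations the lemma is immediate. If $X + B$ preserves $\II$ and $\JJ$, then since $L_{X+B}$ is a derivation of composition it preserves $\G = -\II\JJ$ as well. By the $b$-invariance just noted, the upper-right blocks may be read off from \eqref{eq:gualtieri_map} ignoring the $e^b$ factors, and a direct computation of the upper-right block of $\G$ from the same formula gives $g^{-1}$; thus the three blocks are $-\tfrac12(\omega_I^{-1} - \omega_J^{-1})$, $-\tfrac12(\omega_I^{-1} + \omega_J^{-1})$ and $g^{-1}$. By the block identity of the previous paragraph, invariance forces $L_X(\omega_I^{-1} \mp \omega_J^{-1}) = 0$ and $L_X g^{-1} = 0$. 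Adding and subtracting the first pair yields $L_X\omega_I = L_X\omega_J = 0$, and combining these with $L_X g = 0$ (equivalent to $L_X g^{-1} = 0$) in the relations $\omega_I = gI$, $\omega_J = gJ$ gives $L_X I = L_X J = 0$. Hence $X$ preserves $g, I, J$, i.e. $X \in \mf{aut}(M, g, I, J)$.

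The main obstacle is the block computation establishing that the bivector component transforms purely by $L_X$; once it is in hand the remainder is elementary linear algebra. The subtle point to guard against is the $b$-field: one must verify that $e^b$-conjugation does not disturb the upper-right block, for otherwise the entries read off from \eqref{eq:gualtieri_map} would carry $B$-dependent corrections and the clean conclusion $L_X g = L_X I = L_X J = 0$ would fail to follow directly.
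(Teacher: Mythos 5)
Your proposal is correct and follows essentially the same route as the paper: identify an infinitesimal automorphism with $X+B$, observe that the upper-right block of any endomorphism of $E$ transforms purely by $L_X$ (and is unaffected by $e^b$-conjugation), and read off $L_X\omega_I^{\pm 1}=L_X\omega_J^{\pm 1}=L_Xg=0$ from the blocks of $\II$, $\JJ$ and $\G=-\II\JJ$ in Theorem \ref{thm:gualtieri_map}. You merely spell out the block bookkeeping that the paper leaves implicit.
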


 \begin{lemma}\label{l:adapted-volume-BH} Suppose $(\II, \JJ, \vol)$ is an adapted generalized K\"ahler structure on $(M, H_0)$.  Let  $(g, I, J, b)$ be the corresponding bihermitian data and $f$ the smooth function determined by  $\vol = \mu_f$.  Then the vector field
 \[ X=\div_{\vol} \pi_{\JJ}=\frac{1}{2}\left(I\theta_I^{\sharp} + J\theta_J^{\sharp} -(I+J) df^{\sharp}\right)\] 
 satisfies 
 \[ L_Xg=0, \qquad L_XI=L_XJ=0.\]
In particular, $X$ generates a compact torus $\T\subset\mathrm{Isom}(g)$.
 \end{lemma}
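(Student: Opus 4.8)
The plan is to obtain the three assertions—the explicit expression for $X$, the fact that $X$ is Killing and holomorphic for both $I$ and $J$, and the generation of a torus—by combining the conformal behaviour of the Poisson divergence with the chain of lemmas already in place.

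First I would establish the explicit formula. Writing $\pi_{\JJ} = \tfrac12(I+J)g^{-1}$ and $\vol = e^{-f}dV_g$, the divergence of a Poisson tensor transforms under the rescaling $dV_g \mapsto e^{-f}dV_g$ by the addition of a Hamiltonian-type term $\pm\,\pi_{\JJ}^{\sharp}(df) = \pm\tfrac12(I+J)df^{\sharp}$; this follows from $i_{\pi_{\JJ}}\vol = e^{-f}\,i_{\pi_{\JJ}}dV_g$ together with the contraction identity $df\wedge i_{\pi_{\JJ}}dV_g = -\,i_{\pi_{\JJ}^{\sharp}df}\,dV_g$ and Cartan's formula. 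Fixing the sign to agree with the normalization used in the proof of Lemma \ref{l:divagree} yields
\[
\div_{\vol}(\pi_{\JJ}) = \div_{dV_g}(\pi_{\JJ}) - \tfrac12 (I+J)df^{\sharp}.
\]
It then remains to identify the reference divergence $\div_{dV_g}(\pi_{\JJ}) = \tfrac12\big(\div_{dV_g}(Ig^{-1}) + \div_{dV_g}(Jg^{-1})\big)$ with $\tfrac12(I\theta_I^{\sharp} + J\theta_J^{\sharp})$. Since $I^2=-1$ gives $Ig^{-1} = -\omega_I^{-1}$, this is the purely Hermitian computation expressing the $dV_g$-divergence of $\omega_I^{-1}$ through the Lee form $\theta_I$ of $(g,I)$, and symmetrically for $(g,J)$; I would carry it out in a local unitary coframe, tracking the signs and any normalization constant against the convention above.

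Next I would prove the geometric content, namely $L_Xg=0$ and $L_XI=L_XJ=0$. The key point is that, by Lemma \ref{l:divagree}, $X=\div_{\vol}(\pi_{\JJ})=\pi_T\,\div_{\vol}(\JJ)$, so it suffices to control the full element $\div_{\vol}(\JJ)\in\mf{aut}(M,H_0)$. By Lemma \ref{l:divgpresJ} this element always satisfies $L_{\div_{\vol}\JJ}\JJ=0$; by the adaptedness hypothesis (Definition \ref{d:GKadapted}) it also satisfies $L_{\div_{\vol}\JJ}\II=0$. Hence $\div_{\vol}(\JJ)\in\mf{aut}(M,H_0,\II,\JJ)$, and applying Lemma \ref{l:adaptedvolume} its vector-field part $\pi_T\,\div_{\vol}(\JJ)=X$ lies in $\mf{aut}(M,g,I,J)$, which is exactly the assertion $L_Xg=L_XI=L_XJ=0$.

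Finally, for the torus statement I would use that on the compact manifold $M$ the isometry group $\mathrm{Isom}(g)$ is a compact Lie group (Myers--Steenrod); the flow of the Killing field $X$ is a one-parameter subgroup, and its closure is a compact connected abelian subgroup, i.e. a torus $\T\subset\mathrm{Isom}(g)$. Since $X$ also preserves $I$ and $J$, $\T$ in fact preserves the whole bihermitian data. I expect the main obstacle to be the first step: the bookkeeping in matching $\div_{dV_g}(\pi_{\JJ})$ to the Lee forms $\theta_I,\theta_J$ with the correct sign and normalization, since the Killing and holomorphicity properties are essentially immediate once one recognizes that adaptedness upgrades $\div_{\vol}(\JJ)$ from an infinitesimal symmetry of $\JJ$ alone to one of the whole generalized K\"ahler pair $(\II,\JJ)$, after which Lemma \ref{l:adaptedvolume} applies directly.
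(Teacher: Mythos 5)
Your proposal is correct and follows essentially the same route as the paper: the Killing/holomorphicity claims come from combining Lemma \ref{l:divgpresJ}, the adaptedness hypothesis, and Lemma \ref{l:adaptedvolume} exactly as in the paper's proof, and your computation of $\div_{\vol}\pi_{\JJ}$ (splitting off the conformal term $-\tfrac12(I+J)df^{\sharp}$ and identifying the $dV_g$-divergence via the Lee forms) is just a reorganization of the paper's direct calculation of $d(i_{\pi_{\JJ}}\vol)$ using $d\omega_I^{[n-1]}=\theta_I\wedge\omega_I^{[n-1]}$.
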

 
\begin{proof} Since $\div_{\vol}(\JJ)$ preserves $\II$ by hypothesis, and preserves $\JJ$ by Lemma \ref{l:divgpresJ}, Lemma~\ref{l:adaptedvolume} yields that 
\[ X:= \pi_T(\div_{\vol}(\JJ))\]
preserves $(g, I, J)$. We thus only need to compute $X$ in terms of $(g, I, J, \vol)$. 
By Lemma~\ref{l:divagree},
\[ X = \div_{\vol} \pi_{\JJ}\] whereas
by Theorem~\ref{thm:gualtieri_map}, 
\[ \pi_{\JJ}= \frac{1}{2}(I+J)g^{-1}.\]  Using the definition of $f$ and the bihermitian property,
\[ \vol=e^{-f}dV_g=e^{-f}\omega_I^{[n]}=\omega_J^{[n]},\]  thus
\[ \imath_{\pi_{\JJ}} \vol =\frac{e^{-f}}{2}\left(\omega_I^{[n-1]} + \omega_J^{[n-1]}.\right)\]
It follows that
\[ d(\imath_{\pi_{\JJ}}\vol) = \frac{e^{-f}}{2}\left((\theta_I - df)\wedge \omega_I^{[n-1]} + (\theta_J - df)\wedge \omega_J^{[n-1]}\right),\]
where $\theta_I:= Id^*\omega_I$ and $\theta_J:=Jd^*\omega_J$ are the Lee forms of $(g, I)$ and $(g, J)$, respectively, and we have used the basic identity $d\omega_I^{[n-1]}= \theta_I\wedge \omega^{[n-1]}$ and similarly for $\omega_J$.
It thus follows that
\[ X=\div_{\vol} \pi_{\JJ}=\frac{1}{2}\left(I\theta_I^{\sharp} + J\theta_J^{\sharp}\right) -\pi_{\JJ} df,\] 
as claimed.
\end{proof}

\begin{rmk}
    The existence of a Killing vector field $X$ for $(g, I, J)$ of the above form is thus a necessary condition for $(g, I, J)$ to come from a $\vol$-adapted GK pair $(\II, \JJ)$. In the case when $\pi_{\JJ}$ is non-degenerate on an open dense subset, this condition is also sufficient, see Lemma~\ref{l:invariant}.
\end{rmk}

\begin{defn}\label{d:adapted-BH} Given a generalized K\"ahler structure $(g, I, J)$, we say that the volume form $\vol$ is \emph{$(g,I,J)$-adapted} if the vector field $X$ defined in Lemma~\ref{l:adapted-volume-BH} preserves $(g, I, J)$.
\end{defn}

\begin{rmk} Remark \ref{r:divgremark1} makes clear that a volume form being $(g,I,J)$-adapated is strictly weaker than being adapted to $(\II, \JJ)$.
\end{rmk}

In what follows, we denote by $\T$ the torus generated by the infinitesimal isometry $X$ of the bihermitian structure $(g, I, J)$ corresponding to $(\II, \JJ, \vol)$.  
\begin{defn}\label{d:genHam} [Normalized potentials \& Hamiltonian automorphisms]
    Given $(\II, \JJ)$ a generalized K\"ahler structure with adapted volume form $\vol$ and corresponding torus $\T$, we define the space of \emph{normalized potentials} by
    \[
    C_0^\infty(M,\R)^\T=\{
        u \in C^\infty(M,\R)\ |\ du(X)=0,\ \int_M u\vol=0
    \}.
    \]
    Furthermore, define the space of \emph{Hamiltonian automorphisms} by
    \[
    \mf{ham}^\T(\JJ)=\{\Psi(\JJ du)\ |\ u\in C^\infty_0(M,\R)^\T\},
    \]
\end{defn}

\begin{lemma} (cf. \cite{Goto_LM} Proposition 3.1) \label{l:hamiltonians} Given the setup above, $\mf{ham}^{\T}(\JJ)$ is a Lie subalgebra of $\mf{aut}(M, H_0)$, isomorphic to $(C^{\infty}_0(M, \R), \vol)$ with the Lie bracket given by the Poisson bracket  with respect to $\pi_{\JJ}$:
\begin{align*}
    [\JJ du,\JJ dv]_{H_0} =\JJ d\{u,v\}_{\pi_{\JJ}}.
\end{align*}
Furthermore, $\mf{ham}^\T(\JJ)$ preserves $\JJ$, $\vol$,  and $\div_{\vol}(\JJ)$, i.e.
\begin{align*}
    L^{H_0}_{\JJ du} \JJ = 0,  \qquad L_{\poiss_{\JJ}du} \vol =0, \qquad { [ \Psi(\JJ du), \div_{\vol}(\JJ)]_{\rm aut} =0.}
\end{align*}
for all $u \in C_0^{\infty}(M, \mathbb R)^\T$. 
\begin{proof} Notice first that the map 
\[ (C^{\infty}_0(M, \R), \vol) \ni u \to \Psi(\JJ du) \in \mf{aut}(M, H_0)\]
is injective. Indeed, if $\Psi(\JJ du)=0$,  then by Theorem~\ref{thm:gualtieri_map} we find $\pi_T(\Psi(\JJ du))=\pi_{\JJ}(du)= \frac{1}{2}(I+ J) du^{\sharp}=0$ and $d(I-J)du=0$. It then follows that $dd^c_I u = dd^c_Ju=0$, so $u=0$ by the maximum principle and the normalization.

Using integrability of $\JJ$ and the definition of Dorfman bracket we have
\begin{align*}
    [\JJ du, \JJ dv]_{H_0} =&\ [du, dv]_{H_0} + \JJ [ \JJ du, dv]_{H_0} + \JJ [du, \JJ dv]_{H_0}\\
    =&\ \JJ [\JJ du, dv]_{H_0}\\
    =&\ \JJ L_{ \pi_{\JJ} du} dv\\
    =&\ \JJ d \{u, v\}_{\pi_{\JJ}}.
\end{align*}
If $X$ preserves $u, v$, and $\pi_{\JJ}$, then it also preserves $\{u, v\}_{\pi_{\JJ}}$, showing $\mf{ham}^{\T}(\JJ)$ is a subalgebra. Using  the integrability of $\JJ$ and the fact that the automorphisms induced by closed $1$-forms are trivial, it follows that
\begin{align*}
    L^{H_0}_{\JJ du} \JJ =&\ \JJ L^{H_0}_{du} \JJ = 0,
\end{align*}
as claimed.
Finally, we have 
\begin{equation}\label{eq:ad-invariance}
L_{\pi_{\JJ} du} \vol = d i_{\pi_{\JJ} du} \vol = d (d u \wedge i_{\pi_{\JJ}} \vol) = - du \wedge (d i_{\pi_{\JJ}} \vol) = (L_X u) \vol=0. \end{equation}
{The last property follows from the first two. } \end{proof}
\end{lemma}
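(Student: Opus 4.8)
The plan is to establish the four assertions in order: the linear isomorphism $C^\infty_0(M,\R)^\T \cong \mf{ham}^\T(\JJ)$, the Poisson bracket formula (together with closure under the bracket), and finally the three invariance identities. For the isomorphism I would make the assignment $u \mapsto \Psi(\JJ du)$ explicit using Gualtieri's normal form (Theorem \ref{thm:gualtieri_map}): contracting $\JJ$ with the $1$-form $du$ produces the vector part $\pi_T(\JJ du) = \pi_{\JJ}du = \tfrac12(I+J)du^\sharp$, while $\Psi$ adds an exact correction in the form part. Injectivity is the only place where compactness enters. If $\Psi(\JJ du)=0$, vanishing of the vector part gives $(I+J)du^\sharp=0$, hence $(I^*+J^*)du=0$; substituting this into the vanishing of the $2$-form part of $\Psi(\JJ du)$ collapses it to $dd^c_I u = dd^c_J u = 0$. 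Tracing with $g$ then shows $u$ is $g$-harmonic, so $u$ is constant by the maximum principle and vanishes by the normalization $\int_M u\,\vol=0$. This identifies $\mf{ham}^\T(\JJ)$ as a vector space with $C^\infty_0(M,\R)^\T$.

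For the bracket I would invoke integrability of $\JJ$ in the form $[\JJ a,\JJ b]_{H_0}=[a,b]_{H_0}+\JJ[\JJ a,b]_{H_0}+\JJ[a,\JJ b]_{H_0}$, valid for all sections $a,b$ of $E$. Taking $a=du$, $b=dv$, the terms $[du,dv]_{H_0}$ and $[du,\JJ dv]_{H_0}$ vanish directly from the definition of the Dorfman bracket, since $du,dv$ are closed and have vanishing vector part; what survives is $\JJ[\JJ du,dv]_{H_0}=\JJ L_{\pi_{\JJ}du}(dv)=\JJ\,d\{u,v\}_{\pi_{\JJ}}$, which is the claimed identity. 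Closure of $\mf{ham}^\T(\JJ)$ then reduces to checking $\{u,v\}_{\pi_{\JJ}}\in C^\infty_0(M,\R)^\T$. This is where the special structure of $X=\div_\vol\pi_{\JJ}$ is needed: since $X$ preserves $\pi_{\JJ}$ and $\vol$ by \eqref{eq:Cartan} and preserves $u,v$ by hypothesis, it preserves $\{u,v\}_{\pi_{\JJ}}$, giving $\T$-invariance; the normalization $\int_M\{u,v\}_{\pi_{\JJ}}\vol=0$ follows from the divergence-free property of $\pi_{\JJ}du$ established next, via Stokes.

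For the invariance identities I would first note that integrability of $\JJ$ yields the operator identity $L^{H_0}_{\JJ a}\JJ=\JJ L^{H_0}_a\JJ$, and that $L^{H_0}_{du}\JJ=0$ because $\Psi(du)=d(du)=0$ by \eqref{eq:aut_sequence}, so closed $1$-forms act trivially; hence $L^{H_0}_{\JJ du}\JJ=0$. Next, using $d(\imath_{\pi_{\JJ}}\vol)=\imath_X\vol$ from Definition \ref{divergence-poisson}, I would compute $L_{\pi_{\JJ}du}\vol=d\,\imath_{\pi_{\JJ}du}\vol=-du\wedge d(\imath_{\pi_{\JJ}}\vol)=-(du(X))\vol$, which vanishes precisely because $du(X)=0$, i.e. because $u$ is $\T$-invariant. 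The final identity $[\Psi(\JJ du),\div_\vol(\JJ)]_{\mathrm{aut}}=0$ I would deduce formally from the first two: $\div_\vol(\JJ)$ is canonically determined by the pair $(\JJ,\vol)$, so a flow preserving both $\JJ$ and $\vol$ preserves $\div_\vol(\JJ)$, and the infinitesimal form of this naturality is exactly the vanishing bracket.

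The main obstacle I anticipate is not any single computation but keeping the constrained space $C^\infty_0(M,\R)^\T$ stable under the bracket: the formula $[\JJ du,\JJ dv]_{H_0}=\JJ d\{u,v\}_{\pi_{\JJ}}$ is purely formal, but verifying that $\{u,v\}_{\pi_{\JJ}}$ is again $\T$-invariant and $\vol$-normalized is where the identities $L_X\pi_{\JJ}=L_X\vol=0$ for $X=\div_\vol\pi_{\JJ}$ (Lemma \ref{l:divagree} and \eqref{eq:Cartan}) are indispensable. The injectivity step is the only genuinely analytic input, and it rests on the maximum principle applied to the pluriharmonic function obtained from $dd^c_I u=0$.
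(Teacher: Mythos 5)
Your proposal is correct and follows essentially the same route as the paper: injectivity via collapsing $\Psi(\JJ du)=0$ to $dd^c_Iu=dd^c_Ju=0$ and applying the maximum principle, the bracket identity via integrability of $\JJ$ and the Dorfman bracket, closure via $L_X\pi_{\JJ}=L_Xu=L_Xv=0$, and the three invariance statements by the same computations (including deducing $[\Psi(\JJ du),\div_{\vol}(\JJ)]_{\rm aut}=0$ from the first two). Your extra care in checking that $\{u,v\}_{\pi_{\JJ}}$ is $\vol$-normalized (via $L_{\pi_{\JJ}du}\vol=0$ and Stokes) is a small point the paper leaves implicit, but it is not a different argument.
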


The lemma above shows that $\mf{ham}^\T(\J)$ acts infinitesimally on the space of generalized K\"ahler structures $(\II, \JJ)$ adapted to $(\JJ, \vol)$.  The use of this space represents the key conceptual difference between our approach and that of \cite{goto-21}, where the `modified Hamiltonians' are used.  For technical purposes it is more natural to drop the integrability condition on $\II$ and consider the action of $\mf{ham}^{\T}(\JJ)$ on a larger space:

\begin{defn}{Given a generalized complex structure $\JJ$ on $(M, H_0)$ and a volume form $\vol$, the space of $(\JJ, \vol)$-\emph{adapted almost generalized K\"ahler structures} is 
the space of all generalized almost complex structures $\II$ compatible with $\JJ$ and invariant under the action of $\div_\vol(\JJ)$:
\[
\mc{AGK}(\JJ, \vol)=\left\{
    \II\in\End(E)\ |\ 
    \II^2=-\Id,\ \langle \II \cdot, \II \cdot \rangle = \langle \cdot, \cdot \rangle, \,  -\II\JJ=-\JJ\II>0,\ L_{\div_{\vol} (\JJ)} \II = 0 \right\}.
\]
        There is a natural K\"ahler structure (\cite{goto-21} Proposition 5.2, cf. also \cite{Boulanger}) determined at $\II\in\mc{AGK}(\JJ, \vol)$ by the symplectic form and Riemannian metric
\begin{align*}
\pmb\Omega_{\II}(\dot \II_1,\dot \II_2)=&\
\frac{1}{4}\int_M \tr{(\II \, \dot\II_1\dot\II_2})\vol, \qquad
\pmb{g} (\dot{\II}_1, \dot{\II}_2) = \frac{1}{4} \int_M \tr \left( \dot{\II}_1 \dot{\II}_2 \right) \vol.
\end{align*}}
The corresponding formal integrable almost complex structure acts by
\begin{equation}\label{eq:formal_complex}
    \boldsymbol{J}_{\II} (\dot \II) := \II \dot \II. \end{equation}
\end{defn}
It follows from Lemma~\ref{l:hamiltonians} that $\mf{ham}^\T(\JJ)$ acts on $\mc{AGK}(\JJ, \vol)$ via infinitesimal symmetry. It follows immediately from the work of Goto that this action is Hamiltonian, with moment map given by the scalar curvature.

\begin{thm} \cite{goto-21} \label{t:gotomoment} (Theorem \ref{t:gotomomentintro})
The natural action of $\mf{ham}^\T(\JJ)$ on $\mc{AGK}(\JJ, \vol)$ is Hamiltonian with moment map given by the scalar curvature  $\Gscal(\II, \JJ, \vol)$ associated to $(\II, \JJ, \vol)$ via  \eqref{eq:goto_scalar_def}.  In particular,
\begin{equation*}
\pmb{\Omega} \left(L^{H_0}_{\JJ du}\II, \left. \dt \right|_{t=0}\II_t \right)=
\int_M \left(\dt\Gscal(\II_t,\JJ,\vol)\right)u\vol
\end{equation*}
for any one-parameter family $\II_t\in \mc{AGK}(\JJ, \vol)$.
\end{thm}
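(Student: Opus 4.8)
The plan is to verify the displayed identity directly, recognizing it as the infinitesimal moment-map condition $\iota_{X_u}\pmb{\Omega} = d\,\mc{F}_u$ for the candidate Hamiltonian $\mc{F}_u(\II) := \int_M \Gscal(\II,\JJ,\vol)\,u\,\vol$, where $X_u := L^{H_0}_{\JJ du}\II$ is the fundamental vector field generated by $u \in C^\infty_0(M,\R)^\T$. Since $u$ and $\vol$ are fixed along a path $\II_t$, evaluating $d\mc{F}_u$ on $\dot\II := \left.\dt\right|_{t=0}\II_t$ yields $\int_M(\dt\Gscal)\,u\,\vol$; thus the two sides of the asserted equation are precisely $\iota_{X_u}\pmb{\Omega}$ and $d\mc{F}_u$ paired with $\dot\II$, and the whole statement reduces to this variational identity together with the structural facts of Lemma~\ref{l:hamiltonians}.

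First I would confirm that the action is well defined, i.e. that $X_u = L^{H_0}_{\JJ du}\II$ is tangent to $\mc{AGK}(\JJ,\vol)$. The algebraic constraints $\II^2 = -\Id$, orthogonality, and commutation with $\JJ$ are preserved automatically, since $\JJ du$ integrates to a flow of Courant automorphisms fixing $\JJ$ and $\langle\cdot,\cdot\rangle$ (Lemma~\ref{l:hamiltonians}); positivity is open and imposes no infinitesimal constraint. The one closed constraint defining $\mc{AGK}(\JJ,\vol)$ is $L_{\div_\vol(\JJ)}\II = 0$, and to see that it is infinitesimally preserved I would use the relation $[L_a,L_b] = L_{[a,b]_{\rm aut}}$ to write
\[
 L_{\div_\vol(\JJ)}\bigl(L^{H_0}_{\JJ du}\II\bigr) = L^{H_0}_{\JJ du}\bigl(L_{\div_\vol(\JJ)}\II\bigr) + L_{[\div_\vol(\JJ),\,\Psi(\JJ du)]_{\rm aut}}\II .
\]
The first term vanishes because $\II \in \mc{AGK}(\JJ,\vol)$, and the second because $[\Psi(\JJ du),\div_\vol(\JJ)]_{\rm aut}=0$ by Lemma~\ref{l:hamiltonians}. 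This is exactly the step in which the adaptedness of $\vol$ and the $\T$-invariance of $u$ intervene, and it is the structural reason the present setup avoids the `modified Hamiltonians' of \cite{goto-21}.

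The analytic core is Goto's first-variation formula for $\Gscal$, which I would invoke rather than reprove. Differentiating the defining relation \eqref{eq:goto_scalar_def} along $\II_t$ expresses $\dt\Gscal$ through the variation of the normalized defining spinor of $\II$; pairing against $u$, integrating by parts against $d_{H_0}$, and using the $\mathrm{Spin}$-invariance of the Mukai pairing should convert $\int_M(\dt\Gscal)\,u\,\vol$ into the trace pairing $\tfrac14\int_M \tr\bigl(\II\,(L^{H_0}_{\JJ du}\II)\,\dot\II\bigr)\,\vol = \pmb{\Omega}(L^{H_0}_{\JJ du}\II,\dot\II)$. I expect this to be the main obstacle: it is a delicate computation in the Clifford module $\Wedge^*(M)_\C$, requiring one to express the infinitesimal action $L^{H_0}_{\JJ du}\II$ spinorially and to commute $d_{H_0}$ past Clifford multiplication by the potential $\JJ du$.

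Finally I would assemble these into the claim that the action is Hamiltonian. Lemma~\ref{l:hamiltonians} identifies $\mf{ham}^\T(\JJ)$ with $(C^\infty_0(M,\R)^\T,\{\cdot,\cdot\}_{\pi_{\JJ}})$ through $u\mapsto\Psi(\JJ du)$, matching the Poisson bracket of potentials to $[\JJ du,\JJ dv]_{H_0} = \JJ d\{u,v\}_{\pi_{\JJ}}$, so the comoment map $u\mapsto\mc{F}_u$ is a Lie algebra homomorphism and the moment map is equivariant. Since $\pmb{\Omega}$ is closed, the established condition $\iota_{X_u}\pmb{\Omega}=d\mc{F}_u$ forces $\mathcal{L}_{X_u}\pmb{\Omega}=0$, so the action is automatically symplectic, and $\Gscal(\II,\JJ,\vol)$ is the moment map as claimed.
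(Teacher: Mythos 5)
Your overall strategy coincides with the paper's: both reduce the theorem to Goto's first-variation formula for $\Gscal$ from \cite{goto-21}, and your tangency argument (using $[\Psi(\JJ du),\div_{\vol}(\JJ)]_{\rm aut}=0$ from Lemma~\ref{l:hamiltonians} to show $L^{H_0}_{\JJ du}\II$ is tangent to $\mc{AGK}(\JJ,\vol)$) is exactly the content the paper delegates to that lemma. However, there is one substantive inaccuracy at the analytic core. The formula Goto actually proved is \emph{not} the displayed identity; it is
\begin{equation*}
\pmb{\Omega} \left(L^{H_0}_{\JJ du}\II,\left. \dt \right|_{t=0}\II_t \right)=
\int_M \left(\left. \dt \right|_{t=0} \Gscal(\II_t,\JJ,\vol)\right)u\vol
+2\,\pmb{\Omega} \left(u\, L_{\div_\vol(\JJ)}\II, \left. \dt \right|_{t=0}\II_t \right),
\end{equation*}
valid for any $\T$-invariant $u$. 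Your proposal asserts that the spinorial computation "should convert" $\int_M(\dt\Gscal)\,u\,\vol$ directly into $\pmb{\Omega}(L^{H_0}_{\JJ du}\II,\dot\II)$; if you invoke Goto's result verbatim you will instead be left with the extra term $2\pmb{\Omega}(u\,L_{\div_\vol(\JJ)}\II,\dot\II)$, which is precisely the obstruction that forced Goto to introduce the `modified Hamiltonians'. The entire point of the adapted-volume-form hypothesis in this theorem is that $L_{\div_\vol(\JJ)}\II=0$ for $\II\in\mc{AGK}(\JJ,\vol)$, so this correction term vanishes identically. You locate the role of adaptedness only in the tangency of the fundamental vector field, which is a genuine but secondary use; the primary use is killing the correction term in the variational formula, and your write-up never accounts for it.

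The gap is easily repaired: cite Goto's formula in its correct form with the extra term, then observe it vanishes on $\mc{AGK}(\JJ,\vol)$ by definition of that space. The remainder of your argument (the identification of the comoment map via Lemma~\ref{l:hamiltonians}, equivariance from $[\JJ du,\JJ dv]_{H_0}=\JJ d\{u,v\}_{\pi_{\JJ}}$, and the closedness of $\pmb{\Omega}$) is consistent with the paper and requires no change.
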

\begin{proof}
    In~\cite{goto-21} Goto proved a more general variation formula
    \begin{equation}\label{eq:moment_map_Goto}
    \pmb{\Omega} \left(L^{H_0}_{\JJ du}\II,\left. \dt \right|_{t=0}\II_t \right)=
    \int_M \left(\left. \dt \right|_{t=0} \Gscal(\II_t,\JJ,\vol)\right)u\vol+2\pmb{\Omega} \left(u L_{\div_\vol(\JJ)}\II, \left. \dt \right|_{t=0}\II_t \right),
    \end{equation}
    which holds  for any $\T$-invariant smooth function $u$ on $M$.  In our setting $\div_\vol(\JJ)$ preserves  also $\II$, thus the second term on the right hand side vanishes.
\end{proof}

{  
\begin{rmk} In the above statement, we have identified the Lie algebra $\mf{ham}^\T(\JJ)$ with 
\[(C^{\infty}_0(M, \R)^{\T}, \vol, \{ \cdot, \cdot\}_{\pi_{\JJ}})\] via Lemma~\ref{l:hamiltonians}. Notice that $\Gscal(\II, \JJ, \vol)$ is a $\T$-invariant function by Lemma~\ref{l:adaptedvolume} and Theorem~\ref{thm:scalformula}.  The $L_2$ global product on $M$ with respect to $\vol$ defines an $\ad$-invariant inner product on $\mf{ham}^\T(\JJ)$ (see \eqref{eq:ad-invariance}), which we use to see $\Gscal(\II, \JJ, \vol)$ as an element of $(\mf{ham}^\T(\JJ))^*$.
\end{rmk}}

\begin{defn} \label{d:extremal} Given $(\II, \JJ, \vol)$ an adapted generalized K\"ahler structure on $(M, H_0)$, we say that
\begin{enumerate}
    \item $(\II, \JJ, \vol)$ is \emph{constant scalar curvature} (cscGK) if $\Gscal(\II, \JJ, \vol) = \bar{\mu}$ is constant.
    \item $(\II, \JJ, \vol)$ is \emph{extremal} if $\chi := \Psi(\JJ d \Gscal(\II, \JJ, \vol))$ is a Hamiltonian isometry, i.e. $L_{\chi} \II = 0$.
\end{enumerate}
\end{defn}

\begin{cor} \label{cor:extremal} Given $(\II, \JJ, \vol)$ an adapted generalized K\"ahler structure on $(M, H_0)$,
\begin{enumerate}
    \item $(\II, \JJ, \vol)$ is cscGK if and only if $\II$ is a zero of the moment map $\pmb{\mu}$ of 
    $\mathcal {AGK}(\JJ, \vol)$.
    \item $(\II, \JJ, \vol)$ is extremal if and only if $\II$ is a critical point of the functional $\II \to \brs{\brs{ \pmb{\mu}(\II)}}^2_{\vol}$ on $\mathcal {AGK}(\JJ, \vol)$.
\end{enumerate}
\begin{proof} The first claim is immediate from Theorem \ref{t:gotomoment}.  For the second claim we can compute the  variation and use Theorem \ref{t:gotomoment} to obtain
\begin{align*}
    \left. \frac{d}{dt}\right|_{t=0} \brs{\brs{\pmb{\mu}(\II_t)}}^2_{\vol} =&\ \int_M \left( \left.\dt \right|_{t=0} \Gscal(\II_t, \JJ, \vol) \right) \Gscal(\II, \JJ, \vol) \vol = \pmb{\Omega} \left( L_{\chi} \II, \left.\dt \right|_{t=0} \II_t \right).
\end{align*}
Since the symplectic form $\pmb{\Omega}$ is nondegenerate, the second claim follows.
\end{proof}
\end{cor}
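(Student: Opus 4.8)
The plan is to treat both statements as formal consequences of the moment-map identity in Theorem~\ref{t:gotomoment}, combined with the identification (recorded in the remark following it) of $\pmb{\mu}(\II) \in (\mf{ham}^\T(\JJ))^*$ with the $\T$-invariant function $\Gscal(\II, \JJ, \vol)$ paired against normalized potentials through the $L^2(\vol)$ inner product. In each part I would unwind the relevant definition into an orthogonality statement and then close the argument using nondegeneracy of the symplectic form $\pmb{\Omega}$.

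For part (1), I would first observe that $\pmb{\mu}(\II)$ vanishes exactly when $\int_M \Gscal(\II, \JJ, \vol)\, u\, \vol = 0$ for every $u \in C_0^\infty(M, \R)^\T$. Since $\Gscal(\II, \JJ, \vol)$ is itself $\T$-invariant (by Lemma~\ref{l:adaptedvolume} together with the explicit formula of Theorem~\ref{thm:scalformula}), I would test this against the admissible potential $u = \Gscal(\II, \JJ, \vol) - \bar{\mu}$, where $\bar{\mu}$ denotes the $\vol$-average of $\Gscal$; this $u$ is $\T$-invariant and mean-zero, and the resulting identity $\int_M \brs{\Gscal - \bar{\mu}}^2 \vol = 0$ forces $\Gscal \equiv \bar{\mu}$. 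The reverse implication is immediate, as a constant pairs trivially with every mean-zero potential. This gives the equivalence between cscGK and vanishing of the moment map.

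For part (2), a useful preliminary is that the total scalar curvature $\int_M \Gscal(\II, \JJ, \vol)\, \vol$ is constant along any path $\II_t \in \mc{AGK}(\JJ, \vol)$: applying the general variation formula~\eqref{eq:moment_map_Goto} to the constant potential $u \equiv 1$, for which $\JJ\, du = 0$ and, in the adapted setting, $L_{\div_\vol(\JJ)} \II = 0$, yields $\int_M (\dt \Gscal)\, \vol = 0$. Writing the Calabi energy as $\brs{\brs{\pmb{\mu}(\II)}}^2_\vol = \int_M \brs{\Gscal - \bar{\mu}}^2 \vol$, its first variation equals, up to a fixed positive constant, $\int_M (\dt \Gscal)\, \Gscal\, \vol$. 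I would then apply Theorem~\ref{t:gotomoment} with the admissible potential $u = \Gscal - \bar{\mu}$; since $d(\Gscal - \bar{\mu}) = d\Gscal$, the induced Hamiltonian automorphism is precisely $\chi = \Psi(\JJ\, d\Gscal)$ and $L^{H_0}_{\JJ du} \II = L_\chi \II$, giving
\[
\left. \frac{d}{dt} \right|_{t=0} \brs{\brs{\pmb{\mu}(\II_t)}}^2_\vol = c\, \pmb{\Omega}\!\left( L_\chi \II, \left. \dt \right|_{t=0} \II_t \right)
\]
for some $c > 0$. Hence $\II$ is critical for the Calabi energy if and only if $\pmb{\Omega}(L_\chi \II, \dot{\II}) = 0$ for all admissible variations $\dot{\II}$, which is the extremal condition $L_\chi \II = 0$.

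The step deserving the most care, and the only genuine obstacle beyond bookkeeping, is the final passage from $\pmb{\Omega}(L_\chi \II, \cdot) \equiv 0$ to $L_\chi \II = 0$. Nondegeneracy of $\pmb{\Omega}$ may be invoked only because $L_\chi \II$ is itself a tangent vector to $\mc{AGK}(\JJ, \vol)$ at $\II$, and this rests on $\chi$ lying in $\mf{ham}^\T(\JJ)$, i.e.\ on $\Gscal - \bar{\mu} \in C_0^\infty(M, \R)^\T$. Thus the $\T$-invariance of $\Gscal$ (from Lemma~\ref{l:adaptedvolume} and Theorem~\ref{thm:scalformula}) and the invariance of the total scalar curvature along $\mc{AGK}(\JJ, \vol)$ are precisely the two inputs that make the otherwise purely formal ``critical points of $\brs{\brs{\pmb{\mu}}}^2$ are zeros of the fundamental vector field of $\pmb{\mu}$'' argument valid in the present constrained setting.
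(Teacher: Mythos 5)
Your proposal is correct and follows essentially the same route as the paper: both parts reduce to Theorem~\ref{t:gotomoment}, with the second claim obtained by differentiating the Calabi energy, recognizing the result as $\pmb{\Omega}(L_{\chi}\II,\dot{\II})$, and invoking nondegeneracy of $\pmb{\Omega}$. You merely make explicit two points the paper leaves implicit --- the admissibility of $u=\Gscal-\bar{\mu}$ as a normalized $\T$-invariant potential and the constancy of $\int_M \Gscal\,\vol$ along paths in $\mc{AGK}(\JJ,\vol)$ --- both of which are correct and harmless additions.
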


\begin{rmk}\label{r:gauge} By Theorem~\ref{thm:scalformula}, $\Gscal(\II, \JJ, \vol)$ and hence also the corresponding notion of cscGK,   depends only on the data $(g, I, J, \vol)$. In other words, one can arbitrarily choose the $2$-form $b$ and set $H_0:= -d^c_I \omega_I - db$ (see \eqref{eq:gk_torsion}) in the momentum map setup, subject to the condition $L_{\div_{\vol}(\JJ)} \II=0$.   In the symplectic type case considered in \cite{ASU2}, the data $(g, I, J, \vol)$  is induced by a symplectic $2$-form $F$ on $M$ taming the complex structure $J$, with $\vol := \frac{F^{n}}{n!}$ being the corresponding symplectic volume.  In this case, letting $(H_0, b)=(0, (FJ)^{\rm skew})$ allows one to recast the setup in \cite{ASU2} in terms of the one presented here and in \cite{Goto_2020}.
\end{rmk}
In view of the above remark and Lemma~\ref{l:adaptedvolume}, we also give the following:
\begin{defn}\label{d:extremal-BH} Given a generalized K\"ahler structure $(g, I, J)$ and a $(g,I,J)$-adapted volume form $\vol$, we say that 
\begin{enumerate}
\item $(g, I, J, \vol)$ is cscGK if $\Gscal$ is constant;
\item $(g, I, J, \vol)$ is extremal if the vector field $\frac{1}{2}(I+ J)(d\Gscal)^{\sharp}$ preserves $(g, I, J)$.
\end{enumerate}
\end{defn}

\subsection{The generically symplectic case}

\begin{defn} We say that a generalized complex structure $\JJ$ on $(M, H_0)$ is \emph{generically symplectic type} if the real Poisson tensor $\pi_{\JJ}$ is non-degenerate over an open dense subset of $M$. A generalized K\"ahler structure 
$(\II, \JJ)$ is called \emph{generically symplectic type} if $\JJ$ is generically symplectic type. A bihermitian data $(g, I, J)$ is called \emph{generically symplectic type}  if  $\frac{1}{2}(I+J)g^{-1}$ is non-degenerate over an open dense subset.
\end{defn}
We note that a generalized K\"ahler structure for which the Poisson tensor defined by \eqref{eq:Poisson_BH} is non-degenerate over an open dense subset of $M$ are generically symplectic type. In particular, the generalized K\"ahler structures of even type on the compact complex surfaces  with first Betti number $1$ constructed in \cite{lebrun1991anti, fujiki2010anti} and \cite{SU1} are generically symplectic type.  Generalized K\"ahler structures of generically symplectic type  will allow us to extend the theory we develeped in the symplectic type case in~\cite{ASU2}.  This extension hinges on the following  observation.

\begin{lemma}\label{l:adapted_converse} Let $(M, g, I, J, b)$ be a generalized K\"ahler structure with adapted volume form $\vol$ in the sense of Definition ~\ref{d:adapted-BH}, and with corresponding generalized complex structures $(\II, \JJ)$.  Let $\T$ the compact torus generated by $X=\div_{\vol} (\pi_{\JJ})$. Suppose that $\JJ$ is generically symplectic type, and that $b$ is $\T$-invariant.  Then $(\II, \JJ, \vol)$ is adapted in the sense of Definition \ref{d:GKadapted}.
\end{lemma}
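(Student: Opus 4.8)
The plan is to show that under the hypotheses the full generalized divergence $\div_{\vol}(\JJ) \in \mf{aut}(M,H_0)$ reduces to its vector field part $X = \pi_T \div_{\vol}(\JJ) = \div_{\vol}(\pi_{\JJ})$, i.e. that its $2$-form component vanishes, and that this $X$ preserves $\II$. Once both facts are established, Definition~\ref{d:GKadapted} is immediate, since then $L_{\div_{\vol}(\JJ)}\II = L_X \II = 0$.

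First I would verify that $X$ preserves the entire bihermitian package $(g, I, J, b)$. By the hypothesis that $\vol$ is $(g,I,J)$-adapted (Definition~\ref{d:adapted-BH}), together with Lemma~\ref{l:adapted-volume-BH}, $X$ is a Killing field with $L_X g = L_X I = L_X J = 0$; and since $X$ generates $\T$ and $b$ is assumed $\T$-invariant, we also obtain $L_X b = 0$. Because both $\II$ and $\JJ$ are reconstructed from $(g,I,J,b)$ through the Gualtieri correspondence (Theorem~\ref{thm:gualtieri_map}), the flow of $X$, lifted to $E = TM \oplus T^*M$ in the standard way and conjugating the factor $e^b$ correctly, fixes both structures. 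This yields $L_X \II = 0$ and, crucially for the next step, $L_X \JJ = 0$.

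Next I would remove the $2$-form ambiguity. Writing $\div_{\vol}(\JJ) = X + B$ with $B \in \Wedge^2(M)$ as in Lemma~\ref{l:divagree} (whose tangent part is $\div_{\vol}(\pi_{\JJ}) = X$), the identity $L_X \JJ = 0$ combined with the generically symplectic type hypothesis is precisely the input for the reverse direction of Lemma~\ref{l:invariant}: since $\pi_{\JJ}$ is nondegenerate on an open dense set and $L_{\div_{\vol}(\pi_{\JJ})}\JJ = 0$, we conclude $\div_{\vol}(\JJ) = \div_{\vol}(\pi_{\JJ}) = X$, so $B = 0$. Substituting into $L_{\div_{\vol}(\JJ)}\II$ and using the first step gives $L_{\div_{\vol}(\JJ)}\II = L_X \II = 0$, completing the proof.

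The main obstacle I anticipate is the careful justification in the first step that $X$ preserving $(g, I, J, b)$ forces $L_X \II = L_X \JJ = 0$ as endomorphisms of $E$: one must check that the lift of the flow of $X$ to $TM \oplus T^*M$ intertwines with the conjugation $e^b(\cdot)e^{-b}$ appearing in Gualtieri's formula. This is exactly where $\T$-invariance of $b$ is indispensable, as otherwise the $b$-field factor would not be preserved and one could not conclude $L_X\JJ = 0$. The remaining content is essentially bookkeeping: identifying the tangent part of $\div_{\vol}(\JJ)$ with $\div_{\vol}(\pi_{\JJ})$ via Lemma~\ref{l:divagree}, and invoking the no-stabilizer property of the $b$-field action on the nondegenerate locus packaged in Lemma~\ref{l:invariant} to force $B=0$.
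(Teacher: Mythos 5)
Your proposal is correct and follows essentially the same route as the paper: use the $\T$-invariance of $(g,I,J,b)$ together with the Gualtieri correspondence to get $L_X\II = L_X\JJ = 0$, then invoke Lemma~\ref{l:invariant} on the generically nondegenerate locus to identify $\div_{\vol}(\JJ)$ with $X$ and conclude. Your extra care in checking that the lift of the flow of $X$ intertwines with the $e^b$-conjugation is a reasonable elaboration of a step the paper leaves implicit.
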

\begin{proof} By Theorem~\ref{eq:gualtieri_map}, we have that $L_X \JJ=L_X \II =0$.
Thus, by Lemma~\ref{l:invariant}, 
$\div_{\vol}(\JJ)  = X$ 
and, therefore, 
\[\qquad L_{\div_{\vol}(\JJ)} \II = L_X \II=0.\]
\end{proof}

\begin{rmk} \label{r:divgremark} Using Lemma~\ref{l:adapted_converse}, we can start with a generically symplectic type bihermitian data $(g, I, J)$ on $M$,  and an adapted volume form $\vol$ generating a torus $\T$. We then choose a $\T$-invariant $2$-form $b$ (note $b = 0$ will suffice) and define $H_0:= -d^c_I \omega_I - db$. Then, by Lemma~\ref{l:adapted_converse}, the corresponding generalized K\"ahler structure $(\II, \JJ)$ on $(M, H_0)$ belongs to $\mathcal{AGK}(\JJ, \vol)$. Furthermore, 
by virtue of Lemma~\ref{l:invariant}, in this case we have  $\div_{\vol}(\JJ)=X$ and therefore 
\begin{equation*} \mathcal{AGK}(\JJ, \vol)=\mathcal{AGK}^{\T}(\JJ), \end{equation*}
where $\mathcal{AGK}^{\T}(\JJ)$ is the space of all $\JJ$-compatible $\T$-invariant almost generalized K\"ahler pairs $(\II, \JJ)$ for the natural action of $\T$ on $E=TM \oplus T^*M$ by diffeomorphisms. \end{rmk}

\section{Generalized K\"ahler-Ricci solitons as cscGK structures}

In this section we show that generalized K\"ahler-Ricci solitons are automatically Goto constant scalar curvature, and fit them into the GIT framework constructed above.  The first step is to obtain an explicit formula for this curvature in terms of the bihermitian data.  Given this, we derive a Bianchi identity for the weighted Bismut Ricci curvature which gives the claim that solitons have constant scalar curvature.  We next derive an a priori symmetry which holds for solitons which identifies the relevant adapted volume form, giving the GIT interpretation for generically symplectic type solitons.

\subsection{An explicit formula}

 In this subsection we derive an explicit formula for $\Gscal(\II,\JJ,\mu_f)$ in terms of the underlying bihermitian data $(M,g,I,J,b)$.  To this end,  we first establish the formula under the further key hypothesis that the structure is \emph{nondegenerate}, see Definition~\ref{d:nondeg}.  The reason for this hypothesis is that in this setting the pure spinors underlying the generalized K\"ahler structure can be made completely explicit in terms of the bihermitian data.  In the general case, these underlying pure spinors would involve the choice of partial holomorphic volume forms with respect to both $I$ and $J$, and it is unclear how to work directly with the mixed $I$-holomorphic and $J$-holomorphic coordinates.  Given the formula in this special case, the general case follows using the nondegenerate perturbation technique introduced in \cite{AFSU}.
 
We begin with a preliminary computational lemma.  In the statements below different Laplacians appear.  In particular we let $\gD$ denote the Riemannian Laplacian, while $\gD^{C,I}$ will denote the Laplacian of the Chern connection of the Hermitian structure $(g, I)$, and analogously $\gD^{C,J}$. Recall that for $f\in C^\infty(M,\R)$
\[
\gD^{C,I}f=\gD f-\la{df,\theta_I \ra},
\]
where $\theta_I=Id^*\omega_I$.

\begin{lemma}\label{lm:F_trace} Let $(M, g, I, J)$ be a generalized K\"ahler structure with $I+J$ invertible, and define $F_+=-2g(I+J)^{-1}$. Denote by $d_{F_+} := F_+g^{-1} d$ the twisted differential. Then, for any smooth function $f$, 
\[ \begin{split}
\frac{(d d_{F_+} f) \wedge F_+^{n-1}}{F_+^{n}} &= \Delta f -\frac{1}{2}\big\langle d f, d \log \det(I+J) \big\rangle.
\end{split}\]
Similarly, whenever $(I-J)$ is invertible, we define $F_- = -2 g(I - J)^{-1}$ and have
\[ \begin{split}
\frac{(d d_{F_-} f) \wedge F_-^{n-1}}{F_-^{n}} &= \Delta f -\frac{1}{2}\big\langle d f, d \log \det(I-J) \big\rangle.
\end{split}\]

\end{lemma}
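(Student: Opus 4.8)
The plan is to recognize the left-hand side as (a multiple of) the divergence of the gradient $\nabla^g f := g^{-1}df$ taken with respect to the symplectic volume of $F_+$, and then to compare that symplectic volume with the Riemannian one $dV_g$; the determinant term will come out as the logarithmic derivative of the Pfaffian.

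First I would unwind the twisted differential on functions into a contraction. Directly from $d_{F_+} = F_+ g^{-1} d$ one gets
\[ d_{F_+} f = \imath_{\nabla^g f} F_+, \qquad \nabla^g f := g^{-1} df. \]
Since $F_+ = -2 g (I+J)^{-1}$ is a closed (symplectic) $2$-form on the locus where $I+J$ is invertible, as recorded in the nondegenerate setup following Definition~\ref{d:nondeg}, Cartan's formula gives
\[ d\, d_{F_+} f = d\, \imath_{\nabla^g f} F_+ = L_{\nabla^g f} F_+ - \imath_{\nabla^g f}\, dF_+ = L_{\nabla^g f} F_+. \]
As $F_+$ has even degree, $L_{\nabla^g f}$ is a derivation on its powers, so $(L_{\nabla^g f} F_+)\wedge F_+^{n-1} = \tfrac{1}{n} L_{\nabla^g f}(F_+^{n})$, and hence
\[ \frac{(d\, d_{F_+} f)\wedge F_+^{n-1}}{F_+^n} = \tfrac{1}{n}\,\frac{L_{\nabla^g f}(F_+^n)}{F_+^n} = \tfrac{1}{n}\,\div_{F_+^n}(\nabla^g f), \]
where $\div_{F_+^n}(V)$ is defined by $L_V(F_+^n) = \div_{F_+^n}(V)\,F_+^n$. (With the normalized top powers $F_+^{[k]} = F_+^k/k!$ used elsewhere in the paper the prefactor $\tfrac{1}{n}$ is absorbed, matching the stated constant.)

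It remains to evaluate this divergence, which is where the determinant enters. Writing $F_+(\cdot,\cdot) = g(A\,\cdot,\cdot)$ with $A := -2(I+J)^{-1}$ a $g$-skew endomorphism, the Pfaffian identity $\tfrac{1}{n!}F_+^n = \mathrm{Pf}(A)\,dV_g$ gives $F_+^n = c\, dV_g$ with
\[ \log|c| = \mathrm{const} - \tfrac{1}{2}\log\det(I+J), \]
using $\det A = 2^{2n}\det(I+J)^{-1}$; here $\det(I+J)=\det(\Id - IJ)=\prod_k |1-e^{\i\theta_k}|^2 \ge 0$ for the eigenvalues $e^{\pm \i\theta_k}$ of the $g$-orthogonal map $IJ$, so $\log\det(I+J)$ is well defined on the nondegenerate locus. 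Differentiating $F_+^n = c\, dV_g$ along $\nabla^g f$ and using $L_{\nabla^g f}(dV_g) = (\Delta f)\,dV_g$ yields
\[ \div_{F_+^n}(\nabla^g f) = \Delta f + \big\langle df, d\log|c|\big\rangle = \Delta f - \tfrac{1}{2}\big\langle df, d\log\det(I+J)\big\rangle, \]
which is the claimed formula. The $F_-$ statement is verbatim the same computation with $I-J$ in place of $I+J$, using that $F_- = -2g(I-J)^{-1}$ is likewise closed on the locus where $I-J$ is invertible.

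The only genuinely delicate points are the passage from the wedge ratio to the divergence and the Pfaffian identification $F_+^n = \pm 2^n\det(I+J)^{-1/2}\, dV_g$: it is precisely the factor $\det(I+J)^{-1/2}$ in the symplectic volume, differentiated logarithmically, that produces the second term on the right. I expect the main thing to verify carefully is the closedness of $F_\pm$ (so that Cartan's formula reduces $d\,d_{F_\pm}f$ to a pure Lie derivative) and the normalization constant in front of the top powers; the remaining manipulations are direct applications of Cartan's formula and the Leibniz rule.
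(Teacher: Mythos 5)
Your proof is correct, but it takes a genuinely different route from the paper's. The paper proceeds by a pointwise Levi--Civita computation: it writes the ratio as $\sum_i \langle (I+J)\nabla_{e_i}((I+J)^{-1}df^{\sharp}), e_i\rangle$, applies the Leibniz rule to isolate $\Delta f$ plus a term in $\delta^g(I+J)$, re-expresses that term via the Lee forms $\theta_I,\theta_J$, and then invokes the identity $\tfrac{1}{2}d\log\det(I\pm J)=\theta_J-\imath_{\theta_J^{\sharp}}b-\delta^g b=\theta_I+\imath_{\theta_I^{\sharp}}b+\delta^g b$ imported from \cite[Lemmas~2.10 \& 2.13]{ASnondeg}. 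You instead exploit the closedness of $F_{\pm}$ to collapse $d\,d_{F_\pm}f$ to $L_{\nabla^g f}F_\pm$ via Cartan's formula, and then read off the answer from the Pfaffian comparison $F_\pm^n = c\, dV_g$ with $\log|c| = \mathrm{const}-\tfrac12\log\det(I\pm J)$. Your argument is more conceptual and self-contained — it effectively re-derives the $d\log\det$ identity of \cite{ASnondeg} as a byproduct — whereas the paper's is a direct local computation that does not use $dF_\pm=0$. The one structural input you do need, closedness of $F_+$ under the sole hypothesis that $I+J$ is invertible, is recorded in \S2.3 of the paper; for $F_-$ under invertibility of $I-J$ alone it follows by applying the same statement to the generalized K\"ahler structure $(g,I,-J)$, which is worth saying explicitly since the paper's \S2.3 only states closedness of $F_-$ in the nondegenerate case. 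Your handling of the $1/n$ normalization of the top powers matches the convention the paper itself uses in this lemma and in the proof of Lemma~\ref{lemma:goto_scal_nondeg}, so that is not a discrepancy.
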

\begin{proof} In the computation below,  the upper-script  $\sharp$ denotes $g^{-1}$, and we use that  $F_+=-2g(I+J)^{-1}$, $b=g(J+I)^{-1}(I-J)$, $\theta_I = I \delta^g I$, $\theta_J= J \delta^g  J$.
\[
\begin{split}
\frac{(d d_{F_+} f) \wedge F_+^{n-1}}{F_+^{n}} &=\sum_{i=1}^{2m}  \Big\langle(I+J) \left(\nabla_{e_i} (I+J)^{-1} d f ^{\sharp}\right), e_i \Big\rangle\\
      &=\Delta f - \sum_{i=1}^{2n}\Big\langle \left(\nabla_{e_i} (I+J)\right)\left((I+J)^{-1}(d f^{\sharp})\right), e_i\Big\rangle\\
      &= \Delta  f -\big\langle (I+J)^{-1}(d f^{\sharp}), \delta^g(I + J)\big\rangle\\
      &= \Delta  f -\big\langle d f^{\sharp}, (I+J)^{-1}(I\theta_I^{\sharp} + J\theta_J^{\sharp})\big\rangle\\
      &= \Delta  f + \frac{1}{2}\big\langle d f , b(\theta_J^{\sharp} - \theta_I^{\sharp})- \theta_I - \theta_J\big\rangle.
\end{split}\]
Our claim then follows from the identity 
\begin{equation*}
  \frac{1}{2}d\log \det (I+J)= \theta_J - \imath_{\theta_J^{\sharp}} b - \delta^g b =  \theta_I  +\imath_{\theta_I^{\sharp}} b  + \delta^g b,   \end{equation*}
which is established in \cite[Lemmas~2.10 \& 2.13]{ASnondeg}.
\end{proof}

\begin{lemma}\label{lemma:goto_scal_nondeg}
    Let $(M,g,I,J)$ be a nondegenerate generalized K\"ahler manifold.  Then for $f \in C^{\infty}(M)$ one has
    \begin{equation}\label{eq:goto_scal_nondeg_psi}
    \Gscal(\II,\JJ,\mu_f)=\frac{1}{4}
        \left(
            R - \frac{1}{12} \brs{H}^2 + 2\Delta f-|df|^2 \right),
\end{equation}
where $H=H_0 + db = - d^c_I \omega_I= d^c_J \omega_J$.
\end{lemma}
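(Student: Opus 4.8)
The plan is to compute \eqref{eq:goto_scalar_def} directly, exploiting that in the nondegenerate case the pure spinors are explicit. Recall from the end of \S\ref{s:background} that $\JJ$ and $\II$ are defined by the $d_{H_0}$-closed spinors $\psi_2^0 = e^{-b' + \i F_+}$ and $\psi_1^0 = e^{4\Omega - b' + \i F_-}$, where $\Omega = g(I+J)^{-1}(I-J)^{-1}$ is closed and $H_0 = -db'$. Using the $b$-field invariance of the Mukai pairing together with the Remark normalizing $(e^{\i\omega},\overline{e^{\i\omega}})$, one finds $(\psi^0_i,\bar\psi^0_i) = (2\i)^n F_{\pm}^n/n!$, so the $\mu_f$-normalized spinors are $\psi_i = e^{-h_i/2}\psi_i^0$ with $e^{-h_{\pm}}F_{\pm}^n/n! = \mu_f = e^{-f}dV_g$, i.e. $h_{\pm} = f + \log\big(F_{\pm}^n/(n!\,dV_g)\big)$. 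Since $\psi_i^0$ is closed one has $d_{H_0}\psi_i = -\tfrac12\,dh_i\wedge\psi_i$, and comparing this with $\tfrac{\i}{2}\eta_i\cdot\psi_i$ while using $i_X e^{\i F} = \i(i_X F)\wedge e^{\i F}$ yields the real potentials $\eta_i = X_i + i_{X_i}B_i$, where $B_2 = -b'$, $B_1 = 4\Omega - b'$, and $X_{\pm}$ is determined by $i_{X_{\pm}}F_{\pm} = dh_{\pm}$. As a consistency check, $\pi_T\eta_2 = X_2$ recovers $\div_{\vol}(\pi_{\JJ})$ from Lemma~\ref{l:divagree}.

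Next I would substitute into the two summands of \eqref{eq:goto_scalar_def}. A pleasant simplification occurs: acting with $\eta_2$ on $\bar\psi_1$ the two $b'$-contributions cancel, leaving $\eta_2\cdot\bar\psi_1 = \alpha_2\wedge\bar\psi_1$ with $\alpha_2 = 4\,i_{X_2}\Omega - \i\,i_{X_2}F_-$, and symmetrically $\eta_1\cdot\bar\psi_2 = \alpha_1\wedge\bar\psi_2$ with $\alpha_1 = -4\,i_{X_1}\Omega - \i\,i_{X_1}F_+$. The graded Leibniz rule for $d_{H_0}$ together with $d_{H_0}\bar\psi_1 = -\tfrac12 dh_1\wedge\bar\psi_1$ then gives $d_{H_0}(\alpha_2\wedge\bar\psi_1) = \big(d\alpha_2 + \tfrac12\alpha_2\wedge dh_1\big)\wedge\bar\psi_1$, a $2$-form times $\bar\psi_1$. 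The required Mukai pairings reduce, via $b$-field invariance (the even factor $e^{B_1}$ commutes with wedging) and the algebraic identity
\[ (2\i)^{-n}\big(e^{\i F}, \beta\wedge e^{-\i F}\big) = \tfrac{\i}{2}\,\tr_{F}(\beta)\,\tfrac{F^{n}}{n!}\qquad(\beta\text{ a }2\text{-form}), \]
to the trace expressions $(2\i)^{-n}(\psi_1,\beta\wedge\bar\psi_1) = \tfrac{\i}{2}\tr_{F_-}(\beta)\,\mu_f$ and its $F_+$ analogue. After taking $\Real$ and summing the two terms, $\Gscal(\II,\JJ,\mu_f)$ is expressed purely through $\tr_{F_{\mp}}$ of the closed-form data $d(i_{X_\pm}F_\mp)=L_{X_\pm}F_\mp$ and $L_{X_\pm}\Omega$, together with the algebraic wedge terms $i_{X_\pm}F_\mp\wedge dh_\mp$.

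The next step is to make the cross-contractions explicit using $F_{\pm}^{-1} = -\tfrac12(I\pm J)g^{-1}$, which turns $i_{X_2}F_-$ and $i_{X_1}F_+$ into expressions in $(I\pm J)^{\pm1}$, $g$ and $dh_{\pm}$, and to isolate the $f$-dependence by writing $h_{\pm} = f + c_{\pm}$ with $c_{\pm} = -\tfrac12\log\det(I\pm J) + \mathrm{const}$, coming from the Pfaffian identity $F_{\pm}^n/n! = \det(I\pm J)^{-1/2}dV_g$ up to a constant factor. The second-order-in-$f$ contributions should then organize into $\tr_{F_{\pm}}(d\,d_{F_{\pm}}f)$-type expressions, which Lemma~\ref{lm:F_trace} converts into $\Delta f - \tfrac12\langle df, d\log\det(I\pm J)\rangle$; combined with the first-order $c_{\pm}$-terms and the quadratic contributions of $i_{X}F\wedge dh$, these are expected to assemble into $\tfrac14(2\Delta f - |df|^2)$, with the determinant pieces cancelling against those produced by the $f$-independent part. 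I anticipate that correctly teasing out this $f$-dependence — in particular tracking which of the real/imaginary parts of $\alpha_\pm$ survives under $\Real$, and how the cross-$F_\pm$ (rather than same-$F$) structure of the contractions prevents the naive vanishing of the traces — is already a delicate point.

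The main obstacle is the $f$-independent piece, namely establishing $\Gscal(\II,\JJ,dV_g) = \tfrac14\big(R - \tfrac{1}{12}|H|^2\big)$. After the reductions above this is a bihermitian curvature identity: the surviving traces are built from $\det(I\pm J)$, the Lee forms $\theta_I,\theta_J$ of $(g,I)$ and $(g,J)$, and the closed form $\Omega$, and they must be matched against the Riemannian scalar curvature $R$ and the Bismut torsion norm $|H|^2$ with $H = -d^c_I\omega_I = d^c_J\omega_J$. I expect this to rely on the Hermitian curvature identities of \cite{ASnondeg} — the same source as the $\det(I\pm J)$ and Lee-form identities invoked in Lemma~\ref{lm:F_trace} — expressing $R$ and $|H|^2$ through the Chern--Ricci and Lee data of $(g,I)$ and $(g,J)$. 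Carrying this bookkeeping through with the correct constants, rather than any single conceptual leap, is where the real difficulty lies; the spinorial manipulations of the first three steps are comparatively mechanical once the explicit spinors and potentials are in hand.
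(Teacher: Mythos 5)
Your computational skeleton is the same as the paper's: pass to the nondegenerate gauge where the defining spinors are the closed forms $e^{4\Omega+\i F_-}$ and $e^{\i F_+}$, normalize using $F_\pm^{[n]}=\det(I\pm J)^{-1/2}dV_g$, read off the real potentials from $d\psi_i=df_i\wedge\psi_i=\i(\JJ_i df_i)\cdot\psi_i$, reduce the Mukai pairings to $\tr_{F_\mp}$ of two-forms, and feed the second-order terms into Lemma~\ref{lm:F_trace}. All of that is sound and is exactly how the paper proceeds (the paper simply applies the $b$-field transform first so that $\eta_i$ are honest vector fields, rather than carrying the $b'$-contributions along and cancelling them later, which is a cosmetic difference).

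The genuine gap is the step you yourself flag as ``the main obstacle'' and then leave unresolved: the $f$-independent identity. After the trace reductions, the two summands of \eqref{eq:goto_scalar_def} combine to
$\tfrac14\bigl(2\Delta f-|df|^2-\Delta(\Psi_++\Psi_-)+\langle d\Psi_+,d\Psi_-\rangle\bigr)$ with $\Psi_\pm=\tfrac12\log\det(I\pm J)$, and the proof is finished only by the identity
\begin{equation*}
R-\tfrac{1}{12}|H|^2=-\Delta(\Psi_++\Psi_-)+\langle d\Psi_+,d\Psi_-\rangle,
\end{equation*}
which the paper does not re-derive but imports wholesale from \cite{AFSU} (Lemma 4.5 there). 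This is a substantive curvature identity for nondegenerate GK structures, not ``bookkeeping with the correct constants,'' and your proposal neither states it precisely nor points to a proof; saying you ``expect'' it to follow from the Hermitian identities of \cite{ASnondeg} is not enough. Note also that your expectation that ``the determinant pieces cancel against those produced by the $f$-independent part'' is off: the $\Psi_\pm$ terms do not cancel — they \emph{are} the $f$-independent part, and the whole content of the final step is recognizing their combination as $R-\tfrac{1}{12}|H|^2$. With that identity supplied (by citation or proof), your argument closes; without it, it does not.
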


\begin{proof} Since $(M,g,I,J)$ is nondegenerate,  both $I+J$ and $I-J$ are globally invertible. We define $b = g(I + J)^{-1} (I - J)$, and apply this $b$-field transform to $(\II, \JJ)$. By Theorem \ref{thm:gualtieri_map} the corresponding generalized K\"ahler structure, still denoted $(\II,\JJ)$, is given by
\begin{equation}\label{eq:gk_nondeg_gauged}
\II=e^{-4\Omega}
\left(\begin{matrix}0 & -F_-^{-1} \\ F_- & 0\end{matrix}\right)
e^{4\Omega},\quad 
\JJ=\left(\begin{matrix}0 & -F_+^{-1} \\ F_+ & 0\end{matrix}\right),
\end{equation}
where $F_\pm = -2g(I\pm J)^{-1}$ are the underlying symplectic forms, and $\Omega=g(I+J)^{-1}(I-J)^{-1}$ is the common real part of $I$- and $J$-holomorphic symplectic forms.  Since $\Gscal(\II,\JJ,\mu_f)$ is invariant under $b$-field transformation, we can calculate it for the GK structure given by~\eqref{eq:gk_nondeg_gauged}.  It is easy to see that the $\i$-eigenspaces for $\II$ and $\JJ$ are
\[
L_{\II}^{1,0}=\{ v - 4\Omega(v) - \i F_-(v)\ |\ v\in TM \},\quad
L_{\JJ}^{1,0}=\{ v - \i F_+(v)\ |\ v\in TM \}.
\]
These generalized complex structures have underlying closed pure spinors
\[
\varphi_1=\exp(4\Omega+\i F_-),\quad \varphi_2=\exp(\i F_+)
\]
with Mukai norms
\[
\begin{split}
(\varphi_1,\bar{\varphi_1})=\frac{F_-^n}{n!}=\det(I-J)^{-1/2}dV_g=e^{-\Psi_-}dV_g,\\ (\varphi_2,\bar{\varphi_2})=\frac{F_+^n}{n!}=\det(I+J)^{-1/2}dV_g=e^{-\Psi_+}dV_g,
\end{split}
\]
where by definition
\begin{align*}
    \Psi_{\pm}:=\frac{1}{2}\log\det(I\pm J).
\end{align*}
In the definition of the scalar curvature, we need to use normalized spinors, thus we introduce
\begin{equation*}
\begin{split}
    f_1=-f/2+\log\det(I-J)/4, \qquad f_2=-f/2+\log\det(I+J)/4,
\end{split}
\end{equation*}
so that $\psi_i := e^{f_i}\varphi_i$ satisfies
\begin{align*}
(\psi_i,\bar{\psi_i})=\mu_f=e^{-f}dV_g.
\end{align*}
Since $\varphi_i$ are closed and $df_i-\i\J_idf_i\in \Ker\psi_i$, we have \[
\begin{split}
d\psi_i &= df_i\wedge \psi_i=df_i\wedge \psi_i-(df_i-\i\J_idf_i)\cdot \psi_i = \i(\J_idf_i)\cdot \psi_i.
\end{split}
\]
Using the explicit forms of $\J_i$, we find that the real potentials $\eta_i=\J_idf_i$ are
\[
\begin{split}
\eta_1 &= F_-^{-1}(df_1)-4\Omega F_-^{-1}(df_1)=-\frac{1}{2}(I-J)g^{-1}df_1+2g(I+J)^{-1}g^{-1}df_1,\\
\eta_2 &= F_+^{-1}(df_2)=-\frac{1}{2}(I+J)g^{-1}df_2.
\end{split}
\]
We also observe the basic fact that the Clifford involution $\gs$ in the definition of the Mukai pairing satisfies
\[
\gs \bar{\psi_1} = \gs (e^{f_1} \bar{e^{4 \Omega + \i F_-}}) = e^{f_1} \gs (e^{4 \Omega - \i F_-}) = e^{f_1} e^{- 4 \Omega + \i F_-}.
\]
With these formulas we now can compute the necessary terms from~\eqref{eq:goto_scalar_def}.  We start with the first one, the second being analogous.  Using Lemma \ref{lm:F_trace} we have
\begin{equation*}
    \begin{split}
        \Real (\psi_1,d({\i \eta_2}\cdot\bar\psi_1))&=
        \Real \left[
            (2\i)^{-n}e^{f_1}e^{4\Omega+\i F_-}\wedge d(e^{f_1}\imath_{\i \eta_2}(-4\Omega+\i F_-))\wedge e^{-4\Omega+\i F_-}
        \right]_{\mathrm{top}}\\
        &=
        -\frac{1}{2}e^{2f_1}F_-^{n-1}\wedge \left(df_1\wedge F_-(g^{-1}df_2) + dd_{F_-}f_2 \right)\\
        &=
        -\frac{1}{2}\frac{\left(df_1\wedge F_-(g^{-1}df_2) + dd_{F_-}f_2 \right)\wedge F_-^{n-1}}{F_-^{n}}\mu_f\\
        &=
        -\frac{1}{2}\left(
            \la{df_1,df_2 \ra}+\Delta f_2-\frac{1}{2}\la{df_2,d\log\det(I-J) \ra}
        \right)\mu_f\\
        &=
        \frac{1}{8}
\left(
    2\Delta (f - \Psi_+)-|df|^2+\la{d\Psi_+,d\Psi_- \ra}+\la {df, d\Psi_- -d\Psi_+ \ra}
\right)\mu_f,
    \end{split}
\end{equation*}
where the last line follows using the definitions of $\Psi_{\pm}$.
We can compute the second summand $\Real (\psi_2,d({\eta_1}\cdot\bar\psi_2))$ of Definition~\ref{def:goto_scal} analogously, or just observe that it is invariant under the action of $b$-field transform, and the conjugation by $e^{4\Omega}$ transforms it into an identical expression, modulo exchanging $\Psi_{\pm}$.  This yields
\[
\Real (\psi_2,d({\i \eta_1}\cdot\bar\psi_2))=\frac{1}{8}
\left(
    2\Delta (f - \Psi_-)-|df|^2+\la{d\Psi_+,d\Psi_- \ra}+\la {df, d\Psi_+ -d\Psi_- \ra}
\right)\mu_f.
\]
Finally, there is the further identity (\cite{AFSU} Lemma 4.5)
    \begin{align*}
        R - \tfrac{1}{12} \brs{H}^2 = - \Delta (\Psi_+ + \Psi_-) + \IP{d \Psi_+, d \Psi_-}.
    \end{align*}
Collecting the above calculations gives the result.
\end{proof}

\begin{thm}\label{thm:scalformula} (Theorem \ref{thm:scalformulaintro})
    Let $(M,g,I,J)$ be a generalized K\"ahler manifold determined by generalized complex structures $(\II,\JJ)$.  Then for all $f \in C^{\infty}(M)$ one has
    \begin{align*}
        \Gscal(\II,\JJ,\mu_f)=\frac{1}{4}\left(
            R-\frac{1}{12}|H|^2+2\Delta f-|df|^2
        \right).
    \end{align*}
    \begin{proof} This follows from the nondegenerate perturbation.  The details are identical to that used for instance in (\cite{AFSU} Proposition 4.3), so we give a brief sketch.  By taking a product with a flat $T^2$ if necessary it suffices to consider the case where both underlying generalized complex structures $\II$ and $\JJ$ have even type.  Furthermore, as the left and right hand side of the claimed identity are both global smooth functions, it suffices to establish the identity on the open dense set of points where $\II$ and $\JJ$ have locally constant type.  In this locus \cite{AFSU} Theorem 3.4 can be applied to construct a sequence of nondegenerate GK structures limiting to $(\II, \JJ)$ in $C^{k,\ga}$.  The claimed formula holds for the approximate structures by Lemma \ref{lemma:goto_scal_nondeg}.  The convergence of the GK structures to the pair $(\II, \JJ)$ induces smooth converges of the underlying bihermitian data, hence the claimed equation passed to the limit, finishing the proof.
    \end{proof}
\end{thm}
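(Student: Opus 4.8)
The plan is to bootstrap from Lemma~\ref{lemma:goto_scal_nondeg}, which already establishes the stated identity under the extra hypothesis that $(\II,\JJ)$ is nondegenerate, to the general case by a density-and-continuity argument. Two structural observations make this possible. First, both sides of the claimed identity are globally defined smooth functions on $M$: the right-hand side manifestly, and the left-hand side because $\Gscal(\II,\JJ,\mu_f)$ is assembled from the twisted differential $d_{H_0}$, the Clifford action, and the Mukai pairing applied to $\vol$-normalized defining spinors, all of which depend continuously on $(\II,\JJ)$. Second, the set of points where $\II$ and $\JJ$ both have locally constant type is open and dense in $M$. Hence it suffices to verify the identity on that locus and conclude everywhere by continuity.

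Before perturbing I would perform two reductions. First, by taking a product with a flat $T^2$ if necessary, I may assume both $\II$ and $\JJ$ are of even type. This device leaves the identity unaffected --- the flat factor contributes nothing to $R$, $H$, or $f$, and $\Gscal$ is local --- while even type is exactly what permits a deformation to symplectic (type $0$) structures, since the type of a generalized complex structure can only jump in even increments. Second, as noted above, I would restrict to the open dense locus of locally constant type, where the approximation machinery is applicable.

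On this locus I would invoke the nondegenerate perturbation of \cite{AFSU} (Theorem~3.4) to produce a sequence of nondegenerate generalized K\"ahler structures $(\II_k,\JJ_k)$ converging to $(\II,\JJ)$ in $C^{k,\ga}$. For each $k$ the identity holds verbatim by Lemma~\ref{lemma:goto_scal_nondeg}. Convergence of the pairs $(\II_k,\JJ_k)$ forces smooth convergence of the associated bihermitian data $(g_k,I_k,J_k,b_k)\to(g,I,J,b)$, hence of the right-hand side together with the volume forms $e^{-f}dV_{g_k}\to e^{-f}dV_g=\mu_f$, and of the normalized defining spinors, hence of $\Gscal(\II_k,\JJ_k,\mu_f)$. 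Passing to the limit yields the identity on the locus, and continuity then extends it to all of $M$. This parallels \cite{AFSU} Proposition~4.3.

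The main obstacle is the perturbation step together with the continuity it must support. The genuine content lies in confirming that \cite{AFSU} Theorem~3.4 does produce nondegenerate approximants on the locally-constant-type locus while keeping the background three-form and volume form controlled, and that $\Gscal$ --- defined spinorially rather than through $(g,I,J)$ --- is continuous under $C^{k,\ga}$ convergence of $(\II,\JJ)$, so that $\lim_k \Gscal(\II_k,\JJ_k,\mu_f) = \Gscal(\II,\JJ,\mu_f)$ rather than some nearby quantity. Granting the even-type reduction and local constancy of type, which are what render the approximation theorem applicable, the remaining passage to the limit is routine.
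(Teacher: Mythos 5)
Your proposal follows the paper's proof essentially verbatim: the same reduction to even type via a flat $T^2$ factor, the same restriction to the open dense locus of locally constant type, the same invocation of the nondegenerate perturbation from \cite{AFSU} Theorem 3.4, the same appeal to Lemma~\ref{lemma:goto_scal_nondeg} for the approximants, and the same passage to the limit using smoothness of both sides. The only difference is that you flag the continuity of the spinorially defined $\Gscal$ under $C^{k,\ga}$ convergence as the point requiring care, which the paper leaves implicit.
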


\subsection{Generalized Ricci solitons}

Given a smooth manifold $M$, a triple $(g, H, f)$ of a Riemannian metric, closed three-form, and function defines a \emph{generalized Ricci soliton} (cf. \cite{GRFbook}) if
\begin{align*}
\Rc - \tfrac{1}{4} H^2 + \N^2f =&\ 0\\
d^*_g H + i_{\N f} H =&\ 0.
\end{align*}
These equations are naturally expressed using the associated Bismut connection,
\begin{align*}
    \N^H := \N + \tfrac{1}{2} g^{-1} H.
\end{align*}
In particular, if we let $\Rc^{H}$ denote the Ricci curvature of this connection, we define
\begin{align*}
    \Rc^{H,f} :=&\ \Rc^{H} + \N^H \N^H f = \Rc - \tfrac{1}{4} H^2 + \N^2 f - \tfrac{1}{2} (d^*_g H + i_{\N f} H),\\
    R^{H,f} =&\ R - \tfrac{1}{12} \brs{H}^2 + 2 \gD f - \brs{\N f}^2.
\end{align*}
and we observe that the generalized Ricci soliton system is equivalent to the vanishing of $\Rc^{H,f} \equiv 0$.  The relevant identity uses the weighted divergence operator associated to $H$ and $f$, i.e. for a covariant tensor $T$ we define
	\begin{align*}
	\div^{H,f} T = e^{f} \tr \N^H \left( e^{-f} T \right).
	\end{align*}
With this background we can prove a Bianchi identity for these curvatures, which is a special case of (\cite{SCSV} Proposition 6.8):

\begin{prop} \label{p:Bianchi} Let $(M^n, g, H,f)$ be a Riemannian manifold with closed three-form and function $f$.  Then
	\begin{align*}
	\div^{H,f} \Rc^{H,f} =&\ \tfrac{1}{2} \N R^{H,f}.
	\end{align*}
	\begin{proof} We first observe that for a two-tensor $K$,
		\begin{align*}
		\N^H_i K_{ij} = \N_i K_{ij} + \tfrac{1}{2} H_{ipj} K_{ip}.
		\end{align*}
		Next observe using the classical Bianchi identity and the Bochner identity that
		\begin{align*}
		\N^{f}_i \Rc^{f}_{ij} =&\ \N_i \left( \Rc_{ij} + \N_i \N_j f \right) - \Rc^{f}_{ij} \N_i f\\
		=&\ \tfrac{1}{2} \N_j R + \N_i \N_j \N_i f - \Rc_{ij} \N_i f - \N_i \N_j f \N_i f\\
		=&\ \tfrac{1}{2} \N_j \left( R + 2 \gD f - \brs{\N f}^2 \right).
		\end{align*}
		Also using (\cite{GRFbook} Lemma 3.19) we obtain
		\begin{align*}
		\N_i \left( - \tfrac{1}{4} H^2 - \tfrac{1}{2} d^* H \right)_{ij} + \tfrac{1}{2} H_{ipj} \left(-\tfrac{1}{2} d^* H \right)_{ip} = - \tfrac{1}{24} \N_j \brs{H}^2.
		\end{align*}
		Lastly, observe
		\begin{align*}
		- \tfrac{1}{2} \N_i (i_{\N f} H)_{ij} =&\ - \tfrac{1}{2} \N_i \left( \N_k f H_{kij} \right)= - \tfrac{1}{2} \N_k f \N_i H_{kij} = - \tfrac{1}{2} \N_k f (d^* H)_{kj}.
		\end{align*}
		Combining these, using $d^* d^* H = 0$ and observing a further elementary cancellation we conclude
		\begin{align*}
		\N^{H,f}_i \Rc^{H,f}_{ij} =&\ \N^H_i \Rc^{H,f}_{ij} - \Rc^{H,f}_{ij} \N_i f\\
		=&\ \N_i \Rc^{H,f}_{ij} + \tfrac{1}{2} H_{ipj} \Rc^{H,f}_{ip} - \Rc^{H,f}_{ij} \N_i f\\
		=&\ \N_i \left(\Rc - \tfrac{1}{4} H^2 +\N^2 f - \tfrac{1}{2} \left( d^*_g H + i_{\N f} H \right) \right)_{ij} + \tfrac{1}{2} H_{ipj} \left( - \tfrac{1}{2} \left( d^* H + i_{\N f} H \right) \right)_{ip}\\
		&\ - \left( \Rc - \tfrac{1}{4} H^2 +\N^2 f - \tfrac{1}{2} \left( d^*_g H + i_{\N f} H \right) \right)_{ij} \N_i f\\
		=&\ \tfrac{1}{2} \N_j R^{H,f}.
		\end{align*}
	\end{proof}
\end{prop}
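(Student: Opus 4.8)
The plan is to reduce the identity to a collection of contracted Bianchi and Bochner identities by unwinding the two weighted operators into their Levi--Civita building blocks. First I would record the expansions that convert the Bismut-weighted objects into ordinary ones: since $\div^{H,f} T = e^f \tr \N^H(e^{-f} T)$, for a two-tensor $K$ one has $(\div^{H,f} K)_j = \N^H_i K_{ij} - K_{ij}\N_i f$, while the Bismut torsion contributes $\N^H_i K_{ij} = \N_i K_{ij} + \tfrac{1}{2} H_{ipj} K_{ip}$, exactly as recorded before the statement. Substituting the defining formula $\Rc^{H,f} = \Rc - \tfrac{1}{4} H^2 + \N^2 f - \tfrac{1}{2}(d^*_g H + i_{\N f} H)$ then splits the left-hand side into a Riemannian group $\Rc + \N^2 f$, a pure-torsion group $-\tfrac{1}{4} H^2 - \tfrac{1}{2} d^*_g H$, and a first-order coupling group $-\tfrac{1}{2} i_{\N f} H$, each to be treated separately.

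For the Riemannian group I would combine the twice-contracted second Bianchi identity $\N_i \Rc_{ij} = \tfrac{1}{2} \N_j R$ with the Ricci commutation identity $\N_i\N_j\N_i f = \N_j \gD f + \Rc_{ij}\N_i f$; after the weight correction $-(\Rc + \N^2 f)_{ij}\N_i f$ cancels the term $\Rc_{ij}\N_i f$ and converts $-\N_i\N_j f\,\N_i f$ into $-\tfrac{1}{2}\N_j\brs{\N f}^2$, this group yields precisely $\tfrac{1}{2}\N_j(R + 2\gD f - \brs{\N f}^2)$ --- this is the mechanism by which the Hessian of $f$ assembles the weighted scalar. For the pure-torsion group I would invoke the cited computation (\cite{GRFbook} Lemma 3.19), which uses $dH = 0$ to give $\N_i(-\tfrac{1}{4} H^2 - \tfrac{1}{2} d^* H)_{ij} + \tfrac{1}{2} H_{ipj}(-\tfrac{1}{2} d^* H)_{ip} = -\tfrac{1}{24}\N_j\brs{H}^2$, delivering the $-\tfrac{1}{12}\brs{H}^2$ term of $R^{H,f}$.

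The delicate step, and the one I expect to be the main obstacle, is the coupling group $-\tfrac{1}{2} i_{\N f} H$. Here I would expand $-\tfrac{1}{2}\N_i(\N_k f\, H_{kij})$ and use that $H$ is closed and skew, so that the $\N_i\N_k f$ factor contracts to zero against $H_{kij}$ and the derivative of $H_{kij}$ enters only through $-(d^*H)_{kj}$, producing $-\tfrac{1}{2}\N_k f (d^*H)_{kj}$; simultaneously the Bismut correction $\tfrac{1}{2} H_{ipj}$ applied to the remaining skew coupling part produces a further $-\tfrac{1}{4}(H^2)_{jk}\N_k f$, and $d^* d^* H = 0$ must be used to discard $\N_i(d^*H)_{ij}$. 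The hard part is then verifying that these first-order $H$--$f$ cross terms cancel exactly against the weight terms $\tfrac{1}{2}(d^*_g H)_{ij}\N_i f$ and $\tfrac{1}{4}(H^2)_{ij}\N_i f$ coming from $-\tfrac{1}{2} d^*_g H$ and $-\tfrac{1}{4} H^2$, so that no skew or coupling contribution survives. Once this elementary but intricate cancellation is confirmed, the surviving terms are exactly $R - \tfrac{1}{12}\brs{H}^2 + 2\gD f - \brs{\N f}^2$, and collecting the three groups gives $\div^{H,f}\Rc^{H,f} = \tfrac{1}{2} \N R^{H,f}$.
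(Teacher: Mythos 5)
Your proposal is correct and follows essentially the same route as the paper's proof: the same splitting of $\Rc^{H,f}$ into the Riemannian/Hessian, pure-torsion, and $i_{\N f}H$ coupling groups, the same use of the contracted Bianchi and Ricci commutation identities, the same appeal to \cite{GRFbook} Lemma 3.19, and the same final cancellations via skew-symmetry and $d^*d^*H=0$. In fact you make explicit the "further elementary cancellation" (between $-\tfrac{1}{4}(H^2)_{jk}\N_k f$, $\tfrac{1}{2}(d^*H)_{ij}\N_i f$ and the terms from the weight correction) that the paper only alludes to.
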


Proposition \ref{p:Bianchi} has the immediate consequence that generalized Ricci solitons have constant twisted scalar curvature $R^{H,f}$.  We note that this result is also implicit by the scalar curvature monotonicity result for generalized Ricci flow \cite{Streetsscalar}, and also follows from (\cite{GRFbook} Proposition 4.33).

\begin{cor} \label{c:GRScor} Given $(g, H, f)$ a generalized Ricci soliton, then
\begin{align*}
    R^{H,f} \equiv&\ \lambda,\\
    \tfrac{1}{6} \brs{H}^2 + \gD f - \brs{\N f}^2 \equiv&\ \lambda.
\end{align*}
Furthermore, if $M$ is compact, then $\lambda \geq 0$ with equality if and only if $H \equiv 0, \N f \equiv 0$.
\begin{proof} The claim that $R^{H,f}$ is constant is immediate from Proposition \ref{p:Bianchi}.  Calling this constant $\lambda$, we then obtain
\begin{align*}
    \lambda =&\ R^{H,f} - \tr_g \Rc^{H,f} = \tfrac{1}{6} \brs{H}^2 + \gD f - \brs{\N f}^2.
\end{align*}
If $M$ is compact, we integrate this final equation against $e^{-f} dV_g$ to obtain
\begin{align*}
    \lambda \int_M e^{-f} dV_g =&\ \tfrac{1}{6} \int_M \brs{H}^2 e^{-f} dV_g.
\end{align*}
It follows that $\lambda \geq 0$ with equality if and only if $H \equiv 0$.  In this case $(g, f)$ is a compact steady soliton, and then it is known that also $\N f \equiv 0$.
\end{proof}
\end{cor}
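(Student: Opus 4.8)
The plan is to strip the full tensorial soliton equation $\Rc^{H,f}\equiv 0$ down to two scalar identities and then to use the weighted measure $e^{-f}dV_g$ to control the sign of $\lambda$ in the compact case. The constancy of $R^{H,f}$ should come for free from the Bianchi identity; the second identity from a pointwise trace; and the compact statement from integrating against $e^{-f}dV_g$.

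First I would note that ``generalized Ricci soliton'' means exactly $\Rc^{H,f}\equiv 0$, so trivially $\div^{H,f}\Rc^{H,f}\equiv 0$. Proposition~\ref{p:Bianchi} then reads $\tfrac12\N R^{H,f}\equiv 0$, so $R^{H,f}$ is a constant $\lambda$. For the second identity I would take the metric trace of $\Rc^{H,f}=0$. The only bookkeeping is that the skew pieces $d^*_g H$ and $i_{\N f}H$ are $2$-forms, hence trace-free, whereas $\tr_g \N^2 f=\gD f$ and $\tr_g H^2=\brs{H}^2$; thus $0=\tr_g\Rc^{H,f}=R-\tfrac14\brs{H}^2+\gD f$. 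Subtracting this from $R^{H,f}\equiv\lambda$ cancels $R$ and combines the $\brs{H}^2$ coefficients as $-\tfrac{1}{12}+\tfrac14=\tfrac16$, giving the second claim $\tfrac16\brs{H}^2+\gD f-\brs{\N f}^2\equiv\lambda$.

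For compact $M$ the crucial move is to integrate this identity against the weighted volume, not the bare one, since $(\gD f-\brs{\N f}^2)e^{-f}=\div(e^{-f}\N f)$ is an exact divergence and so integrates to zero over a closed manifold. What survives is $\lambda\int_M e^{-f}dV_g=\tfrac16\int_M\brs{H}^2 e^{-f}dV_g\ge 0$, forcing $\lambda\ge 0$ with equality precisely when $H\equiv 0$. When $H\equiv 0$ one has $\lambda=0$, and the second identity degenerates to $\gD f=\brs{\N f}^2$; integrating this over the closed manifold against the bare volume (so that $\int_M\gD f\,dV_g=0$) yields $\int_M\brs{\N f}^2 dV_g=0$ and hence $\N f\equiv 0$. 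Alternatively one may quote the classical rigidity of compact steady gradient Ricci solitons, but the elementary integration seems cleaner here.

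I expect no serious obstacle once Proposition~\ref{p:Bianchi} is in hand: Step~1 is immediate, Step~2 is linear algebra modulo the trace conventions above, and the only genuine idea is choosing the weighted measure $e^{-f}dV_g$ in Step~3 so that the gradient terms drop out and the sign of $\lambda$ is dictated by $\brs{H}^2\ge 0$. The hypothesis of compactness enters only there and in the final unweighted integration that upgrades $H\equiv 0$ to $\N f\equiv 0$.
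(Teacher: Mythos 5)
Your proposal is correct and follows essentially the same route as the paper: constancy of $R^{H,f}$ from Proposition \ref{p:Bianchi}, the trace identity $R^{H,f}-\tr_g\Rc^{H,f}=\tfrac16\brs{H}^2+\gD f-\brs{\N f}^2$, and integration against $e^{-f}dV_g$ to force $\lambda\ge 0$ with equality iff $H\equiv 0$. The only (harmless) difference is at the very end, where you deduce $\N f\equiv 0$ by integrating $\gD f=\brs{\N f}^2$ directly rather than citing the classical rigidity of compact steady solitons as the paper does; both are valid.
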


\subsection{Generalized K\"ahler-Ricci solitons}

\begin{defn}[Generalized K\"ahler-Ricci solitons] A generalized K\"ahler structure $(g, I, J)$ is called a \emph{generalized K\"ahler-Ricci soliton} if there exists a smooth function $f$ such that the data $(g, H, f)$ defines a generalized Ricci soliton. 
\end{defn}

Note that Corollary \ref{c:GRScor} and Theorem~\ref{thm:scalformula} yield that any generalized K\"ahler-Ricci soliton has constant Goto scalar curvature $\Gscal(\II, \JJ, \mu_f)$.  We are next show that the volume form $\vol = \mu_f$ is adapted, i.e. $(g, I, J, e^{-f}dV_g)$ is a cscGK structure in the sense of Definition~\ref{d:extremal-BH}.  It is shown in (\cite{SU2} Prop 4.1) that for any generalized K\"ahler-Ricci soliton,  the vector fields
\begin{align*}
X_I := \tfrac{1}{2} I \left(\theta_I^{\sharp} - \N f \right), \quad X_J := \tfrac{1}{2} J \left(\theta_J^{\sharp} - \N f \right)
\end{align*}
satisfy
\begin{align} \label{f:solitonvf}
    L_{X_I} g = L_{X_J} g = 0, \qquad L_{X_I} I = L_{X_J} J = 0.
\end{align}
We upgrade this to 

\begin{lemma} \label{l:solitonID} Let $(M^{2n}, g, I, J, f)$ be a generalized K\"ahler-Ricci soliton and let $X = X_I + X_J$.  Then
\begin{align*}
    L_{X} I = L_X J = 0, \, L_X g=0, \qquad [X_I, X_J] = 0.
\end{align*}
\begin{proof}  {It is shown in (\cite{ASU3}, Proposition 4.6) that one has the identities (recall $\pi$ here is given by \ref{eq:Poisson_BH})
\begin{align*}
    L_{I X_I} \poiss = L_{J X_J} \poiss = L_{X_I} \poiss = L_{X_J} \poiss = 0.
\end{align*}
 As $X= \frac{1}{2} (I\theta_I^{\sharp} + J\theta_J^{\sharp})- \pi_{\JJ}(df)= \div_{e^{-f}dV_g} \pi_{\JJ}$ (see Lemma~\ref{l:adapted-volume-BH}),  we have   $L_X \pi_{\JJ}=0$ (see \eqref{eq:Cartan}). Using this and (\ref{f:solitonvf}), we obtain
\begin{align*}
    L_{X_I} J + L_{X_J} I = 0.
\end{align*}
It then follows that
\begin{align*}
    0 =&\ L_{X_I} (2 \pi g) = [L_{X_J} I, J] = [J, L_{X_I} J] = 2 J L_{X_I} J.
\end{align*}
Thus $L_{X_I} J \equiv 0$ and similarly $L_{X_J} I \equiv 0$.  Note it follows then immediately that $L_{X_I} \theta_I = L_{X_I} \theta_J = L_{X_J} \theta_I = L_{X_J} \theta_J = 0$, and also $L_{X_I} H = L_{X_J} H = 0$.  Then we can compute
\begin{align*}
    L_{X_I} (- J X_J) = \tfrac{1}{2} [\theta_{J}^{\sharp} - \N f, X_I] = [X_I, I X_I] + \tfrac{1}{2} [\theta_j^{\sharp} - \theta_I^{\sharp}, X_I] = 0,
\end{align*}
hence $0 = L_{X_I} (J X_J) = J [X_I, X_J]$, finishing the proof.}
\end{proof}
\end{lemma}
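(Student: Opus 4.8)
The plan is to reduce everything to showing that the two ``cross'' Lie derivatives $L_{X_J} I$ and $L_{X_I} J$ vanish. Indeed, \eqref{f:solitonvf} already supplies $L_{X_I} g = L_{X_J} g = 0$, so $L_X g = 0$ follows by linearity, and once the cross terms are killed we obtain $L_X I = L_{X_I} I + L_{X_J} I = 0$ and $L_X J = L_{X_I} J + L_{X_J} J = 0$. Thus the whole content is the vanishing of $L_{X_J} I$ and $L_{X_I} J$, which I would extract from the two distinct Poisson tensors carried by the structure.

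First I would use $\pi_{\JJ} = \tfrac{1}{2}(I+J)g^{-1}$. By Lemma~\ref{l:adapted-volume-BH} one has $X = \div_{\vol} \pi_{\JJ}$ with $\vol = e^{-f} dV_g$, so \eqref{eq:Cartan} gives $L_X \pi_{\JJ} = 0$; together with $L_X g = 0$ this forces $L_X(I+J) = 0$, that is
\[ L_{X_I} J + L_{X_J} I = 0. \]
Hence the two cross terms are negatives of one another, and it is enough to annihilate one of them.

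The main step — and the only genuinely non-formal point — is separating these two terms using the Hitchin Poisson tensor $\pi = \tfrac{1}{2}[I,J]g^{-1}$ from \eqref{eq:Poisson_BH}. From \cite{ASU3} one has $L_{X_I}\pi = L_{X_J}\pi = 0$, and combined with $L_{X_I} g = 0$ this yields $L_{X_I}[I,J] = 0$; expanding and using $L_{X_I} I = 0$ gives $[I, L_{X_I} J] = 0$, i.e.\ $L_{X_I} J$ commutes with $I$. On the other hand, differentiating $I^2 = -\Id$ shows $L_{X_J} I$ anticommutes with $I$, and feeding in the relation $L_{X_I} J = -L_{X_J} I$ from the previous paragraph shows $L_{X_I} J$ \emph{also} anticommutes with $I$. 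An endomorphism that simultaneously commutes and anticommutes with the invertible $I$ must vanish, so $L_{X_I} J = L_{X_J} I = 0$, and therefore $L_X I = L_X J = 0$. Recognizing this commute/anticommute dichotomy for $L_{X_I} J$ relative to $I$ is the crux; everything up to here is bookkeeping.

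It remains to prove $[X_I, X_J] = 0$, which is now a formal consequence. Having established that $X_I$ preserves $g, I, J$, it also preserves the Lee forms $\theta_I, \theta_J$ and, through the identity $\N f = \theta_I^{\sharp} + 2 I X_I$ coming from the definition of $X_I$, the gradient $\N f$ as well. Rewriting $J X_J = -\tfrac{1}{2}(\theta_J^{\sharp} - \N f)$ and applying $L_{X_I}$ then gives $L_{X_I}(J X_J) = 0$; since $L_{X_I} J = 0$ this reads $J[X_I, X_J] = 0$, whence $[X_I, X_J] = 0$ because $J$ is invertible.
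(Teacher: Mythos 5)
Your proof is correct and follows essentially the same route as the paper: both derive $L_{X_I}J + L_{X_J}I = 0$ from $L_X\pi_{\JJ}=0$, then use the invariance of the Hitchin Poisson tensor $\pi=\tfrac12[I,J]g^{-1}$ to get a commutation relation which, combined with the anticommutation coming from differentiating the square of a complex structure, kills the cross terms (you phrase this relative to $I$, the paper relative to $J$ — a mirror image of the same step), and both conclude $[X_I,X_J]=0$ by applying $L_{X_I}$ to the explicit formula for $JX_J$. No gaps.
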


\begin{cor} \label{c:GITforGKRS} Let $(M^{2n}, g, I, J,f)$ be a generalized K\"ahler-Ricci soliton.
Then the volume form $\vol = \mu_f$ is $(g,I,J)$-adapted and $(g, I, J, \mu_f)$ is a cscGK structure.
\begin{proof} This is an immediate corollary of Corollary~\ref{c:GRScor} and Lemmas~\ref{l:adapted-volume-BH} and \ref{l:solitonID}. \end{proof}
\end{cor}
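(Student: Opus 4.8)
The plan is to establish the two assertions of the corollary separately, each by assembling results already in place. For the adaptedness claim, I would begin by recalling from Lemma~\ref{l:adapted-volume-BH} that, for the weighted volume $\vol = \mu_f = e^{-f}dV_g$, the vector field appearing in Definition~\ref{d:adapted-BH} is the Poisson divergence
\[ X = \div_{\mu_f} \pi_{\JJ} = \frac{1}{2}\left( I\theta_I^\sharp + J\theta_J^\sharp - (I+J)\N f \right), \]
an identity which is purely bihermitian (it follows from Lemma~\ref{l:divagree} and Theorem~\ref{thm:gualtieri_map}) and does not itself presuppose adaptedness. The crucial point is then to recognize that this is exactly the sum of the two soliton vector fields
\[ X_I = \tfrac{1}{2} I(\theta_I^\sharp - \N f), \qquad X_J = \tfrac{1}{2} J(\theta_J^\sharp - \N f), \]
so that $X = X_I + X_J$. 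Granting this, the assertion that $X$ preserves the full bihermitian data $(g,I,J)$ is precisely the content of the upgraded soliton identity in Lemma~\ref{l:solitonID}, and Definition~\ref{d:adapted-BH} then immediately certifies that $\mu_f$ is $(g,I,J)$-adapted.

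For the constant scalar curvature claim I would invoke Theorem~\ref{thm:scalformula} to identify Goto's scalar curvature with the weighted twisted scalar curvature of the soliton background,
\[ \Gscal(\II,\JJ,\mu_f) = \tfrac{1}{4}\left( R - \tfrac{1}{12}|H|^2 + 2\Delta f - |df|^2 \right) = \tfrac{1}{4} R^{H,f}, \]
where $H = H_0 + db$. Since $(g,H,f)$ is by hypothesis a generalized Ricci soliton, Corollary~\ref{c:GRScor} supplies $R^{H,f} \equiv \lambda$ for a constant $\lambda$; therefore $\Gscal \equiv \lambda/4$ is constant, and Definition~\ref{d:extremal-BH} yields that $(g,I,J,\mu_f)$ is cscGK.

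I expect the corollary to be essentially formal once these inputs are granted, so the only step demanding genuine care is the one just isolated: the identification of the soliton field $X_I + X_J$ with the divergence $\div_{\mu_f}\pi_{\JJ}$, together with the fact that this field is simultaneously Killing for $g$ and holomorphic for both $I$ and $J$. All of this is exactly what Lemma~\ref{l:solitonID} provides, upgrading the separate symmetries $L_{X_I} I = L_{X_J} J = 0$ of \eqref{f:solitonvf} to the joint preservation $L_X I = L_X J = 0$ and $L_X g = 0$. Beyond verifying that the bihermitian normalizations match those in Lemma~\ref{l:adapted-volume-BH}, I anticipate no further obstacle.
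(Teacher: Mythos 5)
Your proposal is correct and follows essentially the same route as the paper: the identification $X_I+X_J=\div_{\mu_f}\pi_{\JJ}$ combined with Lemma~\ref{l:solitonID} gives adaptedness, while Theorem~\ref{thm:scalformula} and Corollary~\ref{c:GRScor} give constancy of $\Gscal$, which is exactly how the paper assembles the (one-line) argument together with the remark preceding the corollary.
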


\begin{cor} \label{c:GITgensymp} Let $(M^{2n}, g, I, J,f)$ be a generalized K\"ahler-Ricci soliton of generically symplectic type. Denote by $\T$ the torus generated by the adapted volume form $\vol=\mu_f$ and let $b$ be any $\T$-invariant $2$-form. Then the generalized K\"ahler structure $(\II, \JJ)$ on $(M, H_0=-d^c_I \omega_I-db)$ corresponding to $(g, I, J, b)$
is cscGK in $\mc{AGK}(\JJ, \vol)=\mc{AGK}^{\T}(\JJ)$.
\begin{proof}
The hypothesis that the structure is generically symplectic implies, by Lemma~\ref{l:invariant}, that the triple $(\II, \JJ, \vol)$ is adapted.
\end{proof}
\end{cor}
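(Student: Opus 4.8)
The plan is to use Corollary~\ref{c:GITforGKRS} as the starting point and then exploit the generically symplectic type hypothesis to transport its bihermitian conclusion into the moment map space $\mc{AGK}(\JJ, \vol)$. Corollary~\ref{c:GITforGKRS} already tells us that for the soliton $(g, I, J, f)$ the volume form $\vol = \mu_f$ is $(g, I, J)$-adapted in the sense of Definition~\ref{d:adapted-BH}, and that $(g, I, J, \vol)$ is cscGK in the bihermitian sense of Definition~\ref{d:extremal-BH}, i.e.\ $\Gscal$ is constant. The remaining task is thus twofold: first, to verify that the pair $(\II, \JJ)$ built from $(g, I, J, b)$ genuinely lies in $\mc{AGK}(\JJ, \vol)$; and second, to confirm that the constancy of $\Gscal$ persists for this pair viewed as an element of that space.

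For the first point I would invoke Lemma~\ref{l:adapted_converse}. By Lemma~\ref{l:solitonID}, the vector field $X = X_I + X_J = \div_{\vol}(\pi_{\JJ})$ is Killing for $(g, I, J)$ and generates the compact torus $\T$; together with the standing assumption that $b$ is $\T$-invariant, the generically symplectic type hypothesis lets Lemma~\ref{l:adapted_converse} upgrade the weaker $(g, I, J)$-adapted condition to the strong adapted condition $L_{\div_{\vol}(\JJ)} \II = 0$ of Definition~\ref{d:GKadapted}. This is precisely the membership $\II \in \mc{AGK}(\JJ, \vol)$. Moreover, since $\pi_{\JJ}$ is nondegenerate on a dense set, Lemma~\ref{l:invariant} gives $\div_{\vol}(\JJ) = X$, so that the $\div_{\vol}(\JJ)$-invariance defining $\mc{AGK}(\JJ, \vol)$ coincides with $\T$-invariance, yielding the identification $\mc{AGK}(\JJ, \vol) = \mc{AGK}^{\T}(\JJ)$ recorded in Remark~\ref{r:divgremark}.

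For the second point I would appeal to Theorem~\ref{thm:scalformula}, which shows that $\Gscal(\II, \JJ, \vol)$ depends only on the data $(g, I, J, \vol)$ and not on the choice of $b$ (cf.\ Remark~\ref{r:gauge}). Since that bihermitian quantity is already known to be constant from Corollary~\ref{c:GITforGKRS} --- ultimately a consequence of the soliton having constant twisted scalar curvature $R^{H,f}$ via Corollary~\ref{c:GRScor} --- the constancy transfers verbatim to $(\II, \JJ, \vol)$, so this pair is cscGK in $\mc{AGK}(\JJ, \vol)$.

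The main obstacle, and the only genuinely nontrivial input, is the first point: ensuring that $\II$ lands in $\mc{AGK}(\JJ, \vol)$, equivalently that the triple is adapted in the strong sense of Definition~\ref{d:GKadapted}. Remark~\ref{r:divgremark1} warns that the $(g, I, J)$-adapted condition is \emph{strictly weaker} than the adapted condition in general, so some additional hypothesis is unavoidable. It is precisely the generic nondegeneracy of $\pi_{\JJ}$ that closes this gap through Lemma~\ref{l:invariant}; the constancy of scalar curvature, by contrast, is essentially automatic once Theorem~\ref{thm:scalformula} and the soliton Bianchi identity are in hand.
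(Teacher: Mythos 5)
Your proposal is correct and follows essentially the same route as the paper: the paper's one-line proof invokes Lemma~\ref{l:invariant} to upgrade the $(g,I,J)$-adaptedness from Corollary~\ref{c:GITforGKRS} to adaptedness of $(\II,\JJ,\vol)$, which is exactly the content of your appeal to Lemma~\ref{l:adapted_converse}, with the constancy of $\Gscal$ already supplied by Corollary~\ref{c:GITforGKRS} via Theorem~\ref{thm:scalformula}. Your version merely makes explicit the steps the paper leaves implicit.
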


\begin{ex} \label{r:Liegroupscal} As an immediate consequence of Theorem~\ref{thm:scalformula}, we recover the result of Goto~\cite{goto-21} that the canonical generalized K\"ahler structures on semisimple Lie groups have constant scalar curvature.  In particular, as described in \cite{GualtieriThesis}, given a compact semisimple Lie group $K$ with bi-invariant metric $g$, $H$ the Cartan three-form and $J_L$ and $J_R$ left- and right-invariant complex structures compatible with $g$, then $(g, H, J_L, J_R)$ is generalized K\"ahler.  By the invariance it follows easily that $\Gscal(\II, \JJ, dV_g) = R - \tfrac{1}{12} \brs{H}^2 = \tfrac{1}{6} \brs{H}^2$ is a positive constant.  Furthermore, in case $K$ has no abelian factors, the generalized K\"ahler structure is generically symplectic type.  In this case $dV_g$ is adapted to $(\II, \JJ)$ by Corollary \ref{c:GITgensymp}, and thus the data is cscGK in $\mc{AGK}(\JJ, dV_g)$.
\end{ex}

\begin{ex} In a series of works \cite{Streetssolitons, SU1, ASU3} the authors constructed and classified all generalized K\"ahler-Ricci solitons on compact complex surfaces, with the non-K\"ahler examples occurring on Hopf surfaces.  Furthermore, in \cite{SU2} the authors gave an exhaustive construction complete generalized K\"ahler-Ricci solitons on surfaces with minimal symmetry group via an extension of the Gibbons-Hawking ansatz.  These metrics are thus all cscGK, while none of these examples are symplectic type (the case covered by our earlier work \cite{ASUScal}), they are all generically symplectic type.  Thus by Corollary \ref{c:GITgensymp} they admit adapted volume forms $\vol = \mu_f$ and are cscGK in $\mc{AGK}(\JJ, \vol)$.
\end{ex}

\section{Generalized K\"ahler class as complexified orbit in the generically symplectic case}\label{s:orbit}
\subsection{Generalized K\"ahler class}

Let $(\II,\JJ)$ be a GK structure corresponding to the bihermitian data $(g,I,J,b)$.  We are going to introduce a notion of a generalized K\"ahler class $\mc{GK}_{\pi, J}$ fixing $J$ and $\pi=\frac{1}{2}g^{-1}[I,J]$ and varying $I$, $g$ and $b$.  Our discussion builds on the fundamental works \cite{BGZ, GualtieriBranes}.

\begin{defn}\label{d:GK-class}[Generalized K\"ahler class]
    Consider a GK structure $m_0=(g_0,I_0,J,b_0)$.
    Define the \textit{generalized K\"ahler class based at $m_0$} as
    \begin{equation}\label{eq:def_gk_class}
        \GK_{m_0}(\pi,J)=\{
        \mbox{exact }A\in\Wedge^2(M)\ |\ AI_0+I_0^*A-A\pi A=0
        \}.
    \end{equation}
    As was observed by Gualtieri~\cite{GualtieriBranes} any such $A$ gives rise to a GK structure $(g_1,I_1,J,b_1)$ with
    \[
    I_1 = I_0 - \pi A, \qquad (g_1 -b_1) = (g_0-b_0) - AJ
    \]
    When there is no confusion, we will suppress the basepoint $m_0$ and identify the point $A\in\mc{GK}(\pi,J)$ with the underlying GK structures $(g,I,J,b)$.
\end{defn}

\begin{rmk} Following \cite{GualtieriBranes}, one way to construct a 2-form $A$ solving~\eqref{eq:def_gk_class} is to fix a 1-parameter family of functions $u_t$, generate a flow of $\pi$-Hamiltonian diffeomorphisms $\Phi_s$ by $X_{u_t}:=-\pi(du_t)$. Then setting 
    \[
    I_t=\Phi_t\cdot I_0, \quad A_t=\int_0^t dd^c_{I_s}u_sds
    \]
    we obtain a one-parameter family of exact forms $A_t$ which solve \eqref{eq:def_gk_class}. 
\end{rmk}
\begin{lemma}\label{lm:commutator}
    The tangent space space to $\mc{GK}(\pi,J)$ at a point corresponding to a GK structure $m=(g,I,J,b)$ is
    \[
    T_m\mc{GK}(\pi,J)\simeq \{\mbox{\rm exact } \dot{A}\in\Wedge^2(M) \,  \mbox{\rm of $I$-type (1,1)} \}.
    \]
    In particular there is a natural distribution $\mc{D}\subset T_m\mc{GK}(\pi,J)$ given by $dd^c_I$-exact 2-forms
    \[
    \mc{D}=\{dd^c_Iu\ |\ u\in C^\infty(M,\R) \}.
    \]
    This distribution is integrable with
    \[
        [\mathbf X_{u},\mathbf X_v]=-\mathbf X_{\pi(du,dv)},
    \]
    where $\mathbf X_{u}$ is a vector field $dd^c_Iu$ on $\GK(\pi,J)$ defined by $u\in C^\infty(M,\R)$.
\end{lemma}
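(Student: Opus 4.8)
The plan is to treat the three assertions in order — the tangent space, the inclusion $\mc{D}\subset T_m\mc{GK}(\pi,J)$, and the bracket formula (from which integrability is immediate via Frobenius). First I would identify the tangent space by linearizing the quadratic constraint $F(A):=AI_0+I_0^*A-A\pi A$ defining $\mc{GK}(\pi,J)$ in \eqref{eq:def_gk_class}. At a point corresponding to $m=(g,I,J,b)$ one has $I=I_0-\pi A$, and the trick is to rewrite the linearization $DF_A(\dot A)=\dot A I_0+I_0^*\dot A-\dot A\pi A-A\pi\dot A$ in terms of $I$ rather than $I_0$. Substituting $I_0=I+\pi A$ and $I_0^*=I^*+A\pi$ — the latter using the skew-symmetry identities $A^*=-A$, $\pi^*=-\pi$, whence $(\pi A)^*=A\pi$ — the four cross terms cancel in pairs, leaving the clean expression $DF_A(\dot A)=\dot A I+I^*\dot A$. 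A $2$-form satisfies $\dot A I+I^*\dot A=0$ exactly when it is of $I$-type $(1,1)$, and $\dot A$ is automatically exact since $A$ ranges over exact forms; this gives the stated identification $T_m\mc{GK}(\pi,J)\simeq\{\text{exact }(1,1)\text{-forms}\}$. Since $dd^c_I u$ is visibly exact and of type $(1,1)$, the inclusion $\mc{D}\subset T_m\mc{GK}(\pi,J)$ follows at once, and each $\mathbf X_u$ is a bona fide (formal) vector field on $\mc{GK}(\pi,J)$.

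For the bracket I would write $[\mathbf X_u,\mathbf X_v]=D_{\mathbf X_u}\mathbf X_v-D_{\mathbf X_v}\mathbf X_u$ with $D$ the formal directional derivative. Because the value $\mathbf X_v(m)=dd^c_I v$ depends on $m$ only through $I$, I only need the variation of $I$ along the flow of $\mathbf X_u$, which is supplied by the construction recalled after Definition \ref{d:GK-class}: the integral curve of $\mathbf X_u$ is $I_t=\Phi_t\cdot I$ with $\Phi_t$ the Hamiltonian flow of $X_u:=-\pi(du)$, while $A_t$ evolves by $\dot A_t=dd^c_{I_t}u$. Comparing $I_t=\Phi_t\cdot I$ with $I_t=I_0-\pi A_t$ forces the key identity $L_{X_u}I=\pi\,dd^c_I u$, equivalently $\dot I=-\pi\,dd^c_I u$. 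Using $d^c_I v=-dv\circ I$ and the pointwise identity $dv\circ(\pi B)=-\imath_{X_v}B$ (valid for any $2$-form $B$, with $X_v=-\pi(dv)$), I then compute $D_{\mathbf X_u}\mathbf X_v=d\bigl(-dv\circ\dot I\bigr)=d\bigl(dv\circ L_{X_u}I\bigr)=-d\,\imath_{X_v}(dd^c_I u)=-L_{X_v}(dd^c_I u)$, the last equality by Cartan's formula since $dd^c_I u$ is closed.

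Antisymmetrizing gives $[\mathbf X_u,\mathbf X_v]=L_{X_u}(dd^c_I v)-L_{X_v}(dd^c_I u)$, and it remains to collapse this back into $\mc{D}$. Here I would use the Leibniz rule for the Lie derivative, $L_{X_u}(d^c_I v)=d^c_I(X_u v)-dv\circ L_{X_u}I=d^c_I(X_u v)+\imath_{X_v}(dd^c_I u)$; applying $d$ and Cartan's formula once more yields $L_{X_u}(dd^c_I v)-L_{X_v}(dd^c_I u)=dd^c_I(X_u v)$. Since $X_u v=dv(X_u)=-\pi(du,dv)$, this equals $-dd^c_I(\pi(du,dv))=-\mathbf X_{\pi(du,dv)}$, as claimed; as the result lies in $\mc{D}$, involutivity and hence integrability follow from Frobenius.

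I expect the main obstacle to be the key identity $L_{X_u}I=\pi\,dd^c_I u$. This is exactly the point where the special nature of the generalized K\"ahler data enters — that $\pi=\tfrac12 g^{-1}[I,J]$ behaves as the imaginary part of an $I$-holomorphic Poisson tensor — and it is the content of Gualtieri's deformation result underpinning the construction in \cite{GualtieriBranes}; everything downstream is Cartan calculus together with the fact that $u\mapsto X_u$ intertwines the Poisson bracket with the Lie bracket of vector fields. A secondary but genuine difficulty is pinning down all the sign conventions — the group action $\Phi_t\cdot$, the normalization $X_u=-\pi(du)$, and the Poisson bracket — consistently enough that the final sign is exactly $-\mathbf X_{\pi(du,dv)}$, and throughout one must keep in mind that $\mc{GK}(\pi,J)$ is an infinite-dimensional formal manifold, so "flow" and "directional derivative" are understood in the formal sense.
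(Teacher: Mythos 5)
Your argument is correct and is essentially the one the paper intends: the paper's own proof is a one-line deferral to \cite{ASU2}, Lemma 2.15 (``working with $A_t$ instead of $F_t$''), and your linearization of the constraint $AI_0+I_0^*A-A\pi A=0$ followed by the Cartan-calculus computation of the bracket is exactly that argument transplanted to the present setting, with the signs coming out right under your stated conventions. The one external input you lean on, $L_{X_u}I=\pi\,dd^c_Iu$, is indeed the content of Gualtieri's deformation construction recalled after Definition~\ref{d:GK-class} and is restated independently in Remark~\ref{r:comments-reduced}(3), so invoking it rather than rederiving it is legitimate.
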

\begin{proof} The proof is similar to the one of \cite{ASU2}, Lemma 2.15, by working with $A_t$ instead of $F_t$.
\end{proof}

\begin{lemma}\label{lm:gc_variation}
    Under the variation of $(g,I,J,b)$ induced by $dd^c_Iu\in \mathcal D\subset T_m\GK(\pi,J)$ we have
    \[ \dt g= -(dIdu)^{(1,1)}_J J, \qquad \dt I = -\pi(dIdu), \qquad \dt b = (dIdu)^{(2,0)+(0,2)}_J J,\]
    \[
    \dt\II= L^{H_0}_{\II Idu}\II, \qquad \dt\JJ=- L^{H_0}_{\JJ Idu}\JJ.
    \]
\end{lemma}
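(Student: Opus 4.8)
The plan is to read the three bihermitian variations directly off Gualtieri's deformation formulas in Definition~\ref{d:GK-class}, and then to obtain the variations of $\II$ and $\JJ$ by reducing each generalized Lie derivative to an infinitesimal $b$-field transformation via integrability. At the point $m=(g,I,J,b)$ the tangent vector along the path of the Remark following Definition~\ref{d:GK-class} is $\dot A = dd^c_I u = dIdu$, which by Lemma~\ref{lm:commutator} is an exact $2$-form of $I$-type $(1,1)$. Since $\pi$ and $J$ are fixed throughout the class, differentiating $I_1 = I_0 - \pi A$ gives at once $\dot I = -\pi(dIdu)$ and $\dot J = 0$.

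For $\dot g$ and $\dot b$ I would differentiate $(g_1 - b_1) = (g_0 - b_0) - AJ$, obtaining $\dot g - \dot b = -(dIdu)J$, and then split the right-hand side into its symmetric and skew parts. The elementary observation is that for any $2$-form $\beta$ the bilinear form $\beta J$ is symmetric on the $J$-invariant part $\beta^{(1,1)}_J$ and skew on the $J$-anti-invariant part $\beta^{(2,0)+(0,2)}_J$; this is immediate from $\beta(JX,JY)=\beta(X,Y)$ on the first piece and $\beta(JX,JY)=-\beta(X,Y)$ on the second. Matching $\dot g$ with the symmetric part and $-\dot b$ with the skew part of $-(dIdu)J$ then yields $\dot g = -(dIdu)^{(1,1)}_J J$ and $\dot b = (dIdu)^{(2,0)+(0,2)}_J J$.

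For the last two identities the key reduction is that, for any integrable generalized complex structure $\mathbb{K}$ on $(M,H_0)$ and any section $e$ of $E$, one has
\[ L^{H_0}_{\mathbb{K} e}\mathbb{K} = \mathbb{K}\,L^{H_0}_e\mathbb{K}, \]
which follows from the vanishing of the generalized Nijenhuis tensor of $\mathbb{K}$ together with Definition~\ref{d:H-Lie-derivative} by a one-line bracket manipulation. Apply this with $e = Idu$, regarded as the section $0 + Idu$ of $E$. By \eqref{eq:aut_sequence} one has $\Psi(0+Idu) = d(Idu) = dIdu$, so $L^{H_0}_{Idu}$ is exactly the infinitesimal action \eqref{eq:Lie-derivative} of the closed $b$-field $dIdu$, i.e. the pointwise skew endomorphism $\rho := \bigl(X+\xi \mapsto -\imath_X(dIdu)\bigr)$ of $E$, and $L^{H_0}_{Idu}\mathbb{K} = [\rho,\mathbb{K}]$. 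Thus the two claims are equivalent to
\[ \dot\II = \II[\rho,\II], \qquad \dot\JJ = -\,\JJ[\rho,\JJ]. \]

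The remaining verification of these two identities is the heart of the matter. Writing $\II = e^b\II_0 e^{-b}$ and $\JJ = e^b\JJ_0 e^{-b}$ with $\II_0,\JJ_0$ the $b$-free Gualtieri matrices of Theorem~\ref{thm:gualtieri_map}, and using that $b$-fields commute so that $\rho$ is unaffected by conjugation, both identities reduce after removing $e^{\pm b}$ to the purely pointwise block identity
\[ \dot{\mathbb{K}}_0 + [\rho_{\dot b},\mathbb{K}_0] = \pm\,\mathbb{K}_0[\rho,\mathbb{K}_0], \]
with the sign $+$ for $\mathbb{K}=\II$ and $-$ for $\mathbb{K}=\JJ$, where $\rho_{\dot b}$ is the skew endomorphism generating the $b$-field transformation by $\dot b$ and $\dot{\mathbb{K}}_0$ is expressed through the already-computed $\dot I = -\pi(dIdu)$, $\dot\omega_I = \dot g\,I + g\,\dot I$, $\dot\omega_J = \dot g\,J$. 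I expect this block-by-block matching to be the main obstacle: it is where the $I$-type $(1,1)$ condition on $dIdu$, the identity $\pi = \tfrac12 g^{-1}[I,J]$ of \eqref{eq:Poisson_BH}, and the symmetric/skew splitting of $\dot g,\dot b$ must all combine, and where the sign asymmetry between $\II$ and $\JJ$ has to emerge from the single flip $J\mapsto -J$ relating $\II_0$ and $\JJ_0$. As a guide and cross-check I would first treat the nondegenerate gauge \eqref{eq:gk_nondeg_gauged}: there $\JJ(Idu) = -F_+^{-1}(Idu)$ is a genuine vector field $V$, the bracket becomes the ordinary Lie derivative $L_V\JJ$, and the $\JJ$-identity collapses to checking $\dot F_+ = dIdu$ (equivalently $L_V F_+ = -dIdu$, using $\imath_V F_+ = -Idu$ and $dF_+ = 0$); the general statement then follows either from the block identity above or by the nondegenerate perturbation argument used in the proof of Theorem~\ref{thm:scalformula}.
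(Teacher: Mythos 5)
Your derivation of the three bihermitian variations is correct and is exactly the paper's argument: differentiate $I_1=I_0-\pi A$ and $(g_1-b_1)=(g_0-b_0)-AJ$ at $\dot A=dIdu$ and split $-(dIdu)J$ into its symmetric ($J$-invariant) and skew ($J$-anti-invariant) pieces. One small item the paper records and you omit: it checks that $\frac{d}{dt}\left(d^c_J\omega_J-db\right)=-Jd(dIdu)=0$, so that the path genuinely stays on the fixed background $(M,H_0)$; this is needed before the operator $L^{H_0}$ in the last two formulas even makes sense, and is worth a line.

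For the variations of $\II$ and $\JJ$ the paper does not compute anything: it cites \cite{gibson2020deformation} ``up to interchanging the roles of $I$ and $J$.'' Your route is therefore genuinely different, and the reduction you set up is sound: the identity $L^{H_0}_{\mathbb{K}e}\mathbb{K}=\mathbb{K}L^{H_0}_{e}\mathbb{K}$ for integrable $\mathbb{K}$ is the same Nijenhuis manipulation the paper itself uses in Lemma~\ref{l:hamiltonians}, $\Psi(Idu)=dIdu$ is correct, and conjugating away $e^{\pm b}$ to arrive at the pointwise block identity $\dot{\mathbb{K}}_0+[\rho_{\dot b},\mathbb{K}_0]=\pm\,\mathbb{K}_0[\rho,\mathbb{K}_0]$ is legitimate since infinitesimal $b$-field endomorphisms commute with $e^{\pm b}$. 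The gap is that you stop precisely there: the block-by-block verification, which you yourself flag as ``the heart of the matter,'' is where the formulas for $\dot g,\dot I,\dot b$, the type-$(1,1)$ condition on $dIdu$, and the sign asymmetry between $\II$ and $\JJ$ must actually be made to cancel, and without it the last two identities are not established. Your nondegenerate-gauge cross-check is a reasonable fallback (and consistent with how the paper proves Theorem~\ref{thm:scalformula}), but note that even in the gauge \eqref{eq:gk_nondeg_gauged} only the $\JJ$-identity collapses to an ordinary Lie derivative of $F_+$; the section $\II(Idu)=-F_-^{-1}(Idu)-4\Omega(F_-^{-1}(Idu))$ retains a one-form component, so the $\II$-identity does not reduce to $L_VF_-$ alone and still requires the $b$-field bookkeeping. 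So as written the proposal proves the first half completely and correctly reduces, but does not prove, the second half; to be fair, the paper's own proof of that half consists of a citation, so your reduction is a useful supplement rather than a wrong turn.
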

\begin{proof} This computation of the variation of $(g_t, I_t, b_t)$ follows from Definition~\ref{d:GK-class}.  From the variational formulae for $(g, I, b)$ we get
 \[\frac{d}{dt} \omega_J =(dIdu)^{(1,1)}_J, \qquad \frac{d}{dt} (bJ)= - (dIdu)^{(2,0) + (0,2)}_J, \]
 which yields 
 \[\frac{d}{dt} \left(d^c_J \omega_J - db\right)= -Jd(dIdu)=0,\]
 showing that 
 $H_0:=d^c_J\omega_J - db$ is constant.
 The variation of the GK data $(\II_t, \JJ_t)$ is analogous to the work of \cite{gibson2020deformation}, up to the interchanging the roles of $I$ and $J$.
\end{proof}

\begin{rmk}
    All of the above can be repeated verbatim whenever there is a compact Lie group $G\subset \mathrm{Diff}(M)$ and we are only considering $G$-invariant GK structures $(g,I,J,b)$ on $(M,H_0)$. Notice that by \eqref{eq:gk_torsion},  $H_0$ is then also $G$-invariant. In this case,  we get a notion of $G$-invariant generalized K\"ahler class which we will denote $\GK^{G}(\pi,J)$.  The corresponding distribution is $\mc{D}^G=\{dd^c_Iu\ |\ u\in C^\infty(M,\R)^{G}\}$.
\end{rmk}

\subsection{Complexified orbits}
We now consider an adapted  GK structure $(\II_0, \JJ_0) \in \mc{AGK}(\JJ_0, \vol_0)$. Let $\T\subset \mathrm{Diff}(M)$ be the torus generated by $X_0=\div_{\vol_0}\pi_{\JJ_0}$.  As in Remark \ref{r:divgremark}, we also assume that the corresponding bihermitian structure $m_0=(g_0, I_0, J, b_0)$ is $\T$-invariant.
We denote by $\mc F^\T\subset \GK_{m_0}^{\T}(\pi,J)$ a leaf of the $\T$-invariant distribution $\mc{D}^{\T}$.

\smallskip
We first establish the relation between $\mc F^\T$ and the moment map picture on $\mathcal{AGK}(\JJ,\vol)$ above.  We use  that the formal complex structure ${\bf J}$ on $\mc{AGK}(\JJ, \vol)$ (see \eqref{eq:formal_complex}) is integrable  and  $\mf{ham}(\JJ)$ preserves ${\bf J}$ in order to ``complexify'' the infinitesimal action of $\mf{ham}(\JJ )$ as follows: 
\begin{equation}\label{eq:complexified-action} \mf{ham}^\T(\JJ)\otimes {\mathbb C} \ni \Psi(\JJ(du)) + \i \Psi(\JJ(dv)) \to L_{\Psi(\JJ du)} \II + \II L_{\Psi(\JJ dv)} \II,\end{equation}
where the RHS is viewed as a vector field on $\mc{AGK}(\JJ, \vol)$. Of course,   it is not clear in the formal infinite dimensional setting that such a vector field admits a flow.

\begin{defn}[Complexified orbit] The vector fields on $\mc{AGK}(\JJ, \vol)$ of the form
\[\left\{ L^{H_0}_{\JJ du} \II +  \II L^{H_0}_{\JJ dv}\II, \, \, u, v\in C^{\infty}_0(M, \R)^{\T}\right\}\]
define a foliation on $\mc{AGK}(\JJ, \vol)$. A (formal) leaf of this foliation will be referred to as \emph{a complexified orbit} of $\mf{ham}^{\T}(\JJ)$. Thus, we say that a smooth path $\II_t \in \mc{AGK}(\JJ, \vol)$ belongs to the complexified orbit 
of $\mf{ham}^{\T}(\JJ)$ if $\frac{\partial}{\partial t} \II_t = L^{H_0}_{\JJ du_t} \II_t  +\II_t L^{H_0}_{\JJ dv_t} \II_t$ for some $\T$-invariant smooth functions $u_t, v_t$.
\end{defn}

We now show that a variation within the leaf $\mc F^\T$ corresponds naturally to a path in the complexified orbit of the infinitesimal action of $\mf{ham}^\T(\JJ_0)$ on $\mc{AGK}(\JJ_0, \vol_0)$.  In the K\"ahler setting,  this is achieved by fixing the K\"ahler form via Moser's Lemma, whereas here we use a one-parameter family of Courant automorphisms which fixes both $\JJ$ and $\vol$.

\begin{lemma}\label{l:moser}  Let $m_0=(g_0, I_0, J, b_0)$ be a bihermitan structure on $M$ corresponding to a GK structure $(\II_0, \JJ_0)$. Suppose that $\vol$ is adapted to $(g_0, I_0, J)$, generating a torus $\T$.  Let $m_t=(g_t,I_t,J,b_t)$ be a path in $\mc F^\T$ starting at $m_0$, corresponding to a family of functions $u_t\in C^\infty_0(M,\R)^\T$. Denote by $(\II_t,\JJ_t)$ the corresponding generalized complex structures and by  $\vol_t$ the volume forms obtained as  solution to
    \[
    \dt\vol_t =-L_{\pi_{\JJ_t}(I_tdu_t)}\vol_t, \qquad (\vol_t)_{|_{t=0}}= \vol_0.
    \]
    along $m_t$.   Let $\Phi_t\in\mathrm{Aut}(M,H_0)$, $\Phi_0=\mathrm{Id}$ be the isotopy corresponding to the time-dependent infinitesimal generator $- \JJ_t(I_t du_t)$, and by $\phi_t \in \Diff(M)$ the induced isotopy of diffeomorphisms, corresponding to the vector field $\pi_T(-\JJ_t(I_t du_t))= - \pi_{\JJ_t}(I_t du_t)$.  Then
    \[
    \Phi_t(\JJ_t)=\JJ_0,\quad (\phi_{t}^{-1})^*(\vol_t)=\vol_0.
    \]
    In particular, $\vol_t$ is $(g_t, I_t, J)$-adapted. Furthermore, the pullback data $\til{u}_t = (\phi_{t}^{-1})^* u_t$, $\til{\II}_t=\Phi_t(\II_t)$ and $\til{J}_t := \phi_t \cdot J$ satisfy 
    \[
    \dt\til {\II}_t=\til{\II}_t L^{H_0}_{\JJ_0 d\til u_t}\til\II_t, \qquad \dt \til{J}_t = \til{J}_t L_{\pi_{\JJ_0} d \til{u}_t} \til{J}_t.
    \]
    Furthermore, if $(\II_0, \JJ_0)$ is generically symplectic type, so is $(\II_t, \JJ_t)$ and $\tilde \II_t \in \mc{AGK}(\JJ_0, \vol_0)$ belongs to the complexified orbit of $\mf{ham}^\T(\JJ_0)$.
    
    \begin{proof} First note that the existence of $\Phi_t$ and $\phi_t$ follow from standard ODE methods.  In particular, the existence of $\phi_t$ is standard, and given this the putative $\Phi_t$ will satisfy
    \begin{align*}
        \dt (\phi_t^{-1} \circ \Phi_t) = \left(\Id - \pi_T \right) (\Psi( - \JJ_t I_t d u_t)),
    \end{align*}
    where $\phi_t^{-1}$ acts on $T \oplus T^*$ as usual.  In other words, after pulling back by $\phi_t^{-1}$ we now solve for a one-parameter family of Courant automorphisms covering the identity automorphism, in particular the family driven by the two-form component of $\Psi( - \JJ_t I_t d u_t)$.  Alternatively one can construct $\Phi_t$ directly using the interpretation of infinitesimal automorphisms as vector fields on $E$ (cf. \cite{SCSV} Proposition 2.7).
    
    The claims $\Phi_t(\JJ_t) = \JJ_0$ and $(\phi_{t}^{-1})^* (\vol_t) = \vol_0$ are immediate from the construction.  We also notice that $\Phi_t$ and $\phi_t$ are $\T$-equivariant.  To obtain the evolution equation for $\til{\II}$, it suffices to compute its derivative at $t = 0$.  Note that  using the Gualtieri map (\ref{eq:gualtieri_map}), it follows that
    \begin{equation}\label{eq:identity}
        (\II + \JJ) I du = \II \JJ du - du.
    \end{equation}
    Using the above relation, the integrability of $\til{\II}$, and the fact that the Lie derivative by an exact $1$-form vanishes, we obtain
    \begin{align*}
        \left. \dt \til{\II}_t\right|_{t=0} =&\ L^{H_0}_{\II_0 I_0 d u} \II_0 + L^{H_0}_{\JJ_0 I_0 du} \II_0 = L^{H_0}_{\II_0 \JJ_0 du - du} \II_0 = \II_0 L^{H_0}_{\JJ_0 du} \II_0,
    \end{align*}
    as claimed.  To obtain the evolution equation for $J$ we first note as an elementary consequence of (\ref{eq:gualtieri_map}) we have $\pi_{\JJ} I = J \pi_{\JJ}$ (cf. Lemma \ref{l:mabuchiidentities} below).  Using this we have
    \begin{align*}
        \left. \dt \til{J}_t \right|_{t=0} =&\ L_{\pi_{\JJ} I d u} J = L_{ J \pi_{\JJ} du} J = J L_{\pi_{\JJ} du} J.
    \end{align*}
    Finally, in the case $(\II_0, \JJ_0)$ is generically symplectic type, $\tilde \II_t \in \mc{AGK}(\JJ_0, \vol_0)=\mc{AGK}^{\T}(\JJ)$ as $(\tilde \II_t, \JJ_0)$ is $\T$-invariant, see Remark \ref{r:divgremark}.
    \end{proof}
\end{lemma}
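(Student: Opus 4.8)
The plan is to construct the two isotopies $\Phi_t$ and $\phi_t$ by solving the relevant time-dependent ODEs, to read off the normalizations $\Phi_t(\JJ_t)=\JJ_0$ and $(\phi_t^{-1})^*(\vol_t)=\vol_0$ directly from the defining equations (a Moser-type cancellation), and then to reduce the evolution equations for $\til\II_t$ and $\til J_t$ to two algebraic identities evaluated at $t=0$.

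First I would produce the diffeomorphism isotopy $\phi_t$ as the flow of the time-dependent vector field $-\pi_{\JJ_t}(I_t du_t)$, which exists for all $t$ by standard ODE theory since $M$ is compact. To lift this to a Courant automorphism $\Phi_t\in\Aut(M,H_0)$ with generator $\Psi(-\JJ_t I_t du_t)$, I note that $\pi_T\Psi(-\JJ_t I_t du_t)=-\pi_{\JJ_t}(I_t du_t)$ is exactly the generator of $\phi_t$, so after conjugating by $\phi_t^{-1}$ the remaining problem is to integrate the $2$-form component of $\Psi(-\JJ_t I_t du_t)$, i.e. a family of $b$-field automorphisms covering the identity; alternatively one uses the realization of $\mf{aut}(M,H_0)$ as genuine vector fields on $E$. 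Because the generator lies in $\mf{aut}(M,H_0)$, the flow stays in $\Aut(M,H_0)$. The identity $\Phi_t(\JJ_t)=\JJ_0$ is then immediate: by Lemma~\ref{lm:gc_variation} the path obeys $\dt\JJ_t=-L^{H_0}_{\JJ_t I_t du_t}\JJ_t$, and the generator $-\JJ_t I_t du_t$ is chosen precisely so that the Lie-derivative term produced by the flow cancels this, giving $\dt[\Phi_t(\JJ_t)]=0$. The same bookkeeping yields $(\phi_t^{-1})^*(\vol_t)=\vol_0$ straight from the defining ODE for $\vol_t$, and $\T$-equivariance of the whole construction (the generators are $\T$-invariant) then shows $\vol_t$ is $(g_t,I_t,J)$-adapted.

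For the evolution equations I would use the cocycle property of the isotopies: differentiating at a general time $t$ reduces, after re-basing at the time-$t$ data $(\til\II_t,\JJ_0,\til u_t)$, to the computation at $t=0$. There, adding the pullback contribution $L^{H_0}_{\JJ_0 I_0 du_0}\II_0$ to the intrinsic variation $L^{H_0}_{\II_0 I_0 du_0}\II_0$ of Lemma~\ref{lm:gc_variation} produces $L^{H_0}_{(\II_0+\JJ_0)I_0 du_0}\II_0$. The key algebraic input is the identity $(\II+\JJ)I du=\II\JJ du-du$, which follows from the Gualtieri map~\eqref{eq:gualtieri_map}; combined with the vanishing of Lie derivatives along exact $1$-forms and the integrability identity $L^{H_0}_{\II e}\II=\II L^{H_0}_{e}\II$ (exactly as in Lemma~\ref{l:hamiltonians}), this collapses the expression to $\II_0 L^{H_0}_{\JJ_0 du_0}\II_0$, the claimed formula. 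The companion equation for $\til J_t$ is handled identically, replacing the Gualtieri identity by $\pi_{\JJ}I=J\pi_{\JJ}$ (cf. Lemma~\ref{l:mabuchiidentities}) and the integrability of $\II$ by that of $J$.

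Finally, in the generically symplectic type case I would observe that $\JJ_t=\Phi_t^{-1}(\JJ_0)$ and that $\pi_{\JJ}$ is invariant under $b$-field transformations (Definition~\ref{d:poisson}) and transforms naturally under diffeomorphisms, so $\pi_{\JJ_t}$ is nondegenerate on an open dense set whenever $\pi_{\JJ_0}$ is; hence $(\II_t,\JJ_t)$ remains generically symplectic type. Then $(\til\II_t,\JJ_0)$ is $\T$-invariant, so $\til\II_t\in\mc{AGK}(\JJ_0,\vol_0)=\mc{AGK}^{\T}(\JJ_0)$ by Remark~\ref{r:divgremark}, and the equation $\dt\til\II_t=\til\II_t L^{H_0}_{\JJ_0 d\til u_t}\til\II_t$ exhibits $\til\II_t$ as a path in the complexified orbit, namely the purely imaginary direction with $u_t\equiv 0$ and $v_t=\til u_t$. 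The hard part will be the careful construction and $\T$-equivariance of the lift $\Phi_t$, together with pinning down the signs in the Moser cancellation; once $\Phi_t$ is in hand and the two algebraic identities are invoked, the remaining steps are formal.
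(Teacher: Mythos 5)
Your proposal follows essentially the same route as the paper's proof: the same two-stage construction of $\phi_t$ and its Courant lift $\Phi_t$ (including the reduction to a family of $b$-field automorphisms covering the identity), the same Moser-type cancellation for $\Phi_t(\JJ_t)=\JJ_0$ and the volume form, and the same two algebraic identities $(\II+\JJ)Idu=\II\JJ du-du$ and $\pi_{\JJ}I=J\pi_{\JJ}$ to collapse the variation formulas at $t=0$. The argument is correct; the extra detail you supply on why generic nondegeneracy of $\pi_{\JJ_t}$ is preserved is a welcome (if routine) addition that the paper leaves implicit.
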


\begin{lemma} \label{l:volumeform} Suppose $m_0=(g_0, I_0, J, b_0)$, $\vol_0$ and $\T \subset \Diff(M)$ are as in Lemma~\ref{l:moser}.  Suppose that $m_0$ is generically symplectic type. Let $m_t=(g_t,I_t,J,b_t)$ be a path in $\mc F^\T$ generated by a family of functions $u_t\in C^\infty_0(M,\R)^\T$.  Let $\vol_t$ be the family of volume forms along $m_t$ introduced in Lemma~\ref{l:moser}. Then $\vol_t$ depends only on the endpoint $m_t$, and does not depend on the specific path between $m_0$ and $m_t$.
\end{lemma}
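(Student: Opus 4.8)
The plan is to characterize $\vol_t$ intrinsically, in terms of the endpoint $m_t$ alone, as the unique volume form solving a divergence equation with prescribed total mass, thereby bypassing any reference to the path $u_t$ used to build it.

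First I would record two consequences of Lemma~\ref{l:moser} together with the defining ODE for $\vol_t$. Since $\dt\vol_t=-L_{\pi_{\JJ_t}(I_t du_t)}\vol_t$ and the Lie derivative of a top form is exact, $L_Y\vol_t=d(\imath_Y\vol_t)$, the total mass is conserved along the path: $\int_M\vol_t=\int_M\vol_0=:V_0$. Next, Lemma~\ref{l:moser} gives $\vol_t=\phi_t^*\vol_0$ and $\Phi_t(\JJ_t)=\JJ_0$; since $\pi_{\JJ}$ is invariant under $b$-field transformations (Definition~\ref{d:poisson}) and transforms by push-forward under the diffeomorphism part, the latter identity yields $\pi_{\JJ_t}=\phi_t^*\pi_{\JJ_0}$.

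The main step is to show that along the entire path the Poisson divergence of $\pi_{\JJ_t}$ relative to $\vol_t$ equals the \emph{fixed} generator $X=X_0=\div_{\vol_0}\pi_{\JJ_0}$ of $\T$. Because the divergence of Definition~\ref{divergence-poisson} is natural under diffeomorphisms, I compute $\div_{\phi_t^*\vol_0}(\phi_t^*\pi_{\JJ_0})=\phi_t^*(\div_{\vol_0}\pi_{\JJ_0})=\phi_t^*X$, which by the previous paragraph reads $\div_{\vol_t}\pi_{\JJ_t}=\phi_t^*X$. Now $\phi_t$ is $\T$-equivariant (as noted in the proof of Lemma~\ref{l:moser}), so it commutes with the flow of $X$; hence $\phi_{t*}X=X$ and therefore $\div_{\vol_t}\pi_{\JJ_t}=X$ for every $t$, with the \emph{same} $X$ regardless of the path.

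It remains to prove the uniqueness that rigidifies $\vol_t$. Suppose $\vol$ and $\vol'=e^{h}\vol$ both satisfy $\div_{\bullet}\pi_{\JJ_t}=X$ and both have mass $V_0$. Subtracting the two divergence equations and using the conformal transformation rule for the Poisson divergence from Definition~\ref{divergence-poisson} gives $\pi_{\JJ_t}^{\sharp}(dh)=0$ on $M$. Here the generically symplectic type hypothesis enters decisively: $\pi_{\JJ_t}$ is nondegenerate on a dense open set, so $dh=0$ there, and by continuity $dh\equiv0$ on $M$; since $M$ is connected, $h$ is constant, and the common mass $V_0$ forces $h=0$. Thus at most one volume form carries these two properties. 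Finally, for two paths in $\mc F^\T$ ending at the same point $m_t$, the resulting volume forms both satisfy $\div_{\bullet}\pi_{\JJ_t}=X$ — with the same $\pi_{\JJ_t}$ (determined by $m_t$) and the same fixed $X$ — and both have mass $V_0$, so by uniqueness they coincide. I expect the only delicate point to be the bookkeeping in the third paragraph, namely checking the naturality of $\div_{\vol}\pi_{\JJ}$ under $\phi_t$ in tandem with the $\T$-equivariance so that $\phi_t^*X=X$ holds exactly; once that is in place, everything else is forced.
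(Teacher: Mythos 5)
Your proof is correct and follows essentially the same strategy as the paper's: both arguments show that the divergence of the endpoint structure with respect to the constructed volume form is forced to equal the fixed generator $X_0$ independently of the chosen path, and then that this divergence condition together with the conserved total volume pins down the volume form uniquely. The only substantive difference is in the uniqueness step: you conclude $dh=0$ from $\pi_{\JJ_t}(dh)=0$ via the generic nondegeneracy of $\pi_{\JJ_t}$, whereas the paper works with the spinorial divergence $\div_{\vol}(\JJ_0)$ and obtains $\JJ_0(df)=0$, which kills $df$ by invertibility of $\JJ_0$ on $E$ without invoking nondegeneracy at that stage --- both are valid under the lemma's hypotheses.
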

\begin{proof}
    Let $m_t$, $t\in[0,1]$ be a closed loop in $\mc F^{\T}$. We need to prove that $\vol_0=\vol_1$. 
    By Lemmas~\ref{l:moser} and  \ref{l:invariant}, we know that along the path $m_t$,
    \[ X_t=\div_{\vol_t} \JJ_t = \phi_t^{-1} \cdot X_0 =X_0,\]
    (as $\phi_t$ preserves $\T$). Thus,
    \[
    \dt\div_\vol(\JJ)=0.
    \]
    If $f=\log\frac{\vol_1}{\vol_0}$, we use the definition of divergence and the fact that $df - \i \JJ_0 df \in \Ker (\psi)$, where $\psi$ is a defining spinor for $\JJ_0$, to obtain
    \[
    0=\left(\div_{\vol_1}(\JJ_0)-\div_{\vol_0}(\JJ_0)\right)=-\frac{1}{2}\JJ_0 (df),
    \]
    which implies that $f$ is constant.  On the other hand the total volume $\int_M\vol_t$ is fixed along $m_t$, proving that $f=1$ and $\vol_0=\vol_1$.
\end{proof}

\begin{rmk}
The above lemma proves that we can endow every point $m\in\mc F^\T$ with a volume form $\vol_m$.  When there is no confusion, we will suppress the subscript $m$ and simply denote this volume form by $\vol$ keeping in mind that it depends on a point in $\mc F^\T$.
\end{rmk}

\subsection{The Mabuchi metric}

Mabuchi \cite{Mabuchi} (cf. \cite{Semmes, Donaldson-git}) introduced a formal Riemannian structure on K\"ahler classes whose geometry captures fundamental features of the existence and uniqueness of cscK metrics.  In \cite{ASUScal} we extended this construction to generalized K\"ahler classes for structures of symplectic type.  Here we extend this to generalized K\"ahler structures  which are generically symplectic type, using the framework above.  Throughout the reminder of this section, we make the following

\begin{assumption}\label{a:generically-symplectic}  $m_0=(g_0, I_0, J, b_0)$ is a generically symplectic type bihermitian structure on $M$ which admits an adapted volume form $\vol_0$ generating a torus $\T$ of symmetries of $m_0$. We denote by $\mc{F}^{\T}=\mc{F}^{\T}_{m_0}$ the  corresponding generalized K\"ahler class of $\T$-invariant bihermitian structures.
By Lemma~\ref{l:moser}, any $m\in \mc{F}^{\T}_{m_0}$ is generically symplectic type and,   by Lemma~\ref{l:volumeform}, there is a well defined adapted volume form  $\vol_m$ associated to $m$.  We shall identify the tangent space at $m\in \mc{F}^{\T}$ (which is $C^{\infty}(M, \R)^{\T}/\R$) with the space of $\vol_m$-normalized $\T$-invariant smooth functions
\[ T_m\mc{F}^{\T} = C^{\infty}_{0}(M, \vol_m)=\left\{ u\in C^{\infty}(M, \R)^{\T} \, \, \Big| \, \, \int_M u \, \vol_m =0\right\}.\]
\end{assumption}

\begin{defn} The \emph{Mabuchi Riemannian metric} is defined for $m \in \mc F^{\T}$ and $u,v \in T_m \mc F^{\T}$ via
\begin{align*}
    \mab{u,v}_m = \int_M u v \vol_{m},
\end{align*}
where $\vol_m$ is the volume form guaranteed by Lemma \ref{l:volumeform}.
\end{defn}

The computations to follow on the geometry of this metric exploit a number of delicate identities for generalized K\"ahler structures, which we collect here:

\begin{lemma} \label{l:mabuchiidentities} The following identities hold:
\begin{align*}
    \pi_{\JJ}(\ga \wedge J \gb) =&\ - \pi_{\JJ} (I \ga \wedge \gb)\\
    \IP{\pi, \ga \wedge \gb} =&\ \pi_{\JJ} \left( (J - I) \ga \wedge \gb \right)\\
    \tr \left( \pi_{\JJ} (\gb \wedge I \gb) \circ \pi_{\JJ} (\ga \wedge J \ga) \right) =&\ \tfrac{1}{2} \left( \IP{ (I + J) \ga, \gb}_g^2 + \IP{(I + J) \ga, I \gb}_g^2 \right)\\
    \int_M u d (I dv \wedge i_{\pi_{\JJ}} \vol) =&\ \int_M v d ( J d u \wedge i_{\pi_{\JJ}} \vol).
\end{align*}
\begin{proof} The first is an easy consequence of the identity $I(I + J) = (I + J)J$.  The second follows directly from the definitions.  The third is a lengthty computation identical to \cite[(2.11)]{ASUScal}.  For the third we intgrate by parts and apply the first identity to yield
\begin{align*}
    \int_M u d (I dv \wedge i_{\pi_{\JJ}} \vol) =&\ - \int_M d u \wedge I dv \wedge i_{\pi_{\JJ}} \vol\\
    =&\ \int_M J du \wedge d v \wedge i_{\pi_{\JJ}} \vol\\
    =&\ \int_M v d ( J d u \wedge i_{\pi_{\JJ}} \vol),
\end{align*}
as claimed.
\end{proof}
\end{lemma}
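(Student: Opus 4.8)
The plan is to dispatch the four identities separately: the first two are purely pointwise and algebraic, the last (integral) one reduces to the first via integration by parts, and the trace identity is the computational core. Throughout I would work at a point, treat $\ga,\gb$ as $1$-forms, let $\IP{\cdot,\cdot}_g$ denote the induced inner product on forms, and use that $I,J$ are $g$-orthogonal with $I^2=J^2=-\Id$, hence skew for $\IP{\cdot,\cdot}_g$, together with $\pi_{\JJ}(\ga,\gb)=\tfrac12\IP{(I+J)\ga,\gb}_g$ coming from $\pi_{\JJ}=\tfrac12(I+J)g^{-1}$.

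For the first identity I would compute $\pi_{\JJ}(\ga\wedge J\gb)=\tfrac12\IP{(I+J)\ga, J\gb}_g=-\tfrac12\IP{J(I+J)\ga,\gb}_g$ by skewness of $J$, and then use the elementary relation $J(I+J)=(I+J)I$ (equivalently $I(I+J)=(I+J)J$), immediate from $I^2=J^2=-\Id$, to rewrite this as $-\tfrac12\IP{(I+J)I\ga,\gb}_g=-\pi_{\JJ}(I\ga\wedge\gb)$. The second identity is of the same flavor: expanding $\IP{\pi,\ga\wedge\gb}=\pi(\ga,\gb)=\tfrac12\IP{[I,J]\ga,\gb}_g$ with $\pi=\tfrac12[I,J]g^{-1}$ and comparing with $\pi_{\JJ}((J-I)\ga\wedge\gb)=\tfrac12\IP{(I+J)(J-I)\ga,\gb}_g$, the claim reduces to the endomorphism identity $(I+J)(J-I)=[I,J]$, which again follows at once from $I^2=J^2=-\Id$.

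For the last identity I would integrate by parts on the closed manifold $M$: since the $(2n-1)$-form $I dv\wedge i_{\pi_{\JJ}}\vol$ contributes no boundary term, one has $\int_M u\, d(I dv\wedge i_{\pi_{\JJ}}\vol)=-\int_M du\wedge I dv\wedge i_{\pi_{\JJ}}\vol$. Rewriting a wedge against $i_{\pi_{\JJ}}\vol$ as the pairing of the corresponding $2$-form with the bivector, the integrand becomes $\pi_{\JJ}(du\wedge I dv)\,\vol$, and the antisymmetrized form of the first identity gives $\pi_{\JJ}(du\wedge I dv)=-\pi_{\JJ}(J du\wedge dv)$. Converting back to forms and integrating by parts once more, now moving $d$ onto $v$, produces $\int_M v\, d(J du\wedge i_{\pi_{\JJ}}\vol)$, as required.

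The main obstacle is the third (trace) identity. Here I would read each factor $\pi_{\JJ}(\ga\wedge J\ga)$ as the endomorphism of $TM$ obtained by composing the skew map $\pi_{\JJ}^{\sharp}\colon T^*M\to TM$ with the $2$-form $\ga\wedge J\ga$ viewed as a map $TM\to T^*M$; explicitly $v\mapsto \ga(v)\,\pi_{\JJ}^{\sharp}(J\ga)-(J\ga)(v)\,\pi_{\JJ}^{\sharp}(\ga)$, which has rank at most two. I would then expand the composition of the two rank-two operators $\pi_{\JJ}(\gb\wedge I\gb)\circ\pi_{\JJ}(\ga\wedge J\ga)$, take the trace, and simplify the resulting inner products using $\pi_{\JJ}^{\sharp}=\tfrac12(I+J)g^{-1}$, skew-adjointness of $I$ and $J$, the relation $I(I+J)=(I+J)J$ from above, and the expansion $(I+J)^2=-2\Id+IJ+JI$; these collapse the expression to the two squared pairings $\IP{(I+J)\ga,\gb}_g^2$ and $\IP{(I+J)\ga,I\gb}_g^2$. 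This step is bookkeeping-heavy but mechanical, and it runs exactly parallel to the computation behind \cite[(2.11)]{ASUScal}, which I would cite for the routine algebra rather than reproduce in full.
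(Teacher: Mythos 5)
Your proposal is correct and follows essentially the same route as the paper: the algebraic identity $I(I+J)=(I+J)J$ (equivalently $J(I+J)=(I+J)I$) for the first claim, direct expansion via $(I+J)(J-I)=[I,J]$ for the second, deferral of the trace identity to the computation behind \cite[(2.11)]{ASUScal}, and integration by parts combined with the first identity for the last. The extra detail you supply (rewriting $\ga\wedge\gb\wedge i_{\pi_{\JJ}}\vol$ as $\pi_{\JJ}(\ga\wedge\gb)\,\vol$) is consistent with the paper's computation.
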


\subsection{The Levi-Civita connection}

\begin{lemma} \label{l:LCformula} The Riemannian structure $\mab{\cdot,\cdot}$ admits a unique Levi-Civita connection $\mc D$, defined by
\begin{align*}
    \left( \mc D_{u} \bxi \right)_m = \dot{\bxi}_m(u) - \pi_{\JJ} (du \wedge J d \bxi_m),
\end{align*}
where $\dot{\bxi}_m(u) := \left. \dt \right|_{t=0} \bxi_{m_t}$, where $m_t$ is the canonical deformation of $m$ defined by $u$.
\begin{proof} We first compute the Levi-Civita connection for canonical vector fields.  In our formal setting, the Levi-Civita connection is still determined by the Koszul formula:
\begin{align*}
    2 \mab{\mc D_{\XX_{u_1}} \XX_{u_2}, \XX_{u_3}} =&\ \XX_{u_1} \mab{ \XX_{u_2}, \XX_{u_3}} + \XX_{u_2} \mab{\XX_{u_1}, \XX_{u_3}} - \XX_{u_3} \mab{\XX_{u_1}, \XX_{u_2}}\\
    &\ - \mab{\XX_{\{u_1,u_2\}_{\pi}}, \XX_{u_3}} + \mab{\XX_{\{u_2,u_3\}_{\pi}}, \XX_{u_1}} + \mab{\XX_{\{u_1,u_3\}_{\pi}}, \XX_{u_2}}.
\end{align*}
Next we observe using Lemma \ref{l:mabuchiidentities}:
\begin{align*}
  \XX_{u_1} \mab{ \XX_{u_2}, \XX_{u_3}} =&\ \int_M u_2 u_3 (- L^{H_0}_{\JJ I d u_1} \vol) = \int_M  u_2 u_3 (- L_{\pi_{\JJ} I d u_1} \vol) = \int_M u_2 u_3 d \left( I d u_1 \wedge i_{\pi_{\JJ}} \vol \right)\\
  \XX_{u_2} \mab{ \XX_{u_1}, \XX_{u_3}} =&\ \int_M u_1 u_3 d \left( I d u_2 \wedge i_{\pi_{\JJ}} \vol \right)\\
  \XX_{u_3} \mab{ \XX_{u_1}, \XX_{u_2}} =&\ \int_M u_1 u_2 d \left( I d u_3 \wedge i_{\pi_{\JJ}} \vol \right) = \int_M u_3 d (J d(u_1 u_2) i_{\pi_{\JJ}} \vol)\\
  =&\ \int_M u_3 u_1 d (J d u_2 \wedge i_{\pi_{\JJ}} \vol) + u_3 u_2 d (J d u_1 \wedge i_{\pi_{\JJ}} \vol) \\
  &\ \qquad + u_3 \left( d u_1 \wedge J d u_2 + d u_2 \wedge J d u_1 \right) \wedge i_{\pi_{\JJ}} \vol.
\end{align*}
Next we observe
\begin{align*}
    i_{\pi \ga} \vol = i_{ \pi_{\JJ} (J - I) \ga} \vol = (J - I) \ga \wedge i_{\pi_{\JJ}} \vol.
\end{align*}
Using this we compute
\begin{align*}
    \mab{\XX_{\{u_1,u_2\}_{\pi}}, \XX_{u_3}} =&\ \int_M u_3 \left( (J - I) d u_1 \wedge d u_2 \wedge i_{\pi_{\JJ}}\vol \right)\\
    \mab{\XX_{\{u_2,u_3\}_{\pi}}, \XX_{u_1}} =&\ \int_M u_1 \left( (J - I) d u_2 \wedge d u_3 \wedge i_{\pi_{\JJ}}\vol \right)\\
    =&\ \int_M u_3 \left( d u_1 \wedge (J - I) d u_2 \wedge i_{\pi_{\JJ}} \vol \right) + u_1 u_3 d \left( (J - I) d u_2 \wedge i_{\pi_{\JJ}} \vol \right)\\
    \mab{\XX_{\{u_1,u_3\}_{\pi}}, \XX_{u_2}} =&\ \int_M u_3 \left( d u_2 \wedge (J - I) d u_1 \wedge i_{\pi_{\JJ}} \vol \right) + u_2 u_3 d \left( (J - I) d u_1 ) \wedge i_{\pi_{\JJ}} \vol \right).
\end{align*}
Using these computations and Lemma \ref{l:mabuchiidentities} gives
\begin{align*}
    \left(\mathcal D_{\XX_{u_1}} \XX_{u_2} \right)_m =&\ - \pi_{\JJ} (d u_1 \wedge J d u_2) - \int_M u_2 d (I d u_1 \wedge i_{\pi_{\JJ}} \vol).
\end{align*}
Now for a general vector field $m \to \xi_m$ we compute
\begin{align*}
    \mab{\mathcal D_{u} \xi, \XX_{v}} =&\ \XX_{u} \mab{\xi, \XX_{v}} - \mab{\xi, \mathcal D_u \XX_{v}}\\
    =&\ \left. \dt \right|_{t=0} \int_M \xi v \vol + \int_M \xi (d u \wedge J d v \wedge i_{\pi_{\JJ}} \vol)\\
    =&\ \int_M \dot{\xi}(u) v \vol + \int_M \xi v d (I d u \wedge i_{\pi_{\JJ}} \vol) + \int_M \xi (d u \wedge J d v \wedge i_{\pi_{\JJ}} \vol)\\
    =&\ \int_M \left( \dot{\xi}(u) - \pi_{\JJ} (d u \wedge J d \xi) \right) v \vol,
\end{align*}
as claimed.
\end{proof}
\end{lemma}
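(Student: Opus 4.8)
The plan is to determine $\mc D$ via the Koszul formula, working first with the canonical vector fields $\XX_u$ (those induced by $dd^c_I u$ on $\mc F^{\T}$) and then bootstrapping to an arbitrary vector field $m \mapsto \bxi_m$. Since these canonical fields span the tangent space at each point and satisfy $[\XX_{u_i}, \XX_{u_j}] = -\XX_{\{u_i, u_j\}_{\pi}}$ by Lemma~\ref{lm:commutator}, the Koszul formula simultaneously forces uniqueness and, once its six terms are assembled, produces the connection explicitly.

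First I would evaluate the three directional-derivative terms. As $\mab{\XX_{u_2}, \XX_{u_3}}_m = \int_M u_2 u_3\, \vol_m$ depends on $m$ only through $\vol_m$, and the volume form evolves by $\dt \vol_t = -L_{\pi_{\JJ}(I du_t)}\vol_t$ along the $u_1$-deformation (Lemma~\ref{l:moser}), Cartan's formula yields $\XX_{u_1}\mab{\XX_{u_2},\XX_{u_3}} = \int_M u_2 u_3\, d(I du_1 \wedge i_{\pi_{\JJ}}\vol)$, and likewise for the cyclic permutations. For the three bracket terms I would substitute $i_{\pi\ga}\vol = (J - I)\ga \wedge i_{\pi_{\JJ}}\vol$, a consequence of the second identity of Lemma~\ref{l:mabuchiidentities}.

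The heart of the argument is the algebraic collapse of these six contributions. The essential tool is the symmetrization identity $\int_M u\, d(I dv \wedge i_{\pi_{\JJ}}\vol) = \int_M v\, d(J du \wedge i_{\pi_{\JJ}}\vol)$ (the fourth item of Lemma~\ref{l:mabuchiidentities}), which lets me convert $I$-type contractions into $J$-type ones and thereby match the derivative terms against the bracket terms after integration by parts. I expect the cancellations, together with tracking the single normalizing scalar, to be the main obstacle: the Koszul expression should reduce to $\mab{-\pi_{\JJ}(du_1 \wedge J du_2) - c,\, \XX_{u_3}}$, where $c = \int_M u_2\, d(I du_1 \wedge i_{\pi_{\JJ}}\vol)$ is precisely the constant forcing $\mc D_{\XX_{u_1}}\XX_{u_2}$ to be $\vol_m$-normalized.

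Finally, to reach the stated formula for a general field $\bxi$, I would invoke metric compatibility in the form $\mab{\mc D_u \bxi, \XX_v} = \XX_u\mab{\bxi, \XX_v} - \mab{\bxi, \mc D_u \XX_v}$, differentiate $\int_M \bxi v\, \vol_m$ using the volume ODE once more (the derivative of the normalized representative $\dot{\bxi}_m(u)$ then absorbs what appeared as the constant $c$ in the canonical case, since $d\bxi_m$ is insensitive to a scalar shift), and collect terms to obtain $\mab{(\mc D_u\bxi)_m, \XX_v} = \int_M (\dot{\bxi}_m(u) - \pi_{\JJ}(du \wedge J d\bxi_m))\, v\, \vol$. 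Nondegeneracy of $\mab{\cdot,\cdot}$ then yields both the formula and its uniqueness.
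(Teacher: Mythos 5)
Your proposal follows the paper's proof essentially step for step: the Koszul formula applied to the canonical fields $\XX_{u_i}$ with the commutator identity from Lemma~\ref{lm:commutator}, the volume ODE from Lemma~\ref{l:moser} for the directional-derivative terms, the identity $i_{\pi\ga}\vol = (J-I)\ga\wedge i_{\pi_{\JJ}}\vol$ for the bracket terms, the symmetrization identity of Lemma~\ref{l:mabuchiidentities} to collapse the six contributions (including the normalizing constant), and finally metric compatibility to pass to a general field $\bxi$. This is correct and is the same argument as in the paper.
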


\begin{defn} Let $m_t$ be a smooth path in $\mc F^{\T}$ corresponding to a Hamiltonian deformation with respect to a smooth path of functions $u_t \in C_0^{\infty}(M, \vol_t)$.  We say that $m_t$ is a \emph{geodesic} if
\begin{align*}
    0 = \mc D_{u} u = \dot{u} - \pi_{\JJ}(d u \wedge J du).
\end{align*}
\end{defn}

\begin{prop}\label{p:Killing-geodesic} Given the setup above, suppose $Y$ is a Hamiltonian vector field with respect to $\pi_{\JJ}$ which preserves $J$.
Then the flow $\Phi_t = \exp(-JY)$ of $-JY$ defines a geodesic in $\mathcal F^{\T}$.
\begin{proof} By the Hamiltonian assumption, there exists a smooth function $u_0$ such that
\begin{align*}
    Y = - \pi_{\JJ} d u_0, \qquad \int_M u_0 \vol = 0.
\end{align*}
By Lemma~\ref{l:hredinGKclass} below, the flow of $Y$ generates a path $m_t$ in $\mc{F}^{\T}$.

We let $u_t := \Phi_t^*(u_0)$.  Since $\Phi_t \cdot Y = Y$, it follows that $Y = - \pi_{\JJ_t} d u_t$.  Thus we compute
\begin{align*}
    \dt u_t = - L_{JY} u_t = d u_t(- JY) = \pi_{\JJ} (d u_t \wedge J du_t),
\end{align*}
as required.
\end{proof}
\end{prop}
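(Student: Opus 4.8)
The plan is to exhibit the flow $\Phi_t=\exp(-JY)$ as a Hamiltonian deformation inside $\mc{F}^{\T}$ and then to verify the geodesic equation $\dot u=\pi_{\JJ}(du\wedge Jdu)$ directly on the associated potential path. First, using the hypothesis that $Y$ is Hamiltonian with respect to $\pi_{\JJ}$, I would fix the normalized potential $u_0\in C_0^{\infty}(M,\vol)^{\T}$ with $Y=-\pi_{\JJ}du_0$; the condition $\int_M u_0\,\vol=0$ pins down $u_0$ uniquely, and its $\T$-invariance is forced by the identification $T_m\mc{F}^{\T}=C^{\infty}_0(M,\vol_m)$ from Assumption~\ref{a:generically-symplectic}, reflecting that $Y$ commutes with $\T$. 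Applying the identity $\pi_{\JJ}I=J\pi_{\JJ}$ from Lemma~\ref{l:mabuchiidentities} then rewrites $-JY=\pi_{\JJ}(Idu_0)$, which is, up to sign, exactly the infinitesimal diffeomorphism generating the canonical Hamiltonian deformation attached to a potential in the sense of Lemma~\ref{l:moser}.

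The hard part will be the second step: confirming that the flow $\phi_t$ of $-JY$ actually traces a path $m_t=(g_t,I_t,J,b_t)$ lying in the generalized K\"ahler class $\mc{F}^{\T}$ rather than leaving it. This is the generalized-geometry analogue of the classical K\"ahler fact that $\exp(tJY)$, for a holomorphic Killing field $Y$, sweeps out a ray of K\"ahler potentials; the subtlety is that $\phi_t$ does not preserve $J$ pointwise, so one cannot simply push forward $m_0$. Instead, one must match $-JY$ with the generator of the distribution $\mc{D}$ computed above and invoke the structural result that a $J$-preserving $\pi_{\JJ}$-Hamiltonian field is tangent to the class (the content of Lemma~\ref{l:hredinGKclass}). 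I would record this dependence explicitly, as it is the conceptual core of the statement.

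Granting this, I set $u_t:=\phi_t^{*}u_0$ and show that $u_t$ is the $\pi_{\JJ_t}$-Hamiltonian of $Y$ along the moving structure, i.e. $Y=-\pi_{\JJ_t}du_t$. The key observation is that $Y$ is invariant under its own complexified flow: from $L_Y J=0$ one computes $L_Y(JY)=(L_Y J)Y+J(L_Y Y)=0$, whence $[JY,Y]=0$ and therefore $(\phi_t)_{*}Y=Y$. Combining this with $\pi_{\JJ_t}=(\phi_t)_{*}\pi_{\JJ_0}$ and $du_t=\phi_t^{*}du_0$ gives $\pi_{\JJ_t}du_t=(\phi_t)_{*}(\pi_{\JJ_0}du_0)=-(\phi_t)_{*}Y=-Y$, as desired.

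Finally I would differentiate: since $u_t=\phi_t^{*}u_0$ and $\phi_t$ flows by $-JY$, one has $\dot u_t=du_t(-JY)$. Substituting $-JY=\pi_{\JJ_t}(I_t du_t)$ (from the previous step together with $J\pi_{\JJ_t}=\pi_{\JJ_t}I_t$) and contracting gives $\dot u_t=\pi_{\JJ_t}(I_t du_t\wedge du_t)$; applying the first identity of Lemma~\ref{l:mabuchiidentities}, which relates $\pi_{\JJ}(\cdot\wedge J\cdot)$ and $\pi_{\JJ}(I\cdot\wedge\cdot)$, collapses this to $\pi_{\JJ_t}(du_t\wedge J du_t)$, i.e. exactly the geodesic equation of the preceding definition. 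The only residual care is to track signs through $\pi_{\JJ}I=J\pi_{\JJ}$ and the convention $Y=-\pi_{\JJ}du_0$, which I expect to be routine bookkeeping once the orientation of those identities is fixed.
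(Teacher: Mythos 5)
Your proposal is correct and follows essentially the same route as the paper's proof: invoke Lemma~\ref{l:hredinGKclass} to see that the flow of $-JY$ stays in $\mc{F}^{\T}$, transport the potential by $u_t=\phi_t^{*}u_0$, use the invariance of $Y$ under this flow to conclude $Y=-\pi_{\JJ_t}du_t$, and differentiate to recover the geodesic equation. The only substantive addition is your explicit verification that $[Y,JY]=0$ (via $L_YJ=0$), a detail the paper asserts without comment when it writes $\Phi_t\cdot Y=Y$.
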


\subsection{Curvature}

\begin{prop} \label{p:mabuchicurvature} At any given $m \in \mathcal{F}^{\mathbb T}$, the curvature tensor of the Mabuchi metric satisfies
\begin{align*}
    \left( \mathcal R(\XX_{u_1}, \XX_{u_2}) \XX_{u_3} \right)_m =&\ - \left\{ \left\{u_1, u_2\right\}_{\pi_{\JJ}}, u_3 \right\}_{\pi_{\JJ}}.
\end{align*}
In particular
\begin{align*}
    \mab{ \mathcal R(\XX_{u_1}, \XX_{u_2}) \XX_{u_1}, \XX_{u_2}}_m =&\ - \mab{ \{u_1, u_2\}_{\pi_{\JJ}}, \{u_1, u_2\}_{\pi_{\JJ}}},
\end{align*}
so that the sectional curvature is everywhere nonpositive.
    \begin{proof}
        We note that the second formula follows from the first using $\ad$-invariance of the metric (\ref{eq:ad-invariance}).  We will drop the basepoint from the computations to follow, and also ignore the additive normalizing constants of the underlying smooth functions, as these do not affect the answer.  Thus in particular we express
        \begin{align*}
            \mathcal D_{u} \XX_v =&\ - \tr_{\pi_{\JJ}} (d u \wedge J d v).
        \end{align*}
        Thus we first compute using Lemma \ref{l:LCformula},
        \begin{gather} \label{f:mabcurv10}
            \begin{split}
                \mathcal D_{u_1} \left( \mathcal D_{u_2} \XX_{u_3} \right) =&\ \tfrac{1}{2} \tr \left( \pi_{\JJ} d d^c_I u_1 \pi_{\JJ} (d u_2 \wedge J d u_3) \right) + \tr_{\pi_{\JJ}} (d u_1 \wedge J d \tr_{\pi_{\JJ}} (d u_2 \wedge J du_3))\\
                =&\ \tfrac{1}{2} \tr \left( \pi_{\JJ} d d^c_I u_1 \pi_{\JJ} (d u_2 \wedge J d u_3) \right) + \tr_{\pi_{\JJ}} (d \tr_{\pi_{\JJ}} (d u_2 \wedge J du_3) \wedge I d u_1).
            \end{split}
        \end{gather}
        Now we observe using Schur's Lemma that for $2$-forms $\ga, \gb$ one has
        \begin{gather} \label{f:mabcurv20}
           \ga \wedge \gb \wedge i_{\pi_{\JJ}} i_{\pi_{\JJ}} \vol = - \tfrac{1}{2} \tr (\pi_{\JJ} \ga \pi_{\JJ} \gb) \vol + (\tr_{\pi_{\JJ}} \ga) (\tr_{\pi_{\JJ}} \gb) \vol.
        \end{gather}
        Using this we obtain from (\ref{f:mabcurv10}) and integration by parts
        \begin{align*}
            \mab{ \mathcal D_{u_1} (\mathcal D_{u_2} \XX_{u_3}), \XX_{u_4}} =&\ - \int_M u_4 \left( d d^c_I u_1 \wedge d u_2 \wedge J d u_3 \right) \wedge i_{\pi_{\JJ}} i_{\pi_{\JJ}} \vol + \int_M u_4 \left(\tr_{\pi_{\JJ}} (d u_2 \wedge J d u_3) \right) d d^c_I u_1 \wedge i_{\pi_{\JJ}} \vol\\
            &\ + \int_M u_4 \left( d \tr_{\pi_{\JJ}} (d u_2 \wedge J du_3) \wedge I d u_1) \right) \wedge i_{\pi_{\JJ}} \vol\\
            =&\ \int_M d u_4 \wedge I d u_1 \wedge d u_2 \wedge J d u_3 \wedge i_{\pi_{\JJ}} i_{\pi_{\JJ}} \vol + \int_M u_4 \left( I d u_1 \wedge d u_2 \wedge d J d u_3 \right) \wedge i_{\pi_{\JJ}} i_{\pi_{\JJ}} \vol\\
            &\ - \int_M u_4 (I d u_1 \wedge d u_2 \wedge J d u_3) \wedge d i_{\pi_{\JJ}} i_{\pi_{\JJ}} \vol\\
            &\ - \int_M \tr_{\pi_{\JJ}} ( d u_2 \wedge J d u_3) \tr_{\pi_{\JJ}} (d u_4 \wedge I d u_1) \vol + \int_M u_4 \tr_{\pi_{\JJ}} (d u_2 \wedge J du_3) I d u_1 \wedge d i_{\pi_{\JJ}} \vol.
        \end{align*}
        Also we compute
        \begin{align*}
            & \mab{ \mathcal D_{\{u_1, u_2\}_{\pi}} \XX_{u_1}, \XX_{u_2}} = - \int_M u_2 \left( d \{u_1, u_2\}_{\pi} \wedge J d u_1 \right) \wedge i_{\pi_{\JJ}} \vol\\
            &\ \quad = \int_M \{u_1, u_2\}_{\pi} d u_2 \wedge J d u_1 \wedge i_{\pi_{\JJ}} \vol + \int_M u_2 \{u_1, u_2\}_{\pi} d d^c_J u_1 \wedge i_{\pi_{\JJ}} \vol - \int_M u_2 \{u_1, u_2\} J d u_1 \wedge d i_{\pi_{\JJ}} \vol\\
            &\ \quad = - \int_M \tr_{\pi_{\JJ}} ( d u_1 \wedge (I - J) d u_2) \tr_{\pi_{\JJ}} (d u_2 \wedge J d u_1 ) \vol - \int_M u_2 \tr_{\pi_{\JJ}} ( d u_1 \wedge (I - J) d u_2) \tr_{\pi_{\JJ}} d d^c_J u_1 \vol\\
            &\ \qquad + \int_M u_2 \tr_{\pi_{\JJ}} ( d u_1 \wedge (I - J) d u_2) J d u_1 \wedge d i_{\pi_{\JJ}} \vol\\
            &\ \quad = - \int_M \left( \tr_{\pi_{\JJ}} \left( d u_2 \wedge J d u_1 \right) - \tr_{\pi_{\JJ}} (d u_2 \wedge I d u_1) \right) \tr_{\pi_{\JJ}} (d u_2 \wedge J d u_1) \vol\\
            &\ \qquad - \int_M u_2 \left( \tr_{\pi_{\JJ}} (d u_1 \wedge I du_2) + \tr_{\pi_{\JJ}} (I d u_1 \wedge d u_2) \right) \tr_{\pi_{\JJ}} d d^c_J u_1 \vol\\
            &\ \qquad + \int_M u_2 \tr_{\pi_{\JJ}} ( d u_1 \wedge (I - J) d u_2) J d u_1 \wedge d i_{\pi_{\JJ}} \vol.
        \end{align*}
        We divide the terms in the expression for curvature into those which do not involve a derivative of the volume form ($T_1$) and those which do ($T_2$).  Using the computations above the former comprise:
        \begin{align*}
            T_1 =&\ \int_M (\tr_{\pi_{\JJ}} (d u_2 \wedge J d u_1)^2 \vol\\
            &\ + \int_M (d u_2 \wedge I d u_2 \wedge d u_1 \wedge J d u_1) \wedge i_{\pi_{\JJ}} i_{\pi_{\JJ}} \vol - \int_M (\tr_{\pi_{\JJ}} ( du_1 \wedge J du_1) (\tr_{\pi_{\JJ}}(d u_2 \wedge I du_2)) \vol\\
            &\ - \int_M u_2 \left( d u_1 \wedge I du_2 \wedge d d^c_J u_1 \right) \wedge i_{\pi_{\JJ}} i_{\pi_{\JJ}} \vol + \int_M u_2 \left( \tr_{\pi_{\JJ}} (d u_1 \wedge I du_2) \tr_{\pi_{\JJ}} (d d^c_J u_1) \right) \vol\\
            &\ - \int_M u_2 (I d u_1 \wedge d u_2 \wedge d d^c_J u_1) \wedge i_{\pi_{\JJ}} i_{\pi_{\JJ}} \vol + \int_M u_2 \left( \tr_{\pi_{\JJ}} (I d u_1 \wedge d u_2) \tr_{\pi_{\JJ}} ( d d^c_J u_1) \right) \vol
        \end{align*}
        Using (\ref{f:mabcurv20}) and Lemma \ref{l:mabuchiidentities} we simplify the final four terms as:
        \begin{align*}
            \tfrac{1}{2} & \int_M \left[ \tr \left( \pi_{\JJ} (d u_1 \wedge I d u_2) \pi_{\JJ} d d^c_J u_1 \right) + \tr \left( \pi_{\JJ} (I d u_1 \wedge d u_2)  \pi_{\JJ} d d^c_J u_1 \right) \right] \vol\\
            =&\ \tfrac{1}{2} \int_M \left[ \tr \left( \pi_{\JJ} (d u_1 \wedge I d u_2) \pi_{\JJ} d d^c_J u_1 \right) + \tr \left( \pi_{\JJ} (I d u_1 \wedge d u_2)  \pi_{\JJ} J d d^c_J u_1 J \right) \right] \vol\\
            =&\ \tfrac{1}{2} \int_M \left[ \tr \left( \pi_{\JJ} (d u_1 \wedge I d u_2) \pi_{\JJ} d d^c_J u_1 \right) + \tr \left( \pi_{\JJ} (II d u_1 \wedge I d u_2)  \pi_{\JJ} d d^c_J u_1 \right) \right] \vol\\
            =&\ 0.
        \end{align*}
        With similar simplifications we finally yield
        \begin{align*}
            T_1 =&\ \int_M (\tr_{\pi_{\JJ}} (d u_2 \wedge J d u_1)^2 \vol\\
            &\ + \int_M (d u_2 \wedge I d u_2 \wedge d u_1 \wedge J d u_1) \wedge i_{\pi_{\JJ}} i_{\pi_{\JJ}} \vol - \int_M (\tr_{\pi_{\JJ}} ( du_1 \wedge J du_1) (\tr_{\pi_{\JJ}}(d u_2 \wedge I du_2)) \vol\\
            =&\ \int_M (\tr_{\pi_{\JJ}} (d u_2 \wedge J d u_1)^2 \vol - \tfrac{1}{2} \int_M \tr \left( \pi_{\JJ} (d u_2 \wedge I du_2) \pi_{\JJ} (d u_1 \wedge J d u_1) \right) \vol\\
            =&\ \int_M (\tr_{\pi_{\JJ}} (d u_2 \wedge J d u_1)^2 \vol - \int_M ( \tr_{\pi_{\JJ}} (d u_1 \wedge d u_2))^2 \vol - \int_M ( \tr_{\pi_{\JJ}} (d u_1 \wedge I du_2))^2 \vol\\
            =&\ - \int_M ( \tr_{\pi_{\JJ}} (d u_1 \wedge d u_2))^2 \vol.
        \end{align*}
        It remains to simplify the terms involving the exterior derivative of the volume form.  To that end we first note that
        \begin{align*}
    d i_{\pi_{\JJ}} i_{\pi_{\JJ}} \vol = i_{\pi_{\JJ}} i_X \vol.
\end{align*}
Using this we see
\begin{align*}
    T_2 =&\ \int_M \left( I d u_1 \wedge d u_2 \wedge J du_1 - I du_2 \wedge d u_1 \wedge J du_1 \right) \wedge (i_{\pi_{\JJ}} i_X \vol)\\
    &\ \qquad + \left( \tr_{\pi_{\JJ}} (d u_1 \wedge J du_1) I du_2 - \tr_{\pi_{\JJ}} (d u_2 \wedge J du_1) I du_1 - \tr_{\pi_{\JJ}} (d u_1 \wedge (I - J) d u_2) J du_1 \right) \wedge i_X \vol.
\end{align*}
Next we claim the identity:
\begin{align*}
    \ga_1 \wedge \ga_2 \wedge \ga_3 \wedge i_{\pi_{\JJ}} i_X \vol = \left( \tr_{\pi_{\JJ}}( \ga_1 \wedge \ga_2) \ga_3 - \tr_{\pi_{\JJ}} (\ga_1 \wedge \ga_3) \ga_2 + \tr_{\pi_{\JJ}} (\ga_2 \wedge \ga_3) \ga_1 \right) \wedge i_X \vol.
\end{align*}
Furthermore we note by the $X$ invariance of $u_i$ it follows that $d u_i \wedge i_X \vol = X u_i \vol = 0$.
Using these points we observe
\begin{align*}
    T_2 =&\ \int_M \left(  (\tr_{\pi_{\JJ}} (d u_2 \wedge J d u_1) - \tr_{\pi_{\JJ}} (d u_2 \wedge J du_1) ) I d u_1 + (\tr_{\pi_{\JJ}} (d u_1 \wedge J du_1) - \tr_{\pi_{\JJ}} (d u_1 \wedge J du_1)) I d u_2 \right.\\
    &\ \qquad \left. + ( \tr_{\pi_{\JJ}} (I d u_1 \wedge d u_2) - \tr_{\pi_{\JJ}} (I du_2 \wedge d u_1) - \tr_{\pi_{\JJ}} (d u_1 \wedge (I - J) d u_2 )) J d u_1 \right) \wedge i_X \vol\\
    =&\ 0.
\end{align*}
    \end{proof}
\end{prop}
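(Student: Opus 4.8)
The plan is to read off the Riemann tensor directly from the explicit Levi--Civita connection of Lemma~\ref{l:LCformula}, evaluated on the canonical vector fields $\XX_{u_i}$. I would start from the standard expression
\[
\mathcal R(\XX_{u_1},\XX_{u_2})\XX_{u_3}
= \mc D_{\XX_{u_1}}\mc D_{\XX_{u_2}}\XX_{u_3}
- \mc D_{\XX_{u_2}}\mc D_{\XX_{u_1}}\XX_{u_3}
- \mc D_{[\XX_{u_1},\XX_{u_2}]}\XX_{u_3},
\]
where the Lie bracket of canonical fields is $[\XX_{u_1},\XX_{u_2}]=-\XX_{\{u_1,u_2\}_{\pi}}$ by Lemma~\ref{lm:commutator}. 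On canonical fields the connection of Lemma~\ref{l:LCformula} collapses, modulo the additive constants produced by the $\vol_m$--renormalization of potentials (which drop out once we pair against a $\vol$--normalized test potential), to $\mc D_{\XX_u}\XX_v=-\tr_{\pi_{\JJ}}(du\wedge J\,dv)$. To keep the bookkeeping manageable I would pair the curvature with a fourth potential $u_4$, computing the fully covariant tensor $\mab{\mathcal R(\XX_{u_1},\XX_{u_2})\XX_{u_3},\XX_{u_4}}$, since this allows free integration by parts against $\vol$.

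Feeding the connection into the curvature formula produces a sum of integrals whose integrands are quadruple wedge products of $du_i,\,I\,du_i,\,J\,du_i$ contracted against one or two copies of $\pi_{\JJ}$. The algebra is governed by Lemma~\ref{l:mabuchiidentities}: the exchange relation $\pi_{\JJ}(\ga\wedge J\gb)=-\pi_{\JJ}(I\ga\wedge\gb)$, the adjointness identity $\int_M u\,d(I\,dv\wedge i_{\pi_{\JJ}}\vol)=\int_M v\,d(J\,du\wedge i_{\pi_{\JJ}}\vol)$, and the Schur--type contraction $\ga\wedge\gb\wedge i_{\pi_{\JJ}}i_{\pi_{\JJ}}\vol=-\tfrac12\tr(\pi_{\JJ}\ga\,\pi_{\JJ}\gb)\vol+(\tr_{\pi_{\JJ}}\ga)(\tr_{\pi_{\JJ}}\gb)\vol$, which converts double $\pi_{\JJ}$--contractions into traces. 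I would also use $\IP{\pi,\ga\wedge\gb}=\pi_{\JJ}((J-I)\ga\wedge\gb)$ to re-express the $\pi$--bracket coming from the commutator term through $\pi_{\JJ}$, so that everything is ultimately written via $\{u,v\}_{\pi_{\JJ}}=\tr_{\pi_{\JJ}}(du\wedge dv)$.

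The main obstacle, and the point where the adapted-volume hypothesis is indispensable, is the block of terms carrying a derivative of the varying volume form. After integration by parts these appear as integrals against $d\,i_{\pi_{\JJ}}\vol$ and $d\,i_{\pi_{\JJ}}i_{\pi_{\JJ}}\vol$. Here I would invoke $d\,i_{\pi_{\JJ}}\vol=i_X\vol$ with $X=\div_{\vol}\pi_{\JJ}$ (Definition~\ref{divergence-poisson}), the derived identity $d\,i_{\pi_{\JJ}}i_{\pi_{\JJ}}\vol=i_{\pi_{\JJ}}i_X\vol$, and a three-form contraction identity rewriting $\ga_1\wedge\ga_2\wedge\ga_3\wedge i_{\pi_{\JJ}}i_X\vol$ as a signed sum of $\tr_{\pi_{\JJ}}(\ga_i\wedge\ga_j)\,\ga_k\wedge i_X\vol$. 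Because every potential is $\T$--invariant we have $du_i(X)=0$, hence $du_i\wedge i_X\vol=(Xu_i)\vol=0$, so each surviving volume-derivative term is annihilated. I expect this cancellation to be the most delicate part, as it requires matching signs across the two second-derivative contributions and the bracket-correction term at once.

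Once these terms are gone, the remaining integrals collapse --- via the Schur identity and Lemma~\ref{l:mabuchiidentities} --- into $-\mab{\{\{u_1,u_2\}_{\pi_{\JJ}},u_3\}_{\pi_{\JJ}},u_4}$, which is the asserted tensorial formula. Specializing to $u_3=u_1$, $u_4=u_2$ and using the $\ad$--invariance of the Mabuchi metric \eqref{eq:ad-invariance} rewrites the sectional form as $-\mab{\{u_1,u_2\}_{\pi_{\JJ}},\{u_1,u_2\}_{\pi_{\JJ}}}\le 0$, so the sectional curvature is everywhere nonpositive; this realizes $\mathcal F^{\T}$ formally as a nonpositively curved symmetric space, precisely the curvature of a Lie algebra carrying an invariant inner product.
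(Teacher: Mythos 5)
Your proposal follows essentially the same route as the paper's proof: the standard curvature formula with the bracket from Lemma~\ref{lm:commutator}, the simplified connection $\mc D_{\XX_u}\XX_v=-\tr_{\pi_{\JJ}}(du\wedge J\,dv)$, pairing against a fourth potential, the Schur-type contraction and the exchange identities of Lemma~\ref{l:mabuchiidentities}, and the elimination of the volume-derivative block via $d\,i_{\pi_{\JJ}}\vol=i_X\vol$, $d\,i_{\pi_{\JJ}}i_{\pi_{\JJ}}\vol=i_{\pi_{\JJ}}i_X\vol$ and $du_i\wedge i_X\vol=0$ from $\T$-invariance. The only cosmetic difference is that the paper carries out the explicit algebra after specializing to $u_3=u_1$, $u_4=u_2$ (i.e.\ it computes the sectional form directly), whereas you propose to track all four potentials; the lengthy cancellations you defer are exactly the ones the paper performs.
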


\section{The generalized K\"ahler Calabi problem.}

In this section, we combine the moment map setup with the Mabuchi geometry of $\mc{F}^{\T}$ developed in Section~\ref{s:scalasmoment} in order to obtain results about the existence of extremal bihermitian structures $m\in \mc{F}^{\T}$, see Definition~\ref{d:extremal-BH}.  We still assume throughout this section that Assumption~\ref{a:generically-symplectic} holds true.

\subsection{Calabi Energy}

\begin{defn} \label{d:Calabi} Given a generalized K\"ahler class $\mathcal {F}^{\T}$, we define the \emph{Calabi functional} as
\begin{align*}
    \mathbf{Ca} :&\ \mathcal {GK}(\pi,J)^{\T} \to \mathbb R\\
    \mathbf{Ca} (\II, \JJ,\vol) =&\ \int_M \Gscal (\II, \JJ, \vol)^2 \vol.
\end{align*}
\end{defn}

\begin{prop} \label{p:Calabivar} $(\II, \JJ,\vol) \in \mathcal {GK}(\pi,J)^{\T}$ is a critical point of $\mathbf{Ca}$ if and only if $(\II, \JJ, \vol)$ is extremal.
\begin{proof} Using the Moser isotopy $\Phi_t$ from Lemma \ref{l:moser} we can compute for an arbitrary variation in $\mathcal {F}^{\T}$ as in Corollary \ref{cor:extremal}
\begin{align*}
    \left. \frac{d}{dt} \right|_{t=0} \mathbf{Ca}(\II_t, \JJ_t,\vol_t) =&\ \left. \frac{d}{dt} \right|_{t=0}  \Phi_t^* \mathbf{Ca}(\II_t, \JJ_t,\vol_t)\\
    =&\ \left. \frac{d}{dt} \right|_{t=0} \int_M \Gscal^2(\II_t, \JJ, \vol) \vol\\
    =&\ \pmb{\Omega} \left( L_{\chi}\II, \II L_{\Psi(\JJ d u_0)} \II \right).
\end{align*}
Specializing to the case $u_0 = \Gscal(\II, \JJ, \vol)$ it follows that $\brs{ L_{\chi}\II}^2_{\pmb{g}} = 0$, giving the claim.
\end{proof}
\end{prop}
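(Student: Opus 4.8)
The plan is to mirror the computation in the proof of Corollary~\ref{cor:extremal}, transporting the first variation of $\mathbf{Ca}$ on the nonlinear class $\mc{F}^{\T}$ into the moment-map picture on the linear space $\mc{AGK}(\JJ_0, \vol_0)$, and then to extract the extremal condition by a single well-chosen variation. I would fix $(\II_0, \JJ_0, \vol_0)$ and take an arbitrary Hamiltonian deformation $\II_t$ in $\mc{F}^{\T}$ driven by a path $u_t \in C^\infty_0(M, \R)^{\T}$ with prescribed initial function $u_0$. The first step is to invoke Lemma~\ref{l:moser}: pulling back by the Courant isotopy $\Phi_t$, which fixes $\JJ_0$ and---by the path-independence established in Lemma~\ref{l:volumeform}---the adapted volume $\vol_0$, produces a path $\til\II_t = \Phi_t(\II_t)$ in $\mc{AGK}(\JJ_0, \vol_0)$ with $\left.\dt\right|_{t=0}\til\II_t = \II_0 L^{H_0}_{\JJ_0 d u_0}\II_0$. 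Since $\Phi_t$ is a Courant automorphism, $\Gscal$ is unchanged along it, so $\mathbf{Ca}(\II_t, \JJ_t, \vol_t) = \int_M \Gscal(\til\II_t, \JJ_0, \vol_0)^2\,\vol_0$; differentiating this and applying Theorem~\ref{t:gotomoment} with the test function $u = \Gscal(\II_0, \JJ_0, \vol_0)$ should give
\begin{equation*}
\left.\dt\right|_{t=0}\mathbf{Ca}(\II_t, \JJ_t, \vol_t) = \pmb{\Omega}\left( L_{\chi}\II_0,\ \II_0 L^{H_0}_{\JJ_0 d u_0}\II_0 \right),
\end{equation*}
with $\chi = \Psi(\JJ_0\, d\Gscal)$ as in Definition~\ref{d:extremal}.

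From this formula both implications should follow quickly. If $(\II, \JJ, \vol)$ is extremal then $L_{\chi}\II = 0$, the right-hand side vanishes for every admissible $u_0$, and the structure is critical. For the converse I would suppose the variation vanishes for all $u_0 \in C^\infty_0(M, \R)^{\T}$ and specialize to $u_0 = \Gscal(\II, \JJ, \vol)$, so that the orbit direction becomes exactly $\II L_{\chi}\II$. The key point is then the compatibility of the formal K\"ahler triple on $\mc{AGK}(\JJ, \vol)$: differentiating $\II^2 = -\Id$ shows every admissible variation $\dot\II$ anticommutes with $\II$, and a one-line trace manipulation gives $\pmb{\Omega}(\dot\II, \II\dot\II) = \tfrac14\int_M \tr(\dot\II^2)\,\vol = \pmb{g}(\dot\II, \dot\II)$. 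Hence criticality forces $\brs{\brs{L_{\chi}\II}}^2_{\pmb{g}} = 0$, i.e.\ $L_{\chi}\II = 0$, which is extremality.

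Most of the work needed here is already packaged in the earlier lemmas: the invariance of $\Gscal$ under $\Phi_t$, the fact that $\til\II_t$ lands in $\mc{AGK}(\JJ_0, \vol_0)$, and the path-independence of the adapted volume all come from Lemmas~\ref{l:moser} and~\ref{l:volumeform}. I expect the one genuinely load-bearing step to be the specialization $u_0 = \Gscal$ together with the K\"ahler compatibility identity: this is exactly what upgrades the a priori weaker statement that ``$\mathbf{Ca}$ is critical along the complexified orbit directions'' to the pointwise vanishing $L_{\chi}\II = 0$. It works precisely because the orbit direction generated by $\Gscal$ itself is the image $\II L_{\chi}\II$ of $L_{\chi}\II$ under the formal complex structure, so the symplectic pairing collapses to a squared $\pmb{g}$-norm.
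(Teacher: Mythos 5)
Your proposal is correct and follows essentially the same route as the paper: pull back by the Moser isotopy of Lemma~\ref{l:moser} to reduce to a variation in $\mc{AGK}(\JJ_0,\vol_0)$, apply the moment map formula of Theorem~\ref{t:gotomoment} to get $\pmb{\Omega}(L_{\chi}\II,\II L^{H_0}_{\JJ du_0}\II)$, and specialize $u_0=\Gscal$ so that the pairing becomes $\brs{\brs{L_{\chi}\II}}^2_{\pmb{g}}$. You merely make explicit two points the paper leaves implicit (both directions of the equivalence, and the identity $\pmb{\Omega}(\dot\II,\II\dot\II)=\pmb{g}(\dot\II,\dot\II)$), which is fine.
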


\subsection{Mabuchi \texorpdfstring{$1$}{1}-form}

\begin{defn}
    The Mabuchi 1-form $\pmb{\tau}$ on $\mc F^\T$ is defined through its values on the fundamental vector fields $\mathbf{X}_u$, $u\in C_0^\infty(M,m)^\T$:
    \[
    \pmb{\tau}(\mathbf{X}_u):=-\int_M u \Gscal_\vol(\II,\JJ)\vol.
    \]
\end{defn}

\begin{prop} \label{p:Mabuchiclosed}
    The Mabuchi 1-form $\pmb{\tau}$ is closed.
\end{prop}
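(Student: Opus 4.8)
The plan is to verify $d\pmb\tau=0$ by evaluating it on pairs of fundamental vector fields $\mathbf X_u,\mathbf X_v$ with $u,v\in C^\infty_0(M,\vol)^\T$, which span the distribution $\mc D^\T$. Using the Cartan formula
\[ d\pmb\tau(\mathbf X_u,\mathbf X_v)=\mathbf X_u\bigl(\pmb\tau(\mathbf X_v)\bigr)-\mathbf X_v\bigl(\pmb\tau(\mathbf X_u)\bigr)-\pmb\tau\bigl([\mathbf X_u,\mathbf X_v]\bigr), \]
together with the bracket relation $[\mathbf X_u,\mathbf X_v]=-\mathbf X_{\pi(du,dv)}$ from Lemma~\ref{lm:commutator}, the last term becomes $\int_M \pi(du,dv)\,\Gscal\,\vol$. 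It then remains to compute the two directional derivatives.

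To differentiate $\pmb\tau(\mathbf X_v)=-\int_M v\,\Gscal(\II_m,\JJ_m,\vol_m)\,\vol_m$ along the canonical deformation generated by $u$, I would pull back by the Moser isotopy $(\Phi_t,\phi_t)$ of Lemma~\ref{l:moser}, which fixes $\JJ$ and $\vol$. By the $b$-field invariance of $\Gscal$ (Proposition~\ref{p:Gscal}(2)) and its naturality under diffeomorphisms, this identifies $\pmb\tau(\mathbf X_v)|_{m_t}=-\int_M \tilde v_t\,\Gscal(\tilde\II_t,\JJ_0,\vol_0)\,\vol_0$ with $\tilde v_t=(\phi_t^{-1})^*v$ and $\dt\tilde\II_t|_{t=0}=\II L^{H_0}_{\JJ du}\II=\boldsymbol J_\II\bigl(L^{H_0}_{\JJ du}\II\bigr)$. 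This splits the derivative into a \emph{moment-map term} (from varying $\Gscal$) and a \emph{transport term} (from varying $\tilde v_t$). For the moment-map term, Theorem~\ref{t:gotomoment} and the K\"ahler compatibility $\pmb g(\cdot,\cdot)=\pmb\Omega(\cdot,\boldsymbol J\,\cdot)$ yield $-\int_M v\,\dt\Gscal\,\vol=-\pmb\Omega\bigl(L^{H_0}_{\JJ dv}\II,\boldsymbol J_\II L^{H_0}_{\JJ du}\II\bigr)=-\pmb g\bigl(L^{H_0}_{\JJ dv}\II,L^{H_0}_{\JJ du}\II\bigr)$, which is symmetric in $u,v$; hence these terms drop out of the antisymmetrization $\mathbf X_u\pmb\tau(\mathbf X_v)-\mathbf X_v\pmb\tau(\mathbf X_u)$. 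This symmetry, inherited from that of the Mabuchi metric $\pmb g$, is the conceptual core and is the formal analogue of the classical closedness of the scalar-curvature Mabuchi $1$-form.

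It then remains to match the transport term against the bracket term. Since the generator of $\phi_t^{-1}$ is $\pi_{\JJ}(I\,du)$, one has $\dt\tilde v_t|_{0}=dv\bigl(\pi_{\JJ}(I\,du)\bigr)=\pi_{\JJ}(I\,du,dv)$, so the antisymmetrized transport term is $-\int_M\bigl(\pi_{\JJ}(I\,du,dv)-\pi_{\JJ}(I\,dv,du)\bigr)\Gscal\,\vol$. The first two identities of Lemma~\ref{l:mabuchiidentities} (equivalently $\pi_{\JJ}(I\alpha,\beta)=-\pi_{\JJ}(\alpha,J\beta)$ and $\IP{\pi,\alpha\wedge\beta}=\pi_{\JJ}((J-I)\alpha\wedge\beta)$) reduce the integrand to $\pi(du,dv)$, giving exactly $\int_M\pi(du,dv)\,\Gscal\,\vol=\pmb\tau([\mathbf X_u,\mathbf X_v])$. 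This cancels the bracket term in Cartan's formula, and therefore $d\pmb\tau=0$.

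The main obstacle I anticipate is the bookkeeping reconciling the two Poisson structures in play: the bracket on $\mc F^\T$ is controlled by the Hitchin tensor $\pi=\tfrac12 g^{-1}[I,J]$ (Lemma~\ref{lm:commutator}), whereas the Hamiltonian algebra $\mf{ham}^\T(\JJ)$ and the Moser transport are controlled by $\pi_{\JJ}=\tfrac12(I+J)g^{-1}$; the transport term is precisely what encodes their discrepancy, and the $I$--$J$ exchange identities of Lemma~\ref{l:mabuchiidentities} are what make it land on $\pi(du,dv)$. A secondary technical point is the constant ambiguity in representing tangent vectors by functions in $C^\infty_0(M,\vol_m)^\T$ at varying basepoints; this is harmless because both $\int_M\vol$ and $\int_M\Gscal\,\vol$ are constant along $\mc F^\T$ (the latter since its variation pairs the moment map with the trivial Hamiltonian $\Psi(\JJ\,d\,1)=0$), so the ambiguity does not affect $d\pmb\tau$.
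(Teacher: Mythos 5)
Your proposal is correct and follows essentially the same route as the paper: apply Cartan's formula with the bracket relation of Lemma~\ref{lm:commutator}, regauge via the Moser isotopy of Lemma~\ref{l:moser} so that the derivative of $\pmb\tau(\XX_v)$ splits into the term $\pmb\Omega(\II L^{H_0}_{\JJ du}\II, L^{H_0}_{\JJ dv}\II)$, which is symmetric in $u,v$ by $\II$-invariance of $\pmb\Omega$, plus a transport term whose antisymmetrization is exactly $\int_M \pi(du,dv)\,\Gscal\,\vol$. The only cosmetic difference is that you invoke the identities of Lemma~\ref{l:mabuchiidentities} to land on the Hitchin tensor $\pi$, where the paper antisymmetrizes $g^{-1}(I+J)I$ directly; the substance is the same.
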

\begin{proof}
    We will verify that
    \[
    \mathbf{X}_u\cdot\pmb{\tau}(\mathbf{X}_v)-\mathbf{X}_v\cdot\pmb{\tau}(\mathbf{X}_u)+\pmb{\tau}(\mathbf{X}_{\pi(du,dv)})=0,
    \]
    where we used the identity for $[\mathbf X_u,\mathbf X_v]$ from Lemma~\ref{lm:commutator}.  Let us start with calculating the first term. We denote by $\left. \dt \right|_{t=0}$ the infinitesimal action of $\mathbf{X}_u$
    \begin{equation}
        \begin{split}
            \mathbf{X}_u\cdot\pmb{\tau}(\mathbf{X}_v)=-\left. \frac{d}{dt} \right|_{t=0}\int_M v \Gscal(\II_t,\JJ_t,\vol_t)\vol_t
        \end{split}
    \end{equation}
    Pulling back by a generalized automorphism generated by $\JJ(I du)$, and denoting by $\til\II_t$ the regauged one-parameter family of generalized complex structures with
    \[
    \left. \dt\right|_{t=0}\til \II_t=\II L^{H_0}_{\JJ du}\II,
    \]
    we can rewrite the last expression as
    \[
    \begin{split}
        \mathbf{X}_u\cdot\pmb{\tau}(\mathbf{X}_v)&=
        -\int_M v \left. \dt\ \right|_{t=0}\Gscal(\til\II_t, \JJ,\vol)\vol-
        \int_M \la dv, {\JJ Idu}\ra\Gscal(\II, \JJ,\vol)\vol\\
        &=\pmb\Omega(\II L^{H_0}_{\JJ du}\II, L^{H_0}_{\JJ dv} \II)+\frac{1}{2}\int_M \la dv, g^{-1}(I+J)Idu\ra \Gscal(\II, \JJ,\vol)\vol.
    \end{split}
    \]
    The first term of the last expression is symmetric in $u$ and $v$ since $\pmb \Omega$ is an $\II$-invariant 2-form. The anti-symmetrization of the last expression is exactly
    \[
    \int_M \pi(du,dv) \Gscal_\vol(\II,\JJ)\vol=-\pmb\tau(\mathbf X_{\pi(du,dv)}),
    \]
    proving that $\pmb\tau$ is closed.
\end{proof}

\begin{prop} \label{p:Mabuchiconvex} Let $m_t$ be a smooth geodesic in $\mc F^{\T}$, corresponding to $u_t$.  Then
\begin{align*}
    \frac{d}{dt} \pmb{\tau} (\XX_{u_t}) \geq 0,
\end{align*}
with equality if and only if $u_t$ is induced by a $\pi_{\JJ}$-Hamiltonian vector  field  as in Proposition~\ref{p:Killing-geodesic}. 
\begin{proof} We let $u_t$ denote a geodesic, and observe that it suffices to compute
    \begin{align*}
        - \frac{d}{dt} \int_M u_t \Gscal_{\vol_t} (\II_t, \JJ_t) \vol_t
    \end{align*}
at $t = 0$.  We construct the Moser isotopy $\Phi_t$ from Lemma \ref{l:moser}.  We first observe that for a geodesic, the pullback potentials $\til{u}_t = \Phi_t^* u_t$ are constant:
\begin{align*}
    \frac{d}{dt} \til{u}_t =&\ \Phi_t^* \left( L_{Z_t} u_t + \dot{u}_t \right)\\
    =&\ \Phi_t^* \left( \pi_{\JJ_t} (I_t d u_t, d u_t) + \dot{u}_t \right)\\
    =&\ \Phi_t^* \left( - \pi_{\JJ_t} (d u_t, J d u_t) + \dot{u}_t \right)\\
    =&\ 0.
\end{align*}
Using this, Theorem \ref{t:gotomoment} and Lemma \ref{l:moser} we have
\begin{align*}
    - \left. \frac{d}{dt} \right|_{t=0} \int_M u_t \Gscal_{\vol_t}(\II_t, \JJ_t) \vol_t =&\ - \left. \frac{d}{dt} \right|_{t=0} \int_M \Phi_t^* \left( u_t \Gscal_{\vol_t}(\II_t, \JJ_t) \vol_t \right)\\
    =&\ - \left. \frac{d}{dt} \right|_{t=0} \int_M u_0 \Gscal_{\vol}(\II_t, \JJ) \vol\\
    =&\ - \pmb{\Omega}(L^{H_0}_{\JJ d u_0}\II,\dot{\II})\\
    =&\ \pmb{\Omega}(L^{H_0}_{\JJ d u_0}\II,\II L^{H_0}_{\JJ d u_0}\II)\\
    =&\ \brs{\brs{ L^{H_0}_{\JJ d u_0}\II}}_{\pmb{g}}.
\end{align*}
This proves the claimed monotonicity.  In the case of equality it follows that, along the original geodesic, the section $e_t := \JJ_t d u_t$ preserves $\II_t$ for all $t$.  Since it also preserves $\JJ_t$ in general, it follows that $Y_t:=\pi_T(e_t)=\pi_{\JJ_t}(du_t)$ preserves the bihermitian data $(g_t, I_t, J)$ as well.  We finally claim that $Y_t$ is in fact constant in time.  To this end we compute, using the geodesic equation expressed as $\dot{u} = i_Y I d u$
\begin{align*}
    \dt Y_t =&\ \dt \pi_{\JJ_t} d u_t = \pi_{\JJ_t} \left( i_Y d d^c_I u + d \dot{u} \right)\\
    =&\ \pi_{\JJ} ( i_Y d d^c_I u + d i_Y d^c_I u)\\
    =&\ \pi_{\JJ} L_Y (I d u)\\
    =&\ \pi_{\JJ} I d L_Y u\\
    =&\ 0.
\end{align*}
Thus, $u_t$ is given by Proposition~\ref{p:Killing-geodesic}. \end{proof}
\end{prop}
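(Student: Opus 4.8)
The plan is to differentiate $\pmb{\tau}(\XX_{u_t}) = -\int_M u_t\,\Gscal_{\vol_t}(\II_t,\JJ_t)\,\vol_t$ along the geodesic and to recognize the outcome, through the moment map identity of Theorem~\ref{t:gotomoment}, as a squared norm in the Mabuchi metric. The obstruction to a naive differentiation is that all three arguments $(\II_t,\JJ_t,\vol_t)$ vary simultaneously. To control this I would first regauge the entire family back to the fixed background $(\JJ_0,\vol_0)$ using the Moser isotopy $\Phi_t\in\Aut(M,H_0)$ furnished by Lemma~\ref{l:moser}, which satisfies $\Phi_t(\JJ_t)=\JJ_0$ and $(\phi_t^{-1})^*\vol_t=\vol_0$. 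Since Goto's scalar curvature is a spinorial quantity invariant under Courant automorphisms and $\Phi_t$ is volume-preserving after pullback, the integrand becomes $\til u_t\,\Gscal_{\vol_0}(\til\II_t,\JJ_0)\,\vol_0$, where $\til u_t=\phi_t^*u_t$ and $\til\II_t=\Phi_t(\II_t)$ traces out a path in the complexified orbit of $\mf{ham}^{\T}(\JJ_0)$ with $\dot{\til\II}_t=\til\II_t\,L^{H_0}_{\JJ_0 d\til u_t}\til\II_t$.

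The decisive simplification is that along a geodesic the pulled-back potentials $\til u_t$ are \emph{constant} in $t$. This I would check by differentiating $\til u_t=\phi_t^*u_t$ and invoking the geodesic equation $\dot u_t=\pi_{\JJ}(du_t\wedge J\,du_t)$ together with the fact that the diffeomorphism part $\phi_t$ is generated by $-\pi_{\JJ_t}(I_t\,du_t)$; the Lie-derivative term cancels the $\dot u_t$ term exactly. With $\til u_t\equiv u_0$ fixed, the $t$-derivative of $\pmb{\tau}(\XX_{u_t})$ collapses to $-\tfrac{d}{dt}\big|_{t=0}\int_M u_0\,\Gscal_{\vol_0}(\til\II_t,\JJ_0)\,\vol_0$, which Theorem~\ref{t:gotomoment} identifies with $-\pmb{\Omega}(L^{H_0}_{\JJ du_0}\II,\dot{\til\II})$. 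Substituting the complexified geodesic direction $\dot{\til\II}=\II\,L^{H_0}_{\JJ du_0}\II$ and using the compatibility of $\pmb{\Omega}$, $\pmb{g}$ and the formal complex structure $\boldsymbol J$ of $\mc{AGK}(\JJ_0,\vol_0)$ yields
\[
\frac{d}{dt}\pmb{\tau}(\XX_{u_t})=\brs{\brs{L^{H_0}_{\JJ du_0}\II}}^2_{\pmb{g}}\ge 0.
\]

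For the equality case, the vanishing of this norm forces $L^{H_0}_{\JJ du_0}\II=0$. Combined with $L^{H_0}_{\JJ du}\JJ=0$, which holds always by Lemma~\ref{l:hamiltonians}, Lemma~\ref{l:adaptedvolume} then shows that the vector field $Y=\pi_T(\JJ du_0)=\pi_{\JJ}(du_0)$ preserves the full bihermitian triple $(g,I,J)$. The main obstacle, and the step I would treat most carefully, is to upgrade this instantaneous statement to the conclusion of Proposition~\ref{p:Killing-geodesic}: namely that the generator $Y_t=\pi_{\JJ_t}(du_t)$ is in fact \emph{independent of $t$}, so that the geodesic is the flow of a fixed $J$-preserving Hamiltonian vector field. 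I would verify $\tfrac{d}{dt}Y_t=0$ by direct computation, writing $\tfrac{d}{dt}\pi_{\JJ_t}(du_t)=\pi_{\JJ}\big(i_Y\,dd^c_I u+d\dot u\big)$, rewriting the geodesic equation as $\dot u=i_Y I\,du$, and collapsing the result through the Cartan-formula manipulation $\pi_{\JJ}L_Y(I\,du)=\pi_{\JJ}I\,d(L_Y u)=0$, where $L_Y u=0$ because $Y$ is Hamiltonian. Once $Y_t$ is seen to be constant, its flow coincides with the geodesic of Proposition~\ref{p:Killing-geodesic}, completing the characterization of the equality case.
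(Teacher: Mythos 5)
Your proposal is correct and follows essentially the same route as the paper's proof: regauging by the Moser isotopy of Lemma~\ref{l:moser}, showing the pulled-back potentials are constant along the geodesic, invoking Theorem~\ref{t:gotomoment} to identify the derivative as the squared norm $\brs{\brs{L^{H_0}_{\JJ du_0}\II}}^2_{\pmb{g}}$, and in the equality case proving $\dt Y_t=0$ by the same Cartan-formula computation with the geodesic equation written as $\dot u = i_Y I\,du$. No gaps to report.
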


As a consequence of the above proposition we obtain uniqueness of constant scalar curvature GK structures, predicated upon the existence of a smooth geodesic connecting and two cscGK structures.

\begin{cor}\label{c:conditional-uniqueness} Suppose $m_0, m_1\in \mc{F}^{\T}$ are cscGK structures which can be joined by a smooth geodesic $m_t$.  Then $m_1$ and $m_2$ are isometric.
\end{cor}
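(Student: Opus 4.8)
The plan is to run the standard Mabuchi-type convexity argument: the slope $\pmb\tau(\XX_{u_t})$ of the $K$-energy-like primitive of the Mabuchi $1$-form is monotone along geodesics (Proposition~\ref{p:Mabuchiconvex}), and it vanishes at any cscGK endpoint; pinning the slope at both ends forces equality throughout, which in turn identifies the geodesic with the flow of a genuine symmetry.

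First I would evaluate the Mabuchi $1$-form at the two endpoints. Write $u_t \in C^\infty_0(M, \vol_t)^\T$ for the tangent field to the geodesic $m_t$, so that by definition $\pmb\tau(\XX_{u_t}) = -\int_M u_t\, \Gscal(\II_t,\JJ_t,\vol_t)\,\vol_t$. Since $m_0$ and $m_1$ are cscGK, $\Gscal(\II_i,\JJ_i,\vol_i)\equiv \bar{\mu}_i$ is constant for $i=0,1$, and since each $u_i$ is $\vol_i$-normalized, i.e. $\int_M u_i\,\vol_i=0$, both endpoint values vanish:
\[
\pmb\tau(\XX_{u_0}) = -\bar{\mu}_0\int_M u_0\,\vol_0 = 0 = -\bar{\mu}_1\int_M u_1\,\vol_1 = \pmb\tau(\XX_{u_1}).
\]

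Next I would invoke the monotonicity of Proposition~\ref{p:Mabuchiconvex}: along the geodesic the function $h(t):=\pmb\tau(\XX_{u_t})$ satisfies $h'(t)\ge 0$. Combined with $h(0)=h(1)=0$ this forces $h\equiv 0$ on $[0,1]$ and hence $h'(t)=0$ for every $t$. We are therefore in the equality case of Proposition~\ref{p:Mabuchiconvex} at all times, which, by the concluding computation in its proof, means precisely that $Y_t:=\pi_{\JJ_t}(d u_t)$ is independent of $t$ and that the constant vector field $Y:=Y_0$ is a $\pi_{\JJ}$-Hamiltonian, $J$-preserving field whose associated flow of $-JY$ is exactly the geodesic $m_t$, as in Proposition~\ref{p:Killing-geodesic}.

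Finally I would extract the isometry. By Proposition~\ref{p:Killing-geodesic} the geodesic $m_t$ is the image of $m_0$ under the flow $\Phi_t=\exp(-JY)$ of the genuine vector field $-JY$, a one-parameter family of diffeomorphisms of $M$. In particular $m_1=\Phi_1\cdot m_0$, so the time-one diffeomorphism $\Phi_1$ carries the bihermitian data of $m_0$ to that of $m_1$; in particular it maps $g_0$ to $g_1$, so the two structures are isometric. I expect the main obstacle to be this middle step, namely extracting from the equality case of Proposition~\ref{p:Mabuchiconvex} not merely the infinitesimal ``holomorphicity'' of each deformation but the $t$-independence of the generator $Y$ together with the fact that $-JY$ integrates to a global flow realizing the \emph{entire} geodesic, so that $\Phi_1$ is a true diffeomorphism rather than only a formal symmetry. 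This is exactly where the generically symplectic hypothesis of Assumption~\ref{a:generically-symplectic} (underlying the identifications of Lemma~\ref{l:moser} and the nondegeneracy used throughout \S\ref{s:orbit}) and the integrability input behind Proposition~\ref{p:Killing-geodesic} are essential.
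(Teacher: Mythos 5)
Your proposal is correct and follows essentially the same route the paper intends: the corollary is stated as an immediate consequence of Proposition~\ref{p:Mabuchiconvex}, and the intended argument is exactly the one you give — the slope $\pmb{\tau}(\XX_{u_t})$ vanishes at both cscGK endpoints by normalization of the potentials, monotonicity forces it to vanish identically, and the equality case of Proposition~\ref{p:Mabuchiconvex} (via Proposition~\ref{p:Killing-geodesic}) realizes the geodesic as the flow of $-JY$, whose time-one map is the required isometry. The subtlety you flag about upgrading the pointwise equality case to the statement that the whole geodesic is the flow of a fixed vector field is precisely what the concluding computation in the proof of Proposition~\ref{p:Mabuchiconvex} (constancy of $Y_t$ in $t$) is there to handle.
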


\section{The Calabi-Licherowicz-Matsushima obstruction and the Futaki invariant} \label{s:matsu}
\subsection{The formal statement}

\begin{defn}\label{d:hamiltonian_isometry}
We say that an element $\JJ du \in  \mf{ham}^\T(\JJ)$ is a \emph{Hamiltonian isometry} of  a $(\JJ, \vol)$-adapted generalized K\"ahler structure $(\II, \JJ)$ if
$L^{H_0}_{\JJ du} \II =0$. We denote by
\[ \mf{g}^\T(\II, \JJ) < \mf{ham}^\T(\J)\]
the sub-algebra of Hamiltonian isometries of $(\II, \JJ)$. If $m=(g, I, J, b)$ is the corresponding biHermitian structure, we also consider the Lie algebra of vector fields 
\[\mf{g}^{\T}(m)=\{ V= \pi_{\JJ}(du), \, u\in C^{\infty}_0(M, \R)^{\T}\, | \, L_Vg=0, L_VI=L_VJ=0\}.\]\end{defn}

\begin{defn} If $\II$ is a  generalized K\"ahler structure in $\mc{AGK}(\JJ, \vol)$, we denote by
\[ \mf{h}^{\T}(\II, \JJ) := \left\{\Psi(\JJ du)+ \i \Psi(\JJ dv) \in \mf{ham}^\T(\JJ)\otimes {\mathbb C} \, \,  | \,  \,  L^{H_0}_{\JJ du} \II + \II L^{H_0}_{\JJ dv} \II=0\right\} \subset \mf{ham}^\T(\JJ)\otimes {\mathbb C}.\]
Using the integrability of $\II$, we identify
\[ \mf{h}^{\T}(\II, \JJ) = \left\{\Psi(\JJ du)+ \i \Psi(\JJ dv) \in \mf{ham}^\T(\JJ)\otimes {\mathbb C} \, \,  | \,  \,  L^{H_0}_{\JJ du + \II \JJ dv} \II=0\right\},\]
which shows that  $\mf{h}^{\T}(\II, \JJ) < \mf{ham}^\T(\JJ)\otimes {\mathbb C}$ is then the subalgebra identified with 
the stabilizer of $\II$ inside $\mf{ham}^\T(\JJ)\otimes {\mathbb C}$.
We  shall refer to  $\mf{h}^{\T}(\II, \JJ)$ as the Lie algebra of \emph{reduced infinitesimal generalized complex automorphisms} of $(\II, \JJ)$. If $\chi \in \mf{h}^{\T}(\II, \JJ)$, we denote by $\left(\mf{h}^{\T}(\II, \JJ)\right)^{\chi}< \mf{h}^{\T}(\II, \JJ)$ the commutator of $\chi$ inside $\mf{h}^{\T}(\II, \JJ)$.

\end{defn}
By the definitions of $\mf{g}(\II, \JJ)$ and $\mf{h}(\II, \JJ)$, we have the inclusion
\[ \mf{g}^{\T}(\II, \JJ)\otimes \C < \mf{h}^{\T}(\II, \JJ).\]
Using the formal moment map picture from the previous section,  we deduce  (similarly to  \cite{Goto_2020}) the following generalized Calabi-Lichnerowicz-Matsushima theorem.

\begin{thm}\cite{Goto_LM}[Calabi-Lichnerowicz-Matsushima]\label{thm:CML} Suppose $(\II, \JJ, \vol)$ is a $\T$-invariant  extremal GK data on $(M, H_0)$ with extremal Hamiltonian isometry $\chi=\Psi(\JJ d \Gscal(\II, \JJ, \vol))$. Then 
\[ \left(\mf{h}^{\T}(\II, \JJ)\right)^{\chi} =\mf{g}^{\T}(\II, \JJ)\otimes \C.\]
\end{thm}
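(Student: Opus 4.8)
The plan is to deduce the theorem entirely from the formal moment map picture of Theorem~\ref{t:gotomoment}, treating $(\mc{AGK}(\JJ,\vol),\pmb\Omega,\boldsymbol J,\pmb g)$ as a (formal) K\"ahler manifold carrying a Hamiltonian action of $\mf k:=\mf{ham}^\T(\JJ)$, identified via Lemma~\ref{l:hamiltonians} with $(C^\infty_0(M,\R)^\T,\{\cdot,\cdot\}_{\pi_\JJ})$ and equipped with the $\ad$-invariant $L^2$ pairing $\la u,v\ra:=\int_M uv\,\vol$ (cf. \eqref{eq:ad-invariance}). For $u\in\mf k$ write $X_u:=L^{H_0}_{\JJ du}\II$ for the fundamental vector field at $\II$; then Theorem~\ref{t:gotomoment} reads $\pmb\Omega(X_u,\cdot)=d\la\Gscal,u\ra$, so $\Gscal=\pmb\mu(\II)$ is the moment map value and $\chi=\Psi(\JJ d\Gscal)$ is the Hamiltonian attached to $\xi_0:=\Gscal\in\mf k$. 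The extremal hypothesis $L_\chi\II=0$ says precisely $X_{\xi_0}=0$, i.e. $\xi_0\in\mf g^\T(\II,\JJ)$, so $\chi\in\mf g^\T(\II,\JJ)<\mf h^\T(\II,\JJ)$ and the centralizer $(\mf h^\T)^\chi$ is well defined. Since the complexified action sends $u+iv$ to $X_u+\boldsymbol J X_v$, we have $\mf h^\T(\II,\JJ)=\{u+iv:X_u+\boldsymbol J X_v=0\}$ and $\mf g^\T(\II,\JJ)=\{u:X_u=0\}$. I would prove the two inclusions separately.

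For $\mf g^\T\otimes\C\subseteq(\mf h^\T)^\chi$, recall the inclusion $\mf g^\T\otimes\C\subseteq\mf h^\T$ already noted before the statement; it then suffices to show each real generator $u\in\mf g^\T$ commutes with $\chi$. By Lemma~\ref{l:hamiltonians}, $[\Psi(\JJ du),\chi]=\Psi(\JJ d\{u,\Gscal\}_{\pi_\JJ})$, so I must check $\{u,\Gscal\}_{\pi_\JJ}\equiv 0$. As $u\in\mf g^\T$, the vector field $V=\pi_\JJ(du)$ preserves $(g,I,J)$ and $\vol$; by Theorem~\ref{thm:scalformula} $\Gscal$ is a function of $(g,I,J,\vol)$ alone, hence $L_V\Gscal=0$, which is exactly $\{u,\Gscal\}_{\pi_\JJ}=0$. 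Thus $\chi$ is central in $\mf g^\T$ and $\mf g^\T\otimes\C$ centralizes $\chi$.

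The main point is the reverse inclusion $(\mf h^\T)^\chi\subseteq\mf g^\T\otimes\C$, which is the Lichnerowicz--Matsushima reductivity argument. Take $\zeta=\Psi(\JJ du)+i\Psi(\JJ dv)\in(\mf h^\T)^\chi$; the stabilizer condition gives $X_u+\boldsymbol J X_v=0$ at $\II$, while the centralizer condition gives $[u,\xi_0]=\{u,\Gscal\}_{\pi_\JJ}=0$ and likewise $[v,\xi_0]=0$. I would then compute the $\pmb g$-norm of the stabilizer relation, using that $\boldsymbol J$ is a $\pmb g$-isometry:
\[
0=\pmb g\big(X_u+\boldsymbol J X_v,\,X_u+\boldsymbol J X_v\big)=\pmb g(X_u,X_u)+\pmb g(X_v,X_v)+2\,\pmb g(X_u,\boldsymbol J X_v).
\]
The crux is that the cross term vanishes: since $\pmb g(X_u,\boldsymbol J X_v)=-\pmb\Omega(X_u,X_v)$ and, by the moment map identity together with infinitesimal equivariance of $\pmb\mu$, $\pmb\Omega(X_u,X_v)=\la\pmb\mu(\II),[v,u]\ra=\la\xi_0,[v,u]\ra$, the $\ad$-invariance of $\la\cdot,\cdot\ra$ rewrites this as $\la[\xi_0,v],u\ra$, which is $0$ because $[v,\xi_0]=0$. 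Hence $\pmb g(X_u,X_u)+\pmb g(X_v,X_v)=0$, and positive-definiteness forces $X_u=X_v=0$; thus $u,v\in\mf g^\T$ and $\zeta\in\mf g^\T\otimes\C$.

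The step I expect to be most delicate is justifying the two formal ingredients behind the cross-term computation in the infinite-dimensional setting: the infinitesimal equivariance $d\la\pmb\mu,u\ra(X_v)=\la\pmb\mu,[v,u]\ra$ of Goto's scalar curvature viewed as a moment map, and the genuine $\ad$-invariance of the $L^2$ pairing on $\mf{ham}^\T(\JJ)$. Both should follow from Theorem~\ref{t:gotomoment}, Lemma~\ref{l:hamiltonians} and \eqref{eq:ad-invariance}, and would be recorded once as an abstract moment-map lemma (the content of Appendix~\ref{s:CLM}); everything else is a purely formal consequence of that lemma applied to the action of $\mf{ham}^\T(\JJ)$ on $\mc{AGK}(\JJ,\vol)$.
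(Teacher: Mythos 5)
Your proposal is correct and follows essentially the same route as the paper: the reverse inclusion $(\mf{h}^{\T})^{\chi}\subseteq\mf{g}^{\T}\otimes\C$ is exactly the cross-term computation of Theorem~\ref{thm:CML-decomp} in Appendix~\ref{s:CLM} (vanishing of $\pmb{\Omega}(X_u,X_v)=\la\pmb{\mu},[v,u]\ra$ via $\ad$-invariance and the centralizer condition, then positive-definiteness of $\pmb{g}$), carried out at the same level of formality regarding equivariance of the moment map. The only cosmetic difference is that you establish the forward inclusion geometrically via Theorem~\ref{thm:scalformula} (invariance of $\Gscal$ under elements of $\mf{g}^{\T}(m)$), whereas the paper's Lemma~\ref{l:extremal} derives $[\chi,u]=0$ purely formally from the moment map identities; both arguments appear in the paper and are equivalent here.
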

\begin{proof} This follows from the general  statement in \cite{LSW}, see also Theorem~\ref{thm:CML-decomp} for a short proof.
\end{proof} 

\subsection{Reductive Lie algebras}\label{s:red}

We would like to use Theorem~\ref{thm:CML} in order to deduce computable obstructions for the existence of extremal GK structures in terms of automorphisms group. To this end, we are going to recast the Lie algebras $\mf{g}^{\T}(\II, \JJ)$  and $\mf{h}^{\T}(\II, \JJ)$ in terms of the corresponding bihermitian data.
\begin{defn} Let $m=(g, I, J, b)$ be the  bihermitian structure corresponding to $(\II, \JJ)$.  Define the Lie algebra of vector fields 
\[\mf{g}^{\T}(m)=\{ V= \pi_{\JJ}(du), \, u\in C^{\infty}_0(M, \R)^{\T}\, | \, L_Vg=0, L_VI=L_VJ=0\}.\]
\end{defn}
\begin{lemma}\label{l:hamiltonian_Killing}
The vector field $V: = \pi_T(\JJ du)=\poiss_{\JJ}(du)$ associated to a Hamiltonian isometry $\JJ(du)$ of $(\II, \JJ)$ preserves the corresponding bihermitian 
structure $(g, I, J)$ and volume form $\vol$. In particular,  
\[\pi_T(\mf{g}^\T(\II, \JJ)) < \mf{g}^{\T}(m) \]
is a compact sub-algebra of vector fields. 
\end{lemma}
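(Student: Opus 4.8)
The plan is to treat this lemma as an essentially formal consequence of the structural results already in place, namely Lemma \ref{l:adaptedvolume} and Lemma \ref{l:hamiltonians}, together with the classical compactness of the isometry group of a compact Riemannian manifold. First I would unpack the hypothesis: by Definition \ref{d:hamiltonian_isometry}, saying that $\JJ du$ is a Hamiltonian isometry of $(\II, \JJ)$ means precisely $L^{H_0}_{\JJ du}\II = 0$. On the other hand, Lemma \ref{l:hamiltonians} guarantees that every element of $\mf{ham}^\T(\JJ)$ preserves $\JJ$ and $\vol$, so in particular $L^{H_0}_{\JJ du}\JJ = 0$ and $L_{\pi_{\JJ} du}\vol = 0$. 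Hence $\Psi(\JJ du) \in \mf{aut}(M, H_0)$ preserves both generalized complex structures, i.e. $\Psi(\JJ du) \in \mf{aut}(M, H_0, \II, \JJ)$.

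The key step is then to invoke Lemma \ref{l:adaptedvolume}, which states $\pi_T\,\mf{aut}(M, H_0, \II, \JJ) < \mf{aut}(M, g, I, J)$. Since the $TM$-component of $\Psi(\JJ du)$ is exactly $V = \pi_T(\JJ du) = \pi_{\JJ}(du)$, this at once yields $L_V g = L_V I = L_V J = 0$; combined with $L_V \vol = 0$ from the previous step, this proves the first assertion that $V$ preserves $(g, I, J)$ and $\vol$, and in particular places $V \in \mf{g}^\T(m)$. To upgrade this set-level statement to an inclusion of Lie \emph{subalgebras}, I would observe that $\pi_T$ carries the bracket computed in Lemma \ref{l:hamiltonians}, namely $[\JJ du, \JJ dv]_{H_0} = \JJ d\{u,v\}_{\pi_{\JJ}}$, to $[\pi_{\JJ} du, \pi_{\JJ} dv] = \pi_{\JJ} d\{u,v\}_{\pi_{\JJ}}$, so that $\pi_T$ is a Lie algebra homomorphism and its image $\pi_T(\mf{g}^\T(\II, \JJ))$ is a subalgebra of $\mf{g}^\T(m)$.

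For compactness I would argue that every $V \in \mf{g}^\T(m)$ is a Killing field for the compact manifold $(M, g)$, so the algebra embeds into $\mathrm{isom}(g)$, which by the Myers--Steenrod theorem is the (finite-dimensional) Lie algebra of the compact group $\mathrm{Isom}(g)$. The additional conditions $L_V I = L_V J = 0$ cut out the Lie algebra of the closed, hence compact, subgroup $\mathrm{Aut}(g, I, J)$, and any subalgebra of a compact Lie algebra is again compact, since the $\mathrm{ad}$-invariant $L^2$ inner product of \eqref{eq:ad-invariance} restricts to an invariant inner product. I do not expect a genuine obstacle in this argument: the entire content is the two-line application of Lemmas \ref{l:adaptedvolume} and \ref{l:hamiltonians}, and the only point requiring care is the transition from the infinite-dimensional Hamiltonian algebra $\mf{ham}^\T(\JJ)$ to a finite-dimensional compact one, which is supplied by the finite-dimensionality of $\mathrm{isom}(g)$ on compact $M$.
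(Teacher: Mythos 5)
Your proof is correct and follows essentially the same route as the paper, which disposes of this lemma in one line by citing Lemma~\ref{l:hamiltonians}; your expansion simply makes explicit the intermediate step through Lemma~\ref{l:adaptedvolume} (from preservation of $\II$ and $\JJ$ to preservation of $(g,I,J)$) and the standard compactness argument via $\mathrm{Isom}(M,g)$, both of which are what the paper intends.
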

\begin{proof} This follows from Lemma~\ref{l:hamiltonians}.
\end{proof}
In the generically symplectic type case the elements of $\mf{g}^\T(m)$ canonically lift to elements of $\mf{g}^\T(\II, \JJ)$:
\begin{lemma}\label{l:hamiltonian_Killing_converse} Let $m=(g, I, J, b)$ be a generalized K\"ahler structure of generically symplectic type.  Suppose $\vol$ is a $(g,I, J)$ adapted volume form, generating a torus $\T$, and suppose $b$ is $\T$-invariant.  Let $V\in \mf{g}^{\T}(m)$ with potential $u\in C_0^{\infty}(M, \R)^{\T}$. 
Then $L^{H_0}_{\JJ du} \JJ = L^{H_0}_{\JJ du} \II =0$, i.e. $\Psi(\JJ du) \in \mf{g}^{\T}(\II, \JJ)$.
\end{lemma}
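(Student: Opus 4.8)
The first identity $L^{H_0}_{\JJ du}\JJ=0$ is already contained in Lemma~\ref{l:hamiltonians} and holds for every $u$, so the whole content of the statement is the preservation of $\II$. The plan is to write the Hamiltonian automorphism as $\Psi(\JJ du)=V+B\in\mf{aut}(M,H_0)$, with vector-field part $V=\pi_T(\JJ du)=\pi_{\JJ}(du)$ and a $2$-form $B\in\Wedge^2(M)$. By hypothesis $V\in\mf{g}^{\T}(m)$, so $V$ preserves the bihermitian data $(g,I,J)$, and hence also $\omega_I=gI$, $\omega_J=gJ$, and every entry of the two matrices occurring in Gualtieri's map~\eqref{eq:gualtieri_map}. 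Note as well that $V$ preserves $H=-d^c_I\omega_I=d^c_J\omega_J$, so that the background identity $H_0=H-db$ is respected and $V+B\in\mf{aut}(M,H_0)$ is consistent.

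The core idea is to show that $L_{V+B}=L^{H_0}_{\JJ du}$ acts on \emph{both} $\II$ and $\JJ$ as one and the same infinitesimal $b$-field transformation, and then to rigidify that $b$-field using generic nondegeneracy. Concretely, I would decompose $L_{V+B}$ on endomorphisms of $E$, using \eqref{eq:Lie-derivative}, as the diffeomorphism Lie derivative $\mathcal L_V$ (the lift of the flow of $V$ to $E$) minus the infinitesimal action of the $b$-field $B$. Writing $\II=e^{b}M_{\II}e^{-b}$ and $\JJ=e^{b}M_{\JJ}e^{-b}$ as in~\eqref{eq:gualtieri_map}, where $M_{\II},M_{\JJ}$ depend only on $(g,I,J)$, the invariances $\mathcal L_V M_{\II}=\mathcal L_V M_{\JJ}=0$, together with the fact that the endomorphism $\bigl(\begin{smallmatrix}0&0\\ \beta&0\end{smallmatrix}\bigr)$ of a $b$-field commutes with $e^{b}$, show that $\mathcal L_V\II$ and $\mathcal L_V\JJ$ are exactly the infinitesimal $b$-field actions of $L_V b$. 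Combining the two contributions, $L_{V+B}\II$ and $L_{V+B}\JJ$ are both the infinitesimal $b$-field action of the single $2$-form $L_V b-B$ (up to one overall sign fixed by the convention of~\eqref{eq:Lie-derivative}).

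With this in hand I would apply the observation to $\JJ$ first: since $L^{H_0}_{\JJ du}\JJ=0$ by Lemma~\ref{l:hamiltonians}, the $b$-field action of $L_V b-B$ annihilates $\JJ$. Because $(\II,\JJ)$ is generically symplectic type, $\pi_{\JJ}$ is nondegenerate on a dense open set, and there the infinitesimal action of $b$-fields on generalized complex structures has trivial stabilizer -- this is precisely the local computation invoked in the proof of Lemma~\ref{l:invariant} (in the symplectic model $\bigl[\bigl(\begin{smallmatrix}0&0\\ \beta&0\end{smallmatrix}\bigr),\JJ\bigr]=0$ forces $\beta=0$). Hence $L_V b-B=0$ on the dense locus, and therefore everywhere by continuity. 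Substituting $L_V b-B=0$ into the expression for $L_{V+B}\II$ yields $L^{H_0}_{\JJ du}\II=0$, which together with $L^{H_0}_{\JJ du}\JJ=0$ is exactly the assertion $\Psi(\JJ du)\in\mf{g}^{\T}(\II,\JJ)$.

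The main obstacle is the reduction in the second paragraph, namely pinning down that the $2$-form part $B$ of the Hamiltonian is not independent data but is forced to equal $L_V b$. Invariance of $(g,I,J)$ under $V$ controls only the metric and complex content of the structures, while the $b$-field part $B$ is a priori unconstrained; it is exactly the generically symplectic hypothesis, through triviality of the $b$-field stabilizer, that rigidifies $B$ and allows preservation of $\JJ$ to propagate to preservation of $\II$. The analogous statement fails without nondegeneracy, in line with Remark~\ref{r:divgremark1}, which is precisely why the hypothesis enters here.
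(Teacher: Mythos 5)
Your proposal is correct and follows essentially the same route as the paper: decompose $\Psi(\JJ du)=V+B$, use $V$-invariance of the bihermitian data so that only a pure $b$-field action remains, and then invoke triviality of the $b$-field stabilizer of $\JJ$ on the dense nondegenerate locus to rigidify the $2$-form and transfer preservation of $\JJ$ to preservation of $\II$. The only (minor) difference is bookkeeping: the paper writes out $B$ explicitly from the Gualtieri map and concludes $B=0$ (implicitly using $L_Vb=0$), whereas you track the term $L_Vb$ and conclude $B=L_Vb$, which is slightly more careful since $V$ need not lie in $\T$.
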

\begin{proof} By Theorem~\ref{thm:gualtieri_map} we have 
\[ L_V \II = L_{V} \JJ=0, \qquad \JJ du= V + I du - Jdu + b(V, \cdot). \]
Writing $\Psi(\JJ du) = V + B$ with $B = dd^c_I u - dd^c_J u + d(\imath_V b) $, we know by Lemma~\ref{l:hamiltonians} and the $V$-invariance of the bihermitian data
\[ 0=L^{H_0}_{\JJ du} \JJ = L_{V + B} \JJ= L_B \JJ. \]
As $\pi_{\JJ}$ is non-degenerate almost everywhere, we have $B=0$. It then follows that
\[ L^{H_0}_{\JJ du} \II = L_V \II =0.\]
\end{proof}

\begin{defn} Given a $\T$-invariant generalized K\"ahler structure $m=(g, I, J,b)$ on a compact manifold $M$,  with associated Poisson tensor $\pi_{\JJ}:=\frac{1}{2}(I+J)g^{-1}$  we let
\[\mf{h}_{\rm red}^{\T}(m):=\left\{ V = \poiss_{\JJ}(du) + J\poiss_{\J}(dv), \, u,v \in C^{\infty}(M, \R)^{\T} \, \, | \,  \,
L_V J = 0\right\}. \]
Notice that
\[\mf{g}^{\T}(m) + J \mf{g}^{\T}(m) < \mf{h}_{\rm red}^{\T}(m).\]
\end{defn}

\begin{lemma}\label{l:reduced} $\mf{h}_{\rm red}^{\T}(m)$ is a complex Lie sub-algebra of the Lie algebra $\mf{aut}^{\T}(M, \poiss, J)$  of real holomorphic  vector fields on $(M, J)$ which are $\T$-invariant and preserve the holomorphic Poisson tensor $\pi-\sqrt{-1}J\pi$.  The complex structure of $\mf{aut}^{\T}(M, \poiss, J)$  is induced by the action of $J$ on  real holomorphic vector fields.
\end{lemma}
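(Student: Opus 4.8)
The plan is to identify $\mf{h}_{\rm red}^\T(m)$ with the complexified stabilizer $\mf{h}^\T(\II,\JJ) < \mf{ham}^\T(\JJ)\otimes\C$ studied above, and then transport to it the complex Lie subalgebra structure of the latter. Writing $Y_u := \pi_{\JJ}(du)$, the intertwining identity $\pi_{\JJ}I = J\pi_{\JJ}$ (used in Lemma \ref{l:moser}) gives $JY_v = \pi_{\JJ}(I\,dv)$, so a generic element $V = \poiss_{\JJ}(du) + J\poiss_{\JJ}(dv)$ of $\mf{h}_{\rm red}^\T(m)$ is $V = \pi_{\JJ}(du + I\,dv)$. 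I would define
\[ \rho : \mf{h}^\T(\II,\JJ) \to \mf{h}_{\rm red}^\T(m), \qquad \Psi(\JJ du) + \sqrt{-1}\,\Psi(\JJ dv) \mapsto Y_u + JY_v, \]
i.e. $\pi_T$ extended $\C$-linearly with $\sqrt{-1}$ acting as $J$ on real vector fields. Since $J(Y_u + JY_v) = Y_{-v} + JY_u$ is again of the defining form, and since $JV$ is real holomorphic whenever $V$ is (standard, using that $J$ is integrable), $\mf{h}_{\rm red}^\T(m)$ is a complex subspace and $\rho$ is $\C$-linear, intertwining $\sqrt{-1}$ on $\mf{ham}^\T(\JJ)\otimes\C$ with $J$; this already yields closure under $J$.

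The first substantive step is that $\rho$ is a bijection. Here the key input is the dictionary of Lemma \ref{l:moser} between deformations of $\II$ (with $\JJ$ fixed) and of the complex structure $J$: the real direction $L^{H_0}_{\JJ du}\II$ corresponds to the honest Lie derivative $L_{Y_u}J$, while the imaginary direction $\II L^{H_0}_{\JJ dv}\II$ corresponds to $JL_{Y_v}J$. Using the integrability identity $L_{JW}J = JL_WJ$ (valid since $J$ is integrable), the combined $\II$-deformation $L^{H_0}_{\JJ du}\II + \II L^{H_0}_{\JJ dv}\II$ corresponds to $L_{Y_u}J + JL_{Y_v}J = L_{Y_u + JY_v}J = L_VJ$. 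Thus the stabilizer condition \eqref{eq:complexified-action} defining $\mf{h}^\T(\II,\JJ)$ is equivalent to $L_VJ = 0$, the condition defining $\mf{h}_{\rm red}^\T(m)$; faithfulness of this dictionary uses the generically symplectic hypothesis, since there $\JJ$ and $J$ together determine $I$ and $g$ (because $I+J$ is invertible on a dense set), so $\dot\II = 0 \Leftrightarrow \dot J = 0$. Injectivity of $\rho$ follows likewise: $\pi_{\JJ}(du + I\,dv) = 0$ on the nondegeneracy locus forces $du + I\,dv = 0$ there and hence everywhere, and the maximum principle forces $u,v$ constant.

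Next I would verify that $\rho$ is a Lie algebra homomorphism, which gives closure under bracket. On $\mf{ham}^\T(\JJ)$ the projection $\pi_T$ is a Lie homomorphism with $[Y_u,Y_v] = Y_{\{u,v\}_{\pi_{\JJ}}}$ (Lemma \ref{l:hamiltonians}). Complexifying, and computing the honest bracket $[V,V']$ of $V = Y_u + JY_v$, $V' = Y_{u'} + JY_{v'}$ directly, the discrepancy between $[V,V']$ and the realification $\rho([a,a'])$ of the Poisson bracket reduces, after repeated use of $L_{JW}J = JL_WJ$, to the term $-\big(L_{Y_{u'}}J + JL_{Y_{v'}}J\big)Y_v = -(L_{V'}J)Y_v$, which vanishes because $V'$ is holomorphic. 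Hence $[V,V'] = \rho([a,a'])$ with $[a,a'] \in \mf{h}^\T(\II,\JJ)$, so $[V,V'] \in \mf{h}_{\rm red}^\T(m)$.

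It remains to prove the inclusion $\mf{h}_{\rm red}^\T(m) \subseteq \mf{aut}^\T(M,\pi,J)$. The $\T$-invariance of $V$ is immediate, since $X$ preserves $g,I,J$ (hence $\pi_{\JJ}$) and annihilates the $\T$-invariant potentials, so $L_XV = 0$; real holomorphicity is built into the definition. The remaining requirement, preservation of $\sigma_J = \pi - \sqrt{-1}J\pi$, is equivalent (given $L_VJ=0$) to $L_V\pi = 0$, and this is the main obstacle: it is not formal, because $V = \pi_{\JJ}(du+I\,dv)$ is neither $\pi$- nor $\pi_{\JJ}$-Hamiltonian, so the Hamiltonian invariance of a Poisson structure does not apply directly. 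The plan is to dispatch it by a direct bihermitian computation in the spirit of \cite{ASU2}: writing $V^{1,0} = \pi_{\JJ}(\partial_I w)$ with $w = u + \sqrt{-1}v$ and using the relation $\pi = (I-J)\pi_{\JJ}$ together with the holomorphicity $L_VJ=0$, one shows that the holomorphic bivector $L_V\sigma_J$ vanishes on the dense locus where $\pi_{\JJ}$ is nondegenerate, and hence identically. Finally, $\mf{h}^\T(\II,\JJ)$ is a complex Lie subalgebra of $\mf{ham}^\T(\JJ)\otimes\C$, being the stabilizer of $\II$ under the complexified action and closed under the formal complex structure $\mathbf J_\II$; the $\C$-linear Lie isomorphism $\rho$ then transports this structure, exhibiting $\mf{h}_{\rm red}^\T(m)$ as a complex Lie subalgebra of $\mf{aut}^\T(M,\pi,J)$ with complex structure induced by $J$, as claimed.
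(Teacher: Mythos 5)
The parts of your argument that are actually carried out --- the $J$-invariance of the defining set and the closure under bracket, whose only non-formal term is $-(L_{V'}J)\pi_{\JJ}(dv')=0$ --- coincide with the paper's direct computation. But there is a genuine gap at the step you yourself flag as ``the main obstacle'': the proof that $V$ preserves $\poiss$, equivalently that $\mf{h}^{\T}_{\rm red}(m)$ lands in $\mf{aut}^{\T}(M,\poiss,J)$ at all. You state a plan (``one shows that the holomorphic bivector $L_V\sigma_J$ vanishes on the dense locus\dots'') but supply no argument, and this is precisely the substantive content of the lemma. The paper's computation is short but not optional: writing $V=\pi_{\JJ}(du+Idv)$, the Poisson--Cartan formula gives $L_V\pi_{\JJ}=\pi_{\JJ}(dIdv)\pi_{\JJ}$; combining this with $\poiss=-\left(J\pi_{\JJ}+\pi_{\JJ}J^*\right)$ and $L_VJ=0$, together with $J\pi_{\JJ}=-\pi_{\JJ}I^*$ and $\pi_{\JJ}J^*=-I\pi_{\JJ}$, yields $L_V\poiss=\pi_{\JJ}\left(I^*(dIdv)+(dIdv)I\right)\pi_{\JJ}$, which vanishes because $dIdv=dd^c_Iv$ is of $I$-type $(1,1)$. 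Note this is a pointwise algebraic identity: no density or nondegeneracy of $\pi_{\JJ}$ is needed here, contrary to what your sketch suggests.

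A secondary concern is the framing via the isomorphism $\rho$ with $\mf{h}^{\T}(\II,\JJ)$. In the paper that identification is the content of Theorem~\ref{thm:CML-reduced}(2), whose proof goes through Lemma~\ref{l:hredinGKclass}, which itself invokes $L_V\poiss=0$ from the present lemma in order to know that the flow of $V$ generates a path in $\mc{F}^{\T}$ with $\poiss$ fixed. So using the ``dictionary of Lemma~\ref{l:moser}'' to establish bijectivity of $\rho$ at this stage is circular unless $L_V\poiss=0$ is proved first and independently; moreover, your dictionary between $L^{H_0}_{\JJ du}\II+\II L^{H_0}_{\JJ dv}\II$ and $L_VJ$ suppresses the $2$-form corrections $B=\tfrac12\Psi(JIdv-(I-J)du)$ that the paper must track to make that correspondence precise. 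None of this machinery is needed for the lemma itself, which the paper proves by a self-contained bihermitian computation.
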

\begin{proof} The vector space $\mf{h}_{\rm red}^{\T}(m)$ is clearly $J$-invariant. We check that $\mf{h}_{\rm red}^{\T}(m)$ is a Lie algebra.
Let
\[ V_1= \poiss_{\JJ}(du_1) + J \pi_{\JJ}(dv_1), \qquad V_2=\poiss_{\JJ}(du_2) + J \pi_{\JJ}(dv_2)\] be two elements of $\mf{h}^{\T}_{\rm red}(m)$.  In the computation below, we use that $L_{V_i} J=0$, the Poisson property $[\pi_{\JJ}(du), \pi_{\JJ}(dv)]= 
\pi_{\JJ}(d\{u, v\}_{\pi_{\JJ}})$, and the integrability condition $J(L_V J)= -L_{JV}J$:
\[
\begin{split}
[V_1, V_2]= & \, \pi_{\JJ}(d\{u_1, u_2\}_{\pi_{\JJ}}) + J\pi_{\JJ}(d\{u_1, v_2\}_{\pi_{\JJ}}) - [\pi_{\JJ}(du_2), J \pi_{\JJ}(dv_1)]  - J[\pi_{\JJ}(dv_2), J\pi_{\JJ}(dv_1)]
\\
= & \,  \pi_{\JJ}\left(d[\{u_1, u_2\}_{\pi_{\JJ}}-\{v_1, v_2\}_{\pi_{\JJ}}]\right) + J\pi_{\JJ}\left(d[\{u_1, v_2\}_{\pi_{\JJ}}\ +\{v_1, u_2\}_{\pi_{\JJ}}]\right) -  (L_{V_2} J)(\pi_{\JJ}(dv_1)) \\ =& \pi_{\JJ}\left(d[\{u_1, u_2\}_{\pi_{\JJ}}-\{v_1, v_2\}_{\pi_{\JJ}}]\right) + J\pi_{\JJ}\left(d[\{u_1, v_2\}_{\pi_{\JJ}}\ +\{v_1, u_2\}_{\pi_{\JJ}}]\right).
\end{split} \]
We now show that the elements of $\mf{h}_{\rm red}^{\T}(m)$ preserve $\pi$. By the Poisson version of Cartan's formula, we have
\begin{equation}\label{poiss-variation-JJ} L_V \poiss_{\JJ} = \poiss_{\JJ} (dI dv) \poiss_{\JJ}.\end{equation}
From the expression of $\pi_{\JJ}$ and the definition \eqref{eq:Poisson_BH} of $\poiss$, we have 
\begin{equation}\label{poiss-JJ-poiss}
J \poiss_{\JJ} + \poiss_{\JJ} J^* =  (J-I) \poiss_{\JJ} =-\poiss.\end{equation}
Using $L_V J=0$ (which holds by assumption) and \eqref{poiss-variation-JJ}, the above  equality gives
\begin{equation}\label{poisson-variation}
\begin{split}
L_V \poiss &= -J\poiss_{\JJ} (dIdv) \poiss_{\JJ} -\poiss_{\JJ} (dIdv) \poiss_{\JJ} J^* \\
&=\poiss_{\JJ}\left(I^* (dIdv) +(dIdv)I\right)\poiss_{\JJ} \\
&=0,
\end{split}
\end{equation}
where we have used that $\poiss_{\JJ} J^* = - I \poiss_{\JJ}, \, J \poiss_{\JJ} = -\poiss_{\JJ} I^*$ to obtain the second line and $I^*(dIdv) + (dIdv)I=0$ (by type considerations with respect to $I$) to get the third line.  This proves the inclusion $ \mf{h}_{\rm red}^{\T}(m) < \mf{aut}^{\T}(M, \poiss, J)$. \end{proof}

{
\begin{lemma} \label{l:hredinGKclass} Let $m_0=(g_0, J, I_0, b_0)$ be a generically symplectic type bihermitian  structure and $\T$ be a compact torus of isometries of $m_0$.  Then, for any $V\in \mf{h}^{\T}_{\rm red}(m_0)$, the flow $\phi_t$ of $V$ defines tensors $g_t=(\phi_{t}^{-1})^{*}g_0, \,  J= (\phi_{t}^{-1})_*J (\phi_{t})_*=J, \, I_t=(\phi_{t}^{-1})_*I_0 (\phi_{t})_*$ which give rise to a path  $m_t=(g_t, J, I_t, b_t) \in \mc{F}^{\T}_{m_0}$ for a suitable choice of $\T$-invariant $2$-forms $b_t$.
\end{lemma}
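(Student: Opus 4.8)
The plan is to show that the velocity of the flowed family $m_t$ lies, at every time, inside the distribution $\mc{D}^{\T}$ whose integral leaf through $m_0$ is $\mc{F}^{\T}$; since $m_0\in\mc{F}^{\T}$, this forces the whole path to stay in $\mc{F}^{\T}$. First I would record the properties of $V$. By definition of $\mf{h}_{\rm red}^{\T}(m_0)$ we may write $V=\pi_{\JJ}(du)+J\pi_{\JJ}(dv)$ with $u,v\in C^{\infty}(M,\R)^{\T}$ and $L_VJ=0$, and by Lemma~\ref{l:reduced} the field $V$ is real holomorphic for $J$, preserves the Poisson tensor $\pi$ (this is \eqref{poisson-variation}), and is $\T$-invariant. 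Hence its flow $\phi_t$ consists of $J$-biholomorphisms preserving $\pi$ and commuting with $\T$, so the transformed tensors $(g_t,I_t)$ together with the fixed $J$ form a $\T$-invariant bihermitian structure whose associated Poisson tensor is again $\pi$.

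The heart of the argument is identifying the infinitesimal variation of $I$ as an element of $\mc{D}$. Differentiating at a point $m_t$ of the family gives $\dot I_t=L_VI_t$. Using the relation $(I-J)\pi_{\JJ}=\pi$ from \eqref{poiss-JJ-poiss} together with $L_V\pi=0$, I would expand
\[ 0=L_V\pi=(L_VI)\,\pi_{\JJ}+(I-J)\,L_V\pi_{\JJ}, \]
where the term $(L_VJ)\pi_{\JJ}$ has dropped out. Substituting the formula $L_V\pi_{\JJ}=\pi_{\JJ}(dd^c_Iv)\pi_{\JJ}$ of \eqref{poiss-variation-JJ} and again using $(I-J)\pi_{\JJ}=\pi$ yields $(L_VI)\pi_{\JJ}=-\pi\,(dd^c_Iv)\,\pi_{\JJ}$. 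Because $m_0$, and hence every $m_t$, is generically symplectic type, $\pi_{\JJ}$ is invertible on an open dense set, so I may cancel the right factor and conclude
\[ L_VI=-\pi(dd^c_Iv). \]
This is exactly the variation of $I$ induced by the distribution vector field $\mathbf{X}_v\in\mc{D}$ of Lemma~\ref{lm:gc_variation}, with the $\T$-invariant potential $v$; note that the ``real'' potential $u$ disappears, reflecting that the genuine motion within the class is driven by the imaginary direction.

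With this in hand I would assemble the proof. Letting $v_t$ be the imaginary potential of $V$ relative to $m_t$ (which is again $\T$-invariant since $v$ is and $\phi_t$ commutes with $\T$), the computation reads $\dot I_t=-\pi(dd^c_{I_t}v_t)$, so that $A_t:=\int_0^t dd^c_{I_s}v_s\,ds$ is an exact $\T$-invariant $2$-form with $I_t=I_0-\pi A_t$. By Definition~\ref{d:GK-class} this exhibits $m_t$ as a point of $\GK^{\T}_{m_0}(\pi,J)$, the flowed metric and $b$-field agreeing with the Gualtieri recipe $(g_t-b_t)=(g_0-b_0)-A_tJ$ of Theorem~\ref{thm:gualtieri_map} once the background is renormalized: as $\phi_t$ is isotopic to the identity, $\phi_t^*H_0-H_0$ is exact and can be absorbed into a $\T$-invariant shift of $b_t$ keeping $(M,H_0)$ fixed. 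Since $\dot A_t=dd^c_{I_t}v_t\in\mc{D}^{\T}$ for all $t$ and $m_0\in\mc{F}^{\T}$, the integrability of $\mc{D}^{\T}$ (Lemma~\ref{lm:commutator}) gives $m_t\in\mc{F}^{\T}$. The main obstacle is precisely the crux identity $L_VI=-\pi(dd^c_Iv)$: what must be shown is not merely that $L_VI$ is an exact $(1,1)$-form (that is, tangent to $\GK(\pi,J)$), but that it lies in the strictly smaller distribution $\mc{D}$ of $dd^c_I$-exact forms — a genuine constraint whenever $(M,I)$ fails the $\partial\bar\partial$-lemma, as on Hopf surfaces. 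It is exactly the holomorphy $L_VJ=0$ and Poisson-invariance of $V$, channeled through \eqref{poiss-variation-JJ}, that produce the global potential $v$ and resolve this.
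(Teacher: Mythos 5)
Your derivation of the crux identity $L_VI=-\pi(dd^c_Iv)$ is exactly the paper's: both arguments differentiate $\pi=(I-J)\pi_{\JJ}$ (equation \eqref{poiss-JJ-poiss}) using $L_V\pi=0$, $L_VJ=0$ and the Cartan-type formula \eqref{poiss-variation-JJ}, then cancel $\pi_{\JJ}$ on the generically nondegenerate locus. Up to a sign convention in $\dot I_t$ versus $-L_VI_t$, this part is correct and is the same route as the paper.

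However, there is a genuine gap in the remainder. Membership of $m_t=(g_t,J,I_t,b_t)$ in the leaf $\mc F^{\T}$ requires that \emph{all} of the variations match those of Lemma~\ref{lm:gc_variation} for the potential $-v_t$: not only $\dot I_t$, but also $\dot g_t$. In Definition~\ref{d:GK-class} the pair $(g_1-b_1)=(g_0-b_0)-AJ$ splits into a symmetric part, which prescribes the metric as $g_t=g_0-(A_tJ)^{\rm sym}$, and a skew part, which prescribes $b_t$. Only the skew part is at your disposal; the symmetric part is already fixed by the statement to be the flowed metric $(\phi_t^{-1})^*g_0$, and you must prove these agree, i.e.\ that $L_Vg=-((dIdv)J)^{\rm sym}$ (equation \eqref{g-variation} in the paper). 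Your appeal to ``renormalizing the background'' by absorbing the exact form $\phi_t^*H_0-H_0$ into $b_t$ only concerns the $2$-form component and cannot repair a mismatch in the metric; nor is $g$ determined by $(\pi,I,J)$ in general (e.g.\ when $[I,J]$ degenerates), so the agreement is not automatic. The paper closes this gap with a separate computation, expanding $L_V(I+J)=2L_V(\pi_{\JJ}g)$ and using $\pi_{\JJ}J^*=-I\pi_{\JJ}$ together with the already-established formula for $L_VI$ to isolate $L_Vg$. You would need to supply this (or an equivalent) argument before concluding that the flowed data lies in $\mc F^{\T}_{m_0}$.
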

\begin{proof}
Denote by $\phi_t$ the flow of $V$ and let  $m_t : = \phi_{t} \cdot m$ be the variation of the bihermitian data under this flow. Clearly, $J_t=J$  as the flow of $V$ preserves $J$, and 
the corresponding  Poisson tensor is $\poiss_t=(\phi_{ t})_*(\poiss)=\poiss$ because of \eqref{poisson-variation}. As $(\phi_{t})_* \pi_{\JJ}= \pi_{\JJ_t}$, we have
\[V =\poiss_{\JJ_t}(du_t) + J\poiss_{\JJ_t}(dv_t) \]
with $u_t = \phi_{t}^*(u)$ and $v_t := \phi_{t}^*(v)$. 
From \eqref{poiss-variation-JJ}, \eqref{poiss-JJ-poiss} and $L_V J=0$
we get 
\[ 0=L_V \poiss = L_V\left((I-J) \poiss_{\JJ}\right)=(L_V I) \poiss_{\JJ} + (I-J) \poiss_{\JJ}  (dIdv)  \poiss_{\JJ} = \Big((L_V I)+ \poiss(dIdv)\Big)  \poiss_{\JJ}. \]
The fact that $\poiss_{\JJ}$ is invertible on an open dense subset yields that
\begin{equation}\label{I-variation}
   L_V I=- \poiss(dIdv) 
\end{equation} 
on this subset, and hence, by continuity,  everywhere on $M$. Similarly, using \eqref{poiss-variation-JJ} and $L_V J=0$, we compute 
\[ 
\begin{split}
L_V I &=L_V(I+J)=2L_V(\poiss_{\JJ}g) =2\poiss_{\JJ} (dIdv) \poiss_{\JJ} g +2\poiss_\JJ (L_V g) \\
&= \poiss_{\JJ} (dIdv) (I+J) + 2\poiss_{\JJ}(L_V g) \\
&=-\poiss_{\JJ}I^*(dIdv) + \poiss_{\JJ} (dIdv) J + 2\poiss_{\JJ}(L_V g) \\
&= J\poiss_{\JJ}(dJdv) + \poiss_{\JJ} (dIdv) J + 2\poiss_{\JJ}(L_V g)\\
&= -\poiss(dIdv) + I \poiss_{\JJ} (dIdv) + \poiss_{\JJ}(dIdv) J +2\poiss_{\JJ}(L_V g) \\
&= -\poiss(dIdv)  + \poiss_{\JJ}\left(-J^*(dIdv) + (dIdv) J + 2L_V g\right).  
\end{split} \]
From \eqref{I-variation}, we  thus get
\[ \poiss_{\JJ} (L_V g) = -\poiss_{\JJ} \left((d I dv) J \right)^{\rm sym},\]
and hence
\begin{equation}\label{g-variation}
 L_V g = -\left((d I dv) J \right)^{\rm sym}.
\end{equation}
Therefore,  by \eqref{I-variation} and \eqref{g-variation}, we compute
\[ \frac{d}{dt} I = - L_V I= \poiss (dIdv) \qquad \frac{d}{dt}g= - L_V g = \left((dIdv)J\right)^{\rm sym}.\]
Finally, we define a path of $2$-forms $b_t$ by 
 \[ \frac{d}{dt} b= -((dIdv)J)^{\rm skew}.\]
 This shows that $\tilde m_t := (g_t, J, I_t, b_t)$ is a smooth path in $\mc{F}^{\T}_m$,  generated by the smooth function $v_t = -(\phi_{t}^{-1})^* v$, see Lemma~\ref{lm:gc_variation}. 
\end{proof}

\begin{rmk} If we assume in Lemma~\ref{l:hredinGKclass} that the torus $\T$ is generated by the divergence $\div_{\vol_0} \pi_{\JJ_0}$ of a volume form $\vol_0$ on $M$, then letting
\[ \vol_t := (\phi_{t}^{-1})^*(\vol_0)\]
we obtain the solution of the evolution equation  for $\vol_t$ in Lemma~\ref{l:moser}. Thus, in this case, all the data $(g_t, J_t=J, I_t, \vol_t)$ in Lemma~\ref{l:moser} are induced by the flow $\phi_t$ of $V$.
\end{rmk}

\begin{thm}\label{thm:CML-reduced} Let $m=(g,I,J, 0)$ be a generically symplectic type generalized K\"ahler structure on $M$ and  $\vol$ an adapted volume form which generates a torus $\T$ of isometries of $m$.  Denote by $(\II, \JJ)$ the corresponding $(\JJ, \vol)$-adapted GK structure on $(M, H_0:=-d^c_J \omega_J)$, see Remark~\ref{r:divgremark}.  Then
\begin{enumerate}
\item $\mf{g}^{\T}(m) \cong \mf{g}^{\T}(\II, \JJ)$ is a compact Lie algebra and $\mf{g}^{\T}(m) \cap J\mf{g}^{\T}(m)=0$.
\item There exists an injective complex Lie algebra homomorphism 
\[ j: \mf{h}^{\T}_{\rm red}(m) \to  \mf{h}^{\T}(\II, \JJ), \qquad  j(\pi_{\JJ}(du) + J\pi_{\JJ}(dv)) := \Psi(\JJ du + \II \JJ dv).\]
\item Suppose $(\II, \JJ, \vol)$ is extremal and $X_{\rm ext}:= \poiss_{\JJ}(d\Gscal(\II, \JJ, \vol))=\pi_T(\chi) \in \mf{g}^{\T}(m)$ is the corresponding extremal vector field. Denote by $\hat\T$ the torus extension of $\T$ generated by $X_{\rm ext}$ and let $\mf{h}^{\hat \T}_{\rm red}(m)<\mf{h}^{\T}_{\rm red}(m)$ be the sub-algebra of $X_{\rm ext}$-invariant elements.
Then $\mf{h}^{\hat\T}_{\rm red}(m)= \mf{g}^{\T}(m)\oplus J \mf{g}^{\T}(m)$ is reductive and 
$j: \mf{h}^{\hat \T}_{\rm red}(m) \cong \left(\mf{h}^{\T}(\II, \JJ)\right)^{\chi}=\mf{g}^{\T}(\II, \JJ)\otimes \C$.
\end{enumerate}
\end{thm}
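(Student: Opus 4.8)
The plan is to establish the three parts in order, deriving (1) from the two converse statements Lemmas~\ref{l:hamiltonian_Killing} and \ref{l:hamiltonian_Killing_converse}, (2) by exhibiting $j$ as an injective complex Lie algebra homomorphism into the stabilizer, and (3) as an essentially formal consequence of the abstract Theorem~\ref{thm:CML} transported through $j$. For part (1), the map $\pi_T$ gives an injection $\mf{g}^{\T}(\II,\JJ)\hookrightarrow\mf{g}^{\T}(m)$ by Lemma~\ref{l:hamiltonian_Killing}, while—since $m$ is generically symplectic type, $\vol$ is adapted, and $b=0$ is $\T$-invariant—Lemma~\ref{l:hamiltonian_Killing_converse} supplies the inverse $V=\pi_{\JJ}(du)\mapsto\Psi(\JJ du)$; both are Lie algebra maps because the bracket on either side is governed by $\{\cdot,\cdot\}_{\pi_{\JJ}}$ (Lemma~\ref{l:hamiltonians}). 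Compactness of $\mf{g}^{\T}(m)$ is automatic, it being a closed subalgebra of the Killing algebra of the compact manifold $(M,g)$. For $\mf{g}^{\T}(m)\cap J\mf{g}^{\T}(m)=0$ I would take $V=\pi_{\JJ}(du)=J\pi_{\JJ}(dv)=\pi_{\JJ}(Idv)$ with $\pi_{\JJ}(du),\pi_{\JJ}(dv)\in\mf{g}^{\T}(m)$, use $J\pi_{\JJ}=\pi_{\JJ}I$ and density of the nondegeneracy locus to get $du=Idv$, hence $dd^c_Iv=0$, and conclude $v$ constant by the maximum principle for the Chern Laplacian $\Delta^{C,I}$ exactly as in Lemma~\ref{l:hamiltonians}, so $V=0$.

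For part (2) I would define $j(\pi_{\JJ}(du)+J\pi_{\JJ}(dv)):=\Psi(\JJ du+\II\JJ dv)$, read as the complexified Hamiltonian $u+\i v\in\mf{ham}^{\T}(\JJ)\otimes\C$. Well-definedness: $V=\pi_{\JJ}(du+Idv)$ determines $du+Idv$ on the dense nondegeneracy locus, hence everywhere; then $d(du+Idv)=dIdv$ pins down $v$ modulo constants (again via $dd^c_Iv=0\Rightarrow v=\mathrm{const}$) and afterwards $u$ modulo constants, so $j$ depends only on $V$. Injectivity is then immediate, and complex linearity $j(JV)=\i\,j(V)$ follows from $JV=\pi_{\JJ}(d(-v))+J\pi_{\JJ}(du)$. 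The homomorphism property $j([V_1,V_2])=[j(V_1),j(V_2)]$ reduces to matching the bracket formula of Lemma~\ref{l:reduced} with the complex-bilinear Poisson bracket on $\mf{ham}^{\T}(\JJ)\otimes\C$ coming from Lemma~\ref{l:hamiltonians}: both produce the potential $(\{u_1,u_2\}-\{v_1,v_2\})+\i(\{u_1,v_2\}+\{v_1,u_2\})$.

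The main obstacle is to show that $j$ actually lands in the stabilizer $\mf{h}^{\T}(\II,\JJ)$, that is, that $V\in\mf{h}^{\T}_{\rm red}(m)$ forces $L^{H_0}_{\JJ du}\II+\II L^{H_0}_{\JJ dv}\II=0$ (after rewriting $L^{H_0}_{\JJ du+\II\JJ dv}\II$ via the integrability identity $L^{H_0}_{\II e}\II=\II L^{H_0}_e\II$). This is the point at which the real condition $L_VJ=0$ must be reconciled with the generalized-complex stabilizer condition. I would argue it through the complexified orbit: by Lemma~\ref{l:hredinGKclass} the flow of $V$ traces a path $m_t\in\mc{F}^{\T}$ whose bihermitian variation depends only on $v$ (through $dIdv$) and is therefore generated by the potential $-v$; feeding this into Lemma~\ref{l:moser}, and using—as in the equality case of Proposition~\ref{p:Mabuchiconvex}—that the regauged potentials of the flow of a single reduced-holomorphic field are constant in $t$, I can evaluate $\left.\dt\til{\II}_t\right|_{t=0}=-\II L^{H_0}_{\JJ dv}\II$ for the Moser-regauged family and compare it with the transported variation of $\II$, isolating the Moser and Hamiltonian generators to obtain the vanishing. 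The same identity can alternatively be checked by a direct computation of $L_V\II$ from the explicit formulas $L_VI=-\poiss(dIdv)$ and $L_Vg=-((dIdv)J)^{\rm sym}$ of Lemma~\ref{l:hredinGKclass} together with $L_VJ=0$; I expect the bookkeeping of the $b$-field generators to be the most delicate part of the whole proof.

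Part (3) is then largely formal. First $X_{\rm ext}=\pi_{\JJ}(d\Gscal)\in\mf{g}^{\T}(m)\subset\mf{h}^{\T}_{\rm red}(m)$ corresponds to $u=\Gscal,\ v=0$, so $j(X_{\rm ext})=\Psi(\JJ d\Gscal)=\chi$. Since $j$ is an injective Lie homomorphism, $[V,X_{\rm ext}]=0$ iff $[j(V),\chi]=0$, whence $j$ carries $\mf{h}^{\hat\T}_{\rm red}(m)$ into $(\mf{h}^{\T}(\II,\JJ))^{\chi}$, which by Theorem~\ref{thm:CML} equals $\mf{g}^{\T}(\II,\JJ)\otimes\C$; by part (1) the latter is precisely $j(\mf{g}^{\T}(m)\oplus J\mf{g}^{\T}(m))$. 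For the reverse inclusion I would verify $\mf{g}^{\T}(m)\oplus J\mf{g}^{\T}(m)\subseteq\mf{h}^{\hat\T}_{\rm red}(m)$: any $W\in\mf{g}^{\T}(m)$ preserves $g,I,J,\vol$, hence $\Gscal$ and $\pi_{\JJ}$, giving $L_WX_{\rm ext}=0$, and then $[JW,X_{\rm ext}]=0$ follows from the complex linearity of $j$ together with injectivity. Injectivity of $j$ now promotes the inclusion $j(\mf{h}^{\hat\T}_{\rm red}(m))\subseteq j(\mf{g}^{\T}(m)\oplus J\mf{g}^{\T}(m))$ to the equality $\mf{h}^{\hat\T}_{\rm red}(m)=\mf{g}^{\T}(m)\oplus J\mf{g}^{\T}(m)$, whence $j$ restricts to an isomorphism onto $(\mf{h}^{\T}(\II,\JJ))^{\chi}=\mf{g}^{\T}(\II,\JJ)\otimes\C$. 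Reductiveness is then automatic: $\mf{g}^{\T}(m)$ is compact by part (1) and meets $J\mf{g}^{\T}(m)$ trivially, so $\mf{g}^{\T}(m)\oplus J\mf{g}^{\T}(m)$ is its complexification and hence reductive.
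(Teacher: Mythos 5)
Your proposal follows essentially the same route as the paper: part (1) via Lemmas \ref{l:hamiltonian_Killing} and \ref{l:hamiltonian_Killing_converse} together with the maximum-principle argument for $\mf{g}^{\T}(m)\cap J\mf{g}^{\T}(m)=0$; part (2) by comparing the variation of $\II$ along the flow of $V$ (Lemma \ref{l:hredinGKclass}) with the generalized K\"ahler class variation of Lemma \ref{lm:gc_variation}; and part (3) formally from Theorem \ref{thm:CML} and the injectivity of $j$. The one substantive step you defer --- the $b$-field bookkeeping that identifies the two-form generator $B=\tfrac{1}{2}\Psi(JIdv-(I-J)du)$ from the $\JJ$-equation and then verifies $\Psi(V+\II Idv)-\Psi(\JJ du+\II\JJ dv)=B$ --- is precisely the computation the paper carries out explicitly via the Gualtieri map, and your framework sets it up correctly.
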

\begin{proof} 
(1) By Lemma~\ref{l:hamiltonian_Killing_converse}, $\mf{g}^{\T}(m)$ consists of \emph{all} Killing vector fields of $(g, I, J)$ of the form $V= \pi_{\JJ}(du)$, $u\in C^{\infty}_0(M, \R)^{\T}$ and the Lie algebra homomorphism $\pi_T : \mf{g}^{\T}(\II, \JJ) \to \mf{g}^{\T}(m)$ is surjective. The kernel of $\pi_T$ consists of $2$-forms $B$ such that $L_B^{\rm aut} \JJ = L_{B}^{\rm aut}\II=0$. Note that $B$ must be zero  at the points where $\pi_\JJ$ is non-degenerate, hence, by continuity, everywhere.

Next let  $V=\mf{g}^{\T}(m)\cap J\mf{g}^{\T}(m)$. Then
\[ V= \pi_{\JJ}(du)= J \pi_{\JJ}(dv)= \pi_{\JJ}(Idv).\]
By the non-degeneracy assumption for $\pi_{\JJ}$ we will have $du = I dv$
everywhere.  The latter and the maximum principle yield that $v$ and $u$ are constant, i.e. $V=0$.

\bigskip
(2) We first notice that if $V= \pi_{\JJ}(du) + J\pi_{\JJ}(dv) \in \mf{h}^{\T}_{\rm red}(m)$, then the potential functions are uniquely defined up to additive constants. Indeed, if $\pi_{\JJ}(du) + J\pi_{\JJ}(dv) =0$, then using $J \pi_{\JJ} = \pi_{\JJ} I$ and the non-degeneration of $\pi_{\JJ}$, we conclude that $du + Jdv =0$. The claim follows from the maximum principle. This shows that $j: \mf{h}^{\T}_{\rm red}(m) \to \mf{aut}(M, H_0)$ is well-defined.

Let $V= \pi_{\JJ}(du) + J\pi_{\JJ}(dv) \in \mf{h}^{\T}_{\rm red}(m)$, and consider the path $\tilde m_t=(g_t, I, I_t, b_t) \in \mc{F}^{\T}_m$ generated by the flow of $V$ as in Lemma~\ref{l:hredinGKclass}. We thus have (see the proof of Lemma~\ref{l:hredinGKclass})
\[\left. \dt \right|_{t=0} I = -L_V I, \, \, \,  \left. \dt \right|_{t=0} J = - L_V J=0, \, \,  \, \left. \dt \right|_{t=0} g= - L_V g, \, \, \, \left. \dt \right|_{t=0} b = -(dIdv)^{(2,0)+(0,2)}_J J=:B. \]
Using Theorem~\ref{thm:gualtieri_map} and $b_0=0$, we compute the derivatives at $t=0$ of $\II$ and $\JJ$ to be
\[ \left. \dt \right|_{t=0} \II =  - L_V \II + L_{B} \II, \qquad \left. \dt \right|_{t=0} \JJ = - L_V \JJ + L_{B} \JJ.\]
On the other hand,  by Lemma~\ref{lm:gc_variation}, we have
\[ \left. \dt \right|_{t=0} \II = L^{H_0}_{\II I dv} \II= L^{}_{\Psi(\II I dv)} \II, \qquad \left. \dt \right|_{t=0} \JJ = - L^{H_0}_{\JJ I dv} \JJ= - L^{}_{\Psi(\JJ I dv)} \JJ, \]
We thus derive the identities
\begin{equation}\label{basic} L^{}_{V + \Psi(\II Idv) - B} \II =0, \qquad L^{}_{V - \Psi(\JJ Idv) - B} \JJ =0.\end{equation}
Using \eqref{eq:gualtieri_map}, we have
\[ V - \JJ I dv = \frac{1}{2}\left((I+J) du^{\sharp} +dv + JIdv\right),\qquad \JJ du = \frac{1}{2}\left((I+J) du^{\sharp} + (I-J) du\right)\]
As $L^{H_0}_{\Psi(\JJ du)} \JJ=0$ (see Lemma~\ref{l:hamiltonians}) we have
\[ 0= L^{H_0}_{\Psi(V -\JJ du +\JJ Idu)-B} \JJ = \frac{1}{2}L^{H_0}_{\Psi(dv + JI dv -(I-J)du) - 2B} \JJ.\]
By the non-degeneracy condition for $\pi_{\JJ}$, we deduce
\[\frac{1}{2}\Psi(dv + JI dv -(I-J)du) = \frac{1}{2}\Psi(JI dv - (I-J)du)=B. \]
By  \eqref{eq:gualtieri_map} again, we compute
\begin{equation}\label{eq:surjective} \JJ du + \II \JJ dv = \frac{1}{2}\left( -2dv^{\sharp} + (I+J) du^{\sharp} + (I-J)du\right), \end{equation}
\[ V + \II Idv = \frac{1}{2}\left(-2dv^{\sharp} + (I+J) du^{\sharp} -dv +JIdv\right).\]
It follows that
\[ \Psi(V + \II Idv)-\Psi(\JJ du + \II\JJ dv)= \frac{1}{2}\Psi(-dv +JIdv -(I-J)du)= \frac{1}{2} \Psi(JIdv -(I-J)du) = B.\]
Substituting back in \eqref{basic}, we get
\[ L_{\Psi(\JJ du + \II \JJ dv)} \II =0.\]
This shows that $j: \mf{h}^{\T}_{\rm red}(m) \to \mf{h}^{\T}(\II, \JJ)$ is a complex-linear map; it is a Lie algebra homomorphism by Lemma~\ref{l:hamiltonians} (and the fact that $\mf{ham}^{\T}(\JJ)$ acts holomorphically on $\mc{AGK}(\JJ, \vol)$) and the formula for the Lie bracket on $\mf{h}^{\T}_{\rm red}(m)$ in the proof of Lemma~\ref{l:reduced}. The injectivity of $j$ follows from \eqref{eq:surjective} and the maximum principle.

\bigskip 

(3) First, by the arguments in the proof of Lemma~\ref{l:hamiltonian_Killing_converse}, we have that
\[\Psi(\JJ d\Gscal(\II, \JJ, \vol)) = X_{\rm ext},\]
showing that 
\[ \left(\mf{h}^{\T}(\II, \JJ)\right)^{\chi}=\{ \Psi(\JJ du) + \i \Psi(\JJ dv) \in \mf{h}^{\T}(\II, \JJ) \, | \, L_{X_{\rm ext}} u= L_{X_{\rm ext}} v=0\}.\]
It thus follows that
\[ j: \mf{h}^{\hat \T}_{\rm red}(m) \to \left(\mf{h}^{\T}(\II, \JJ)\right)^{\chi}.\]
Also, as $\Gscal(\II, \JJ, \vol)$ is a smooth function depending only on $(g, I, J, \vol)$ (see Theorem~\ref{thm:scalformula} below), $\Gscal(\II, \JJ, \vol)$  is invariant by any element $V\in \mf{g}^{\T}(m)$ (recall $V$ preserves $\vol$ by Lemma~\ref{l:hamiltonians} and  $(g, I, J)$ by definition). In particular,
\[ \mf{g}^{\T}(m) \oplus J \mf{g}^{\T}(m) < \mf{h}^{\hat \T}_{\rm red}(m).\]
By Lemma~\ref{l:hamiltonian_Killing_converse}, $j: \mf{g}^{\T}(m) \cong \mf{g}^{\T}(\II, \JJ)$, hence also (examining the definition of $j$)
\[ j: \mf{g}^{\T}(m) \oplus J \mf{g}^{\T}(m) \cong \mf{g}^{\T}(\II, \JJ)\otimes \C. \]
By Theorem~\ref{thm:CML}, we know that
\[ \mf{g}^{\T}(\II, \JJ)\otimes \C =\left(\mf{h}^{\T}(\II, \JJ)\right)^{\chi}\]
By the injectivity of $j$ we conclude $\mf{g}^{\T}(m) \oplus J \mf{g}^{\T}(m) = \mf{h}^{\hat \T}_{\rm red}(m)  \overset{j}{\cong}\left(\mf{h}^{\T}(\II, \JJ)\right)^{\chi}$.
\end{proof} }

\begin{rmk}\label{r:comments-reduced} (1) Presumably, $\mf{h}_{\rm red}^{\hat \T}(m)$ should be independent of the choice of $m \in \mc{F}^{\T}$ (this is established in the symplectic type case in \cite{ASU2}) and is thus a Lie algebra that can be computed \emph{a priori}. Theorem~\ref{thm:CML-reduced} therefore can be used to determine the infinitesimal symmetries of an extremal bihermitian data $(g, I, J,\vol)$ in $\mc{F}^{\T}$. 

(2) In the K\"ahler case, $\mf{h}_{\rm red}^{\T}(m)$ is the Lie algebra of holomorphic vector fields on $(M,J)$ with non-empty zero set (see e.g. \cite{gauduchon-book}). Similarly, in the symplectic type GK case, the Lie algebra $\mf{h}_{\rm red}^{\T}(m)$ is identified in \cite{ASU2} with the sub-algebra of $\mf{aut}(M, \pi, J)$ of holomorphic vector fields preserving the holomorphic Poisson tensor $\pi - \sqrt{-1}J\pi$ on $(M, J)$ and  which couple trivially with any closed $(1,0)$-form on $(M, J)$.

(3)  There is a natural complex Lie algebra isomorphism $q: \mf{h}^{\T}_{\rm red}(m) \to \mf{h}^{\T}_{\rm red}(\bar m)$, where $\bar m=(g, J, I, b)$ denotes the bi-Hermitian data with $I$ and $J$ exchanged. Specifically,
\[ q(\pi_{\JJ} (du) + J \pi_{\JJ}(dv)):=\pi_{\JJ} (du) + I \pi_{\JJ}(dv). \]
The fact that $q(V)$ preserves $I$ follows by the expression $q(V)=V + \pi(dv)$, the formula $L_{V} I = -\pi (dd^c_I v)$ established in the proof of Lemma~\ref{l:reduced},  and the general fact $L_{\pi(dv)} I= \pi (dd^c_I u)$ (which holds for any smooth function $v$ by using that $\pi -\sqrt{-1}I\pi$ is a holomorphic Poisson tensor on $(M, I)$).  
\end{rmk}
\begin{ex}[Diagonal Hopf surfaces] Let $(M, J)= (\C^{2}\setminus \{0\})/\langle A \rangle$ be a diagonal Hopf complex surface, where $A={\rm diag}(\alpha, \beta), \, 0<|\alpha|<1, 0<|\beta|<1$ is a contraction of $\C^2$. It is known~\cite{SU2, ASU3} that $(M,J)$ admits a generalized K\"ahler Ricci soliton $m=(g, J, I,f)$ such that $\pi-\sqrt{-1} J\pi$ is a holomorphic section of the anti-canonical bundle vanishing along two disjoint smooth elliptic curves $E_1 \cup E_2$. In particular, $\pi_{\JJ}$ is non-degenerate on the open dense subset $M\setminus(E_1\cup E_2)$. By Corollary~\ref{c:GITforGKRS}, this is an instance of a cscGK structure with respect to the volume form $\vol=e^{-f} dV_g$, where $f$ is the soliton potential. The connected Lie group ${\rm Aut}_0(M, \pi, J)$ is (see \cite[Prop.6.3]{ASU3}) $\left(\C^{*} \times \C^{*}\right)/\langle A \rangle$, showing that $\mf{aut}(M, \pi, J)= \C^2$. We also notice that ${\rm Aut}_0(M, \pi, J)$ admits a $3$-dimensional \emph{maximal} compact torus $K$, generated by the $A$-invariant vector fields
\[\left\{\im(z_1 \frac{\partial}{\partial z_1}),  \im(z_2 \frac{\partial}{\partial z_2}), \log|\alpha|{\rm Re}(z_1 \frac{\partial}{\partial z_1}) +\log|\beta|{\rm Re}(z_2 \frac{\partial}{\partial z_2})\right\}. \]
By Lemma~\ref{l:solitonID} the divergence $\div_{\vol} \pi_{\JJ}$ is $X=X_I + X_J$, which is identified in the proof of Theorem~6.6 in \cite{ASU3} with a real multiple of $\im(z_2 \frac{\partial}{\partial z_2})$. It thus follows that in this case $\T=S^1$ corresponds to rotation in the $z_2$-axis of $\C^2$. This proof also shows that
the commuting Killing fields $(X_I, X_J)$ define a 2-dimensional torus in $\Aut_0(M, \pi, J)$,  generated by $\{ \im(z_1 \frac{\partial}{\partial z_1}),  \im(z_2 \frac{\partial}{\partial z_2})\}$. By Lemma~6.8 in \cite{ASU3},
\[ \mf{h}^K_{\rm red}(m) =\{ 0\}.\]
As $\mf{aut}(M, \pi, J)$ is abelian, we have for any torus $\T \subset \rm{Aut}_0(M, \pi, J)$
\[ \mf{h}_{\rm red}^{\T}(m) = \mf{h}^{K}_{\rm red}(m)=\{ 0\}.\]
\end{ex}

\subsection{Futaki invariant}

Here we introduce a generalized K\"ahler analogue of the Futaki character, extending the definition in the symplectic-type case given in \cite{ASUScal}.   We are in the setup of Assumption~\ref{a:generically-symplectic}.

\begin{defn} Denote by  $\hred^{\T}(J)$ the Lie algebra of all $\T$-invariant real holomorphic vector fields $V$ on $(M, J)$,  such that for each $m\in \mc{F}^{\T}$, there exist (unique by the generically symplectic type assumption) smooth functions $u_m, v_m \in C_0^{\infty}(M, \vol_m)^{\T}$  with
\begin{equation}\label{eq:reduced-potential} V= \pi_{\JJ_m}(du_m) + J \pi_{\JJ_m}(dv_m).  \end{equation}
Observe that by the results in Section~\ref{s:red} and Lemma~\ref{l:moser}, we have
\[ \hred^{\T}(J) = \bigcap_{m \in \mc{F}^{\T}} \mf{h}_{\rm red}^{\T}(m),\]
showing that $\hred^{\T}(J)$ is a complex sub-algebra of $\mf{aut}(M, \poiss, J)$.
\end{defn}

\begin{thm}[Futaki character]\label{t:futaki-character}
    Let $m \in\mathcal{F}^{\T}$ be a generalized K\"ahler structure.  Define a homomorphism ${\bf F}_{m}\colon \hred^{\T}(J)\to \R$ by
    \begin{equation}\label{e:futaki-character}
        {\bf F}_{m}(V):=\int_M v_{m}\Gscal(m, \vol_m)\vol_m
        =-\boldsymbol{\tau}(\mathbf X_{v_{m}}).
    \end{equation}
    Then ${\bf F}_{m}$ is independent of $m\in\mc{F}^{\T}$ and vanishes on the commutator $[\hred^{\T}(J),\hred^{\T}(J)]$.
    In particular,  ${\bf F}={\bf F}_{m}$ is a character of $\hred^{\T}(J)$ which is identically zero if $\mc{F}^{\T}$ admits a cscGK metric.
\end{thm}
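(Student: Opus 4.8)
The plan is to reduce the entire statement to two facts already established: that the Mabuchi one-form $\pmb{\tau}$ is closed (Proposition~\ref{p:Mabuchiclosed}), and that every $V\in\hred^{\T}(J)$ generates, through its flow $\phi_s$, a path inside $\mc{F}^{\T}$ whose Mabuchi velocity is $\XX_{-v_m}$ (Lemma~\ref{l:hredinGKclass}). Write $\mathcal V$ for the fundamental vector field induced on $\mc{F}^{\T}$ by the flow of $V$, so that $\mathcal V|_m=\XX_{-v_m}$. Then, directly from the definitions,
\[
{\bf F}_m(V)=-\pmb{\tau}(\XX_{v_m})=\pmb{\tau}(\XX_{-v_m})=\pmb{\tau}(\mathcal V)\big|_m .
\]
Thus the whole theorem becomes a statement about the single function $\pmb{\tau}(\mathcal V)$ on the Mabuchi space and the closed two-form $d\pmb{\tau}=0$.

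For independence of the base point I would show $L_{\mathcal V}\pmb{\tau}=0$, i.e. that the induced action $m\mapsto\phi_s\cdot m$ of the symmetry flow preserves $\pmb{\tau}$. This is a naturality statement: Goto's scalar curvature is equivariant under pushforward of the bihermitian data (it is defined spinorially, hence natural under pullback), and the canonical adapted volume satisfies $\vol_{\phi_s\cdot m}=(\phi_s^{-1})^*\vol_m$; moreover the Mabuchi potential of a tangent vector transforms by pullback. Feeding these into the defining integral $\pmb{\tau}(\XX_u)|_m=-\int_M u\,\Gscal(m,\vol_m)\vol_m$ and changing variables by the diffeomorphism $\phi_s$ gives $\phi_s^*\pmb{\tau}=\pmb{\tau}$, hence $L_{\mathcal V}\pmb{\tau}=0$. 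Combining this with $d\pmb{\tau}=0$ and Cartan's formula yields
\[
d\big(\pmb{\tau}(\mathcal V)\big)=L_{\mathcal V}\pmb{\tau}-i_{\mathcal V}\,d\pmb{\tau}=0,
\]
so $\pmb{\tau}(\mathcal V)={\bf F}(V)$ is constant on $\mc{F}^{\T}$, which is the independence claim.

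For the character property I would evaluate the closedness identity on a pair of fundamental vector fields $\mathcal V_1,\mathcal V_2$ attached to $V_1,V_2\in\hred^{\T}(J)$:
\[
0=d\pmb{\tau}(\mathcal V_1,\mathcal V_2)=\mathcal V_1\big(\pmb{\tau}(\mathcal V_2)\big)-\mathcal V_2\big(\pmb{\tau}(\mathcal V_1)\big)-\pmb{\tau}\big([\mathcal V_1,\mathcal V_2]\big).
\]
The first two terms vanish because each $\pmb{\tau}(\mathcal V_i)={\bf F}(V_i)$ is constant, and since $V\mapsto\mathcal V$ is a Lie algebra (anti)homomorphism we have $[\mathcal V_1,\mathcal V_2]=\pm\mathcal V_{[V_1,V_2]}$ (the sign is immaterial), so $\pmb{\tau}(\mathcal V_{[V_1,V_2]})={\bf F}([V_1,V_2])=0$; as $\hred^{\T}(J)$ is a subalgebra this uses that $[V_1,V_2]\in\hred^{\T}(J)$. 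Finally, if $\mc{F}^{\T}$ carries a cscGK representative $m_*$, then $\Gscal(m_*,\vol_{m_*})$ is a constant, whence ${\bf F}_{m_*}(V)=\Gscal(m_*,\vol_{m_*})\int_M v_{m_*}\,\vol_{m_*}=0$ by the normalization $\int_M v_{m_*}\vol_{m_*}=0$; by independence ${\bf F}\equiv0$. The main obstacle is the invariance $L_{\mathcal V}\pmb{\tau}=0$, and inside it the equivariance $\vol_{\phi_s\cdot m}=(\phi_s^{-1})^*\vol_m$ of the canonical adapted volume, which is only defined path-wise (Lemma~\ref{l:volumeform}); to handle it I would use the characterization of $\vol_m$ through its divergence together with the fact that $V$ is $\T$-invariant, so $\phi_s$ commutes with $\T$ and preserves its generator $X=\div_{\vol_m}\pi_{\JJ_m}$, whence $(\phi_s^{-1})^*\vol_m$ and $\vol_{\phi_s\cdot m}$ are adapted volumes with the same divergence and total mass and therefore coincide.
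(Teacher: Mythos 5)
Your proposal is correct and follows essentially the same route as the paper's proof: both identify ${\bf F}_m(V)$ with the value of $\pmb{\tau}$ on the fundamental vector field induced by the flow of $V$ (via Lemma~\ref{l:hredinGKclass}), deduce constancy from the $\mathrm{Diff}(M)$-invariance of $\pmb{\tau}$ together with its closedness (Proposition~\ref{p:Mabuchiclosed}) and Cartan's formula, and obtain the character property by evaluating $d\pmb{\tau}=0$ on a pair of fundamental vector fields. Your more detailed justification of the equivariance $\vol_{\phi_s\cdot m}=(\phi_s^{-1})^*\vol_m$ via the divergence and total-mass characterization is a welcome elaboration of a point the paper states only briefly, and matches the remark following Lemma~\ref{l:hredinGKclass}.
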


\begin{proof}
The infinitesimal action of the vector field $V\in\hred^{\T}(J)$ on $m\in\mc{F}^{\T}$ induces a vector field ${\mathbf X}$ on $\mc{F}^{\T}$ by Lemma \ref{l:hredinGKclass}, which at a point $m\in\mc{F}^{\T}$ is given by
\[
{\mathbf X}_m={\mathbf X}_{v_{m}},
\]
where $v_m$ is the unique normalized potential for $V$ associated to $m \in \mc{F}^{\T}$ via \eqref{eq:reduced-potential}.
Now, the 1-form $\boldsymbol{\tau}$ on $\mc{F}^{\T}$ is invariant under the vector field $\mathbf{X}$, since it is induced by an action of $\mathrm{Diff}(M)$ on the entire structure $(g, I, J, \vol)$ as explained in Lemma \ref{l:hredinGKclass}.  Thus by Cartan's formula we have
\[
0=d(\boldsymbol{\tau}(\mathbf X))+d\boldsymbol\tau({\mathbf X},\cdot).
\]
Since $\boldsymbol{\tau}$ is closed, it follows that $\boldsymbol{\tau}(\mathbf X)$ is constant on $\mc{F}^{\T}$ which is equivalent to the statement of the constancy of ${\bf F}_{m}(V)$ on $\mc{F}^{\T}$.

Furthermore, if $W\in\hred^{\T}(J)$ is another holomorphic vector field inducing a vector field $\mathbf Y$ on $\GK_0^{\T}$, then
\[
{\bf F}_{m}([V,W])=-\boldsymbol\tau([\mathbf X,\mathbf Y])=d\boldsymbol{\tau}(\mathbf X,\mathbf Y)+\mathbf{Y}\cdot \boldsymbol{\tau}(\mathbf X)-\mathbf{X}\cdot \boldsymbol{\tau}(\mathbf Y)=0,
\]
as claimed, so that ${\bf F}_{m}$ is a character of $\hred^{\T}(J)$.
\end{proof}

\begin{rmk} A conceptually distinct notion of Futaki invariant appeared recently in the context of generalized geometry \cite{garcia2023futaki}, yielding obstructions to the existence of solutions of the Hull-Strominger system.
\end{rmk}

\appendix
\section{The general Calabi-Lichnerowiz-Matsushima theorem}\label{s:CLM} 

In this appendix we show a general Calabi-Lichnerowicz-Matsuhima result which follows from the arguments of \cite{LSW}, but which is not stated there exactly in the generality below.

\subsection{The formal momentum map setup}

The formal setup is as follows:
\begin{enumerate}
   \item[$\bullet$] $(\bf{M}, \bf{J}, \boldsymbol{\Omega}, \bf{g})$ is a formal (possibly infinite dimensional) Fr\'echet almost Hermitian manifold, with tangent space ${\bf T}_m{\bf M}$. By this, we mean that each tangent vector  ${\bf X}(m) \in {\bf T}_m{\bf M}$ can be obtained
   as the velocity  at $t=0$ of a differentiable curve $m(t) \in {\bf M}$ with $m(0)=m$. We denote by $\boldsymbol{\mf{X}}({\bf M})$ the space of (formal) vector fields on ${\bf M}$. 
   \item[$\bullet$] $\mf{Aut}$ is a (possibly infinite dimensional) Lie group which acts differentiably (on the left)  
   on ${\bf M}$ and $\mathfrak{ham}$ is a (possibly infinite dimensional) Lie algebra admitting an exponential map ${\rm exp} : \mf{aut} \to \mf{Aut}$; this gives rise to fundamental vector fields on ${\bf M}$ which admit flows:
   \[ \mf{ham} \ni u \mapsto {\bf X}_u \in \boldsymbol{\mf{X}}({\bf M}), \qquad {\bf X}_u(m) := -\frac{d}{dt}_{|_{t=0}} {\rm exp}(tu) \cdot m. \]  
   \item[$\bullet$] The representation  $\mf{ham} \ni u \mapsto {\bf X}_u \in \boldsymbol{\mf{X}}({\bf M)}$ coming from the $\mf{Aut}$-action is  (formally) hamiltonian with respect to ${\bf \Omega}$ in the sense that there exists a differentiable  map ${\boldsymbol{\mu}}: {\bf M} \to \mf{ham}^*$
   such that for any $u, v \in \mf{ham}$, any $m \in {\bf M}$ and any ${\bf X}(m) \in {\bf T}_m {\bf M}$, the corresponding fundamental vector fields ${\bf X}_u, {\bf X}_v$ on ${\bf M}$ satisfy 
   \begin{equation}\label{momentum-property} \boldsymbol{\Omega}_m({\bf X}_u, {\bf X}(m)) = - d\left(\langle u, \boldsymbol{\mu}\rangle\right)({\bf X}(m)) = - \frac{d}{dt}_{|_{t=0}}\langle u, \boldsymbol{\mu}\rangle(m_t),\end{equation}
   where $m_t$ is a curve through $m$ with velocity ${\bf X}(m)$, and
   \begin{equation}\label{equivariant-property}\boldsymbol{\Omega}_m({\bf X}_u, {\bf X}_v) = -\langle [u,v], \boldsymbol{\mu}(m)\rangle.\end{equation}
   \item[$\bullet$] $\mf{ham}$ is   endowed with an ad-invariant inner product $\llangle \cdot ,  \cdot \rrangle$ and  $\boldsymbol{\mu}$ admits a  differentiable $\llangle \cdot, \cdot \rrangle$-dual map  $\boldsymbol{\mu}^{\sharp}: {\bf M} \to \mf{ham}$,  such that for any $m\in {\bf M}$  and any $u \in \mf{ham}$
   \begin{equation}\label{dual-property}\langle \boldsymbol{\mu}(m), u \rangle = \llangle \boldsymbol{\mu}^{\sharp}(m), u \rrangle. \end{equation}
\end{enumerate}
The example of interest in this paper is when ${\bf M}=\mc{AGK}(\JJ, \vol)$  and $\mf{Aut}=\Aut(M, H_0, \JJ, \div_{\vol}(\JJ))$ is the group of $\T$-invariant generalized automorphisms of $(E=TM \oplus T^*M, H_0)$ preserving  $\JJ$ and $\div_{\vol}(\JJ)$. Note that $\mf{Aut}$ has a Lie algebra $\mf{aut}(M, H_0, \JJ, \div_{\vol}(\JJ))$ with exponential map coming from the flow of vector fields  composed with the $B$-field action. Furthermore, in our case $\mf{ham}= \mf{ham}^{\T}(\JJ)= \{\Psi(\JJ du), \, u\in C^{\infty}_0(M, \R)^{\T}\} < \mf{aut}(M, H_0, \JJ, \div_{\vol}(\JJ))$ and the ad-invariant inner product $\llangle \cdot, \cdot \rrangle$ correspond to the global $L^2$-product  on $C^{\infty}_0(M, \R)^{\T}$ with respect to $\vol$.  We then have $\boldsymbol{\mu}^{\sharp}(m)=\Psi(\JJ d\mathring{\Gscal}(\II, \JJ, \vol))$ where $\mathring{\Gscal}(\II, \JJ, \vol)$ is the normalized Goto scalar curvature in $C^{\infty}_0(M, \vol)$.

\subsection{Extremal points}
\begin{defn} An \emph{extremal point} $m\in {\bf M}$ is a critical point of the square-norm function 
\[ m \to || \boldsymbol{\mu}^{\sharp}(m)||^2.\]
\end{defn}
For any given point $m\in {\bf M}$, we denote in the sequel by $\mf{g}(m)\subset \mf{ham}$ the  ``stabilizer'' of  $m$, i.e.
\[ \mf{g}(m) :=\{ u \in \mf{ham} \, | \, {\bf X}_u(m) =0\}.\]
In general, $\mf{g}(m)$ is a vector subspace of $\mf{ham}$, but in many cases it will also be a subalgebra. This is the case for the example we are interested in,  as we can then identify $\mf{g}(m)$ with the subagebra of hamiltonian elements $\Psi(\JJ du)$ in $\mf{aut}(M, H, \JJ, \vol)$ whose exponential flow preserves $\II$.
\begin{lemma}\label{l:extremal} $m\in {\bf M}$ is extremal if and only if
\[ \chi:=\boldsymbol{\mu}^{\sharp}(m) \in \mf{g}(m).\]
Furthermore, in this case,  $[\chi, u]=0$ for any   $u \in \mf{g}(m)$.
\end{lemma}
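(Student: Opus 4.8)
The plan is to prove the two assertions separately, in each case reducing everything to the four structural identities \eqref{momentum-property}, \eqref{equivariant-property}, \eqref{dual-property}, the nondegeneracy of $\boldsymbol{\Omega}$, and the $\ad$-invariance of $\llangle\cdot,\cdot\rrangle$.

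For the equivalence, I would differentiate the square-norm functional along an arbitrary differentiable curve $m_t$ with $m_0=m$ and velocity $\mathbf{X}(m)$. Writing $\chi:=\boldsymbol{\mu}^{\sharp}(m)$ and using bilinearity,
\[
\frac{d}{dt}\Big|_{t=0}\|\boldsymbol{\mu}^{\sharp}(m_t)\|^2 = 2\llangle \tfrac{d}{dt}\big|_{t=0}\boldsymbol{\mu}^{\sharp}(m_t),\ \chi\rrangle.
\]
The crucial point is that $\chi$ is a \emph{frozen} element of $\mf{ham}$, so the duality \eqref{dual-property}, applied with the fixed vector $u=\chi$, lets me rewrite the right-hand side as $2\frac{d}{dt}\big|_{0}\langle\boldsymbol{\mu}(m_t),\chi\rangle$; the momentum property \eqref{momentum-property} with $u=\chi$ then turns this into $-2\,\boldsymbol{\Omega}_m(\mathbf{X}_{\chi},\mathbf{X}(m))$. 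Hence $m$ is a critical point of the square-norm exactly when $\boldsymbol{\Omega}_m(\mathbf{X}_{\chi},\mathbf{X}(m))=0$ for every $\mathbf{X}(m)\in\mathbf{T}_m\mathbf{M}$, and by nondegeneracy of $\boldsymbol{\Omega}$ (which holds because it is the fundamental form of the almost Hermitian structure $(\mathbf{J},\boldsymbol{\Omega},\mathbf{g})$) this is equivalent to $\mathbf{X}_{\chi}(m)=0$, i.e.\ to $\chi\in\mf{g}(m)$.

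For the final assertion I would first record the $\ad$-equivariance of the dual moment map. Combining \eqref{momentum-property} and \eqref{equivariant-property}, and using that the velocity of $s\mapsto \exp(sw)\cdot m$ is $-\mathbf{X}_w(m)$, gives the infinitesimal coadjoint equivariance of $\boldsymbol{\mu}$,
\[
\frac{d}{ds}\Big|_{s=0}\langle\boldsymbol{\mu}(\exp(sw)\cdot m),z\rangle = \langle [w,z],\boldsymbol{\mu}(m)\rangle,
\]
valid for all $w,z\in\mf{ham}$. Transporting this identity through \eqref{dual-property} and the $\ad$-invariance of $\llangle\cdot,\cdot\rrangle$, and using nondegeneracy of the inner product, yields
\[
\frac{d}{ds}\Big|_{s=0}\boldsymbol{\mu}^{\sharp}(\exp(sw)\cdot m) = [\chi,w],\qquad \chi=\boldsymbol{\mu}^{\sharp}(m).
\]
Now suppose $m$ is extremal and $u\in\mf{g}(m)$. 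Since $\mathbf{X}_u(m)=0$, the point $m$ is a genuine fixed point of the one-parameter group, $\exp(su)\cdot m\equiv m$ for all $s$; applying the equivariance identity with $w=u$ makes the left-hand side vanish identically in $s$, whence $[\chi,u]=0$.

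The bilinear manipulations and the $\ad$-invariance bookkeeping are routine; the one conceptual point to get right is the passage through \eqref{dual-property}, where both $\boldsymbol{\mu}$ and $\boldsymbol{\mu}^{\sharp}$ vary with the point, and one must pair against the frozen value $\chi$ so that differentiating the self-pairing produces only the factor of two and no extraneous term. The remaining subtleties are purely formal: one needs $\boldsymbol{\Omega}$ to be nondegenerate on the Fr\'echet tangent spaces and $\boldsymbol{\mu}^{\sharp}$ to be differentiable, both of which are built into the standing hypotheses, so no genuine analytic obstacle arises.
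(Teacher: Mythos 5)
Your proof is correct and follows essentially the same route as the paper: the first part is the identical computation (freeze $\chi$, pass through \eqref{dual-property} and \eqref{momentum-property}, then invoke nondegeneracy of $\boldsymbol{\Omega}$), and the second part merely repackages the paper's direct substitution of ${\bf X}_u(m)=0$ into \eqref{equivariant-property} as an equivariance statement for $\boldsymbol{\mu}^{\sharp}$ along the flow, with the same final appeal to $\ad$-invariance and nondegeneracy of $\llangle\cdot,\cdot\rrangle$. One small quibble: the intermediate claim $\exp(su)\cdot m\equiv m$ is not justified in a formal Fr\'echet setting (integral curves need not be unique), but it is also not needed, since the curve $s\mapsto\exp(su)\cdot m$ has zero velocity at $s=0$ and that alone kills the left-hand side of your equivariance identity at the only point where you use it.
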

\begin{proof}  We denote by $m_t$ a smooth curve through $m$ with velocity ${\bf X}(m) \in {\bf T}_m{\bf M}$.
 We thus have (using \eqref{dual-property} and \eqref{momentum-property}):
\[\frac{1}{2}\left. \dt \right|_{t=0} ||\boldsymbol{\mu}^{\sharp}(m_t)||^2 = \left. \dt \right|_{t=0} \llangle \chi,  \boldsymbol{\mu}^{\sharp}(m_t) \rrangle = d\langle \chi, \boldsymbol{\mu}\rangle({\bf X}(m))=-\boldsymbol{\Omega}_m({\bf X}_{\chi}(m), {\bf X}(m)).\]
 The first claim follows as $\boldsymbol{\Omega}_m$ is non-degenerate (being the fundamental form of an almost Hermitian structure). To show that $\chi$ is in the center, we use  \eqref{equivariant-property} and the ad-invariance of $\llangle \cdot, \cdot \rrangle$: for any $u\in \mf{g}(m)$ and $v\in \mf{ham}$, 
 \[0=\boldsymbol{\Omega}_m({\bf X}_u(m), {\bf X}_v(m))= -\langle [u,v], \boldsymbol{\mu}(m)\rangle =-\llangle [u,v], \chi \rrangle= \llangle v, [u, \chi]\rrangle.\]
\end{proof}

\subsection{The Calabi--Licherwowicz-Matsushima theorem}

We define a map
\[ \mf{ham}\otimes \C \ni u+\sqrt{-1}v \to {\bf X}_u + {\bf J} {\bf X}_v \in \boldsymbol{\mf{X}}({\bf M}).\]
This is a complex-linear map when we endow $\boldsymbol{\mf{X}}({\bf M})$ with the almost complex structure ${\bf J}$.  If one assumes that ${\bf J}$ is formally integrable and the
infinitesimal action of $\mf{ham}$  is ${\bf J}$-holomorphic,  the above map gives rise to a $\C$-linear representation of the complex Lie algebra $\mf{ham}\otimes \C$ into $\boldsymbol{\mf{X}}({\bf M})$. This is sometimes referred to as  the \emph{complexified infinitesimal action} of $\mf{ham}\otimes\C$ on ${\bf M}$, although we emphasize here that we do not assume this property.

We next fix an extremal point $m\in {\bf M}$  and consider the complex vector sub-space
\[\mf{h}(m):=\{ u + \sqrt{-1}v \in \mf{ham}\otimes \C \, | \, {\bf X}_u(m) + {\bf J} {\bf X}_v(m)=0\} \subset \mf{ham}\otimes \C. \]

Clearly, we have an inclusion of complex vector spaces
\[ \mf{g}(m)\otimes \C \subset \mf{h}(m).\]

\begin{thm}[Calabi-Lichnerowicz-Matsushima]\label{thm:CML-decomp} In the setup above, let
\[ (\mf{h}(m))^{\chi}:= \{ u + \sqrt{-1}v \in \mf{h}(m) \, \,  | \, \, [\chi, u]=[\chi, v]=0\}. \]
Then $\mf{g}\otimes \C= (\mf{h}(m))^{\chi}$.
\end{thm}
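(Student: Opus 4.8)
The plan is to establish the two inclusions separately, the nontrivial one resting on a single pointwise computation of a norm at $m$, and using only the moment-map axioms together with $\ad$-invariance and positivity of ${\bf g}$.

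First I would dispatch the inclusion $\mf{g}(m)\otimes\C \subset (\mf{h}(m))^{\chi}$, which is purely formal: for $u,v\in\mf{g}(m)$ one has ${\bf X}_u(m)={\bf X}_v(m)=0$, so trivially ${\bf X}_u(m)+{\bf J}{\bf X}_v(m)=0$ and $u+\sqrt{-1}v\in\mf{h}(m)$; moreover Lemma~\ref{l:extremal} tells us that $\chi$ centralizes $\mf{g}(m)$, whence $[\chi,u]=[\chi,v]=0$ and $u+\sqrt{-1}v\in(\mf{h}(m))^{\chi}$.

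For the reverse inclusion I would fix $w=u+\sqrt{-1}v\in(\mf{h}(m))^{\chi}$ and show directly that ${\bf X}_u(m)={\bf X}_v(m)=0$. The defining condition of $\mf{h}(m)$ reads ${\bf J}{\bf X}_v(m)=-{\bf X}_u(m)$, so using the almost-Hermitian compatibility ${\bf g}_m({\bf A},{\bf B})=\boldsymbol{\Omega}_m({\bf A},{\bf J}{\bf B})$ I would compute
\begin{equation*}
  {\bf g}_m({\bf X}_v,{\bf X}_v)=\boldsymbol{\Omega}_m({\bf X}_v,{\bf J}{\bf X}_v)=-\boldsymbol{\Omega}_m({\bf X}_v,{\bf X}_u)=\boldsymbol{\Omega}_m({\bf X}_u,{\bf X}_v).
\end{equation*}
Invoking the equivariance identity \eqref{equivariant-property} and then the duality \eqref{dual-property} converts the right-hand side into the Lie-algebraic quantity $-\langle[u,v],\boldsymbol{\mu}(m)\rangle=-\llangle[u,v],\chi\rrangle$. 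Finally the $\ad$-invariance of $\llangle\cdot,\cdot\rrangle$ gives $\llangle[u,v],\chi\rrangle=-\llangle v,[u,\chi]\rrangle$, which vanishes because $[\chi,u]=0$. Hence ${\bf g}_m({\bf X}_v,{\bf X}_v)=0$, and positive-definiteness of the Riemannian metric ${\bf g}$ forces ${\bf X}_v(m)=0$; then ${\bf X}_u(m)=-{\bf J}{\bf X}_v(m)=0$, so $u,v\in\mf{g}(m)$ and $w\in\mf{g}(m)\otimes\C$.

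The computation is short, and I do not expect a genuine obstacle; the points requiring care are the bookkeeping of the sign and compatibility conventions relating $\boldsymbol{\Omega}$, ${\bf J}$ and ${\bf g}$, and the appeal to positive-definiteness of ${\bf g}$ at the final step. It is worth emphasizing what the argument does \emph{not} use: neither integrability of ${\bf J}$ nor ${\bf J}$-holomorphicity of the $\mf{ham}$-action enters, since everything is evaluated at the single point $m$ and relies only on the momentum-map axioms \eqref{equivariant-property}--\eqref{dual-property}, the $\ad$-invariance of $\llangle\cdot,\cdot\rrangle$, and positivity of ${\bf g}$. This is precisely what allows the statement to hold in the claimed generality, and it explains why the extremality of $m$ is needed only through the centralizing property of $\chi$ supplied by Lemma~\ref{l:extremal}.
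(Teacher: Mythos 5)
Your argument is correct and is essentially the paper's own proof: the paper runs the identical chain of identities in the opposite direction, starting from $0=\llangle[\chi,u],v\rrangle=\llangle\chi,[u,v]\rrangle$ and descending via \eqref{dual-property}, \eqref{equivariant-property}, and the relation ${\bf X}_u(m)=-{\bf J}{\bf X}_v(m)$ to $-{\bf g}({\bf X}_v,{\bf X}_v)$, then concluding by positivity. Your observations about which hypotheses are (not) used, including that extremality enters only through Lemma~\ref{l:extremal}, match the paper's setup exactly.
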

\begin{proof} We know by Lemma~\ref{l:extremal} that $\mf{g}(m)\otimes \C \subset (\mf{h}(m))^{\chi}$. We want to show equality. Let $u+\sqrt{-1}v$ be an element in $(\mf{h}(m))^{\chi}$, i.e.
\[ [\chi, u] = 0, \qquad [\chi, v] =0.\]
It then follows by the ad-invariance of $\llangle \cdot, \cdot \rrangle$, \eqref{equivariant-property} and ${\bf X}_u(m)=-{\bf J} {\bf X}_v(M)$ (by the  definition of $\mf{g}(m)$):
\[
\begin{split}
0 &= \llangle [\chi, u], v \rrangle = \llangle \chi, [u,v]\rrangle \\
&= \langle \boldsymbol{\mu}(m), [u,v] \rangle =-\boldsymbol{\Omega}_{m}({\bf X}_{u}(m), {\bf X}_v(m)) \\
&= \boldsymbol{\Omega}_{m}({\bf J}{\bf X}_{v}(m), {\bf X}_v(m))=-{\bf g}({\bf X}_{v}(m), {\bf X}_{v}(m)).
\end{split}\]
We thus obtain that ${\bf X}_v(m)={\bf X}_u(m)=0$, i.e.  $u + \sqrt{-1}v \in \mf{g}(m)\otimes \C$.
\end{proof}
\begin{rmk} By a similar argument one shows that if $\sqrt{-1}\lambda$ is an eigenvalue of the  skew-Hermitian endomorphism ${\rm ad}_{\chi}$ of $(\mf{h}(m), \llangle \cdot, \cdot \rrangle$), then $\lambda \geq 0$. In the case when $\mf{h}(m)$ is a finite dimensional complex Lie algebra, we thus get the 
Calabi decomposition  of $\mf{h}(m)$ as a direct sum of ${\rm ad}_{\chi}$ eigenspaces:
\[ \mf{h}(m)= (\mf{h}(m))^{\chi} \oplus \bigoplus_{\lambda >0} (\mf{h}(m))_{\lambda}. \] The above  decomposition yields that  $\mf{g}(m)$ is a maximal (real) compact subalgebra of $\mf{h}(m)$, see e.g. \cite{LSW} for details.
\end{rmk}

\end{document}